\newtheorem{theorem}{Theorem}[section]
\newtheorem{lemma}[theorem]{Lemma}
\newtheorem{prop}[theorem]{Proposition}
\theoremstyle{definition}
\theoremstyle{remark}
\newtheorem{remark}[theorem]{Remark}
\numberwithin{equation}{section}
\newcommand\reallywidehat[1]{
\savestack{\tmpbox}{\stretchto{
  \scaleto{
    \scalerel*[\widthof{\ensuremath{#1}}]{\kern.1pt\mathchar"0362\kern.1pt}
    {\rule{0ex}{\textheight}}
  }{\textheight}
}{2.ex}}
\stackon[-7pt]{#1}{\tmpbox}
}
\newcommand*{\rom}[1]{\expandafter\@slowromancap\romannumeral #1@}
\begin{document}

\title{On the Fourier analysis of the Einstein-Klein-Gordon system: Growth and Decay of the Fourier constants} 

\author{Athanasios Chatzikaleas}

\address{Westf\"alische Wilhelms-Universit\"at M\"unster, Mathematisches Institut, Einsteinstrasse 62, 48149 M\"unster Germany}
\email{achatzik@uni-muenster.de}

\dedicatory{}

\begin{abstract}
We consider the $(1+3)$-dimensional Einstein equations with negative cosmological constant coupled to a spherically-symmetric, massless scalar field and study perturbations around the Anti-de Sitter spacetime. We derive the resonant systems, pick out vanishing secular terms and discuss issues related to small divisors. Most importantly, we rigorously establish (sharp, in most of the cases) asymptotic behaviour for all the interaction coefficients. The latter is based on uniform estimates for the eigenfunctions associated to the linearized operator as well as on some oscillatory integrals.  
\end{abstract}

\maketitle
\tableofcontents 
\addtocontents{toc}{\protect\setcounter{tocdepth}{1}} 
\noindent
\section{Introduction}
\noindent
In this work, we initiate a rigorous study of the dynamics in Fourier space 
of spherically symmetric solutions to the coupled Einstein-Klein-Gordon system, 
\begin{align}\label{arxiko}
\begin{dcases}
	\text{Ric}(g) -\frac{1}{2}g  \text{R}(g) + \Lambda g = 8 \pi \left ( d \phi \otimes d \phi - \frac{1}{2} g |d \phi|_{g}^2 \right ), \quad 
	 \Lambda<0, \\
	\Box_{g} \phi = 0,
\end{dcases}
\end{align} 
around the Anti-de Sitter solution. Here, $\text{Ric}(g)$ and $R(g)$ denote the Ricci and scalar curvatures respectively, both with respect to the spacetime metric $g$, $\Box_{g} $ stands for the wave operator and $\Lambda$ is the cosmological constant. 
\subsection{(In-)stability of Anti-de Sitter}
The Ant-de Sitter (AdS) spacetime is the unique (up to quotients by isometry subgroups) maximally symmetric solution to the Einstein equations with negative cosmological constant,
\begin{align*}
	\text{Ric}(g)=    -\Lambda g, \quad \Lambda<0.
\end{align*}
In local compactified coordinates $(t,x,\omega) \in  \mathcal{M}:=  \mathbb{R} \times \left[0,\pi /2 \right) \times \mathbb{S}^{d-1}$, it reads
\begin{align*}
	g_{\text{AdS}}(t,x,\omega) = \frac{1}{\cos ^2 (x)} \left( - dt^2 + dx^2 + \sin ^2 (x) d \omega ^2 \right) 
\end{align*}
implying that $(\mathcal{M},g_{\text{AdS}})$ is conformal to half of the Einstein static universe. One can see that, altough the spatial distance from any point $(t,x)$ with $0 \leq x <\pi/2$ to $\mathcal{I}:=\{x=\pi/2\}$ is infinite, null geodesics reach the conformal spatial infinity $\mathcal{I} $ in finite time with respect to time as measured by an inertial observer at the center. The particular characteristic of the AdS solution as well as of all asymptotically AdS spacetimes (that are spacetimes that approach the AdS solution at infinity in the spatial directions fast enough and share the same conformal boundary) is that the conformal spatial infinity is a time-like cylinder $\mathbb{R} \times \mathbb{S}^{d-1}$. Consequently, the AdS metric is not globally hyperbolic and hence in order to study the evolution of the field $\phi$ on such backgrounds one has to prescribe boundary conditions also on $\mathcal{I}$ in addition to the initial data on the $\{t=0\}$ slice. \\ \\
It is well known that the Minkowski space is a ground state among asymptotically flat spacetimes \cite{MR0612249,MR0626707}. The AdS spacetime also enjoys a similar variational characterization due to the positive energy theorem which states that for solutions to the general Einstein field equations which are globally regular and satisfy a reasonable energy condition, the AdS space is a ground state among asymptotically AdS spacetimes \cite{MR0701918}. As far as the initial-boundary value problem is concerned, Smulevici-Holzegel \cite{MR2913628} (for Dirichlet boundary conditions) and Warnick-Holzegel \cite{MR3369103} (for more general boundary conditions) proved its local well-posedness in the class of spherically symmetric solutions. We also note that, if one removes the symmetry assumptions, Friedrich's proof of local well-posedness for the vacuum equations yields local well-posedness for the Einstein-Klein-Gordon system provided that the latter has the conformal value of the mass. Once the local well-posedness is established, an important question for the AdS solution (as for any ground state) is whether it is stable or not, meaning whether small perturbations of the solution on the $\{t = 0\}$ slice remain small for all future times or not. For the Minkowski spacetime such a question has been answered by Christodoulou-Klainerman \cite{MR1316662} and for the de-Sitter spacetime by Friedrich \cite{MR0868737} who proved its stability. 
\\ \\
The main mechanism responsible for the stability of the Minkowski spacetime is the dissipation of energy by dispersion. In the case of AdS solution, such a mechanism is no longer present. For ``reflective'' boundary conditions on the conformal infinity, waves which start at any point inside the region $\{0\leq x < \pi /2 \}$ and propagate outwards are reflected on $\mathcal{I}$ and return back to into the region from where they started \cite{MR3205859}. Such boundary conditions are confining enough forcing the AdS solution to act as a closed universe (in terms of its fields inside). 
\\ \\ 
Although the conjecture on the instability of the AdS spacetime was first announced by Dafermos \cite{DafermosTalk} and Dafermos-Holzegel \cite{DafermosHolzegel} in 2006, the first work in this direction was the seminal paper of Bizo\'{n}-Rostworowski \cite{PhysRevLett107031102}. Specifically, Bizo\'{n}-Rostworowski \cite{PhysRevLett107031102} considered the spherically symmetric Einstein massless scalar field equations with negative cosmological constant in $(1+3)$-dimensions and established strong numerical (as well as analytical) results indicating that the AdS solution to the Einstein equations (although linearly stable) is in fact nonlinearly unstable against the formation of a black hole under arbitrarily small and generic perturbations. In their work \cite{PhysRevLett107031102}, they used specific Gaussian-type initial data and concluded that such initial data evolve to a wave which, as it propagates in time, collapses quickly and an apparent horizon appears. On top of their numerical findings, Bizo\'{n}-Rostworowski \cite{PhysRevLett107031102} also proposed the resonant mode mixing as the mechanism responsible for the AdS instability. The authors expanded the dynamical variables in terms of the eigenfunctions $\{e_n: n \geq 0\}$ to the associated linearized operator. Subsequently, this resulted in the construction of a perturbative series expansion bifurcating from initial data dominated only by a fixed number of modes. Inserting this expansion in the non-linear system, one obtains an infinite system of harmonic oscillators in which secular\footnote{These are terms of the form $\tau^{\alpha} \sin( \beta \tau)$ and $\tau^{\alpha} \cos( \beta \tau)$ for some $\alpha>0$ and $\beta \in \mathbb{R}$.} terms appear naturally, see also Section \ref{SectionSecularTerms}. According to \cite{PhysRevLett107031102}, if these terms are not removed, they should eventually become responsible for exponential instability and a possible breakdown of the solutions. For example, if one starts with initial data dominated by the first 2-modes, namely $e_0 +  e_1$, then a secular term occurs that cannot be removed, for more details see \cite{PhysRevLett107031102} and page 20 in \cite{2011PhRvL107c1102B}. Their important work then triggered a substantial amount of numerical and heuristic studies. Dias-Horowitz-Santos \cite{MR2978943} considered pure gravity with a negative cosmological constant and provided additional support strengthening the evidence that the AdS spacetime might be nonlinearly unstable. Moreover, similar results have been obtained by Ja{\l}mu{\.z}na-Rostworowski-Bizo\'{n} \cite{PhysRevD84085021} and Buchel-Lehner-Liebling \cite{86123011} for higher dimensions as well as by Choptuik \cite{PhysRevLett709} who studied the mechanism of the spherically symmetric collapse of a scalar field with a general time and radial spatial dependent metric and for several families of initial data.\\ \\
Most importantly, Bizo\'{n}-Rostworowski \cite{PhysRevLett107031102} also conjectured that there may exist specific initial data (called islands of stability) for which the evolution of small perturbations around the AdS solution remains globally regular in time. As a matter of fact, Maliborski-Rostworowski \cite{PhysRevLett111051102} considered the spherically symmetric Einstein-massless scalar field equations with negative cosmological constant in $1+d$ dimensions with $d\geq 2$ and provided reliable numerical evidence indicating that actually time-periodic solutions may exist for non-generic initial data. They were able to construct these solutions using both nonlinear perturbative expansions and fully nonlinear numerical methods. Specifically, they expanded the dynamical variables in terms of the eigenfunctions $\{e_n: n \geq 0\}$ to the associated linearized operator and constructed a perturbative series expansion bifurcating from initial data dominated only by 1-modes. Then, they inserted this in the non-linear system and obtained a recurrence relation where one can fine tune the initial data order-by-order in such a way so that all secular terms appearing in the resonant system can be removed. Consequently, they are left with a secular free perturbative series that presumably converges obtaining a time-periodic solution from non-generic initial data. However, their analysis suggests that all secular terms can be removed provided that one can verify an infinite system of both linear and non-linear conditions where the so called Fourier coefficients play a decisive role in its validity. These are oscillatory integrals defined by the mode couplings induced by the non-linearities of the system and are essentially weighted integrals of products of the eigenfunctions $\{e_n: n \geq 0\}$ to the associated linearized operator, see Sections \ref{SectionPerturbation1DefinitionFourier} and \ref{SectionPerturbation2DefinitionFourier} as well as \cite{MR3435560,PhysRevLett107031102,PhysRevLett111051102}. In this work, we consider the coupled Einstein-Klein-Gordon system in spherical symmetry, track down various cancellations occurring in the potentially resonant terms and establish robust bounds for the Fourier coefficients using two types of perturbations. \\ \\
Moreover, similar conjectures were made by Dias-Horowitz-Marolf-Santos \cite{MR3002881} who argued that many asymptotically AdS solutions are nonlinearly stable (including geons, boson stars, and black holes) and by Buchel-Liebling-Lehner \cite{87123006} who considered boson stars in global AdS spacetime and study their stability. Finally, a very interesting non-perturbative approach was developed by Moschidis who rigorously proved the AdS instability conjecture for generic initial data and for the Einstein--massless Vlasov system and Einstein--null dust system in his celebrated works \cite{MR4150259, MR4542703, MR3828498}.  

\subsection{Spherical symmetric ansatz}
We consider the coupled Einstein-Klein-Gordon system \eqref{arxiko} in spherical symmetry and for simplicity we fix the spatial dimension $d=3$. Following the work of Bizo\'{n}-Rostworowski \cite{PhysRevLett107031102} we parametrize the spacetime metric by the spherically symmetric ansatz
\begin{align}\label{symmetricansatz}
	g(t,x,\omega) = \frac{1}{\cos ^2 (x)} \left( -   A(t,x)e^{-2\delta(t,x)} dt^2 + A^{-1}(t,x) dx^2+ \sin ^2 (x) d \omega ^2 \right),
\end{align}
for $(t,x,\omega) \in \mathcal{M}$. Under this ansatz, the wave equation in spherical symmetry becomes
\begin{align} \label{originalwaveequation}
	\partial_{t} \left( A^{-1}(t,x)e^{\delta(t,x)} \partial_{t} \phi(t,x) \right) = \frac{1}{\tan^2(x)} \partial_{x} \left( \tan^2(x) A(t,x)e^{-\delta(t,x)} \partial_{x} \phi(t,x) \right). 
\end{align}
We note that the gauge condition used here includes the normalization $\delta=0$ and $A=1$ at $x=0$. We transform the second order partial differential equation \eqref{originalwaveequation} into a first order system by setting
\begin{align*}
	\Phi (t,x) = \partial_{x} \phi (t,x),~~~\Pi (t,x) = \frac{1}{A(t,x)e^{-\delta (t,x)} } \partial_{t} \phi(t,x).
\end{align*}
Then, \eqref{originalwaveequation} reads
\begin{align*}
\begin{dcases}
	 \partial_{t} \Phi(t,x) = \partial_{x} \left( A(t,x)e^{-\delta(t,x)} \Pi(t,x) \right), \\ 
	 \partial_{t} \Pi(t,x) = - \reallywidehat{L} \left(  A(t,x)e^{-\delta (t,x)} \Phi (t,x) \right),
\end{dcases}
\end{align*}
where
\begin{align*}
	\reallywidehat{L}[g](x):= - \frac{1}{\tan^2(x)} \partial_{x} ( \tan^2(x) g(x)),
\end{align*}
coupled to the Einstein equations
\begin{align*}
\begin{dcases}
	(1-A(t,x))e^{-\delta(t,x)}= \frac{\cos ^3 (x)}{\sin(x)} \int_{0}^{x} e^{-\delta(t,y)} \left(\Phi^2(t,y) + \Pi^2(t,y) \right) (\tan(y))^2 dy, \\
	-\delta(t,x) = \int_{0}^{x} \left(\Phi^2(t,y) + \Pi^2(t,y) \right) \sin(y) \cos(y) dy.
\end{dcases}
\end{align*}
\subsection{The linearized operator}
From the Einstein equations, one can derive an additional equation, namely the momentum constraint
\begin{align*}
	\partial_{t} A(t,x) = - 2 \sin(x)\cos(x) A(t,x) \partial_{x}\phi(t,x) \partial_{t}\phi(t,x)
\end{align*}
and now \eqref{originalwaveequation} can be written as
\begin{align*}
	\partial_{t}^2 \phi(t,x) +   L\left[\phi (t,x)\right] &=  \frac{1}{2} \partial_{x} \big( \big( A(t,x)e^{-\delta(t,x)} \big)^2 \big) \partial_{x} \phi(t,x) + \big(1- \big(A(t,x)e^{-\delta(t,x)} \big)^2  \big)  L\left[\phi(t,x) \right]\\
	&- 2 \sin(x) \cos(x) \partial_{x} \phi(t,x) \left(\partial_{t} \phi(t,x) \right)^2 - \partial_{t} \delta(t,x) \partial_{t} \phi(t,x),
\end{align*}
where 
\begin{align*}
	L[f](x):= -\frac{1}{\tan^2(x)} \partial_{x} ( \tan^2(x) \partial_{x} f(x))
\end{align*}
is the operator which governs linearized perturbations of AdS solution. The solutions to the eigenvalue problem $L[f] = \omega^2 f$ subject to Dirichlet boundary conditions on the conformal boundary $\mathcal{I}=\{x=\pi /2 \}$ 
fall into the hypergeometric class and hence can be found explicitly. For a rigorous definition of the spectrum as well as for the definition of the domain in which the linearized operator is self-adjoint, see the work of Bachelot \cite{MR2430631}. Specifically, the eigenvalues read 
\begin{align*}
	\omega^2_{j}:=(3+2j)^2 
\end{align*}
and eigenfunctions are weighted Jacobi polynomials,
\begin{align*}
	e_{j}(x):=2\frac{\sqrt{j! (j+2)!}}{\Gamma (j+\frac{3}{2})} \cos^3(x) P^{\frac{1}{2},\frac{3}{2}}_{j}(\cos(2x)),  
\end{align*}
for all integers $j=0,1,2,\dots$ and $x\in[0,\pi/2]$. For the definition, basic properties and an introduction to the Jacobi polynomials see Chapter 4, page 48 in Szeg\"{o}'s book \cite{MR0372517}. In addition, the linearized operator $L$ is self-adjoint with respect to the weighted inner product
\begin{align} \label{innerproduct}
	(f|g):=\int _{0}^{\frac{\pi}{2}} f(x)g(x) \tan^{2}(x) dx.
\end{align}
Finally, note that the eigenvalues are strictly positive and hence the linear problem is orbitally stable. 
\section{Main result and preliminaries}
We consider the spherically symmetric Einstein-massless scalar field equations with negative cosmological constant under the spherically symmetric ansatz \eqref{symmetricansatz},
\begin{align}
	& \partial_{t} \Phi(t,x) = \partial_{x} \left( A(t,x)e^{-\delta(t,x)} \Pi(t,x) \right), \label{EKG1} \\ 
	&  \partial_{t} \Pi(t,x) = - \reallywidehat{L} \left(  A(t,x)e^{-\delta (t,x)} \Phi (t,x) \right), \label{EKG2}\\
	& (1-A(t,x))e^{-\delta(t,x)}= \frac{\cos ^3 (x)}{\sin(x)} \int_{0}^{x} e^{-\delta(t,y)} \left(\Phi^2(t,y) + \Pi^2(t,y) \right) (\tan(y))^2 dy,\label{EKG3} \\
	& \delta(t,x) = -\int_{0}^{x} \left(\Phi^2(t,y) + \Pi^2(t,y) \right) \sin(y) \cos(y) dy, \label{EKG4}
\end{align} 
and we are mainly interested in the asymptotic behaviour of the Fourier constants which appear in the analysis of perturbations around the AdS solution $(\Phi, \Pi, A, \delta)=(0,0,1,0)$.
\subsection{Statement of the main result}
Specifically, we consider two types of perturbations. On the one hand, in light of recent work Maliborski-Rostworowski \cite{PhysRevLett111051102}, although the series may not converge, we seek a solution of the form
\begin{align}
	&\Phi(t,x) =\sum_{\lambda=0}^{\infty} \psi_{2\lambda+1} (\tau,x)\epsilon^{2\lambda+1}  , \quad  \Pi(t,x)=\sum_{\lambda=0}^{\infty} \sigma_{2\lambda+1} (\tau,x)\epsilon^{2\lambda+1} , \label{series1} \\
	& A(t,x) e^{-\delta(t,x)} =\sum_{\lambda=0}^{\infty} \xi_{2\lambda} (\tau,x)\epsilon^{2\lambda}, \quad  e^{-\delta(t,x)}=\sum_{\lambda=0}^{\infty} \zeta_{2\lambda} (\tau,x)\epsilon^{2\lambda}  ,\label{series3}  \\ 
	& \tau = \Omega_{\gamma} t, \quad
	\Omega_{\gamma} = \sum_{\lambda=0}^{\infty} \omega_{\gamma,2\lambda} \epsilon^{2\lambda} ,\label{series4}
\end{align}
where $\psi_{2\lambda+1}$,$\sigma_{2\lambda+1}$,$\xi_{2\lambda}$ and $\zeta_{2\lambda}$ are all periodic in time and in particular $\xi_{0} = \zeta_{0}  = 1 $. Here, $\psi_{1},\sigma_{1},\xi_{2},\zeta_{2},\omega_{\gamma,0},\gamma$ will be chosen later. Furthermore, with a slight abuse of notation, we use the same letters to denote the variables with respect to the $(\tau,x)$ and $(t,x)$.  
On the other hand, we still assume that $(\Phi,\Pi,A,\delta)$ are all close to the AdS solution $(0,0,1,0)$ but expand them using a finite sum
\begin{align}
	& \Phi(t,x) = \Phi_{1}(\tau,x) \epsilon + \Psi(\tau,x) \epsilon ^3 , \quad 
	\Pi(t,x) = \Pi_{1}(\tau,x) \epsilon + \Sigma(\tau,x) \epsilon ^3 
	\label{finitesum1} \\ 
&A(t,x) = 1-A_{2}(\tau,x) \epsilon^2 - B(\tau,x) \epsilon^4 , \quad 
e^{-\delta (t,x)} = 1 - \delta_{2} (\tau,x) \epsilon^2 - \Theta (\tau,x) \epsilon^4 ,\label{finitesum3} \\
& \tau = (\omega_{\gamma} + \epsilon^2 \theta_{\gamma} +\epsilon^4 \eta_{\gamma})t, \label{finitesum5}
\end{align}
for some error terms $\Psi, \Sigma, B,\Theta,\eta_{\gamma}$ where $\Phi_{1},\Pi_{1},A_{2},\delta_{2}$ are all explicit periodic expressions in time and will be chosen later together with $\gamma$.  \\ \\
The aim of this paper is to provide a better understanding of the asymptotic behaviour of the Fourier coefficients that play a crusial role to the work of Maliborski-Rostworowski \cite{PhysRevLett111051102} and the conjectured existence of time-periodic solutions to the Eistein-Klein-Gordon equation in spherical symmetry. Is it also motivated by the work of Hunik-Kostyra-Rostworowski \cite{zbMATH07227949} who established interesting recurrence relations for the interaction coefficients for the $5-$dimensional Einstein equations in vacuum with negative cosmological constant within cohomogenity-two biaxial Bianchi IX ansatz. To place our results in the context of the physics literature, we refer the reader to \cite{PhysRevLett113071601, MR4016456, PhysRevLett115081103, MR3270350, PhysRevD92084001}. In our main result, we track down the various cancellations occurring in the potentially resonant terms and establish robust bounds for the Fourier coefficients in the two settings described above. 
\begin{theorem}[Rough version]\label{maintheorem}
We establish robust bounds for the interaction coefficients of the various Fourier components in the perturbative expansions around the AdS:
	\begin{itemize}
		\item Perturbation 1: the series \eqref{series1}-\eqref{series4}
		\item Perturbation 2: the finite sum \eqref{finitesum1}-\eqref{finitesum5}.
	\end{itemize}
For a precise version, see Propositions \ref{Result1} and \ref{A1}-\ref{B2} respectively.
\end{theorem}
\subsection{Acknowledgments}
The author would like to thank Professor Jacques Smulevici for very useful  comments and insights and gratefully acknowledges the support of the ERC grant 714408 GEOWAKI under the European Union's Horizon 2020 research and innovation program. The author also acknowledges funding by the Alexander von Humboldt Foundation endowed by the Federal Ministry of Education and Research, by the ERC Consolidator Grant 772249 as well as by the Germany's Excellence Strategy EXC 2044 390685587, Mathematics Munster: Dynamics-Geometry-Structure.

\subsection{Preliminaries}
As we will see, there are subspaces from which these Fourier constants decay and subspaces from which they grow. On the one hand, in order to establish the decay estimates, we will use
\begin{itemize}
\item the leading order terms of the eigenfunctions and their weighted derivatives (Lemma \ref{ClosedformulasFore} and Remark \ref{lot})
\item the asymptotic behaviour of specific oscillatory integrals (Lemma \ref{OscillatoryIntegrals})
\item integration by parts (Lemma \ref{byparts1} and Lemma \ref{byparts2})
\end{itemize}
The second is based on 
\begin{itemize}
	\item Dirichlet-Kernel-type identities (Lemma \ref{Dirichlet})
	\item carefully chosen anti-derivatives
\end{itemize}
On the other hand, for the growth estimates, we will only use
\begin{itemize}
	\item Holder's inequality
	\item $L^{\infty}-$bounds for quantities related to the eigenfunctions (Lemma \ref{Linftyboundse})
\end{itemize}
To begin with, we prove the first auxiliary result.
\begin{lemma}[Closed formulas]\label{ClosedformulasFore}
For all $x \in [0, \pi/2 ]$ and $i=0,1,\dots$, we have 
\begin{align*}
	& e_{i}(x) = \frac{2}{\sqrt{\pi}} \frac{ 1}{\sqrt{\omega_{i}^2-1}}  \left( \omega_{i} \frac{\sin\left( \omega_{i} x \right)}{\tan(x)} - \cos \left( \omega_{i}x \right) \right), \\
	& \frac{e_{i}^{\prime}(x)}{\omega_{i}} = \frac{2}{\sqrt{\pi}} \frac{ 1}{\sqrt{\omega_{i}^2-1}} \left(  \omega_{i}\frac{\cos\left( \omega_{i} x \right)}{\tan(x)} - \frac{\sin \left( \omega_{i}x \right)}{\tan^2(x)}  \right),
\end{align*}
Furthermore, both $\{e_{i}:i=0,1,2,\dots\} $ and $\{e_{i}^{\prime} / \omega_{i}:i=0,1,2,\dots \} $ form an orthogonal basis for $L^{2}\left( \left[0,\pi/2  \right] \right)$ with respect to the weighted inner product \eqref{innerproduct}, namely
\begin{align*}
	(e_{i}|e_{j}) = \delta_{i,j},\quad (e_{i}^{\prime}|e_{j}^{\prime}) = \omega_{i}^2 \delta_{i,j}
\end{align*}
and, for all $i,j=0,1,\dots,$. Here, $\delta_{i,j}$ stands for the Kronecker's delta. 
\end{lemma}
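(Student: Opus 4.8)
The plan is to handle the two assertions separately: first the explicit trigonometric formulas, then the orthonormality, the latter splitting into the easy orthogonality and the one real computation, the normalization.

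For the closed formulas I would avoid manipulating the Jacobi polynomial and instead verify the stated expression as a solution of the eigenvalue problem. Spelling out $-L$, the equation $L[f]=\omega^2 f$ is the singular Sturm--Liouville problem
\begin{align*}
	\left(\tan^2(x)\,f'(x)\right)' + \omega^2\tan^2(x)\,f(x)=0,
\end{align*}
with regularity at $x=0$ and the Dirichlet condition $f\!\left(\tfrac{\pi}{2}\right)=0$. Setting $\omega_i=2i+3$ and introducing the candidate
\begin{align*}
	\widetilde{e}_i(x):=\frac{2}{\sqrt{\pi}}\frac{1}{\sqrt{\omega_i^2-1}}\left(\omega_i\frac{\sin(\omega_i x)}{\tan(x)}-\cos(\omega_i x)\right),
\end{align*}
I would first check by direct differentiation, using $\tfrac{4}{\sin(2x)}=\tfrac{2}{\sin(x)\cos(x)}$, that $\widetilde{e}_i$ solves this ODE. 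The endpoint behaviour is then transparent: near $x=0$ the combination is regular because $\sin(\omega_i x)/\tan(x)\to\omega_i$, and at $x=\tfrac{\pi}{2}$ one has $\cos(\omega_i\tfrac{\pi}{2})=0$ (as $\omega_i$ is odd) and $\sin(\omega_i x)/\tan(x)\to0$, so $\widetilde{e}_i(\tfrac{\pi}{2})=0$. Since the regular Dirichlet eigenfunction at $\omega_i^2$ is unique up to a scalar, I only need to match a single value; comparing at $x=0$, where $\widetilde{e}_i(0)=\tfrac{2}{\sqrt{\pi}}\sqrt{\omega_i^2-1}$, with $e_i(0)$ evaluated through $P_i^{1/2,3/2}(1)=\Gamma(i+\tfrac{3}{2})/\!\left(i!\,\Gamma(\tfrac{3}{2})\right)$ fixes the constant and gives $\widetilde{e}_i=e_i$. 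The formula for $e_i'/\omega_i$ then follows simply by differentiating the formula for $e_i$ and simplifying with $\sec^2(x)/\tan^2(x)=1/\sin^2(x)$.

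For the orthonormality, the off-diagonal vanishing is immediate from the self-adjointness of $L$ for the inner product \eqref{innerproduct}: as the eigenvalues are distinct, $(\omega_i^2-\omega_j^2)(e_i|e_j)=(L e_i|e_j)-(e_i|L e_j)=0$ forces $(e_i|e_j)=0$ for $i\neq j$. The normalization $(e_i|e_i)=1$ is the one genuine computation, which I would carry out by the substitution $t=\cos(2x)$. Under it $\cos^6(x)\tan^2(x)\,dx=\tfrac{1}{16}(1-t)^{1/2}(1+t)^{3/2}\,dt$, so the integral reduces to the Jacobi orthogonality relation from Szeg\"{o}'s book \cite{MR0106295}, and the Gamma-factors telescope through $\Gamma(i+\tfrac52)/\Gamma(i+\tfrac32)=i+\tfrac32$ to give exactly $1$. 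Alternatively, having the closed form already, I could compute $(e_i|e_i)$ from the three elementary integrals of $\omega_i^2\sin^2(\omega_i x)$, $\sin(\omega_i x)\cos(\omega_i x)\tan(x)$ and $\cos^2(\omega_i x)\tan^2(x)$, trading the Jacobi machinery for a little more trigonometry.

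For the derivatives I would integrate by parts once. Using $(\tan^2(x)e_i'(x))'=-\omega_i^2\tan^2(x)e_i(x)$ together with $e_j(\tfrac{\pi}{2})=0$ and $\tan^2(x)e_i'(x)\to0$ as $x\to0$, the boundary terms vanish and
\begin{align*}
	(e_i'|e_j')=-\int_0^{\pi/2}\left(\tan^2(x)e_i'(x)\right)'e_j(x)\,dx=\omega_i^2(e_i|e_j)=\omega_i^2\,\delta_{i,j},
\end{align*}
which in particular yields $(e_i'/\omega_i\,|\,e_j'/\omega_j)=\delta_{i,j}$. Completeness of $\{e_i\}$ is the spectral theorem for the self-adjoint $L$ with discrete spectrum; for $\{e_i'/\omega_i\}$ I would observe that, since $L=\reallywidehat{L}\circ\partial_x$, the $e_i'$ are eigenfunctions of the conjugate operator $\partial_x\circ\reallywidehat{L}$ with the same eigenvalues, and hence inherit the basis property. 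I expect the main obstacle to lie not in any orthogonality argument but in the careful bookkeeping of the Gamma-function factors that produce the precise constant $\tfrac{2}{\sqrt{\pi}}(\omega_i^2-1)^{-1/2}$, and in confirming that the trigonometric candidate genuinely satisfies the ODE that is singular at both endpoints.
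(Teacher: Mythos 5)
Your route to the closed formula is genuinely different from the paper's and is sound. The paper derives $e_i$ directly by combining two identities from Szeg\"{o} --- $P^{\frac12,\frac32}_j$ as a derivative of $P^{-\frac12,\frac12}_{j+1}$, and the trigonometric closed form of the latter --- and then applying the chain rule with $z=\cos(2x)$; the constant $\tfrac{2}{\sqrt{\pi}}(\omega_i^2-1)^{-1/2}$ falls out of the Gamma-factor bookkeeping $\sqrt{j!\,(j+2)!}/\Gamma(j+3)=2(\omega_j^2-1)^{-1/2}$. You instead verify that the trigonometric candidate solves the Sturm--Liouville equation, is regular at $0$ and Dirichlet at $\tfrac{\pi}{2}$, and match a single value at $x=0$ via $P_i^{(1/2,3/2)}(1)$. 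This works (the value match does come out right: both sides equal $\tfrac{4}{\sqrt{\pi}}\sqrt{(i+1)(i+2)}$), and it trades Szeg\"{o}'s differentiation identity for an ODE computation; but it leans on the one-dimensionality of the regular Dirichlet eigenspace, which at the doubly singular endpoints deserves a sentence (near $x=0$ the equation is of spherical-Bessel type, so the second solution blows up like $1/x$ and regularity pins the eigenfunction down up to scale). Your normalization via $t=\cos(2x)$ and the Jacobi norm, and your integration by parts for $(e_i'|e_j')=\omega_i^2\delta_{i,j}$, are fine and are more explicit than the paper's ``the orthogonality properties are straightforward''.

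The one genuine gap is the completeness of $\{e_i'/\omega_i\}$. You assert that since the $e_i'$ are eigenfunctions of the conjugate operator $\partial_x\circ\reallywidehat{L}$ they ``inherit the basis property'', but that inheritance is not automatic: one would still have to show that this conjugate operator is self-adjoint with compact resolvent on the weighted space (with the correct domain at the two singular endpoints) and that the $e_i'$ exhaust its spectrum. The paper avoids all of this with an elementary reduction: given $\left(f\,\big|\,e_j'/\omega_j\right)=0$ for all $j$, set $F(x)=\int_x^{\pi/2}f(y)\,dy$, integrate by parts, and use $(\tan^2(x)\,e_j'(x))'=-\omega_j^2\tan^2(x)\,e_j(x)$ to conclude $(F|e_j)=0$ for all $j$, hence $F=0$ by the completeness of $\{e_j\}$ and so $f=0$. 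I would substitute that two-line argument for the spectral assertion; everything else in your plan goes through.
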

\begin{proof}
For the first part, we make use of the facts 
\begin{align*}
& P^{\frac{1}{2},\frac{3}{2}}_{j} (z) 
=  \frac{2\Gamma(j+2)}{\Gamma(j+3)} \frac{d}{d z} P^{-\frac{1}{2},\frac{1}{2}}_{j+1} (z), \quad P^{-\frac{1}{2},\frac{1}{2}}_{j+1} (z) = 
\frac{1}{\sqrt{\pi}} \frac{\Gamma(j+\frac{3}{2})}{\Gamma(j+2) }
\frac{\cos(\omega_{j} x)}{\cos(x)}
\end{align*}
where $x=x(z)=\frac{1}{2} \arccos(z)$ and $j=0,1,\dots$. These identities can be found in Chapter 4, page 60, equation (4.1.8) and Chapter 4, page 63, equation (4.21.7) in Szeg\"{o}'s book \cite{MR0372517}. Then, the closed formula for $e_{j}$ follows by the chain rule. Indeed, we define $z=\cos(2x)$ and compute
\begin{align*}
	P^{\frac{1}{2},\frac{3}{2}}_{j} (z) 
& =  \frac{2\Gamma(j+2)}{\Gamma(j+3)}
\frac{1}{\sqrt{\pi}} \frac{\Gamma(j+\frac{3}{2})}{\Gamma(j+2) }
\frac{d}{d z} \left( \frac{\cos(\omega_{j} x)}{\cos(x)}\right) \\
& =\frac{2}{\sqrt{\pi}} \frac{\Gamma(j+\frac{3}{2})}{\Gamma(j+3)}
\frac{dx}{d z} \frac{d}{d x} \left( \frac{\cos(\omega_{j} x)}{\cos(x)} \right) \\
& =\frac{2}{\sqrt{\pi}} \frac{\Gamma(j+\frac{3}{2})}{\Gamma(j+3)}
\frac{-1}{2 \sin(2x)} \frac{-\omega_{j} \cos(x) \sin(\omega_{j}x)+\cos(\omega_{j}x) \sin(x)}{\cos^2(x)} \\
& =\frac{1}{\sqrt{\pi}} \frac{\Gamma(j+\frac{3}{2})}{\Gamma(j+3)}
\left( \frac{\omega_{j}}{2\sin(x) \cos^2(x)} \sin(\omega_{j}x) 
-\frac{1}{2 \cos^3(x)} \cos(\omega_{j}x)
\right)
\end{align*}
and so
\begin{align*}
e_{j}(x) & :=2\frac{\sqrt{j! (j+2)!}}{\Gamma (j+\frac{3}{2})} \cos^3(x) P^{\frac{1}{2},\frac{3}{2}}_{j}(\cos(2x)) \\
	& =\frac{\sqrt{j! (j+2)!}}{\Gamma (j+\frac{3}{2})}  \frac{1}{\sqrt{\pi}} \frac{\Gamma(j+\frac{3}{2})}{\Gamma(j+3)}
\left(\omega_{j} \frac{\cos(x)}{\sin(x)} \sin(\omega_{j}x) 
- \cos(\omega_{j}x) 
\right) \\
& = \frac{1}{\sqrt{\pi}} \frac{\sqrt{j! (j+2)!}}{\Gamma(j+3) } 
\left(\omega_{j} \frac{\sin(\omega_{j}x) }{\tan(x)}
- \cos(\omega_{j}x) 
\right).
\end{align*}
Finally, since $j$ is an integer, we conclude
\begin{align*}
	\frac{\sqrt{j! (j+2)!}}{\Gamma(j+3) } 
	= \frac{\sqrt{j! (j+2)!}}{ (j+2)! } 
	= \sqrt{\frac{j!}{(j+2)!}}
	= \sqrt{\frac{1}{(j+1)(j+2)}} 
	= \frac{2}{\sqrt{\omega_{i}^2-1}}
\end{align*}
The closed formula for $e_{j}^{\prime}$ follows by differentiating the closed formula for $e_{j}$.  
Using these formulas, the orthogonality properties are straightforward. For the fact that the set $\{e_{j}:j=0,1,2,\dots \}$ forms a basis for $L^{2}\left( \left[0,\frac{\pi}{2} \right] \right)$ with respect to the weighted inner product \eqref{innerproduct}, see the Appendix in the work of Bachelot \cite{MR2430631}. In order to show that $\{ \frac{e^{\prime}_{j}}{\omega_{j}}:j=0,1,2,\dots \}$ also forms a basis for the same function space, one has to prove that
\begin{align*}
	\left(f \Big| \frac{e^{\prime}_{j}}{\omega_{j}} \right) = 0,\quad \forall j=0,1,2,\dots \Longrightarrow f=0.
\end{align*}
To this end, we define
\begin{align*}
	F(x):= \int_{x}^{\frac{\pi}{2}} f(y) dy
\end{align*}
and use the fact that
\begin{align*}
	- (\tan^2(x) e_{j}^{\prime}(x))^{\prime} = \omega_{j}^2 \tan^2(x) e_{j}(x)
\end{align*}
which follows from $L[e_{j}]=\omega_{j}^2 e_{j}$ together with integration by parts to compute
\begin{align*}
	0=\left(f \Big| \frac{e^{\prime}_{j}}{\omega_{j}} \right) &=
	\int_{0}^{\frac{\pi}{2}} f(x) \frac{e^{\prime}_{j}(x)}{\omega_{j}} \tan^2(x) dx
	=-\int_{0}^{\frac{\pi}{2}} F^{\prime}(x) \frac{e^{\prime}_{j}(x)}{\omega_{j}} \tan^2(x) dx \\
	&=\int_{0}^{\frac{\pi}{2}} F(x) \left( \frac{e^{\prime}_{j}(x)}{\omega_{j}} \tan^2(x) \right)^{\prime} dx 
	=\int_{0}^{\frac{\pi}{2}} F(x) \left( \frac{e^{\prime}_{j}(x)}{\omega_{j}} \tan^2(x) \right)^{\prime} dx \\
	&=-\omega_{j} \int_{0}^{\frac{\pi}{2}} F(x) e_{j}(x) dx
\end{align*}
for all $j=0,1,2,\dots$. Now, we use the fact that $\{e_{j}:j=0,1,2,\dots \}$ forms a basis to get $F = 0$ which in turn implies $f=0$.
\end{proof}
\begin{remark}\label{lot}
We find the leading order terms
	\begin{align*}
	& e_{i}(x) = \frac{2}{\sqrt{\pi}} \left( \frac{ \omega_{i}}{\sqrt{\omega_{i}^2-1}} \frac{\sin\left( \omega_{i} x \right)}{\tan(x)} -\frac{ 1}{\sqrt{\omega_{i}^2-1}}  \cos \left( \omega_{i}x \right) \right) \simeq \frac{2}{\sqrt{\pi}} \frac{\sin\left( \omega_{i} x \right)}{\tan(x)},\\
	& \frac{e_{i}^{\prime}(x)}{\omega_{i}} = \frac{2}{\sqrt{\pi}} \left( \frac{ \omega_{i}}{\sqrt{\omega_{i}^2-1}} \frac{\cos\left( \omega_{i} x \right)}{\tan(x)} - \frac{1}{\sqrt{\omega_{i}^2-1}} \frac{\sin \left( \omega_{i}x \right)}{\tan^2(x)}  \right) \simeq \frac{2}{\sqrt{\pi}} \frac{\cos \left( \omega_{i} x \right)}{\tan(x)},
\end{align*}
as $i \longrightarrow \infty$ and for all $x\in \left[ 0, \frac{\pi}{2} \right]$. These estimates are uniform with respect to the weighted $L^2-$norm
\begin{align*}
	\| f \|
	:=\left( \int_{0}^{\frac{\pi}{2}} f^2(x) \tan^2 (x) dx \right)^{\frac{1}{2}}.
\end{align*}
Indeed, for large $i$, we estimate
\begin{align*}
	\left \| e_{i} - \frac{2}{\sqrt{\pi}} \frac{\sin\left( \omega_{i} \cdot \right)}{\tan} \right \|
	&\leq \frac{2}{\sqrt{\pi}} \left( \frac{ \omega_{i}}{\sqrt{\omega_{i}^2-1}} -1 \right) \left \| \frac{\sin(\omega_{i} \cdot)}{\tan} \right \|  +\frac{2}{\sqrt{\pi}} \frac{1}{\sqrt{\omega_{i}^2-1}} \left \| \cos(\omega_{i} \cdot) \right \| \\
	& \lesssim  \frac{ \omega_{i}}{\sqrt{\omega_{i}^2-1}} -1  +\sqrt{ \frac{\omega_{i}}{\omega_{i}^2-1} 	} \lesssim \frac{1}{\sqrt{\omega_{i}}}, \\
	\left \| \frac{e_{i}^{\prime}}{\omega_{i}} - \frac{2}{\sqrt{\pi}} \frac{\cos\left( \omega_{i} \cdot \right)}{\tan} \right \|
	& \leq \frac{2}{\sqrt{\pi}} \left(\frac{ \omega_{i}}{\sqrt{\omega_{i}^2-1}} -1 \right) \left \| \frac{\cos(\omega_{i} \cdot)}{\tan} \right \|  + \frac{2}{\sqrt{\pi}} \frac{1}{\sqrt{\omega_{i}^2-1}} \left \| \frac{\sin(\omega_{i} \cdot)}{\tan^2} \right \| \\
	& \lesssim  \frac{ \omega_{i}}{\sqrt{\omega_{i}^2-1}} -1  +\sqrt{ \frac{\omega_{i}}{\omega_{i}^2-1} 	} \lesssim \frac{1}{\sqrt{\omega_{i}}},
\end{align*}
since
\begin{align}
     & \left \| \frac{\sin(\omega_{i} \cdot)}{\tan} \right \|^2 
     = \int_{0}^{\frac{\pi}{2}} \sin^2 (\omega_{i}x) dx = \frac{\pi}{4}, \quad 
     \left \| \frac{\cos(\omega_{i} \cdot)}{\tan} \right \|^2 
	 = \int_{0}^{\frac{\pi}{2}} \cos^2 (\omega_{i}x) dx = \frac{\pi}{4}, \nonumber \\
	 &\left \| \cos(\omega_{i} \cdot) \right \|^2
     = \int_{0}^{\frac{\pi}{2}} \cos^2 (\omega_{i}x) \tan^2(x)dx  \lesssim \omega_{i}, \quad  \left \| \frac{\sin(\omega_{i} \cdot)}{\tan^2} \right \|^2 =
	 \int_{0}^{\frac{\pi}{2}} \frac{\sin^2 (\omega_{i}x)}{\tan^2(x)} dx \lesssim \omega_{i}. \label{AuxiliaryIntegralsBounds}
\end{align} 
The proof of \eqref{AuxiliaryIntegralsBounds} is given in the Appendix, see Lemma \ref{lemmaAppendix}. 
\end{remark}
Next, we prove $L^{\infty}-$bounds for quantities related to the eigenfunctions.
\begin{lemma}[$L^{\infty}$ bounds] \label{Linftyboundse}
For all $j=0,1,2,\dots$, we have
\begin{align*}
 \sup_{x \in \left[0,\frac{\pi}{2} \right]} \left| e_{j}(x) \right| \leq \frac{2}{\sqrt{\pi}} ~\omega_{j},  \quad   
 \sup_{x \in \left[0,\frac{\pi}{2} \right]} \left| 
	\tan(x) \frac{e_{j}^{\prime}(x)}{\omega_{j}} \right| &\leq \frac{4}{\sqrt{\pi}},\\ \sup_{x \in \left[0,\frac{\pi}{2} \right]} \left| \int_{x}^{\frac{\pi}{2}} e_{j}(y) \sin(y) \cos(y) dy \right| &\leq \frac{2}{\omega_{j}}.
\end{align*}
\end{lemma}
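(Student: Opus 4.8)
The plan is to reduce all three estimates to a single elementary bound for a $\cos$--$\cos$ oscillatory integral, exploiting throughout that $\omega_j = 3+2j$ is an \emph{odd} integer with $\omega_j\ge 3$. First I will record two integral representations extracted from Lemma~\ref{ClosedformulasFore}. Multiplying the closed formula for $e_j$ by $\sin x$ shows that $e_j(x)\sin x = \tfrac{2}{\sqrt\pi}\,N(x)/\sqrt{\omega_j^2-1}$ with $N(x):=\omega_j\sin(\omega_j x)\cos x-\cos(\omega_j x)\sin x$; a direct differentiation gives $N'(x)=(\omega_j^2-1)\cos(\omega_j x)\cos x$ and $N(0)=0$, hence
\begin{align*}
	e_j(x) = \frac{2}{\sqrt{\pi}}\,\frac{\sqrt{\omega_j^2-1}}{\sin x}\int_0^x \cos(\omega_j s)\cos s\,ds .
\end{align*}
The same computation applied to $M(x):=\omega_j\cos(\omega_j x)\sin x-\sin(\omega_j x)\cos x$ (with $M'(x)=-(\omega_j^2-1)\sin(\omega_j x)\sin x$, $M(0)=0$) will yield
\begin{align*}
	\tan(x)\,\frac{e_j'(x)}{\omega_j} = -\frac{2}{\sqrt{\pi}}\,\frac{\sqrt{\omega_j^2-1}}{\sin x}\int_0^x \sin(\omega_j s)\sin s\,ds .
\end{align*}
Both extend continuously to $x=0$, so I may argue on $(0,\tfrac\pi2]$.

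The elementary estimate I will isolate is $\bigl|\int_0^x\cos(\omega_j s)\cos s\,ds\bigr|\le \sin x$. Indeed, product-to-sum gives $\int_0^x\cos(\omega_j s)\cos s\,ds = \tfrac12\bigl(\tfrac{\sin((\omega_j+1)x)}{\omega_j+1}+\tfrac{\sin((\omega_j-1)x)}{\omega_j-1}\bigr)$, and since $\omega_j\pm1$ are positive integers I can invoke $|\sin(n\theta)|\le n\sin\theta$ for $n\in\mathbb{N}$, $\theta\in[0,\pi]$ (an induction via the addition formula) on each term. The first bound is then immediate from the representation: $|e_j(x)|\le \tfrac{2}{\sqrt\pi}\sqrt{\omega_j^2-1}\le \tfrac{2}{\sqrt\pi}\omega_j$. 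For the second bound I will integrate by parts to trade the $\sin$--$\sin$ integral for the $\cos$--$\cos$ one, $\int_0^x\sin(\omega_j s)\sin s\,ds = -\tfrac{\cos(\omega_j x)\sin x}{\omega_j}+\tfrac1{\omega_j}\int_0^x\cos(\omega_j s)\cos s\,ds$, so the established estimate gives $\bigl|\int_0^x\sin(\omega_j s)\sin s\,ds\bigr|\le \tfrac{2\sin x}{\omega_j}$ and therefore $\bigl|\tan(x)\tfrac{e_j'(x)}{\omega_j}\bigr|\le \tfrac{4}{\sqrt\pi}\tfrac{\sqrt{\omega_j^2-1}}{\omega_j}\le \tfrac{4}{\sqrt\pi}$.

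The third bound I will treat by a carefully chosen antiderivative. Expanding $\cos^2$ and $\sin y\cos y$ into frequencies turns the integrand into a trigonometric polynomial,
\begin{align*}
	e_j(y)\sin y\cos y = \frac{2}{\sqrt\pi}\frac{1}{\sqrt{\omega_j^2-1}}\left(\frac{\omega_j}{2}\sin(\omega_j y)+\frac{\omega_j-1}{4}\sin((\omega_j+2)y)+\frac{\omega_j+1}{4}\sin((\omega_j-2)y)\right),
\end{align*}
whose antiderivative $G(y)$ is the matching combination of cosines with denominators $\omega_j,\omega_j+2,\omega_j-2$. The decisive point is that $\omega_j$ and $\omega_j\pm2$ are all odd, so every cosine vanishes at $\tfrac\pi2$ and $G(\tfrac\pi2)=0$; thus $\int_x^{\pi/2}e_j(y)\sin y\cos y\,dy = -\tfrac{2}{\sqrt\pi\sqrt{\omega_j^2-1}}G(x)$. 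Bounding each cosine by $1$ gives $|G(x)|\le \tfrac12+\tfrac{\omega_j-1}{4(\omega_j+2)}+\tfrac{\omega_j+1}{4(\omega_j-2)}=\tfrac{\omega_j^2-1}{\omega_j^2-4}$, and the desired $\le \tfrac{2}{\omega_j}$ reduces to the elementary inequality $\omega_j\sqrt{\omega_j^2-1}\le \sqrt\pi\,(\omega_j^2-4)$, which I will check for every $\omega_j=3+2j\ge3$ (monotonicity in $\omega_j$ reduces it to the tightest case $j=0$).

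I expect the third bound to be the main obstacle. The first two close purely through the $\cos$--$\cos$ estimate, but here pulling the factor $\sqrt{\omega_j^2-1}$ out front destroys the required $O(\omega_j^{-1})$ decay, so one cannot simply estimate the integral by $\sin x$; the decay is recovered only through the exact endpoint cancellation $G(\tfrac\pi2)=0$ forced by the parity of $\omega_j$, followed by the final constant comparison, which is genuinely tight at $j=0$.
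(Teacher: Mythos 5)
Your proof is correct, and it reaches the lemma by a genuinely different route from the paper. The paper proceeds by critical-point analysis: for each of the three quantities it differentiates the relevant function ($f_j$, respectively $g_j$), observes that the interior critical points satisfy a transcendental relation such as $\tan(\omega_j x)=\omega_j\tan(x)$, substitutes that relation back to evaluate the function at its critical points, and compares with the endpoint values. You instead convert everything to oscillatory integrals — $e_j(x)\sin x=\tfrac{2}{\sqrt\pi}\sqrt{\omega_j^2-1}\int_0^x\cos(\omega_j s)\cos s\,ds$ and its analogue for $\tan(x)e_j'(x)/\omega_j$ — and close the first two bounds with the single elementary inequality $|\sin(n\theta)|\le n\sin\theta$ (plus one integration by parts to gain the factor $\omega_j^{-1}$ in the second), and the third with an explicit antiderivative whose vanishing at $\tfrac\pi2$ is forced by the oddness of $\omega_j,\omega_j\pm2$. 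Both routes land on the identical intermediate constants $\tfrac{2}{\sqrt\pi}\sqrt{\omega_j^2-1}$ and $\tfrac{2}{\sqrt\pi}\tfrac{\sqrt{\omega_j^2-1}}{\omega_j^2-4}$, so the final comparisons (e.g.\ $\omega_j\sqrt{\omega_j^2-1}\le\sqrt\pi(\omega_j^2-4)$, tight at $j=0$) coincide. What your version buys is the elimination of all case analysis over critical points: the paper's factorization $f_j'(x)=\omega_j\tfrac{\cos(\omega_j x)}{\tan x}\bigl(\omega_j-\tfrac{\tan(\omega_j x)}{\tan x}\bigr)$ also vanishes where $\cos(\omega_j x)=0$, a family of critical points the printed proof does not address (the bound does survive there, since $\omega_j/\tan x<2\omega_j^2/\pi\le\omega_j^2-1$ for $\omega_j\ge3$, but this requires a separate check), whereas your argument needs no such supplement. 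The small price is the extra verification that the ratio $\omega\sqrt{\omega^2-1}/(\omega^2-4)$ is decreasing for $\omega\ge3$, which is routine.
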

\begin{proof}
For the first estimate, we define the oscillating part of $e_{j}(x)$, namely
\begin{align*}
	f_{j}(x):= \omega_{j} \frac{\sin(\omega_{j} x)}{\tan(x)} - \cos(\omega_{j} x),
\end{align*}	
for all $x\in [0,\frac{\pi}{2}]$ and $j=0,1,2,\dots$ and compute its derivative
\begin{align*}
	f_{j}^{\prime}(x) = \frac{\omega_{j}}{\tan(x)} \left( \omega_{j} \cos(\omega_{j} x) - \frac{\sin(\omega_{j}x)}{\tan(x)}  \right) = 
\omega_{j}	\frac{\cos(\omega_{j}x)}{\tan(x)} \left( \omega_{j}  - \frac{\tan(\omega_{j}x)}{\tan(x)}  \right).
\end{align*}
Hence, for all $x\in (0,\frac{\pi}{2})$ and $j=0,1,2,\dots$, the equation
\begin{align*}
	f_{j}^{\prime}(x) = 0 \Longleftrightarrow  \tan(\omega_{j} x) = \omega_{j} \tan(x)
\end{align*}
has countable many solutions, say $x=x_{j}^{\star}\in (0,\frac{\pi}{2})$. Then,
\begin{align*}
	f_{j}(x_{j}^{\star}) &= \cos(\omega_{j} x_{j}^{\star}) \Bigg( \omega_{j} \frac{\tan(\omega_{j} x_{j}^{\star}) }{\tan(x_{j}^{\star}) } -1  \Bigg) = \cos(\omega_{j} x_{j}^{\star}) \Bigg( \omega_{j} \frac{\omega_{j} \tan( x_{j}^{\star}) }{\tan(x_{j}^{\star}) } -1  \Bigg) = \cos(\omega_{j} x_{j}^{\star}) \big( \omega_{j}^2-1 \big).
\end{align*}
Now, since
\begin{align*}
	f_{j}(0) = \lim_{x \rightarrow 0}  f_{j}(x) = \omega_{j}^2-1,\quad \quad f_{j}\left(\frac{\pi}{2} \right) = \lim_{x \rightarrow \frac{\pi}{2}}  f_{j}(x) = 0,
\end{align*}
we get
\begin{align*}
	\sup_{x \in \left[0,\frac{\pi}{2} \right]} \left| f_{j}(x) \right| = \max \left\{ \left| f_{j}(0) \right|,\left| f_{j}(x_{j}^{\star}) \right|,\left| f_{j} \left( \frac{\pi}{2} \right) \right| \right\} \leq \omega_{j}^2-1
\end{align*}
and finally
\begin{align*}
	\sup_{x \in \left[0,\frac{\pi}{2} \right]} \left| e_{j}(x) \right|  = \frac{2}{\sqrt{\pi}} \frac{1}{\sqrt{\omega_{j}^2-1}} \sup_{x \in \left[0,\frac{\pi}{2} \right]} \left| f_{j}(x) \right| = \frac{2}{\sqrt{\pi}} \sqrt{\omega_{j}^2-1} \leq \frac{2}{\sqrt{\pi}}\omega_{j}.
\end{align*}
For the second estimate, we observe that
\begin{align*}
	\tan(x) \frac{e_{j}^{\prime}(x)}{\omega_{j}} & =\frac{2}{\sqrt{\pi}} \frac{1}{\sqrt{\omega_{j}^2-1}} \left(  \omega_{j} \cos(\omega_{j} x) - \frac{\sin(\omega_{j}x)}{\tan(x)} \right) \\
& =\frac{2}{\sqrt{\pi}} \frac{1}{\sqrt{\omega_{j}^2-1}} \left(  \omega_{j} \cos(\omega_{j} x) - \frac{1}{\omega_{j}}\omega_{j} \frac{\sin(\omega_{j}x)}{\tan(x)} + \frac{1}{\omega_{j}} \cos(\omega_{j}x) -\frac{1}{\omega_{j}} \cos(\omega_{j}x) \right)\\
& =\frac{2}{\sqrt{\pi}} \frac{1}{\sqrt{\omega_{j}^2-1}} \left(  \omega_{j} \cos(\omega_{j} x) - \frac{1}{\omega_{j}}
\left( 
\omega_{j} \frac{\sin(\omega_{j}x)}{\tan(x)} -  \cos(\omega_{j}x) \right) -\frac{1}{\omega_{j}} \cos(\omega_{j}x) 
\right)\\
& =\frac{2}{\sqrt{\pi}} \frac{1}{\sqrt{\omega_{j}^2-1}} 
\left(
\left(  \omega_{j} -\frac{1}{\omega_{j}} \right) \cos(\omega_{j} x) - \frac{1}{\omega_{j}} f_{j}(x)  \right).
\end{align*} 
Hence, by triangular inequality, 
\begin{align*}
	\sup_{x \in \left[0,\frac{\pi}{2} \right]} \left| 
	\tan(x) \frac{e_{j}^{\prime}(x)}{\omega_{j}} \right| 
	& \leq
	\frac{2}{\sqrt{\pi}} \frac{1}{\sqrt{\omega_{j}^2-1}} 
\left(
\left(  \omega_{j} -\frac{1}{\omega_{j}} \right) + \frac{1}{\omega_{j}} \sup_{x \in \left[0,\frac{\pi}{2} \right]} \left| f_{j}(x) \right|  \right) \\
& \leq
	\frac{2}{\sqrt{\pi}} \frac{1}{\sqrt{\omega_{j}^2-1}} 
\left(
\left(  \omega_{j} -\frac{1}{\omega_{j}} \right) + \frac{\omega_{j}^2-1}{\omega_{j}}  \right) 
 \leq \frac{4}{\sqrt{\pi}},
\end{align*}
for all $j=0,1,2,\dots$. For the last estimate, we define
\begin{align*}
	g_{j}(x):=\int_{x}^{\frac{\pi}{2}} e_{j}(y) \sin(y) \cos(y) dy
\end{align*}
for all $x\in \left[0,\frac{\pi}{2} \right]$ and $j=0,1,2,\dots$ and compute its derivative
\begin{align*}
	g_{j}^{\prime}(x) = -e_{j}(x)\sin(x) \cos(x). 
\end{align*}
As above, for all $x\in (0,\frac{\pi}{2})$ and $j=0,1,2,\dots$, the equation
\begin{align*}
	g_{j}^{\prime}(x) = 0 \Longleftrightarrow \omega_{j} \sin(\omega_{j}x) = \cos(\omega_{j}x) \tan(x)
\end{align*}
has countable many solutions, say $x=x_{j}^{\prime}\in (0,\frac{\pi}{2})$. Then,
\begin{align*}
	g_{j}(x_{j}^{\prime}) &=
	\int_{x_{j}^{\prime}}^{\frac{\pi}{2}} e_{j}(y) \sin(y)\cos(y) dy \\
	&= \frac{2}{\sqrt{\pi}} \frac{1}{\sqrt{\omega_{j}^2-1}} \left( 
	\omega_{j} \int_{x_{j}^{\prime}}^{\frac{\pi}{2}} \sin(\omega_{j}y) \cos^2(y) dy - \int_{x_{j}^{\prime}}^{\frac{\pi}{2}} \cos(\omega_{j}y) \sin(y)\cos(y) dy 
	\right) \\
&  = \frac{2}{\sqrt{\pi}} \frac{1}{\sqrt{\omega_{j}^2-1}} 
	\frac{\cos(\omega_{j}x_{j}^{\prime})}{\omega_{j}^2-4} \Big( 
	  - 1-2 \sin^2(x_{j}^{\prime}) + \omega_{j}^2 \cos^2(x_{j}^{\prime})
	+ 3  \sin^2(x_{j}^{\prime}) 
	\Big)	\\	
&  = \frac{2}{\sqrt{\pi}} \frac{1}{\sqrt{\omega_{j}^2-1}} 
	\frac{\cos(\omega_{j}x_{j}^{\prime})}{\omega_{j}^2-4} (\omega_{j}^2-1) \cos^2(x_{j}^{\prime}) 
= \frac{2}{\sqrt{\pi}} \frac{\sqrt{\omega_{j}^2-1} }{\omega_{j}^2-4} \cos(\omega_{j}x_{j}^{\prime})  \cos^2(x_{j}^{\prime})		
\end{align*}
and since
\begin{align*}
	g_{j}(0) = \lim_{x \rightarrow 0}  g_{j}(x) =  \frac{2}{\sqrt{\pi}} \frac{\sqrt{\omega_{j}^2-1} }{\omega_{j}^2-4} ,\quad \quad g_{j}\left(\frac{\pi}{2} \right) = \lim_{x \rightarrow \frac{\pi}{2}}  g_{j}(x) = 0,
\end{align*}
we get
\begin{align*}
	\sup_{x \in \left[0,\frac{\pi}{2} \right]} \left| g_{j}(x) \right| 
	\leq  \frac{2}{\sqrt{\pi}} \frac{\sqrt{\omega_{j}^2-1} }{\omega_{j}^2-4} \leq \frac{2}{\omega_{j}},
\end{align*}
valid for all $j=0,1,2,\dots$, which concludes the proof.
\end{proof}
For future reference, we also prove the following Dirichlet-Kernel-type identities.
\begin{lemma}[Dirichlet-Kernel-type identities]\label{Dirichlet}
For any $n \in \mathbb{N}$ and $x\in\mathbb{R}$, we have
\begin{align*}
& \frac{\sin((2n+1)x)}{\sin(x)} = 1+ 2\sum_{\mu=1}^{n} \cos(2 \mu x), \\
& \frac{\cos((2n+1)x)}{\cos(x)} =(-1)^{n}\left( 1+ 2\sum_{\mu=1}^{n} (-1)^{\mu} \cos(2 \mu x) \right), \\
& \frac{\sin(2 n x)}{\tan(x)} = 1+ \cos(2nx) + 2\sum_{\mu=1}^{n-1} \cos(2 \mu x).
\end{align*}
\begin{proof}
	The first result is well-known (Dirichlet Kernel). For the second, we use the first one and just replace $x$ by $x+\pi /2$,
	\begin{align*}
	 & 1+ 2\sum_{\mu=1}^{n} (-1)^{\mu} \cos(2 \mu x)  =
	1+ 2\sum_{\mu=1}^{n} \cos(2 \mu x+\pi \mu) = 1+ 2\sum_{\mu=1}^{n} \cos \left(2 \mu \left( x+ \frac{\pi}{2} \right)\right ) \\ 
		&= \frac{\sin \left( (2n+1) \left(x+\frac{\pi}{2} \right) \right)}{\sin \left(x+\frac{\pi}{2} \right)} 
		 = \frac{\sin((2n+1)x)\cos \left( (2n+1) \frac{\pi}{2} \right)+\cos((2n+1)x)\sin \left( (2n+1) \frac{\pi}{2} \right)}{\cos(x)} \\
		&= (-1)^n \frac{\cos((2n+1)x)}{\cos(x)}.
	\end{align*}
	For the third, we observe that
\begin{align*}
	& \frac{\sin(2nx)}{\tan(x)} +\cos(2nx)  = \frac{\sin(2nx) \cos(x) + \cos(2nx) \sin(x)}{\sin(x)} \\
	& = \frac{\sin(2n+1)x}{\sin(x)}  = 1+ 2\sum_{\mu=1}^{n} \cos(2 \mu x) 
	 = 1+ 2\sum_{\mu=1}^{n-1} \cos(2 \mu x) + 2 \cos(2 n x),
\end{align*}
from which the result follows. 
\end{proof}
\end{lemma}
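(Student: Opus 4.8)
The plan is to establish the three identities in order, deriving the second and third from the first so that only the classical Dirichlet kernel needs an independent argument. For the first identity I would argue by telescoping. The product-to-sum formula gives
\begin{align*}
2\sin(x)\cos(2\mu x) = \sin((2\mu+1)x) - \sin((2\mu-1)x),
\end{align*}
so multiplying the right-hand side of the claimed identity by $\sin(x)$ yields
\begin{align*}
\sin(x)\left(1 + 2\sum_{\mu=1}^{n} \cos(2\mu x)\right) = \sin(x) + \sum_{\mu=1}^{n}\bigl(\sin((2\mu+1)x) - \sin((2\mu-1)x)\bigr).
\end{align*}
The sum collapses to $\sin((2n+1)x) - \sin(x)$, leaving exactly $\sin((2n+1)x)$; dividing by $\sin(x)$ completes the proof wherever $\sin(x)\neq 0$.

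For the second identity I would use the substitution already indicated, namely replacing $x$ by $x + \tfrac{\pi}{2}$ in the first identity. Then $\sin(x+\tfrac{\pi}{2}) = \cos(x)$ and $\cos(2\mu(x+\tfrac{\pi}{2})) = (-1)^{\mu}\cos(2\mu x)$, while for the numerator the key computation is that $(2n+1)\tfrac{\pi}{2} = n\pi + \tfrac{\pi}{2}$ forces $\sin\bigl((2n+1)x + (2n+1)\tfrac{\pi}{2}\bigr) = (-1)^{n}\cos((2n+1)x)$. Cancelling the common factor $(-1)^n$ produces the stated formula.

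For the third identity I would combine $\tfrac{\sin(2nx)}{\tan(x)}$ with $\cos(2nx)$ over the common denominator $\sin(x)$, recognising that $\sin(2nx)\cos(x) + \cos(2nx)\sin(x) = \sin((2n+1)x)$, and then invoke the first identity. Splitting off the $\mu = n$ term from the resulting sum and subtracting $\cos(2nx)$ gives precisely $1 + \cos(2nx) + 2\sum_{\mu=1}^{n-1}\cos(2\mu x)$.

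The argument is elementary throughout; the only point requiring slight care—and the closest thing to an obstacle—is the interpretation at the isolated values of $x$ where the relevant denominator ($\sin(x)$, $\cos(x)$, or $\tan(x)$) vanishes. There the cleanest reading is that each identity is an equality of the trigonometric polynomials obtained after clearing denominators, with the singularities being removable; a direct check of the limiting values (for instance $\tfrac{\sin(2nx)}{\tan(x)} \to 2n$ as $x\to 0$, matching the right-hand side) confirms that no difficulty arises.
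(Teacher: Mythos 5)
Your proposal is correct and follows essentially the same route as the paper: the second identity via the substitution $x \mapsto x + \tfrac{\pi}{2}$ and the third by combining over the denominator $\sin(x)$ and splitting off the $\mu = n$ term. The only differences are cosmetic additions on your part — you supply the standard telescoping proof of the first identity (which the paper simply cites as the well-known Dirichlet kernel) and you note that the singularities of the denominators are removable.
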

Finally, we establish the asymptotic behaviour of specific oscillatory integrals which will appear later.
\begin{lemma}[Oscillatory integrals]\label{OscillatoryIntegrals}
	For any $N \in \mathbb{N}$, we have
	\begin{align*}
		 \int_{0}^{\frac{\pi}{2}} \frac{\sin((2a+1)x)}{\tan(x)} dx 
		&= \frac{\pi}{2} + \mathcal{O} \left( \frac{1}{a^{2}} \right),\\
		 \int_{0}^{\frac{\pi}{2}}  \cos^2(x) \frac{\sin(2 a x )}{\tan(x)} dx& = \frac{\pi}{2} + \mathcal{O} \left( \frac{1}{a^{N}} \right), \\
		 \int_{0}^{\frac{\pi}{2}}  \cos^3(x) \frac{\sin((2a+1)x)}{\tan(x)} dx &= \frac{\pi}{2} + \mathcal{O} \left( \frac{1}{a^{N}} \right),\\
		 \int_{\frac{\pi}{a}}^{\frac{\pi}{2}}   \frac{ \cos(2 a x)}{\tan^2(x)} dx  &= c a + \mathcal{O}  \left( \frac{1}{a^{3}} \right),
	\end{align*} 
as $a \longrightarrow \infty$ with $a \in \mathbb{N}$. Here,
\begin{align*}
	c:= \frac{1}{\pi} - \pi + 2 \int_{0}^{1} \frac{\sin(2 \pi y )}{y}dy \simeq 0.01302.
\end{align*}
\end{lemma}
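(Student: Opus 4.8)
The plan is to treat the four integrals separately. The first three have weights that are bounded on $[0,\frac{\pi}{2}]$ once the removable singularity at $x=0$ is cancelled, so for these I would reduce everything to finite trigonometric sums via Lemma~\ref{Dirichlet}; the fourth is different, since $\tan^{-2}x\sim x^{-2}$ near $0$ and the lower limit $\frac{\pi}{a}$ generates the linear growth $ca$, so it needs a careful endpoint analysis. For the first integral I would combine the product-to-sum formula with the Dirichlet kernel to write
\[
\frac{\sin((2a+1)x)}{\tan x}=2\sum_{k=0}^{a-1}\cos((2k+1)x)+\cos((2a+1)x),
\]
and integrate term by term using $\int_0^{\pi/2}\cos((2k+1)x)\,dx=(-1)^k/(2k+1)$. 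This produces $2\sum_{k=0}^{a-1}(-1)^k/(2k+1)+(-1)^a/(2a+1)$, a truncation of the Leibniz series $\sum_k(-1)^k/(2k+1)=\frac{\pi}{4}$; writing the partial sum as $\frac{\pi}{2}$ minus twice the alternating tail and expanding that tail (e.g.\ by Boole summation), the leading tail term $(-1)^a/(2a+1)$ cancels the explicit last term and leaves the error $O(a^{-2})$.

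For the second and third integrals I would apply, respectively, the third and first identities of Lemma~\ref{Dirichlet} to expand $\sin(2ax)/\tan x$ and $\sin((2a+1)x)/\tan x$ as finite sums of cosines, and expand $\cos^2x=\frac{1+\cos 2x}{2}$ and $\cos^3x=\frac{3\cos x+\cos 3x}{4}$ as well. Integrating term by term and using $\int_0^{\pi/2}\cos(2\mu x)\,dx=0$ for $\mu\ge1$ together with $\int_0^{\pi/2}\cos((2j+1)x)\cos((2k+1)x)\,dx=\frac{\pi}{4}\delta_{j,k}$, only finitely many terms survive, and for every $a\ge 2$ both integrals equal exactly $\frac{\pi}{2}$. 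The error is thus identically zero, which is in particular $O(a^{-N})$ for every $N$.

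The hard case is the fourth integral. Here I would first remove the singularity through $\tan^{-2}x=-\frac{d}{dx}\cot x-1$, so that
\[
\int_{\pi/a}^{\pi/2}\frac{\cos(2ax)}{\tan^2 x}\,dx=-\int_{\pi/a}^{\pi/2}\cos(2ax)\,d(\cot x)-\int_{\pi/a}^{\pi/2}\cos(2ax)\,dx.
\]
The last integral vanishes because $\sin(a\pi)=\sin(2\pi)=0$, and one integration by parts of the first (its boundary term dies at $\frac{\pi}{2}$ since $\cot\frac{\pi}{2}=0$, while at $\frac{\pi}{a}$ it equals $\cos(2\pi)\cot\frac{\pi}{a}$) yields
\[
\int_{\pi/a}^{\pi/2}\frac{\cos(2ax)}{\tan^2 x}\,dx=\cot\frac{\pi}{a}-2a\,J,\qquad J:=\int_{\pi/a}^{\pi/2}\cot x\,\sin(2ax)\,dx.
\]
The growth is already visible in $\cot\frac{\pi}{a}=\frac{a}{\pi}-\frac{\pi}{3a}+O(a^{-3})$. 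To evaluate $J$ I would split $\cot x=\frac1x+k(x)$ with $k(x):=\cot x-\frac1x\in C^\infty([0,\frac{\pi}{2}])$; after $t=2ax$ the principal piece becomes the sine-integral tail $\int_{2\pi}^{a\pi}\frac{\sin t}{t}\,dt$, and combining $\int_0^\infty\frac{\sin t}{t}\,dt=\frac{\pi}{2}$ with $\int_0^{2\pi}\frac{\sin t}{t}\,dt=\int_0^1\frac{\sin(2\pi y)}{y}\,dy$ identifies the constant as $c=\frac1\pi-2\int_{2\pi}^\infty\frac{\sin t}{t}\,dt=\frac1\pi-\pi+2\int_0^1\frac{\sin(2\pi y)}{y}\,dy$, while the smooth remainder $\int_{\pi/a}^{\pi/2}k(x)\sin(2ax)\,dx$ is handled by repeated integration by parts.

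The main obstacle is the error bookkeeping needed to reach the sharp exponent $O(a^{-3})$ rather than the $O(a^{-2})$ one gets naively, since the prefactor $2a$ amplifies every error in $J$ by one order. All surviving oscillatory contributions have the form $(-1)^a a^{-\ell}$ and must cancel across three sources: the expansion of $\cot\frac{\pi}{a}$, the tail $\int_{a\pi}^\infty\frac{\sin t}{t}\,dt=\frac{(-1)^a}{a\pi}-\frac{2(-1)^a}{\pi^3 a^3}+O(a^{-4})$, and the boundary and integration-by-parts terms of the $k$-remainder. Concretely, the $(-1)^a/(a\pi)$ terms from the tail and from the remainder's boundary term cancel; the $-\frac{\pi}{3a}$ in $\cot\frac{\pi}{a}$ is cancelled by $-2a$ times the $a^{-2}$ term $-\frac{\pi}{6a^2}$ of $J$ (which comes from $k(\frac{\pi}{a})=-\frac{\pi}{3a}+O(a^{-3})$); and the $(-1)^a a^{-3}$ coefficient of $J$ vanishes because $k''(\frac{\pi}{2})=-\frac{16}{\pi^3}$ exactly offsets the $\frac{2}{\pi^3}$ coming from the third-order term of the tail. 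Once these cancellations are in place one has $J=\int_{2\pi}^\infty\frac{\sin t}{t}\,dt-\frac{\pi}{6a^2}+O(a^{-4})$, hence $\cot\frac{\pi}{a}-2a\,J=ca+O(a^{-3})$, as claimed.
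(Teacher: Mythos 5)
Your proposal is correct, and for the second and third integrals it coincides with the paper's argument (Dirichlet-kernel expansion, exact term-by-term evaluation, error identically zero for $a\geq 2$). For the first and fourth integrals you take partially different routes, both of which check out. For the first integral the paper writes $\sin((2a+1)x)/\tan(x)=\cos(x)\sin((2a+1)x)/\sin(x)$ and uses the even-cosine Dirichlet kernel, reducing to the absolutely convergent series $\sum_{\mu}(-1)^{\mu}/(1-4\mu^{2})=(\pi-2)/4$, whose tail is automatically $O(a^{-2})$; your odd-cosine expansion instead lands on the Leibniz series, whose tail is only $O(a^{-1})$ term by term, so you genuinely need the cancellation between the leading tail term $(-1)^{a}/(2a+1)$ and the explicit last summand --- you identify this correctly, but the paper's choice of kernel avoids the issue altogether. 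For the fourth integral both arguments reduce to the same identity $\cot(\pi/a)-2a\int_{\pi/a}^{\pi/2}\cot(x)\sin(2ax)\,dx$; the difference is in evaluating the remaining integral $J$. The paper exploits the exact value $\int_{0}^{\pi/2}\sin(2ax)/\tan(x)\,dx=\pi/2$ to rewrite $J=\pi/2-\int_{0}^{\pi/a}$, and since $2ax$ sweeps exactly one period on $[0,\pi/a]$ the Taylor expansion of $\cot(\pi y/a)$ produces no oscillatory boundary terms at all. Your decomposition $\cot x=\tfrac{1}{x}+k(x)$ pushes the analysis to the endpoint $x=\pi/2$, where the sine-integral tail and the integration-by-parts boundary terms each contribute $(-1)^{a}a^{-1}$ and $(-1)^{a}a^{-3}$ pieces that must cancel pairwise; I verified that $k(\pi/2)=-2/\pi$ and $k''(\pi/2)=-16/\pi^{3}$ do produce exactly the cancellations you describe, so the sharp $O(a^{-3})$ survives, but this is the more delicate of the two computations. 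What your route buys is independence from the exact Dirichlet-kernel evaluation (it would survive a perturbation of the weight); what the paper's buys is the complete absence of $(-1)^{a}$ bookkeeping.
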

\begin{proof}
For the first integral, we use Lemma \ref{Dirichlet} to infer
\begin{align*}
	\int_{0}^{\frac{\pi}{2}} \frac{\sin((2a+1)x)}{\tan(x)} dx & = 
\int_{0}^{\frac{\pi}{2}} \cos(x) \frac{\sin((2a+1)x)}{\sin(x)} dx = \int_{0}^{\frac{\pi}{2}} \cos(x) \left( 1+ 2 \sum_{\mu=1}^{a} \cos(2 \mu x)  \right) dx \\
& = \int_{0}^{\frac{\pi}{2}} \cos(x) dx + 2 \sum_{\mu=1}^{a} \int_{0}^{\frac{\pi}{2}} \cos(x) \cos(2 \mu x)  dx \\
& = 1 + 2 \sum_{\mu=1}^{a} \frac{(-1)^{\mu}}{1-4 \mu^2}  = 1 + 2 \sum_{\mu=1}^{\infty} \frac{(-1)^{\mu}}{1-4 \mu^2} 
- 2 \sum_{\mu=a+1}^{\infty} \frac{(-1)^{\mu}}{1-4 \mu^2} \\
& = 1 + 2 \frac{\pi-2}{4} 
- 2 \sum_{\mu=a+1}^{\infty} \frac{(-1)^{\mu}}{1-4 \mu^2} \\
& = \frac{\pi}{2}
+ 2 \sum_{\mu=a+1}^{\infty} \frac{(-1)^{\mu}}{4 \mu^2-1} 
= \frac{\pi}{2} + \mathcal{O} \left( \frac{1}{a^{2}} \right).
\end{align*}
The second integral follows similarly. Lemma \ref{Dirichlet} implies
\begin{align*}
	& \int_{0}^{\frac{\pi}{2}}  \cos^2(x) \frac{\sin(2 a x)}{\tan(x)} dx  = \int_{0}^{\frac{\pi}{2}}  \cos^2(x) \left( 
	1+ \cos(2 a x) + 2 \sum_{\mu =1}^{a-1} \cos(2 \mu x)
	\right) dx \\
	& = \int_{0}^{\frac{\pi}{2}}  \cos^2(x) dx + \int_{0}^{\frac{\pi}{2}}  \cos^2(x) \cos(2 a x) dx + 2 \sum_{\mu=1}^{a-1} \int_{0}^{\frac{\pi}{2}}  \cos^2(x)\cos(2 \mu x) 
	dx \\
& = \frac{\pi}{4}+0+2 \frac{\pi}{8}  = \frac{\pi}{2},	
\end{align*}
for all $a \geq 3$, since
\begin{align*}
 \int_{0}^{\frac{\pi}{2}}  \cos^2(x) dx = \frac{\pi}{4},\quad \int_{0}^{\frac{\pi}{2}}  \cos^2(x) \cos(2 \lambda x) dx =  
\begin{dcases}
\frac{\pi}{8}, &  \lambda = 1 \\
0, &  \lambda \geq 2. 
\end{dcases} 
\end{align*}
Next, for the third integral, Lemma \ref{Dirichlet} also yields
\begin{align*}
	& \int_{0}^{\frac{\pi}{2}}  \cos^3(x) \frac{\sin((2a+1)x)}{\tan(x)} dx  = \int_{0}^{\frac{\pi}{2}} \cos^4(x) \frac{\sin((2a+1)x)}{\sin(x)} dx \\
	& = \int_{0}^{\frac{\pi}{2}}  \cos^4(x) \Bigg( 1+ 2\sum_{\mu=1}^{a} \cos(2 \mu x)\Bigg) dx  = \int_{0}^{\frac{\pi}{2}} \cos^4(x) dx + 
2\sum_{\mu=1}^{a} 	 \int_{0}^{\frac{\pi}{2}}  \cos^4(x) \cos(2 \mu x)  dx \\
	& = \frac{3 \pi }{16} + 2 \frac{\pi }{8} + 2 \frac{ \pi }{32} = \frac{ \pi }{2},
\end{align*}
for all $a \geq 3$, since
\begin{align*}
 \int_{0}^{\frac{\pi}{2}}  \cos^4(x) dx = \frac{3\pi}{16},\quad \int_{0}^{\frac{\pi}{2}}  \cos^2(x) \cos(2 \lambda x) dx =  
\begin{dcases}
\frac{\pi}{8}, &  \lambda = 1 \\
\frac{\pi}{32}, &  \lambda = 2 \\
0, &  \lambda \geq 3. 
\end{dcases}  
\end{align*}
Finally, we conclude with the fourth integral. First, observe that
\begin{align*}
	2 \frac{\cos(2 a x)}{\tan^2(x)} + 4 a \frac{\sin(2 a x)}{\tan(x)} + \Bigg( 2 \frac{\cos(2ax)}{\tan(x)} + \frac{\sin(2ax)}{a} \Bigg)^{\prime} = 0
\end{align*}
and therefore, for any $\epsilon >0$,
\begin{align*}
	\int_{\epsilon}^{\frac{\pi}{2}}  2 \frac{ \cos(2 a x)}{\tan^2(x)} dx & = \int_{\epsilon}^{\frac{\pi}{2}}  \left( -\Bigg( 2 \frac{\cos(2ax)}{\tan(x)} + \frac{\sin(2ax)}{a} \Bigg)^{\prime} - 4 a \frac{\sin(2 a x)}{\tan(x)} \right) dx \nonumber \\
	& = 2 \frac{\cos(2a \epsilon)}{\tan(\epsilon)} + \frac{\sin(2a \epsilon)}{a} - 4a \int_{\epsilon}^{\frac{\pi}{2}} \frac{\sin(2 a x)}{\tan(x)} dx.
\end{align*}
Second, we set $\epsilon = \frac{\pi}{a}$ and get $\cos(2a \epsilon) = 1, \sin(2a \epsilon) = 0$. Now, Lemma \ref{Dirichlet} shows that
 \begin{align*}
 	\int_{0}^{\frac{\pi}{2}} \frac{\sin(2 a x)}{\tan(x)} dx = 
  \frac{\pi}{2}+ \int_{0}^{\frac{\pi}{2}}  \cos(2ax) dx + 2\sum_{\mu=1}^{a-1} \int_{0}^{\frac{\pi}{2}}  \cos(2 \mu x) dx 
= 	\frac{\pi}{2}+0+0 = \frac{\pi}{2},
 \end{align*}
 valid for all $a \geq 1$, and we change variables $y=ax / \pi $ to infer
 \begin{align*}
 	\int_{\frac{\pi}{a}}^{\frac{\pi}{2}}  \frac{ \cos(2 a x)}{\tan^2(x)} dx &= 
 	\frac{1}{\tan(\frac{\pi}{a})} -2 a \int_{\frac{\pi}{a}}^{\frac{\pi}{2}} \frac{\sin(2 a x)}{\tan(x)} dx = \frac{1}{\tan(\frac{\pi}{a})} -2 a \left( \frac{\pi}{2} - \int_{0}^{\frac{\pi}{a}} \frac{\sin(2 a x)}{\tan(x)} dx 
\right)\\
&= \frac{1}{\tan(\frac{\pi}{a})} - a \pi + 2\pi \int_{0}^{1} \frac{\sin(2 \pi y )}{\tan(\frac{\pi y }{a})} dy,
 \end{align*}
for all $a \geq 2$. Now, for all $y \in [0,1]$ and $a \longrightarrow \infty$,
 \begin{align*}
 	\frac{1}{\tan(\frac{\pi y }{a})} = \frac{a}{\pi} \frac{1}{y} - \frac{\pi}{3 a} y + \mathcal{O} \left( \left( \frac{y}{a} \right)^{3} \right)
 \end{align*}
 and hence
 \begin{align*}
 	\int_{0}^{1} \frac{\sin(2 \pi y )}{\tan(\frac{\pi y }{a})} dy & =
 	 \frac{a}{\pi} \int_{0}^{1} \frac{\sin(2 \pi y )}{y}dy -\frac{\pi}{3 a}\int_{0}^{1}   y\sin(2 \pi y ) dy +\int_{0}^{1} \sin(2 \pi y ) \mathcal{O}\left( \left( \frac{y}{a} \right)^{3} \right) dy \\
 	 & = \frac{a}{\pi} \int_{0}^{1} \frac{\sin(2 \pi y )}{y}dy -\frac{\pi}{3 a} \frac{-1}{2 \pi} +\mathcal{O} \left( \frac{1}{a^{3}} \right) \\
 	 & =  \frac{a}{\pi} \int_{0}^{1} \frac{\sin(2 \pi y )}{y}dy +\frac{1}{6 a}  +\mathcal{O} \left( \frac{1}{a^{3}} \right)
 \end{align*}
from which we conclude
 \begin{align*}
 	\int_{\frac{\pi}{a}}^{\frac{\pi}{2}}  \frac{ \cos(2 a x)}{\tan^2(x)} dx &= \frac{1}{\tan(\frac{\pi}{a})} - a \pi + 2\pi \int_{0}^{1} \frac{\sin(2 \pi y )}{\tan(\frac{\pi y }{a})} dy \\
 	&=\frac{1}{\tan(\frac{\pi}{a})} - a \pi+ 2a \int_{0}^{1} \frac{\sin(2 \pi y )}{y}dy +\frac{\pi}{3 a}  +\mathcal{O} \left( \frac{1}{a^{3}} \right) \\
 	& =\frac{a}{\pi}-\frac{\pi}{3 a}+\mathcal{O} \left( \frac{1}{a^{3}} \right) - a \pi+ 2a \int_{0}^{1} \frac{\sin(2 \pi y )}{y}dy +\frac{\pi}{3 a}  +\mathcal{O} \left( \frac{1}{a^{3}} \right) \\
 	& =a \left(
 	\frac{1}{\pi} - \pi + 2 \int_{0}^{1} \frac{\sin(2 \pi y )}{y}dy
 	\right) 
 	+\mathcal{O} \left( \frac{1}{a^{3}} \right), 
 \end{align*}
 that completes the proof.
\end{proof}
\begin{remark}\label{e0oscilating}
A straightforward computation yields 
\begin{align*}
	e_{0}(x) = 4 \sqrt{\frac{2}{\pi}} \cos^3(x)
\end{align*}
and hence the third integral of Lemma \ref{OscillatoryIntegrals} also gives
	\begin{align*}
		& \int_{0}^{\frac{\pi}{2}}  e_{0}(x) \frac{\sin((2a+1)x)}{\tan(x)} dx = 2 \sqrt{2 \pi} + \mathcal{O} \left( \frac{1}{a^{N}} \right), \\	
	\end{align*}
for $a \longrightarrow \infty$.	
\end{remark}
Finally, we will also use standard integration by parts.
\begin{lemma}[Integration by parts]\label{byparts1}
	Let $a \in \mathbb{N}$ and $N \in \mathbb{N}$. Assume that $F$ is differentiable in $\left (0,\frac{\pi}{2} \right)$ and continuous in $\left [0,\frac{\pi}{2} \right]$. Then,
	\begin{align*}
 \int_{0}^{\frac{\pi}{2}} F(x) \cos( a x )  dx   = 
&   \sum_{k=0}^{N} \frac{(-1)^{k}}{a^{2k+1}} \left( F^{(2k)}(x) \sin(a x) \right) \Big|_{x=0}^{x=\frac{\pi}{2}}   +  \sum_{k=0}^{N} \frac{(-1)^{k}}{a^{2k+2}} \left( F^{(2k+1)}(x) \cos(a x) \right) \Big|_{x=0}^{x=\frac{\pi}{2}} 
  \\
 & + \frac{(-1)^{N-1}}{a^{2N+2}} \int_{0}^{\frac{\pi}{2}} F^{(2N+2)}(x) \cos(a x) dx, \\
 \int_{0}^{\frac{\pi}{2}} F(x) \sin( a x )  dx  = 
 &  \sum_{k=0}^{N} \frac{(-1)^{k+1}}{a^{2k+1}} \left( F^{(2k)}(x) \cos(a x) \right) \Big|_{x=0}^{x=\frac{\pi}{2}}  +  \sum_{k=0}^{N} \frac{(-1)^{k}}{a^{2k+2}} \left( F^{(2k+1)}(x) \sin(a x) \right) \Big|_{x=0}^{x=\frac{\pi}{2}} 
  \\
 & + \frac{(-1)^{N-1}}{a^{2N+2}} \int_{0}^{\frac{\pi}{2}} F^{(2N+2)}(x) \sin(a x) dx.
\end{align*} 
\end{lemma}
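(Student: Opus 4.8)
The plan is to prove both identities by repeated integration by parts, organised as an induction on $N$; the two trigonometric cases are handled by the same scheme.

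First I would establish the basic two-step recursion for the cosine case. Integrating by parts once (differentiating $F$ and integrating $\cos(ax)$ to $\frac{1}{a}\sin(ax)$) and then a second time (differentiating $F'$ and integrating $\sin(ax)$ to $-\frac{1}{a}\cos(ax)$) gives
\[
\int_{0}^{\frac{\pi}{2}} F(x)\cos(ax)\,dx = \frac{1}{a}\Big(F(x)\sin(ax)\Big)\Big|_{0}^{\frac{\pi}{2}} + \frac{1}{a^{2}}\Big(F'(x)\cos(ax)\Big)\Big|_{0}^{\frac{\pi}{2}} - \frac{1}{a^{2}}\int_{0}^{\frac{\pi}{2}} F''(x)\cos(ax)\,dx.
\]
This is exactly the claimed cosine identity for $N=0$, since there $(-1)^{N-1}=-1$, so it serves simultaneously as the base case and as the engine of the induction.

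For the inductive step I would assume the cosine identity for a given $N$ and apply the recursion above, with $F$ replaced by $F^{(2N+2)}$, to the remainder integral $\frac{(-1)^{N-1}}{a^{2N+2}}\int_{0}^{\frac{\pi}{2}} F^{(2N+2)}(x)\cos(ax)\,dx$. This produces two new boundary terms carrying the factors $\frac{(-1)^{N-1}}{a^{2N+3}}$ and $\frac{(-1)^{N-1}}{a^{2N+4}}$, together with a new remainder integral involving $F^{(2N+4)}$. Using $(-1)^{N-1}=(-1)^{N+1}$, the two boundary terms are precisely the $k=N+1$ summands of the two finite sums, and the sign flip from the last integration by parts turns the remainder into $\frac{(-1)^{N}}{a^{2N+4}}\int_{0}^{\frac{\pi}{2}} F^{(2N+4)}(x)\cos(ax)\,dx$, which is the required remainder at level $N+1$. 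This closes the induction. The sine identity is proved by the identical two-step recursion, namely
\[
\int_{0}^{\frac{\pi}{2}} F(x)\sin(ax)\,dx = -\frac{1}{a}\Big(F(x)\cos(ax)\Big)\Big|_{0}^{\frac{\pi}{2}} + \frac{1}{a^{2}}\Big(F'(x)\sin(ax)\Big)\Big|_{0}^{\frac{\pi}{2}} - \frac{1}{a^{2}}\int_{0}^{\frac{\pi}{2}} F''(x)\sin(ax)\,dx,
\]
after which the same induction applies verbatim.

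There is no genuine analytic obstacle here: the content is purely the bookkeeping of signs and of the powers of $a$. The one point requiring care is checking that the exponent $(-1)^{N-1}$ in the remainder at level $N$ coincides with $(-1)^{N+1}$, advances correctly to $(-1)^{N}$ at level $N+1$, and that the derivative orders $2k$, $2k+1$ and $2N+2$ align across the two finite sums and the remainder. Implicitly one assumes $F\in C^{2N+2}\big(\left[0,\frac{\pi}{2}\right]\big)$ so that all derivatives appearing are well defined; the differentiability hypothesis on $F$ in the statement should be read in this stronger sense.
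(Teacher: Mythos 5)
Your proof is correct and takes the same route as the paper, which simply states that the identities follow from a straightforward application of integration by parts; your induction on $N$ with the two-step recursion as both base case and inductive engine is exactly the bookkeeping that claim leaves implicit, and your signs and powers of $a$ all check out. Your remark that the stated hypothesis must be read as $F\in C^{2N+2}\bigl(\bigl[0,\frac{\pi}{2}\bigr]\bigr)$ is also apt, since the paper's hypotheses as written would not make $F^{(2N+2)}$ meaningful.
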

\begin{proof}
The proof is a straight forward application of integration by parts.
\end{proof}
\noindent
The following result is a direct consequence of Lemma \ref{byparts1}.
\begin{lemma} \label{byparts2}
Let $b \in \mathbb{N}$ and $N \in \mathbb{N}$. Assume that $F$ is differentiable in $\left (0,\frac{\pi}{2} \right)$, continuous in $\left [0,\frac{\pi}{2} \right]$ with uniformly bounded derivatives in $\left [0,\frac{\pi}{2} \right]$. Then, as $b \longrightarrow \infty$, we have
\begin{align*}
&	\int_{0}^{\frac{\pi}{2}} F(x) \cos( 2 b x )  dx   = 
  \sum_{k=0}^{N} \frac{(-1)^{k}}{(2b)^{2k+2}} \left( (-1)^{b} F^{(2k+1)} \left(  \frac{\pi}{2} \right) -F^{(2k+1)} \left( 0 \right)  \right) + \mathcal{O} \left(  \frac{1}{b^{2N+2}} \right), \\
&	\int_{0}^{\frac{\pi}{2}} F(x) \sin( 2 b x )  dx   = 
 \sum_{k=0}^{N} \frac{(-1)^{k+1}}{(2b)^{2k+1}} \left( (-1)^{b} F^{(2k)} \left(  \frac{\pi}{2} \right) -F^{(2k)} \left( 0 \right)  \right) + \mathcal{O} \left(  \frac{1}{b^{2N+2}} \right), \\
& \int_{0}^{\frac{\pi}{2}} F(x) \cos( (2 b+1) x )  dx   =  \sum_{k=0}^{N} \frac{(-1)^{k+b}}{(2b+1)^{2k+1}} F^{(2k)} \left(  \frac{\pi}{2} \right) 
+ \sum_{k=0}^{N} \frac{(-1)^{k+1}}{(2b+1)^{2k+2}} F^{(2k+1)} \left( 0 \right) 
  + \mathcal{O} \left(  \frac{1}{b^{2N+2}} \right), \\
& \int_{0}^{\frac{\pi}{2}} F(x) \sin( (2 b+1) x )  dx   = 
\sum_{k=0}^{N} \frac{(-1)^{k}}{(2 b+1)^{2k+1}} F^{(2k)} \left(  0 \right) 
+ \sum_{k=0}^{N} \frac{(-1)^{k+b}}{(2 b+1)^{2k+2}} F^{(2k+1)} \left( \frac{\pi}{2} \right) 
  + \mathcal{O} \left(  \frac{1}{b^{2N+2}} \right).
\end{align*}
\end{lemma}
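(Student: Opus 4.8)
The plan is to read off both identities of Lemma \ref{byparts1} at the two relevant frequencies $a = 2b$ and $a = 2b+1$, collapse the boundary sums using the endpoint values of sine and cosine, and absorb the trailing integral into the error term. No new analytic input is required: the whole statement is a specialization of Lemma \ref{byparts1}, so the work is purely in evaluating boundary data and tracking signs.

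First I would record the elementary endpoint identities
\begin{align*}
\sin\!\left(2b\tfrac{\pi}{2}\right) &= \sin(b\pi) = 0, & \cos\!\left(2b\tfrac{\pi}{2}\right) &= \cos(b\pi) = (-1)^b, \\
\sin\!\left((2b+1)\tfrac{\pi}{2}\right) &= (-1)^b, & \cos\!\left((2b+1)\tfrac{\pi}{2}\right) &= 0,
\end{align*}
together with $\sin(0)=0$ and $\cos(0)=1$. For the even frequency $a=2b$, substituting into the first identity of Lemma \ref{byparts1} annihilates the entire $\sin$-boundary sum (both endpoints vanish), while the $\cos$-boundary sum survives with the factor $(-1)^b$ at $x=\frac{\pi}{2}$ and $1$ at $x=0$; this produces exactly the first displayed formula. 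The companion identity for $\int_{0}^{\pi/2} F(x)\sin(2bx)\,dx$ is handled symmetrically: now the $\cos$-boundary sum survives and the $\sin$-boundary sum vanishes. For the odd frequency $a=2b+1$ the roles at $x=\frac{\pi}{2}$ are exchanged — cosine vanishes there and sine contributes $(-1)^b$ — so in each of the two remaining formulas precisely one endpoint survives in each boundary sum, yielding the stated combinations of $F^{(2k)}(\tfrac{\pi}{2})$, $F^{(2k+1)}(0)$, $F^{(2k)}(0)$ and $F^{(2k+1)}(\tfrac{\pi}{2})$.

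Finally I would estimate the remainder. Since $F$ has uniformly bounded derivatives on $[0,\frac{\pi}{2}]$, the trailing integral in Lemma \ref{byparts1} obeys
\begin{align*}
\left| \frac{(-1)^{N-1}}{a^{2N+2}} \int_{0}^{\frac{\pi}{2}} F^{(2N+2)}(x)\cos(ax)\,dx \right| \leq \frac{\pi}{2}\,\frac{\sup_{[0,\frac{\pi}{2}]}\left| F^{(2N+2)} \right|}{a^{2N+2}},
\end{align*}
and likewise for the $\sin$ variant. Because $a\in\{2b,\,2b+1\}$ gives $a^{2N+2}\gtrsim b^{2N+2}$, this contribution is $\mathcal{O}\!\left(b^{-2N-2}\right)$ as $b\to\infty$, matching the error recorded in each of the four formulas. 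The only point demanding care is the bookkeeping of signs when combining $(-1)^k$, $(-1)^b$ and the sign already present in Lemma \ref{byparts1}; this is the sole (purely mechanical) obstacle, and once the endpoint table above is in hand every term lands in its asserted place.
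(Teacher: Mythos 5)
Your proposal is correct and follows exactly the paper's own route: specialize Lemma \ref{byparts1} at $a=2b$ and $a=2b+1$, evaluate the boundary sums with the endpoint values of $\sin$ and $\cos$, and bound the trailing integral by $\tfrac{\pi}{2}\sup|F^{(2N+2)}|\,a^{-2N-2}=\mathcal{O}(b^{-2N-2})$ using the uniform boundedness of the derivatives. Your write-up is in fact slightly more complete than the paper's, which only records the endpoint table and one sample case and leaves the remainder estimate implicit.
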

\begin{proof}
All these asymptotic expansions follow from Lemma \ref{byparts1} just by computing the boundary terms. Note that if $a$ is even, namely $a = 2b$ for some $b \in \mathbb{N}$, then $\sin(a \frac{\pi}{2}) = 0$ and $ \cos(a \frac{\pi}{2}) = (-1)^{b}$ whereas if $a$ is odd, namely $a = 2b+1$ for some $b \in \mathbb{N}$, then $\sin(a \frac{\pi}{2}) = (-1)^b$ and $ \cos(a \frac{\pi}{2}) = 0$. For example, Lemma \ref{byparts1} yields 
\begin{align*} 
\int_{0}^{\frac{\pi}{2}} F(x) \cos( 2 b x )  dx  & = 
 \sum_{k=0}^{N} \frac{(-1)^{k}}{a^{2k+2}} \left( (-1)^{b} F^{(2k+1)} \left(  \frac{\pi}{2} \right) -F^{(2k+1)} \left( 0 \right)  \right) \\
& + \frac{(-1)^{N-1}}{a^{2N+2}} \int_{0}^{\frac{\pi}{2}} F^{(2N+2)}(x) \cos(a x) dx.
\end{align*}
The other asymptotic expansions follow similarly.
\end{proof}
\noindent
With these auxiliary results at hand we proceed to the main analysis of the Fourier constants.
\section{Perturbations 1: Series}\label{SectionPerturbation1}
As mentioned above, Maliborski-Rostworowski \cite{PhysRevLett111051102} considered \eqref{EKG1}-\eqref{EKG4} and provided strong numerical evidence indicating that time-periodic solutions exist for non-generic initial data. To explain their approach and see how the Fourier constants appear, first assume that $(\Phi, \Pi, A, \delta)$ are all close to $(0,0,1,0)$ and expand $(\Phi, \Pi, A, \delta)$ in terms of powers of $\epsilon$ using the series \eqref{series1}-\eqref{series4}. 
\subsection{Definition of the Fourier constants}\label{SectionPerturbation1DefinitionFourier}
First, we compute the density
\begin{align*}
	\Phi^2(t,x)+\Pi^2(t,x) = \sum_{\lambda=1}^{\infty} r_{2\lambda} \epsilon^{2\lambda}
\end{align*}
where, for all $\lambda = 0,1,2,\dots$,
\begin{align*}
	r_{2(\lambda+1)}(\tau,x) = \sum_{\substack{\mu,\nu=0 \\ \mu+\nu=\lambda }}^{\lambda}
	\left( \psi_{2\mu+1}(\tau,x)\psi_{2\nu+1}(\tau,x) +
	 \sigma_{2\mu+1}(\tau,x)\sigma_{2\nu+1} (\tau,x)
	\right).
\end{align*}
Next, we substitute these expressions into \eqref{EKG1}-\eqref{EKG4}, collect terms of the same order in $\epsilon$ and obtain a hierarchy of equations
\begin{align*} 
	\omega_{\gamma} \partial_{\tau} \psi_{2\lambda+1} (\tau,x) - \partial_{x} \sigma_{2\lambda+1} (\tau,x) &= 
	\sum_{\substack{\mu,\nu=0 \\ \mu+\nu=\lambda \\ (\mu,\nu) \neq (\lambda,0) }}^{\lambda}
	\Big (
	- \omega_{\gamma,2\nu} \partial_{\tau} \psi_{2\mu+1} (\tau,x) + \partial_{x} \left( \xi_{2\nu}(\tau,x) \sigma_{2\mu+1}(\tau,x) \right)
	 \Big ), 
	\\
	 \omega_{\gamma} \partial_{\tau} \sigma_{2\lambda+1} (\tau,x) 
	+ \reallywidehat{L}  \left[ \psi_{2\lambda+1}(\tau,x) \right]
	&= -  \sum_{\substack{\mu,\nu=0 \\ \mu+\nu=\lambda \\ (\mu,\nu) \neq (\lambda,0) }}^{\lambda} 
	\Big(
	\omega_{\gamma,2\nu}\partial_{\tau} \sigma_{2\mu+1}(\tau,x)+  \reallywidehat{L}  \left[ \xi_{2\nu}(\tau,x) \psi_{2\mu+1}(\tau,x) \right]
	\Big),\\
	  \xi_{2(\lambda+1)} (\tau,x) &= \zeta_{2(\lambda+1)} (\tau,x) - \frac{\cos^3(x)}{\sin(x)} 
	\sum_{\substack{\mu,\nu=0 \\ \mu+\nu=\lambda  }}^{\lambda} \int_{0}^{x} r_{2(\mu+1)}(\tau,y) \zeta_{2\nu}(\tau,y) \tan^2(y) dy,\\
	 \zeta_{2(\lambda+1)} (\tau,x) &=  
	\sum_{\substack{\mu,\nu=0 \\ \mu+\nu=\lambda }}^{\lambda} \int_{0}^{x} r_{2(\mu+1)}(\tau,y) \zeta_{2\nu}(\tau,y)\sin(y)\cos(y)dy,
\end{align*}
for all $\lambda=0,1,2,\dots$, together with 
\begin{align*}
     \omega_{\gamma,0} = \omega_{\gamma}, \quad
	 \xi_{0}(\tau,x) = 1,\quad
	 \zeta_{0}(\tau,x) = 1, \quad  
\begin{cases}
	\omega_{\gamma} \partial_{\tau} \psi_{1}(\tau,x) - \partial_{x} \sigma_{1} (\tau,x) = 0, \\
	\omega_{\gamma} \partial_{\tau} \sigma_{1}(\tau,x)+ \reallywidehat{L} \left[ \psi_{1}(\tau,x) \right]=0.
\end{cases}
\end{align*}
From the set of all eigenvalues $\{e_{i}\}_{i=0}^{\infty}$ to the linear operator, we choose a dominant mode $e_{\gamma}$ for some $\gamma \in \{0,1,2,\dots \}$ and pick
\begin{align*}
\begin{cases}
	\psi_{1} (\tau,x) = \cos(\tau) e^{\prime}_{\gamma}(x),\\
	\sigma_{1} (\tau,x) =  -\omega_{\gamma} \sin(\tau) e_{\gamma}(x).
\end{cases}
\end{align*}
Now, for each $\lambda=0,1,2,\dots$, we expand the coefficients $\psi_{2\lambda+1},\sigma_{2\lambda+1},\xi_{2\lambda},\zeta_{2\lambda}$ in terms of the eigenvalues of the linearized operator, namely
\begin{align*}
	& \psi_{2\lambda+1}(\tau,x)=\sum_{i=0}^{\infty} f_{2\lambda+1}^{(i)}(\tau) \frac{e^{\prime}_{i}(x)}{\omega_{i}}, \quad \sigma_{2\lambda+1}(\tau,x)=\sum_{i=0}^{\infty} g_{2\lambda+1}^{(i)} (\tau) e_{i}(x), \\
	& \xi_{2\lambda}(\tau,x)=\sum_{i=0}^{\infty} p_{2\lambda}^{(i)}(\tau) e_{i}(x), \quad \zeta_{2\lambda}(\tau,x)=\sum_{i=0}^{\infty} q_{2\lambda}^{(i)} (\tau) e_{i}(x),
\end{align*}
substitute these expressions into the recurrence relations above, take the inner product $(\cdot|e^{\prime}_{m})$ for the first equation, the $(\cdot|e_{m})$ for all the other equations and use their orthogonality properties $(e^{\prime}_{n}|e^{\prime}_{m})=\omega_{n}^2 \delta_{nm}$, $(e_{n},e_{m})=\delta_{nm}$. Using the notation
\begin{align*}
	\dot{f}(\tau) = \frac{d f(\tau)}{d \tau},\quad 
\ddot{f} (\tau) = \frac{d^2 f(\tau)}{d \tau^2},
\end{align*}
we find
\begin{align*}
	& \omega_{\gamma} \dot{f}_{2\lambda+1}^{(m)} (\tau) = \omega_{m} g_{2\lambda+1}^{(m)} (\tau) +\sum_{\substack{\mu,\nu=0 \\ \mu+\nu=\lambda \\ (\mu,\nu) \neq (\lambda,0) }}^{\lambda}
	\left(
	-\omega_{\gamma,2\nu} \dot{f}_{2\mu+1}^{(m)}(\tau)+ \omega_{m}
	\sum_{i,j=0}^{\infty} C_{ij}^{(m)} p_{2\nu}^{(i)}(\tau) g_{2\mu+1}^{(j)}(\tau)
	\right), \\
	&
	\omega_{\gamma} \dot{g}_{2\lambda+1}^{(m)} (\tau) = - \omega_{m} f_{2\lambda+1}^{(m)} (\tau) -  \sum_{\substack{\mu,\nu=0 \\ \mu+\nu=\lambda \\ (\mu,\nu) \neq (\lambda,0) }}^{\lambda}
	\left(
	\omega_{\gamma,2\nu}  \dot{g}_{2\mu+1}^{(m)}(\tau) + \omega_{m} \sum_{i,j=0}^{\infty}   \overline{C}_{ij}^{(m)} p_{2\nu}^{(i)}(\tau) f_{2\mu+1}^{(j)}(\tau) \right),  \\
	& p_{2(\lambda+1)}^{(m)}(\tau) = 	\sum_{\substack{\rho,k,\nu=0 \\ \rho+k+\nu=\lambda  }}^{\lambda} \sum_{i,j,l=0}^{\infty}
	\Bigg(
	\widetilde{A}_{ijl}^{(m)} f_{2\rho+1}^{(i)}(\tau)f_{2k+1}^{(j)}(\tau)+\widetilde{B}_{ijl}^{(m)} g_{2\rho+1}^{(i)}(\tau)g_{2k+1}^{(j)}(\tau)
	\Bigg) q_{2\nu}^{(l)}(\tau),\nonumber \\
	& q_{2(\lambda+1)}^{(m)}(\tau) = 
	\sum_{\substack{\rho,k,\nu=0 \\ \rho+k+\nu=\lambda  }}^{\lambda} \sum_{i,j,l=0}^{\infty}
	\Bigg(
	\frac{\overline{A}_{ijl}^{(m)}}{\omega_{m}}  f_{2\rho+1}^{(i)}(\tau)f_{2k+1}^{(j)}(\tau)+
	\frac{\overline{B}_{ijl}^{(m)}}{\omega_{m}} g_{2\rho+1}^{(i)}(\tau)g_{2k+1}^{(j)}(\tau)
	\Bigg) q_{2\nu}^{(l)}(\tau),\nonumber
\end{align*} 
where all the interactions with respect to the spatial variable $x \in \left[0,\frac{\pi}{2} \right]$ are included in the following Fourier constants
\begin{align*}
	C_{ij}^{(m)} & :=  \int_{0}^{\frac{\pi}{2}}  e_{i}(x) e_{j}(x) e_{m}(x) \tan^2(x) dx, \\
    \overline{C}_{ij}^{(m)} &:= 
     \int_{0}^{\frac{\pi}{2}}  e_{i}(x) \frac{e_{j}^{\prime}(x)}{\omega_{j}} \frac{e_{m}^{\prime}(x)}{\omega_{m}} \tan^2(x) dx, \\
    \overline{A}_{ijl}^{(m)} &:= 
     \int_{0}^{\frac{\pi}{2}}  \frac{ e_{i}^{\prime}(x)}{\omega_{i}} \frac{ e_{j}^{\prime}(x)}{\omega_{j}} e_{l}(x) \frac{ e_{m}^{\prime}(x)}{\omega_{m}}  \frac{\sin^3(x)}{\cos(x)} dx, \\
    \overline{B}_{ijl}^{(m)} &:= 
     \int_{0}^{\frac{\pi}{2}}  e_{i}(x) e_{j}(x) e_{l}(x) \frac{ e_{m}^{\prime}(x)}{\omega_{m}}  \frac{\sin^3(x)}{\cos(x)} dx, \\
     \widetilde{A}_{ijl}^{(m)} &:= \frac{\overline{A}_{ijl}^{(m)}}{\omega_{m}}  - 
     \int_{0}^{\frac{\pi}{2}} \frac{ e_{i}^{\prime}(x)}{\omega_{i}} \frac{e_{j}^{\prime}(x)}{\omega_{j}} e_{l}(x) 
     \left( 
     \int_{x}^{\frac{\pi}{2}} e_{m}(y) \sin(y)\cos(y) dy
     \right) \tan^2(x)dx, , \\
     \widetilde{B}_{ijl}^{(m)} &:= \frac{\overline{B}_{ijl}^{(m)}}{\omega_{m}}  - \int_{0}^{\frac{\pi}{2}} e_{i}(x) e_{j}(x) e_{l}(x) 
     \left( 
     \int_{x}^{\frac{\pi}{2}} e_{m}(y) \sin(y)\cos(y) dy
     \right) \tan^2(x)dx.
\end{align*}
We also find 
\begin{align*}
	f_{1}^{(m)} (\tau) =\omega_{\gamma} \cos(\tau) \delta_{\gamma}^{m}, \quad 
	g_{1}^{(m)} (\tau) = - \omega_{\gamma} \sin(\tau) \delta_{\gamma}^{m}, \quad
	p_{0}^{(m)} (\tau) = q_{0}^{(m)} (\tau) = (1|e_{m})
\end{align*}
and use Lemma \ref{ClosedformulasFore} to compute
\begin{align*}
(1|e_{m}):=	\int_{0}^{\frac{\pi}{2}} e_{m}(x) \tan^2(x) dx = \frac{2}{\sqrt{\pi}} \frac{(-1)^m}{\omega_{m}} \sqrt{\omega_{m}^2-1},
\end{align*}
for all $m=0,1,2,\dots$. In addition, we differentiate the first equation with respect to $\tau$ and use the second to obtain the harmonic oscillator equation
\begin{align} \label{HarmonicOscilator}
	\ddot{f}_{2\lambda+1}^{(m)} (\tau) + \left(\frac{\omega_{m}}{\omega_{\gamma}} \right)^2 f_{2\lambda+1}^{(m)} (\tau) = S_{2\lambda+1}^{(m)}(\tau)
\end{align}
where the source term is given by
\begin{align*}
	S_{2\lambda+1}^{(m)}(\tau) &:=\frac{ \omega_{m} }{ \omega_{\gamma}} \sum_{\substack{\mu,\nu=0 \\ \mu+\nu=\lambda \\ (\mu,\nu) \neq (\lambda,0) }}^{\lambda}
	\Bigg[
	-\frac{\omega_{\gamma,2\nu}}{\omega_{\gamma}} 
	\left(
	\dot{g}_{2\mu+1}^{(m)}(\tau) +\frac{ \omega_{\gamma} }{ \omega_{m}} \ddot{f}_{2\mu+1}^{(m)}(\tau)
	\right) \\
	& + 
	 \sum_{i,j=0}^{\infty} 
\left(
	C_{ij}^{(m)} 
	\frac{d}{d \tau} \left(p_{2\nu}^{(i)}(\tau) g_{2\mu+1}^{(j)}(\tau) \right)
-  \frac{\omega_{m}}{\omega_{\gamma}}  \overline{C}_{ij}^{(m)} p_{2\nu}^{(i)}(\tau) f_{2\mu+1}^{(j)}(\tau) 
\right) \Bigg].
\end{align*}	
Finally, we make use of the variation constants formula to solve  \eqref{HarmonicOscilator} and find
\begin{align*}
	f_{2\lambda+1}^{(m)} (\tau) &=
	f_{2\lambda+1}^{(m)} (0) \cos \left( \frac{\omega_{m}}{\omega_{\gamma}} \tau \right) + \frac{\omega_{\gamma}}{\omega_{m}} \dot{f}_{2\lambda+1}^{(m)} (0) \sin \left( \frac{\omega_{m}}{\omega_{\gamma}}\tau \right) + \frac{\omega_{\gamma}}{\omega_{m}}  \int_{0}^{\tau} \sin \left( \frac{\omega_{m}}{\omega_{\gamma}} (\tau-s) \right) S_{2\lambda+1}^{(m)}(s) ds.
\end{align*}
In conclusion, we get for all $m=0,1,2,\dots$ the following recurrence relations. For all $\lambda=1,2,3,\dots$,
\begin{align*}
& f_{1}^{(m)} (\tau) =\omega_{\gamma} \cos(\tau) \delta_{\gamma}^{m}, \\
& f_{2\lambda+1}^{(m)} (\tau)  = f_{2\lambda+1}^{(m)} (0) \cos \left( \frac{\omega_{m}}{\omega_{\gamma}} \tau \right) + \frac{\omega_{\gamma}}{\omega_{m}} \dot{f}_{2\lambda+1}^{(m)} (0) \sin \left( \frac{\omega_{m}}{\omega_{\gamma}}\tau \right) + \frac{\omega_{\gamma}}{\omega_{m}}  \int_{0}^{\tau} \sin \left( \frac{\omega_{m}}{\omega_{\gamma}} (\tau-s) \right) S_{2\lambda+1}^{(m)}(s) ds, \\ \\
& S_{2\lambda+1}^{(m)}(\tau) =\frac{ \omega_{m} }{ \omega_{\gamma}} \sum_{\substack{\mu,\nu=0 \\ \mu+\nu=\lambda \\ (\mu,\nu) \neq (\lambda,0) }}^{\lambda}
	\Bigg[
	-\frac{\omega_{\gamma,2\nu}}{\omega_{\gamma}} 
	\left(
	\dot{g}_{2\mu+1}^{(m)}(\tau) +\frac{ \omega_{\gamma} }{ \omega_{m}} \ddot{f}_{2\mu+1}^{(m)}(\tau)
	\right) \\
	&\qquad \qquad \qquad \qquad + 
	 \sum_{i,j=0}^{\infty} 
\left(
	C_{ij}^{(m)} 
	\frac{d}{d \tau} \left(p_{2\nu}^{(i)}(\tau) g_{2\mu+1}^{(j)}(\tau) \right)
-  \frac{\omega_{m}}{\omega_{\gamma}}  \overline{C}_{ij}^{(m)} p_{2\nu}^{(i)}(\tau) f_{2\mu+1}^{(j)}(\tau) 
\right) \Bigg]
\\ \\
		&g_{1}^{(m)} (\tau)  = - \omega_{\gamma} \sin(\tau) \delta_{\gamma}^{m}, \\
		&g_{2\lambda+1}^{(m)} (\tau)  =
	\frac{\omega_{\gamma}}{\omega_{m}} \dot{f}_{2\lambda+1}^{(m)} (\tau)
	+\sum_{\substack{\mu,\nu=0 \\ \mu+\nu=\lambda \\ (\mu,\nu) \neq (\lambda,0)}}^{\lambda}
	\Bigg[
	\frac{ \omega_{\gamma,2\nu}}{\omega_{m}} 
	\dot{f}_{2\mu+1}^{(m)}(\tau) - \sum_{i,j=0}^{\infty} C_{ij}^{(m)} p_{2\nu}^{(i)}(\tau) g_{2\mu+1}^{(j)}(\tau)\Bigg], \\ \\
    & p_{0}^{(m)}(\tau)  = \frac{2}{\sqrt{\pi}} \frac{(-1)^m}{\omega_{m}} \sqrt{\omega_{m}^2-1},  \\
    & p_{2(\lambda+1)}^{(m)}(\tau) = 	\sum_{\substack{\rho,k,\nu=0 \\ \rho+k+\nu=\lambda \\  }}^{\lambda} \sum_{i,j,l=0}^{\infty}
		\Bigg[
	\widetilde{A}_{ijl}^{(m)}
	f_{2\rho+1}^{(i)}(\tau)f_{2k+1}^{(j)}(\tau)+
	\widetilde{B}_{ijl}^{(m)}
	g_{2\rho+1}^{(i)}(\tau)g_{2k+1}^{(j)}(\tau)
		\Bigg] q_{2\nu}^{(l)}(\tau), \\ \\
    &q_{0}^{(m)}(\tau)  = \frac{2}{\sqrt{\pi}} \frac{(-1)^m}{\omega_{m}} \sqrt{\omega_{m}^2-1}, \\
	&q_{2(\lambda+1)}^{(m)}(\tau) = 
	\sum_{\substack{\rho,k,\nu=0 \\ \rho+k+\nu=\lambda   }}^{\lambda} \sum_{i,j,l=0}^{\infty}
	\Bigg[
	\frac{\overline{A}_{ijl}^{(m)}}{\omega_{m}}  f_{2\rho+1}^{(i)}(\tau)f_{2k+1}^{(j)}(\tau)+
	\frac{\overline{B}_{ijl}^{(m)}}{\omega_{m}} g_{2\rho+1}^{(i)}(\tau)g_{2k+1}^{(j)}(\tau)
		\Bigg] q_{2\nu}^{(l)}(\tau).	
\end{align*}
\subsection{time-periodic solutions and secular terms}\label{SectionSecularTerms}
As pointed out in \cite{PhysRevLett111051102}, non-periodic terms appear naturally when the source term $S_{2\lambda+1}^{(m)}(\tau)$ has terms of the form $\cos (  \omega_{m}  \tau / \omega_{\gamma} )$ or $\sin (  \omega_{m}  \tau / \omega_{\gamma} )$ in its Fourier expansion. Indeed, we assume that, for some $\lambda=1,2,3,\dots$, 
\begin{align*}
	S_{2\lambda+1}^{(m)}(\tau) = \sum_{a \in I_{\lambda}} S_{1,2\lambda+1,a}^{(m)} \cos(a x)+\sum_{b \in J_{\lambda}} S_{2,2\lambda+1,b}^{(m)} \sin(b x),
\end{align*}
and in addition there exists an index $m =0,1,2,\dots$ such that $\omega_{m}/\omega_{\gamma}:=a \in I_{\lambda} $.
Then, the integral
\begin{align*}
	\int_{0}^{\tau} \sin \left( \frac{\omega_{m}}{\omega_{\gamma}} (\tau-s) \right) S_{2\lambda+1}^{(m)}(s) ds
\end{align*}
produces a non-periodic term since
\begin{align*}
	\int_{0}^{\tau} \sin \left( \frac{\omega_{m}}{\omega_{\gamma}} (\tau-s) \right) \cos \left( a s \right) ds & = 
	\int_{0}^{\tau}\sin \left( \frac{\omega_{m}}{\omega_{\gamma}} (\tau-s) \right) \cos \left( \frac{\omega_{m}}{\omega_{\gamma}} s \right) ds = \frac{1}{2} \tau \sin \left( \frac{\omega_{m}}{\omega_{\gamma}} \tau \right).
\end{align*}
Such secular terms are also produced when there exists an $m =0,1,2,\dots$ such that $\omega_{m}/ \omega_{\gamma}:=b \in J_{\lambda}$.
In other words,
\begin{align*}
	\forall \lambda =0,1,2,\dots,~ \exists \text{~a set~} \mathcal{N}_{\lambda}:~\forall m \in  \mathcal{N}_{\lambda}, ~f_{2\lambda+1}^{(m)}(\tau) \text{~contains non-periodic terms}. 
\end{align*}
Maliborski and Rostworowski \cite{PhysRevLett111051102} were able to numerically cancel these secular terms by prescribing the initial data $(f_{2\lambda+1}^{(m)}(0),\dot{f}_{2\lambda+1}^{(m)}(0))$. To explain their approach, we take $f_{1}^{(\gamma)}(0)=1$ and $f_{2\lambda+1}^{(\gamma)}(0)=0$ for $\lambda=1,2,3,\dots$. First, they choose $\dot{f}_{2\lambda+1}^{(m)}(0)=0$ for all $\lambda=0,1,2,\dots$ and all $m=0,1,2,\dots$ to ensure that the source term $ S_{2\lambda+1}^{(m)}(\tau)$ is a series only of cosines. Then, they observed that the fixed index $\gamma $ belongs in $ \mathcal{N}_{\lambda}$ for all $\lambda=0,1,2,\dots$ and there is only one secular term in $f_{2\lambda+1}^{(\gamma)}(\tau)$ which can be removed by choosing the frequency shift $\omega_{\gamma,2(\lambda-1)}$. Furthermore, for all $m \in \mathcal{N}_{\lambda} \setminus \{\gamma\}$, there are some secular terms which cancel by the structure of the equations, some secular terms cancel by choosing some initial data but some initial data remain free variables at this stage. They choose these free variables together with $\omega_{\gamma,2\lambda}$ to cancel the secular terms in the $f_{2(\lambda+1)+1}(\tau)$.  For more details see \cite{PhysRevLett111051102}. However, there is no proof based on rigorous arguments ensuring that this procedure works for all $\lambda$.  
\subsection{Choice of the initial data}
For example, we fix $\gamma=0$ and choose
\begin{align*}
	\dot{f}_{2\lambda+1}^{(m)}(0) = 0,\quad \forall \lambda \geq 0, \quad \forall m \geq 0.
\end{align*}
First, we use the recurrence relation above and find periodic expressions for $p_{2}^{(m)}(\tau)$ and $q_{2}^{(m)}(\tau)$ due to the periodicity of $f_{1}^{(m)}(\tau)$ and $g_{1}^{(m)}(\tau)$. Second, we compute
\begin{align*}
	S_{3}^{(m)}(\tau)= A_{3}^{(m)} \cos(\tau)+B_{3}^{(m)} \cos(3\tau),
\end{align*}
for some sequences $\{A_{3}^{(m)}\}_{m=0,1,\dots}$ and $\{B_{3}^{(m)}\}_{m=0,1,\dots}$. Then, the equation
\begin{align*}
	\frac{\omega_{m}}{\omega_{0}}=\frac{3+2m}{3}=1+\frac{2}{3} m \in \{1,3 \}
\end{align*}
has two solutions $m \in \mathcal{N}_{3}:= \{0,3\}$. Based on the discussion above, we get two secular terms in the list $\{f_{3}^{(m)}(\tau): m=0,1,2,\dots\}$, one for $m=0$ and one for $m=3$. We see that the secular term for $m=0$ can be canceled by choosing the frequency shift $\omega_{0,2}$ whereas the secular term for $m=3$ cancels by the structure of the equations meaning $B_{3}^{(3)}=0$ without any choice of the initial data $\{f_{3}^{(m)}(0): m=0,1,2,\dots\}$. Hence, all these are free variables at this stage (i.e. for $\lambda=3$) and will be chosen later to cancel all the secular terms in $f_{5}^{(m)}(\tau)$. Specifically, we get
\begin{align*}
		& f_{3}^{(0)}(\tau)=
		\left( \frac{765}{128 \pi}+f_{3}^{(0)}(0) \right) \cos(\tau) 
	- \frac{765}{128 \pi} \cos(3\tau) 
	+ \left(\omega_{0,2} - \frac{153}{4 \pi} \right) \tau \sin(\tau), \\
		& f_{3}^{(1)}(\tau)=
		\frac{6183}{256 \pi}\sqrt{3} \cos(\tau) 
	+ \frac{765}{256 \pi}\sqrt{3} \cos(3\tau) 
	+ \left(f_{3}^{(1)}(0) - \frac{1737}{64 \pi}\sqrt{3} \right) \cos \left(\frac{5}{3}\tau \right), \\
	& f_{3}^{(2)}(\tau)=
		\frac{3717}{3200 \pi}\sqrt{\frac{3}{2}} \cos(\tau) 
	+ \frac{441}{128 \pi}\sqrt{\frac{3}{2}} \cos(3\tau) 
	+ \left(f_{3}^{(2)}(0) - \frac{7371}{1600 \pi}\sqrt{\frac{3}{2}} \right) \cos \left(\frac{7}{3}\tau \right),  
\end{align*}
and so forth. We choose 
\begin{align*}
\omega_{0,2}=\frac{153}{4\pi}
\end{align*}
to ensure the periodicity of $f_{3}^{(0)}(\tau)$. Once all secular terms in $f_{3}^{(m)}(\tau)$ are removed, the periodic expression for $f_{3}^{(m)}(\tau)$ implies a periodic expression also for $g_{3}^{(m)}(\tau)$. 
\subsection{Growth and decay of the Fourier constants}
In this section, we focus on the asymptotic behaviour of all the Fourier constants which appear using this approach. We shall use the notation
\begin{align*}
	\sum_{\pm} f(a\pm b \pm c) = f(a + b + c) + f(a + b - c) + f(a - b + c) + f(a - b - c),
\end{align*}
that is summation with respect to all possible combinations of plus and minus. Furthermore, expressions like $\omega_{i} \pm \omega_{j} \pm \omega_{m}$ stand not only for $\omega_{i} + \omega_{j} + \omega_{m}$ and $\omega_{i} - \omega_{j} - \omega_{m}$ but also for $\omega_{i} + \omega_{j} - \omega_{m}$ and $\omega_{i} - \omega_{j} + \omega_{m}$, that is considering all possible combinations of plus and minus. Specifically, we prove the following result the proof of which is based on the leading order terms (Remark \ref{lot}) together with the asymptotic behavior of the oscillatory integrals (Lemma \ref{OscillatoryIntegrals}), the orthogonality properties (Lemma \ref{ClosedformulasFore}) and the $L^{\infty}-$bounds (Lemma \ref{Linftyboundse}). 
\begin{prop}[Growth and decay estimates for the Fourier constants: Perturbation 1]\label{Result1}
	Let $N\in \mathbb{N}$. The following growth and decay estimates hold. 
\begin{center}
\begin{longtable}{ |l|l|l|}
\hline
\multicolumn{3}{ |c| }{Growth and decay estimates for $C_{ij}^{(m)}$ and $\overline{C}_{ij}^{(m)}$ as $i,j,m \longrightarrow + \infty$} \\
\hline
\quad  Constant  & $\exists ~~\omega_{i} \pm \omega_{j} \pm \omega_{m} \longarrownot\longrightarrow \infty $ &   \quad \quad$\forall~~ \omega_{i} \pm \omega_{j} \pm \omega_{m} \longrightarrow  \infty $  \\ \hline
\multirow{1}{*}{\quad\quad \(\displaystyle C_{ij}^{(m)} \)}
 &\quad \quad \quad$ \mathcal{O} \left( \omega_{i} \right)$
  &\quad \quad \(\displaystyle  \sum_{\pm}\mathcal{O} \left( \frac{1 }{ (\omega_{i} \pm \omega_{j} \pm \omega_{m})^{2}} \right) \)  \\ \hline 
\multirow{1}{*}{\quad\quad \(\displaystyle \overline{C}_{ij}^{(m)} \)}
 &\quad \quad \quad $\mathcal{O} \left( \omega_{i} \right)$
  &~~  \(\displaystyle \frac{4}{\sqrt{\pi}} + \sum_{\pm}\mathcal{O} \left( \frac{1}{(\omega_{i} \pm \omega_{j} \pm  \omega_{m} )^2} \right) \)  \\ \hline 
\end{longtable}
\end{center} 
\begin{center}
\begin{longtable}{ |l|l|l|}
\hline
\multicolumn{3}{ |c| }{Growth and decay estimates for $A_{ijl}^{(m)},B_{ijl}^{(m)},\overline{A}_{ijl}^{(m)}$ and $ \overline{B}_{ijl}^{(m)}$ as $i,j,l,m \longrightarrow + \infty$} \\
\hline
\quad  Constant  & $\exists ~~\omega_{i} \pm \omega_{j} \pm \omega_{l} \pm \omega_{m} \longarrownot\longrightarrow \infty $ & \quad \quad \quad $ \forall~~ \omega_{i} \pm \omega_{j} \pm \omega_{l}\pm \omega_{m} \longrightarrow  \infty $  \\ \hline
\multirow{1}{*}{ \(\displaystyle \overline{A}_{ijl}^{(m)},\overline{B}_{ijl}^{(m)} \)}
 & \quad \quad\quad\quad \(\displaystyle  \mathcal{O} \left(\omega_{l}\right) \)
  &\quad  \(\displaystyle  \sum_{\pm} \mathcal{O} \left( \frac{1 }{ (\omega_{i} \pm \omega_{j} \pm \omega_{l}\pm \omega_{m})^{N}} \right) \)  \\ \hline 
\multirow{1}{*}{ \(\displaystyle A_{ijl}^{(m)},B_{ijl}^{(m)} \)}
 & \quad \quad \quad\(\displaystyle   \mathcal{O} \left( \frac{\omega_{l}}{ \omega_{m}} \right)\)
  &\(\displaystyle ~~~\frac{1}{\omega_{m}} \sum_{\pm} \mathcal{O} \left( \frac{1}{\left( \omega_{i} \pm \omega_{j} \pm \omega_{l} \pm \omega_{m}   \right)^{N}} \right) \)  \\ \hline 
\end{longtable}
\end{center}	
\end{prop}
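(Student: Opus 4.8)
The plan is to treat the two columns of the tables by completely different mechanisms. The growth (upper) bounds are crude and follow from Hölder's inequality together with the $L^{\infty}$-bounds of Lemma \ref{Linftyboundse}, while the decay estimates require the exact closed forms of Lemma \ref{ClosedformulasFore} fed into the oscillatory-integral asymptotics of Lemma \ref{OscillatoryIntegrals} and the integration-by-parts expansions of Lemmas \ref{byparts1}--\ref{byparts2}.

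\textbf{Growth.} For $C_{ij}^{(m)} = (e_i | e_j e_m)$ I would pull the lowest-frequency factor out in $L^{\infty}$ and use orthonormality on the rest: by Cauchy--Schwarz $\int_0^{\pi/2}|e_j e_m|\tan^2 x\,dx \le \|e_j\|\,\|e_m\| = 1$, so $|C_{ij}^{(m)}| \le \|e_i\|_{\infty} \le \frac{2}{\sqrt\pi}\omega_i$ by Lemma \ref{Linftyboundse}, and symmetry in $(i,j,m)$ lets me name the smallest frequency $\omega_i$. The same device handles $\overline C_{ij}^{(m)}$, using $\|\tfrac{e_j'}{\omega_j}\| = \|\tfrac{e_m'}{\omega_m}\| = 1$. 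For $\overline A,\overline B$ I rewrite the weight as $\frac{\sin^3 x}{\cos x} = \tan x\,\sin^2 x$, absorb the $\tan x$ into one derivative factor via $|\tan x\,\tfrac{e_m'}{\omega_m}|\le \tfrac{4}{\sqrt\pi}$, estimate $\sin^2 x\le\tan^2 x$, pull $e_l$ out in $L^{\infty}$, and close with Cauchy--Schwarz to obtain $\mathcal{O}(\omega_l)$. The constants $\widetilde A_{ijl}^{(m)},\widetilde B_{ijl}^{(m)}$ then inherit the $\mathcal{O}(\omega_l/\omega_m)$ bound from $\overline A/\omega_m$, $\overline B/\omega_m$ and from the correction integral, where the third bound of Lemma \ref{Linftyboundse}, $|\int_x^{\pi/2} e_m\sin\cos\,dy|\le 2/\omega_m$, supplies the extra $1/\omega_m$.

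\textbf{Decay.} Here I would substitute the exact expressions of Lemma \ref{ClosedformulasFore}, so that every eigenfunction becomes an explicit combination of $\frac{\sin(\omega x)}{\tan x}$ and $\cos(\omega x)$ (and, for the weighted derivatives, $\frac{\cos(\omega x)}{\tan x}$, $\frac{\sin(\omega x)}{\tan^2 x}$) with coefficients $\frac{2\omega}{\sqrt\pi\sqrt{\omega^2-1}}$, $\frac{2}{\sqrt\pi\sqrt{\omega^2-1}}$. Expanding the triple and quadruple products by product-to-sum identities reduces each Fourier constant to a finite linear combination of elementary oscillatory integrals $\int_0^{\pi/2} w(x)\,\cos((\omega_i\pm\omega_j\pm\omega_m)x)\,dx$ and their sine analogues, where $w$ is an explicit smooth envelope built from powers of $\cos x,\sin x$ (the $\tan^{-1}$ factors combining with the weight). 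Applying Lemma \ref{OscillatoryIntegrals} to the singular envelope $1/\tan x$ gives the rate $\mathcal{O}(1/k^2)$ for $C$ and $\overline C$, and for each of the four combinations $k=\omega_i\pm\omega_j\pm\omega_m$ I would track the leading constant $\pm\frac\pi2$: for $C_{ij}^{(m)}$ the $\sin\sin\sin$ structure makes the four signed constants cancel (in the regime where the signed combinations all tend to $+\infty$, i.e. one frequency dominates the other two), producing genuine decay, whereas for $\overline C_{ij}^{(m)}$ the $\sin\cos\cos$ structure leaves the residual constant $\frac{4}{\sqrt\pi}$ exactly as claimed. For $\overline A,\overline B$ the extra factor $\frac{\sin^3 x}{\cos x}$ upgrades the envelope to one with finite Fourier content (as for the second and third integrals of Lemma \ref{OscillatoryIntegrals}), so that all boundary terms in the integration-by-parts expansion of Lemma \ref{byparts2} vanish and one obtains the super-polynomial rate $\mathcal{O}(1/k^N)$; dividing by $\omega_m$ and adding the antiderivative correction then yields the estimates for $\widetilde A,\widetilde B$.

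\textbf{Main obstacle.} The serious difficulty is the bookkeeping of the leading $\pm\frac\pi2$ constants across the four sign combinations: deciding, as a function of which frequency dominates, whether they cancel (as for $C$) or accumulate to a fixed constant (as for $\overline C$) requires a careful case analysis of the signs of $\omega_i\pm\omega_j\pm\omega_m$, using that at most one of $\omega_i+\omega_j-\omega_m,\ \omega_i-\omega_j+\omega_m,\ -\omega_i+\omega_j+\omega_m$ can be negative. Equally delicate is the uniform control of the error terms: one must verify that the subleading pieces of the exact eigenfunctions (the $\cos(\omega x)$ and $\frac{\sin(\omega x)}{\tan^2 x}$ contributions, weighted by $1/\sqrt{\omega^2-1}$) and the remainder integrals in Lemmas \ref{byparts1}--\ref{byparts2} are genuinely lower order, and that the envelopes, although singular at $x=0$ or $x=\frac\pi2$, remain integrable and enjoy the endpoint vanishing needed to push the integration by parts to order $N$. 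Obtaining the sharp power in each cell of the tables amounts to identifying, for each envelope, the exact order of vanishing at the two endpoints.
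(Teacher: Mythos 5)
Your plan is essentially the paper's own proof: H\"older's inequality with the $L^{\infty}$ bounds of Lemma \ref{Linftyboundse} for the growth column, and the leading-order forms of Remark \ref{lot} fed through product-to-sum identities into Lemma \ref{OscillatoryIntegrals} for the decay column, with the same sign pattern of the $\pm\frac{\pi}{2}$ constants separating $C_{ij}^{(m)}$ from $\overline{C}_{ij}^{(m)}$ and the finite Fourier content of the $\cos^2$ envelope producing the $\mathcal{O}(k^{-N})$ rates for $\overline{A},\overline{B},A,B$. The two obstacles you single out are genuine and are treated more loosely in the paper than in your outline: the paper asserts $\frac{\pi}{2}-\frac{\pi}{2}-\frac{\pi}{2}+\frac{\pi}{2}=0$ without the case analysis on the signs of $\omega_{i}\pm\omega_{j}\pm\omega_{m}$ (when no single frequency dominates, $\omega_{i}-\omega_{j}-\omega_{m}\longrightarrow-\infty$ and the four constants sum to $\pi$, so e.g.\ $C_{ii}^{(i)}\longrightarrow\frac{2}{\sqrt{\pi}}$ rather than decaying), and it replaces $e_{i}$ by its leading term via ``$\simeq$'' without checking that the $\mathcal{O}(\omega_{i}^{-1/2})$ remainder of Remark \ref{lot} is compatible with the claimed $\mathcal{O}(k^{-2})$ or $\mathcal{O}(k^{-N})$ decay.
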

\begin{proof}
	First, observe that $\omega_{i} \pm \omega_{j} \pm \omega_{m}$ are all odd, namely
\begin{align*}
	\omega_{i} - \omega_{j} + \omega_{m} & = 2(i-j+m+1)+1, \quad 
	\omega_{i} - \omega_{j} - \omega_{m}  = 2(i-j-m-2)+1, \\
	\omega_{i} + \omega_{j} + \omega_{m} & = 2(i+j+m+4)+1,   \quad 
	\omega_{i} + \omega_{j} - \omega_{m}  = 2(i+j-m+1)+1.
\end{align*}
For large values of $i,j,m$ and in the case where all $\omega_{i} \pm \omega_{j} \pm \omega_{m} \longrightarrow \infty$, we obtain
	\begin{align*}
		C_{ij}^{(m)} & :=  \int_{0}^{\frac{\pi}{2}}  e_{i}(x) e_{j}(x) e_{m}(x) \tan^2(x) dx 
		 \simeq \left( \frac{2}{\sqrt{\pi}} \right)^3 \int_{0}^{\frac{\pi}{2}}  \frac{\sin(\omega_{i}x)\sin(\omega_{j}x)\sin(\omega_{m}x)}{\tan(x)} dx \\
		& =\frac{1}{4} \left( \frac{2}{\sqrt{\pi}} \right)^3 \Bigg(
		 \int_{0}^{\frac{\pi}{2}}  \frac{\sin((\omega_{i}-\omega_{j}+\omega_{m})x)}{\tan(x)} dx -  \int_{0}^{\frac{\pi}{2}}  \frac{\sin((\omega_{i}-\omega_{j}-\omega_{m})x)}{\tan(x)} dx \\
		&\quad \quad \quad\quad \quad\quad - \int_{0}^{\frac{\pi}{2}}  \frac{\sin((\omega_{i}+\omega_{j}+\omega_{m})x)}{\tan(x)} dx + 
		  \int_{0}^{\frac{\pi}{2}}  \frac{\sin((\omega_{i}+\omega_{j}-\omega_{m})x)}{\tan(x)} dx
		 \Bigg) \\
		 & =\frac{1}{4} \left( \frac{2}{\sqrt{\pi}} \right)^3 \left(
		\frac{\pi}{2}-\frac{\pi}{2}-\frac{\pi}{2}+\frac{\pi}{2} + \sum_{\pm} \mathcal{O} \left( \frac{1}{(\omega_{i} \pm \omega_{j} \pm \omega_{m})^2} \right)
		 \right) \\
		 & =\sum_{\pm} \mathcal{O} \left( \frac{1}{(\omega_{i} \pm \omega_{j} \pm \omega_{m})^2} \right). 
	\end{align*}
On the other hand, for large values of $i,j,m$ such that some $\omega_{i} \pm \omega_{j} \pm \omega_{m} \longarrownot\longrightarrow \infty$, Holder's inequality implies
\begin{align*}
	\left| C_{ij}^{(m)} \right| & =\left|  \int_{0}^{\frac{\pi}{2}}  e_{i}(x) e_{j}(x) e_{m}(x) \tan^2(x) dx\right| 
	 \leq 	
	\left\|
	e_{i}
	\right \|_{L^{\infty}\left[0,\frac{\pi}{2}\right]}
	\left \| 
	e_{j}\tan
	\right \|_{L^{2}\left[0,\frac{\pi}{2}\right]}
	\left \| 
	e_{m}\tan
	\right \|_{L^{2}\left[0,\frac{\pi}{2}\right]} 	\lesssim \omega_{i}.
\end{align*}
Similarly, for large values of $i,j,m$ and in the case where all $\omega_{i} \pm \omega_{j} \pm \omega_{m} \longrightarrow \infty$, we obtain
\begin{align*}
		\overline{C}_{ij}^{(m)} & :=  \int_{0}^{\frac{\pi}{2}}  e_{i}(x) \frac{e_{j}^{\prime}(x)}{\omega_{j}} \frac{e_{m}^{\prime}(x)}{\omega_{m}}\tan^2(x) dx  
		 \simeq \left( \frac{2}{\sqrt{\pi}} \right)^3 \int_{0}^{\frac{\pi}{2}}  \frac{\sin(\omega_{i}x)\cos(\omega_{j}x)\cos(\omega_{m}x)}{\tan(x)} dx \\
		& =\frac{1}{4} \left( \frac{2}{\sqrt{\pi}} \right)^3 \Bigg(
		 \int_{0}^{\frac{\pi}{2}}  \frac{\sin((\omega_{i}+\omega_{j}+\omega_{m})x)}{\tan(x)} dx +  \int_{0}^{\frac{\pi}{2}}  \frac{\sin((\omega_{i}+\omega_{j}-\omega_{m})x)}{\tan(x)} dx \\
		&\quad \quad \quad\quad \quad\quad + \int_{0}^{\frac{\pi}{2}}  \frac{\sin((\omega_{i}-\omega_{j}+\omega_{m})x)}{\tan(x)} dx + 
		  \int_{0}^{\frac{\pi}{2}}  \frac{\sin((\omega_{i}-\omega_{j}-\omega_{m})x)}{\tan(x)} dx
		 \Bigg) \\
		 & =\frac{1}{4} \left( \frac{2}{\sqrt{\pi}} \right)^3 \left(
		\frac{\pi}{2}+\frac{\pi}{2}+\frac{\pi}{2}+\frac{\pi}{2} + \sum_{\pm} \mathcal{O} \left( \frac{1}{(\omega_{i} \pm \omega_{j} \pm \omega_{m})^2} \right)
		 \right) \\
		 & =\frac{4}{\sqrt{\pi}} + \sum_{\pm} \mathcal{O} \left( \frac{1}{(\omega_{i} \pm \omega_{j} \pm \omega_{m})^2} \right). 
	\end{align*}	
On the other hand, for large values of $i,j,m$ such that some $\omega_{i} \pm \omega_{j} \pm \omega_{m} \longarrownot\longrightarrow \infty$, Holder's inequality implies
\begin{align*}
	\left|\overline{C}_{ij}^{(m)}\right| & =\left|  \int_{0}^{\frac{\pi}{2}}  e_{i}(x) \frac{e_{j}^{\prime}(x)}{\omega_{j}} \frac{e_{m}^{\prime}(x)}{\omega_{m}}\tan^2(x) dx \right|\\
		 &
	 \leq 	
	\left\|
	e_{i}
	\right \|_{L^{\infty}\left[0,\frac{\pi}{2}\right]}
	\left \| 
	\frac{e_{j}^{\prime}(x)}{\omega_{j}}\tan
	\right \|_{L^{2}\left[0,\frac{\pi}{2}\right]}
	\left \| 
	\frac{e_{m}^{\prime}(x)}{\omega_{m}}\tan
	\right \|_{L^{2}\left[0,\frac{\pi}{2}\right]} 	\lesssim \omega_{i}.
\end{align*}
Next, observe that $\omega_{i} \pm \omega_{j} \pm \omega_{l} \pm \omega_{m}$ are all even, 
\begin{align*}
	\omega_{i} - \omega_{j} + \omega_{l} - \omega_{m} & = 2 (i - j + l - m),\quad 
	\omega_{i} + \omega_{j} - \omega_{l} - \omega_{m}   = 2 (i + j - l - m), \\
	\omega_{i} - \omega_{j} - \omega_{l} - \omega_{m}& = 2 (-3 + i - j - l - m), \quad
	\omega_{i} + \omega_{j} + \omega_{l} - \omega_{m}   = 2 (3 + i + j + l - m),\\
	\omega_{i} - \omega_{j} - \omega_{l} + \omega_{m} & = 2 (i - j - l + m), \quad
	\omega_{i} + \omega_{j} + \omega_{l} + \omega_{m}   = 2 (6 + i + j + l + m), \\
	\omega_{i} - \omega_{j} + \omega_{l} + \omega_{m} & = 2 (3 + i - j + l + m), \quad
	\omega_{i} + \omega_{j} - \omega_{l} + \omega_{m}   = 2 (3 + i + j - l + m).
\end{align*}
Now, for large values of $i,j,l,m$ and in the case where all $\omega_{i} \pm \omega_{j} \pm \omega_{l} \pm \omega_{m} \longrightarrow \infty$, we obtain
\begin{align*}
	\overline{A}_{ijl}^{(m)} &:= \int_{0}^{\frac{\pi}{2}} \frac{e_{i}^{\prime}(x)}{\omega_{i}} \frac{e_{j}^{\prime}(x)}{\omega_{j}} e_{l}(x) \frac{e_{m}^{\prime}(x)}{\omega_{m}} \frac{\sin^3(x)}{\cos(x)}  dx \\
	& \simeq \left(\frac{2}{\sqrt{\pi}} \right)^4 \int_{0}^{\frac{\pi}{2}}  \cos^2(x) \frac{ \cos(\omega_{i}x)  \cos(\omega_{j}x) \sin(\omega_{l}x) \cos(\omega_{m}x) }{\tan(x)} dx \\
	& = \frac{1}{8}\left(\frac{2}{\sqrt{\pi}} \right)^4\Bigg( 
	\int_{0}^{\frac{\pi}{2}}  \cos^2(x) \frac{ \sin \left(  \left( \omega_{i} - \omega_{j} + \omega_{l} + \omega_{m}   \right) x \right) }{\tan(x)} dx \\
	& +  \int_{0}^{\frac{\pi}{2}}  \cos^2(x) \frac{ \sin \left(  \left( \omega_{i} - \omega_{j} + \omega_{l} - \omega_{m}   \right) x \right) }{\tan(x)} dx \\
	& - 	\int_{0}^{\frac{\pi}{2}}  \cos^2(x) \frac{  \sin \left(  \left( \omega_{i} - \omega_{j} - \omega_{l} + \omega_{m}   \right) x \right) }{\tan(x)} dx 
	-  \int_{0}^{\frac{\pi}{2}}  \cos^2(x) \frac{  \sin \left(  \left( \omega_{i} - \omega_{j} - \omega_{l} - \omega_{m}   \right) x \right) }{\tan(x)} dx \\
	& + 	\int_{0}^{\frac{\pi}{2}}  \cos^2(x) \frac{  \sin \left(  \left( \omega_{i} + \omega_{j} + \omega_{l} + \omega_{m}   \right) x \right) }{\tan(x)} dx 
	+  \int_{0}^{\frac{\pi}{2}}  \cos^2(x) \frac{  \sin \left(  \left( \omega_{i} + \omega_{j} + \omega_{l} - \omega_{m}   \right) x \right) }{\tan(x)} dx \\
	& - 	\int_{0}^{\frac{\pi}{2}}  \cos^2(x) \frac{  \sin \left(  \left( \omega_{i} + \omega_{j} - \omega_{l} + \omega_{m}   \right) x \right) }{\tan(x)} dx 
	-  \int_{0}^{\frac{\pi}{2}}  \cos^2(x) \frac{  \sin \left(  \left( \omega_{i} + \omega_{j} - \omega_{l} - \omega_{m}   \right) x \right) }{\tan(x)} dx
	 \Bigg) \\
&=\frac{1}{8} \left(\frac{2}{\sqrt{\pi}} \right)^4 \left( 4 \left(  \frac{\pi}{2} - \frac{\pi}{2} \right) + \sum_{\pm} \mathcal{O} \left( \frac{1}{\left( \omega_{i} \pm \omega_{j} \pm \omega_{l} \pm \omega_{m}   \right)^{N}}
	 \right)  \right) \\
		 &
	  = \sum_{\pm} \mathcal{O} \left( \frac{1}{\left( \omega_{i} \pm \omega_{j} \pm \omega_{l} \pm \omega_{m}   \right)^{N}} \right),	  
\end{align*}
whereas, for large values of $i,j,l,m$ such that some $\omega_{i} \pm \omega_{j} \pm \omega_{l} \pm \omega_{m} \longarrownot\longrightarrow \infty$,  Holder's inequality implies
\begin{align*}
	\left| \overline{A}_{ijl}^{(m)} \right| &= \left| \int_{0}^{\frac{\pi}{2}} \frac{e_{i}^{\prime}(x)}{\omega_{i}} \frac{e_{j}^{\prime}(x)}{\omega_{j}} e_{l}(x) \frac{e_{m}^{\prime}(x)}{\omega_{m}} \frac{\sin^3(x)}{\cos(x)}  dx\right| \\
	&=\left| \int_{0}^{\frac{\pi}{2}} \frac{e_{i}^{\prime}(x)}{\omega_{i}}\tan(x) \cdot \frac{e_{j}^{\prime}(x)}{\omega_{j}}\tan(x)\cdot e_{l}(x)\cos^2(x) \cdot \frac{e_{m}^{\prime}(x)}{\omega_{m}}\tan(x)   dx\right| \\
	&  \leq 	
	\left \| 
	\frac{e_{i}^{\prime}}{\omega_{i}}\tan
	\right \|_{L^{2}\left[0,\frac{\pi}{2}\right]}
	\left \| 
	\frac{e_{j}^{\prime}}{\omega_{j}}\tan
	\right \|_{L^{2}\left[0,\frac{\pi}{2}\right]}
	\left \| 
	e_{l} \cos^2
	\right\|_{L^{\infty}\left[0,\frac{\pi}{2}\right]}
	 \left \| 
	\frac{e_{m}^{\prime}}{\omega_{m}} \tan
	\right \|_{L^{\infty}\left[0,\frac{\pi}{2}\right]} 	\lesssim \omega_{l}.
\end{align*}
Furthermore, for large values of $i,j,l,m$ and in the case where all $\omega_{i} \pm \omega_{j} \pm \omega_{l}\pm \omega_{m} \longrightarrow \infty$, 
\begin{align*}
	\overline{B}_{ijl}^{(m)} &:= \int_{0}^{\frac{\pi}{2}} 
	e_{i}(x) e_{j}(x) e_{l}(x) \frac{e_{m}^{\prime}(x)}{\omega_{m}} \frac{\sin^3(x)}{\cos(x)}  dx\\
		 &
	 \simeq \left(\frac{2}{\sqrt{\pi}} \right)^4 \int_{0}^{\frac{\pi}{2}}  \cos^2(x) \frac{ \sin(\omega_{i}x)  \sin(\omega_{j}x) \sin(\omega_{l}x) \cos(\omega_{m}x) }{\tan(x)} dx \\
	& =\frac{1}{8} \left(\frac{2}{\sqrt{\pi}} \right)^4\Bigg( 
	\int_{0}^{\frac{\pi}{2}}  \cos^2(x) \frac{ \sin \left(  \left( \omega_{i} - \omega_{j} + \omega_{l} + \omega_{m}   \right) x \right) }{\tan(x)} dx \\
		 &
	+  \int_{0}^{\frac{\pi}{2}}  \cos^2(x) \frac{ \sin \left(  \left( \omega_{i} - \omega_{j} + \omega_{l} - \omega_{m}   \right) x \right) }{\tan(x)} dx \\
	& - 	\int_{0}^{\frac{\pi}{2}}  \cos^2(x) \frac{  \sin \left(  \left( \omega_{i} - \omega_{j} - \omega_{l} + \omega_{m}   \right) x \right) }{\tan(x)} dx 
	-  \int_{0}^{\frac{\pi}{2}}  \cos^2(x) \frac{  \sin \left(  \left( \omega_{i} - \omega_{j} - \omega_{l} - \omega_{m}   \right) x \right) }{\tan(x)} dx \\
	& - 	\int_{0}^{\frac{\pi}{2}}  \cos^2(x) \frac{  \sin \left(  \left( \omega_{i} + \omega_{j} + \omega_{l} + \omega_{m}   \right) x \right) }{\tan(x)} dx 
	-  \int_{0}^{\frac{\pi}{2}}  \cos^2(x) \frac{  \sin \left(  \left( \omega_{i} + \omega_{j} + \omega_{l} - \omega_{m}   \right) x \right) }{\tan(x)} dx \\
	& + 	\int_{0}^{\frac{\pi}{2}}  \cos^2(x) \frac{  \sin \left(  \left( \omega_{i} + \omega_{j} - \omega_{l} + \omega_{m}   \right) x \right) }{\tan(x)} dx 
	+  \int_{0}^{\frac{\pi}{2}}  \cos^2(x) \frac{  \sin \left(  \left( \omega_{i} + \omega_{j} - \omega_{l} - \omega_{m}   \right) x \right) }{\tan(x)} dx
	 \Bigg) \\
&=\frac{1}{8} \left(\frac{2}{\sqrt{\pi}} \right)^4 \left( 4 \left(  \frac{\pi}{2} - \frac{\pi}{2} \right) + \sum_{\pm} \mathcal{O} \left( \frac{1}{\left( \omega_{i} \pm \omega_{j} \pm \omega_{l} \pm \omega_{m}   \right)^{N}}
	 \right)  \right)\\
		 &
	  = \sum_{\pm} \mathcal{O} \left( \frac{1}{\left( \omega_{i} \pm \omega_{j} \pm \omega_{l} \pm \omega_{m}   \right)^{N}} \right),	  
\end{align*}
whereas, for large values of $i,j,l,m$ such that some $\omega_{i} \pm \omega_{j} \pm \omega_{l} \pm \omega_{m} \longarrownot\longrightarrow \infty$,  Holder's inequality implies
\begin{align*}
	\left| \overline{B}_{ijl}^{(m)} \right| &= \left| \int_{0}^{\frac{\pi}{2}} e_{i}(x) e_{j}(x)  e_{l}(x) \frac{e_{m}^{\prime}(x)}{\omega_{m}} \frac{\sin^3(x)}{\cos(x)}  dx\right| \\
	&=\left| \int_{0}^{\frac{\pi}{2}} e_{i}(x)\tan(x)  \cdot e_{j}(x) \tan(x)\cdot e_{l}(x)\cos^2(x) \cdot \frac{e_{m}^{\prime}(x)}{\omega_{m}}\tan(x)   dx\right| \\
	&  \leq 	
	\left \| 
	e_{i}\tan
	\right \|_{L^{2}\left[0,\frac{\pi}{2}\right]}
	\left \| 
	e_{j}\tan
	\right \|_{L^{2}\left[0,\frac{\pi}{2}\right]}
	\left \| 
	e_{l} \cos^2
	\right\|_{L^{\infty}\left[0,\frac{\pi}{2}\right]}
	 \left \| 
	\frac{e_{m}^{\prime}}{\omega_{m}} \tan
	\right \|_{L^{\infty}\left[0,\frac{\pi}{2}\right]} 	\lesssim \omega_{l}.
\end{align*}
Next, we use once more Remark \ref{lot} to compute
\begin{align*}
	& \int_{x}^{\frac{\pi}{2}} e_{m}(y) \cos(y) \sin(y) dy  \simeq \frac{2}{\sqrt{\pi}} \int_{x}^{\frac{\pi}{2}} \sin(\omega_{m}y) \cos^2(y) dy \\
	 &= \frac{2}{\sqrt{\pi}} \left(
	 \frac{\sin(\omega_{m}x)\sin(2x)}{\omega_{m}^2-4}
	 -\frac{2\cos(\omega_{m}x)}{\omega_{m}(\omega_{m}^2-4)}
	 + \frac{\omega_{m}\cos^2(x)\cos(\omega_{m}x)}{\omega_{m}^2-4}
	 \right) 
	  \simeq \frac{2}{\sqrt{\pi}} 
	 \frac{1}{\omega_{m}} \cos^2(x)\cos(\omega_{m}x),
\end{align*}
as $m \longrightarrow \infty$. Hence, for large values of $i,j,l,m$ and in the case where all $\omega_{i} \pm \omega_{j} \pm \omega_{l} \pm \omega_{m} \longrightarrow \infty$, 
\begin{align*}
	A_{ijl}^{(m)} &:= 
     \int_{0}^{\frac{\pi}{2}} \frac{ e_{i}^{\prime}(x)}{\omega_{i}} \frac{e_{j}^{\prime}(x)}{\omega_{j}} e_{l}(x) 
     \left( 
     \int_{x}^{\frac{\pi}{2}} e_{m}(y) \sin(y)\cos(y) dy
     \right) \tan^2(x)dx\\
	& \simeq \left(\frac{2}{\sqrt{\pi}} \right)^4 \frac{1}{\omega_{m}} \int_{0}^{\frac{\pi}{2}}  \cos^2(x) \frac{ \cos(\omega_{i}x)  \cos(\omega_{j}x) \sin(\omega_{l}x) \cos(\omega_{m}x) }{\tan(x)} dx \\
	& = \frac{1}{8}\left(\frac{2}{\sqrt{\pi}} \right)^4 \frac{1}{\omega_{m}} \Bigg( 
	\int_{0}^{\frac{\pi}{2}}  \cos^2(x) \frac{ \sin \left(  \left( \omega_{i} - \omega_{j} + \omega_{l} + \omega_{m}   \right) x \right) }{\tan(x)} dx \\
	 &+  \int_{0}^{\frac{\pi}{2}}  \cos^2(x) \frac{ \sin \left(  \left( \omega_{i} - \omega_{j} + \omega_{l} - \omega_{m}   \right) x \right) }{\tan(x)} dx \\
	& - 	\int_{0}^{\frac{\pi}{2}}  \cos^2(x) \frac{  \sin \left(  \left( \omega_{i} - \omega_{j} - \omega_{l} + \omega_{m}   \right) x \right) }{\tan(x)} dx 
	-  \int_{0}^{\frac{\pi}{2}}  \cos^2(x) \frac{  \sin \left(  \left( \omega_{i} - \omega_{j} - \omega_{l} - \omega_{m}   \right) x \right) }{\tan(x)} dx \\
	& + 	\int_{0}^{\frac{\pi}{2}}  \cos^2(x) \frac{  \sin \left(  \left( \omega_{i} + \omega_{j} + \omega_{l} + \omega_{m}   \right) x \right) }{\tan(x)} dx 
	+  \int_{0}^{\frac{\pi}{2}}  \cos^2(x) \frac{  \sin \left(  \left( \omega_{i} + \omega_{j} + \omega_{l} - \omega_{m}   \right) x \right) }{\tan(x)} dx \\
	& - 	\int_{0}^{\frac{\pi}{2}}  \cos^2(x) \frac{  \sin \left(  \left( \omega_{i} + \omega_{j} - \omega_{l} + \omega_{m}   \right) x \right) }{\tan(x)} dx 
	-  \int_{0}^{\frac{\pi}{2}}  \cos^2(x) \frac{  \sin \left(  \left( \omega_{i} + \omega_{j} - \omega_{l} - \omega_{m}   \right) x \right) }{\tan(x)} dx
	 \Bigg) \\
&=\frac{1}{8\omega_{m}} \left(\frac{2}{\sqrt{\pi}} \right)^4 \left( 4 \left(  \frac{\pi}{2} - \frac{\pi}{2} \right) + \sum_{\pm} \mathcal{O} \left( \frac{1}{\left( \omega_{i} \pm \omega_{j} \pm \omega_{l} \pm \omega_{m}   \right)^{N}}
	 \right)  \right) \\
	 & =\frac{1}{\omega_{m}} \sum_{\pm} \mathcal{O} \left( \frac{1}{\left( \omega_{i} \pm \omega_{j} \pm \omega_{l} \pm \omega_{m}   \right)^{N}} \right),	 
\end{align*}
whereas, for large values of $i,j,l,m$ such that some $\omega_{i} \pm \omega_{j}\pm \omega_{l} \pm \omega_{m} \longarrownot\longrightarrow \infty$,  Holder's inequality implies
\begin{align*}
	\left| A_{ijl}^{(m)} \right| &= \left| \int_{0}^{\frac{\pi}{2}} \frac{ e_{i}^{\prime}(x)}{\omega_{i}} \frac{e_{j}^{\prime}(x)}{\omega_{j}} e_{l}(x) 
     \left( 
     \int_{x}^{\frac{\pi}{2}} e_{m}(y) \sin(y)\cos(y) dy
     \right) \tan^2(x)dx\right| \\
	&=\left| \int_{0}^{\frac{\pi}{2}} \frac{ e_{i}^{\prime}(x)}{\omega_{i}} \tan(x) \frac{e_{j}^{\prime}(x)}{\omega_{j}} \tan(x) e_{l}(x) 
     \left( 
     \int_{x}^{\frac{\pi}{2}} e_{m}(y) \sin(y)\cos(y) dy
     \right) dx\right|\\
	&  \leq 
	 \left \| 
	\frac{e_{i}^{\prime}}{\omega_{i}} \tan
	\right \|_{L^{2}\left[0,\frac{\pi}{2}\right]}	
	 \left \| 
	\frac{e_{j}^{\prime}}{\omega_{j}} \tan
	\right \|_{L^{2}\left[0,\frac{\pi}{2}\right]}
	\left \| 
	e_{l} 
	\right\|_{L^{\infty}\left[0,\frac{\pi}{2}\right]}
	\left \| 
	 \int_{\cdot}^{\frac{\pi}{2}} e_{m}(y) \sin(y)\cos(y) dy
	\right \|_{L^{\infty}\left[0,\frac{\pi}{2}\right]} 	\lesssim \frac{\omega_{l}}{\omega_{m}}.
\end{align*}
Finally, for large values of $i,j,l,m$ and in the case where all $\omega_{i} \pm \omega_{j}\pm \omega_{l} \pm \omega_{m} \longrightarrow \infty$, 
\begin{align*}
	B_{ijl}^{(m)} &:= 
     \int_{0}^{\frac{\pi}{2}} e_{i}(x) e_{j}(x) e_{l}(x) 
     \left( 
     \int_{x}^{\frac{\pi}{2}} e_{m}(y) \sin(y)\cos(y) dy
     \right) \tan^2(x)dx\\
	& \simeq \left(\frac{2}{\sqrt{\pi}} \right)^4 \frac{1}{\omega_{m}} \int_{0}^{\frac{\pi}{2}}  \cos^2(x) \frac{ \sin(\omega_{i}x)  \sin(\omega_{j}x) \sin(\omega_{l}x) \cos(\omega_{m}x) }{\tan(x)} dx \\
	& = \frac{1}{8}\left(\frac{2}{\sqrt{\pi}} \right)^4 \frac{1}{\omega_{m}} \Bigg( 
	\int_{0}^{\frac{\pi}{2}}  \cos^2(x) \frac{ \sin \left(  \left( \omega_{i} - \omega_{j} + \omega_{l} + \omega_{m}   \right) x \right) }{\tan(x)} dx \\
	 &+  \int_{0}^{\frac{\pi}{2}}  \cos^2(x) \frac{ \sin \left(  \left( \omega_{i} - \omega_{j} + \omega_{l} - \omega_{m}   \right) x \right) }{\tan(x)} dx \\
	& - 	\int_{0}^{\frac{\pi}{2}}  \cos^2(x) \frac{  \sin \left(  \left( \omega_{i} - \omega_{j} - \omega_{l} + \omega_{m}   \right) x \right) }{\tan(x)} dx 
	-  \int_{0}^{\frac{\pi}{2}}  \cos^2(x) \frac{  \sin \left(  \left( \omega_{i} - \omega_{j} - \omega_{l} - \omega_{m}   \right) x \right) }{\tan(x)} dx \\
	& + 	\int_{0}^{\frac{\pi}{2}}  \cos^2(x) \frac{  \sin \left(  \left( \omega_{i} + \omega_{j} + \omega_{l} + \omega_{m}   \right) x \right) }{\tan(x)} dx 
	+  \int_{0}^{\frac{\pi}{2}}  \cos^2(x) \frac{  \sin \left(  \left( \omega_{i} + \omega_{j} + \omega_{l} - \omega_{m}   \right) x \right) }{\tan(x)} dx \\
	& - 	\int_{0}^{\frac{\pi}{2}}  \cos^2(x) \frac{  \sin \left(  \left( \omega_{i} + \omega_{j} - \omega_{l} + \omega_{m}   \right) x \right) }{\tan(x)} dx 
	-  \int_{0}^{\frac{\pi}{2}}  \cos^2(x) \frac{  \sin \left(  \left( \omega_{i} + \omega_{j} - \omega_{l} - \omega_{m}   \right) x \right) }{\tan(x)} dx
	 \Bigg) \\
&=\frac{1}{8\omega_{m}} \left(\frac{2}{\sqrt{\pi}} \right)^4 \left( 4 \left(  \frac{\pi}{2} - \frac{\pi}{2} \right) + \sum_{\pm} \mathcal{O} \left( \frac{1}{\left( \omega_{i} \pm \omega_{j} \pm \omega_{l} \pm \omega_{m}   \right)^{N}}
	 \right)  \right) \\
	 & =\frac{1}{\omega_{m}} \sum_{\pm} \mathcal{O} \left( \frac{1}{\left( \omega_{i} \pm \omega_{j} \pm \omega_{l} \pm \omega_{m}   \right)^{N}} \right),	 
\end{align*}
whereas, for large values of $i,j,l,m$ such that some $\omega_{i} \pm \omega_{j}\pm \omega_{l} \pm \omega_{m} \longarrownot\longrightarrow \infty$,  Holder's inequality implies
\begin{align*}
	\left| B_{ijl}^{(m)} \right| &= \left| \int_{0}^{\frac{\pi}{2}} e_{i}(x) e_{j}(x) e_{l}(x) 
     \left( 
     \int_{x}^{\frac{\pi}{2}} e_{m}(y) \sin(y)\cos(y) dy
     \right) \tan^2(x)dx\right| \\
	&=\left| \int_{0}^{\frac{\pi}{2}} e_{i}(x) \tan(x) e_{j}(x) \tan(x) e_{l}(x) 
     \left( 
     \int_{x}^{\frac{\pi}{2}} e_{m}(y) \sin(y)\cos(y) dy
     \right) dx\right|\\
	&  \leq 
	 \left \| 
	e_{i} \tan
	\right \|_{L^{2}\left[0,\frac{\pi}{2}\right]}	
	 \left \| 
	e_{j}  \tan
	\right \|_{L^{2}\left[0,\frac{\pi}{2}\right]}
	\left \| 
	e_{l} 
	\right\|_{L^{\infty}\left[0,\frac{\pi}{2}\right]}
	\left \| 
	 \int_{\cdot}^{\frac{\pi}{2}} e_{m}(y) \sin(y)\cos(y) dy
	\right \|_{L^{\infty}\left[0,\frac{\pi}{2}\right]} 	\lesssim \frac{\omega_{l}}{\omega_{m}},
\end{align*}
which completes the proof.
\end{proof}
\section{Perturbations 2: Finite sum with an error term}\label{SectionPerturbation2}
Since the series \eqref{series1}-\eqref{series4} may not converge, we can still assume that $(\Phi,\Pi,A,\delta)$ are all close to the AdS solution $(0,0,1,0)$ and expand $(\Phi,\Pi,A,\delta)$ using \eqref{finitesum1}-\eqref{finitesum5}. First, we solve at the linear level and obtain the periodic expressions
\begin{align*}
\begin{dcases}
    \Phi_{1}(\tau,x):=  \cos(\tau) e^{\prime}_{\gamma}(x), \\
    \Pi_{1}(\tau,x) := - \omega_{\gamma} \sin(\tau) e_{\gamma}(x),
     \\ 
	A_{2}(\tau,x) :=
	\cos^2(\tau) \Gamma_{1}(x)+
	\sin^2(\tau) \Gamma_{2}(x),\\
	 \delta_{2}(\tau,x):=
	-\cos^2(\tau) \Gamma_{3}(x) -
	\sin^2(\tau) \Gamma_{4}(x),
\end{dcases}
\end{align*}
where
\begin{align*}
& \Gamma_{1}(x):=\frac{\cos^3(x)}{\sin(x)}\int_{0}^{x}   \left( e^{\prime}_{0}(y)\right)^2 \tan^2(y) dy, \quad
 \Gamma_{2}(x):= \omega_{0}^2
	\frac{\cos^3(x)}{\sin(x)}\int_{0}^{x}  \left( e_{0}(y)\right)^2 \tan^2(y) dy, \\
&    \Gamma_{3}(x):= \int_{0}^{x}  \left( e^{\prime}_{0}(y)\right)^2 \sin(y)\cos(y) dy, \quad 
 \Gamma_{4}(x):=\omega_{0}^2
 \int_{0}^{x}  \left( e_{0}(y)\right)^2 \sin(y)\cos(y) dy.  
\end{align*}
For simplicity, we have fixed $\gamma = 0$ and use Lemma \ref{ClosedformulasFore} we compute
\begin{align*}
& \Gamma_{1}(x) = \frac{3 \cos^2(x)}{2 \pi \tan(x)} \Big( 12 x -3\sin(2x)-3\sin(4x)+\sin(6x)  \Big), \\
&	\Gamma_{2}(x)= - \frac{3 \cos^2(x)}{2 \pi \tan(x)} \Big( -12 x -3\sin(2x)+3\sin(4x)+\sin(6x)  \Big), \\
&    \Gamma_{3}(x) = \frac{3 \sin^4(x)}{2\pi} \Big( 25 +20\cos(x) +3\cos(4x) \Big), \\
&    \Gamma_{4}(x) = - \frac{9}{32\pi} \Big( 
-93 +56 \cos(2x) + 28 \cos(4x) +8 \cos(6x) +\cos(8x)
\Big).  
\end{align*}
For future reference, observe that
\begin{align}\label{LinftyGamma}
 	\left \| \Gamma_{a} \right \|_{L^{\infty}\left[0,\frac{\pi}{2} \right]} \lesssim 1, \quad \forall a \in \{1,2,3,4 \}.
\end{align}
Second, we get the following non-linear system for the error terms $(\Psi,\Sigma,B,\Theta)$,
\begin{align*}
	(\omega_{0} + \epsilon^{2} \theta_{0}+\epsilon^{4} \eta_{0})\partial_{\tau}\Psi  & =  ( \theta_{0}+\epsilon^{2} \eta_{0}) \sin(\tau) e_{0}^{\prime}(x)  + \partial_{x} \Bigg( \Sigma +\omega_{0}\sin(\tau)e_{0}(x)(A_{2}+\delta_{2})   \Bigg) \\
	& + \epsilon^{2} \partial_{x} \Bigg( -\Sigma(A_{2}+\delta_{2}) -\omega_{0} \sin(\tau) e_{0}(x) ( -B + \Theta + A_{2}\delta_{2} )  \Bigg) \\
	& + \epsilon^{4} \partial_{x} \Bigg( \Sigma (-B+\Theta+A_{2} \delta_{2}) -\omega_{0}\sin(\tau)e_{0} (-\Theta A_{2} +B \delta_{2} ) \Bigg) \\
	& + \epsilon^{6} \partial_{x} \Bigg( \Sigma (-\Theta A_{2} +B \delta_{2}) + \omega_{0}\sin(\tau)e_{0} B \Theta \Bigg) \\
	& - \epsilon^{8} \partial_{x} \Big( \Sigma \Theta B \Big),
\end{align*}
\begin{align*}
(\omega_{0} + \epsilon^{2}& \theta_{0}+\epsilon^{4} \eta_{0})\partial_{\tau}\Sigma  =   \omega_{0} (\theta_{0}+\epsilon^{2} \eta_{0}) \cos(\tau) e_{0} \\
&+ \Bigg( \frac{4}{\sin(2x)} \left( \Psi - \cos(\tau) e_{0}^{\prime}(A_{2}+\delta_{2}) \right) -\cos(\tau) \left( e_{0}^{\prime} (A_{2}+\delta_{2}) \right)^{\prime} + \partial_{x}\Psi \Bigg)  \\
	& + \epsilon^{2} \Bigg( \frac{4}{\sin(2x)} \left( -\Psi (A_{2}+\delta_{2}) +\cos(\tau) e_{0}^{\prime} (-B+\Theta + A_{2} \delta_{2}) \right) -\left( \Psi (A_{2}+\delta_{2}) \right)^{\prime} \\
	& + \cos(\tau) \left( e_{0}^{\prime} (-B+\Theta+A_{2}\delta_{2}) \right)^{\prime} \Bigg) \\
	& + \epsilon^{4} \Bigg( \frac{4}{\sin(2x)} \left( \Psi (-B+\Theta +A_{2}\delta_{2}) + \cos(\tau) e_{0}^{\prime} (-A_{2}\Theta + \delta_{2}B) \right)  \\
	& + \cos(\tau) \left( e_{0}^{\prime}(\delta_{2}B-A_{2}\Theta) \right)^{\prime} + \left( \Psi(-B+\Theta+A_{2}\delta_{2}) \right)^{\prime}  \Bigg) \\
	& + \epsilon^{6} \Bigg( \frac{4}{\sin(2x)} \left( \Psi (-\Theta A_{2} + \delta_{2}B) - \cos(\tau) e_{0}^{\prime} B \Theta \right)  - \cos(\tau) \left( e_{0}^{\prime} B \Theta \right)^{\prime} + \left(\delta_{2} \Psi B \right)^{\prime}   - \left(A_{2} \Psi \Theta \right)^{\prime}  \Bigg) \\
	& - \epsilon^{8} \Bigg(  \frac{4}{\sin(2x)} B \Theta \Psi + \left( B \Theta \Psi  \right)^{\prime}  \Bigg),
\end{align*}
\begin{align*}
	\partial_{x} B & + \frac{1+2 \sin^2(x)}{\sin(x)\cos(x)} B  =  \frac{\sin(2x)}{2} \Bigg( 2 \cos(\tau) e_{0}^{\prime} \Psi - 2 \omega_{0} \sin(\tau) e_{0} \Sigma - A_{2} \cos^2(\tau) (e_{0}^{\prime})^2 - \omega_{0}^2 A_{2} \sin^2(\tau) e_{0}^2 \Bigg)  \\
	& +\frac{\sin(2x)}{2} \epsilon^2 \Bigg( \Psi^2 + \Sigma^2 - 2 A_{2} \cos(\tau) e_{0}^{\prime} \Psi + 2 \omega_{0} A_{2} \sin(\tau)e_{0}  \Sigma - \cos^2(\tau)  (e_{0}^{\prime})^2 B - \omega_{0}^2 \sin^2(\tau) (e_{0} )^2 B \Bigg) \\
	& +\frac{\sin(2x)}{2} \epsilon^4 \Bigg( -A_{2} \Psi^2  -A_{2} \Sigma^2 -2 \cos(\tau) e_{0}^{\prime} B \Psi + 2 \omega_{0} \sin(\tau) e_{0}  B \Sigma \Bigg) \\
	& -\frac{\sin(2x)}{2} \epsilon^6 \Big( B \left( \Psi^2 + \Sigma^2 \right) \Big), 
\end{align*}
\begin{align*}
	\partial_{x}\Theta & = \frac{\sin(2x)}{2} \Bigg( 2 \cos(\tau) e_{0}^{\prime} \Psi - 2 \omega_{0}\sin(\tau) e_{0} \Sigma - \delta_{2} \cos^{2}(\tau) (e_{0}^{\prime})^2 - \omega_{0}^2 \delta_{2} \sin^{2} (\tau) e_{0}^2 \Bigg) \\
	& + \frac{\sin(2x)}{2} \epsilon^{2} \Bigg( \Psi^2+\Sigma^2 - 2\delta_{2} \cos(\tau) e_{0}^{\prime} \Psi + 2 \omega_{0} \sin(\tau) e_{0} \delta_{2} \Sigma + \cos^2(\tau) (e_{0}^{\prime})^2 \Theta + \omega_{0}^2 \sin^2(\tau) e_{0}^2   \Bigg) \\
	& + \frac{\sin(2x)}{2} \epsilon^{4} \Bigg( -\delta_{2} \Psi^2 - \delta_{2} \Sigma^2 + 2 \cos(\tau ) e_{0}^{\prime} \Theta \Psi - 2 \omega_{0} \sin(\tau) e_{0} \Theta \Sigma \Bigg) \\
	& +\frac{\sin(2x)}{2} \epsilon^{6} \Big( \Theta \left( \Psi^2 + \Sigma^2 \right) \Big).
\end{align*}
\subsection{Definition of the Fourier constants}\label{SectionPerturbation2DefinitionFourier}
As before, we expand the error terms $(\Psi, \Sigma, B, \Theta)$ in terms of the eigenfunctions of the linearized operator as follows
\begin{align*}
	\Psi(\tau,x) = \sum_{i=0}^{\infty} \psi_{i}(\tau)  \frac{e_{i}^{\prime}(x)}{\omega_{i}}, \quad  \Sigma(\tau,x) = \sum_{i=0}^{\infty} \sigma_{i}(\tau) e_{i}(x), \\
    B(\tau,x) = \sum_{i=0}^{\infty} b _{i}(\tau) e_{i}(x),\quad \Theta(\tau,x) = \sum_{i=0}^{\infty} \xi _{i}(\tau) e_{i}(x).
\end{align*}
After substituting these expressions into the equations above, we take the inner product $(\cdot | e_{j}^{\prime})$ (for the equation for $\Psi$) and $(\cdot | e_{j})$ (for the equations for $\Sigma,B$ and $\Theta$) in both sides. A long but straightforward  computation yields that all the interactions with respect the space variable $x$ are included in the following Fourier constants:
\begin{align*}
	\mathbb{C}_{abij}& := \int_{0}^{\frac{\pi}{2}} \left( \Gamma_{a}(x) - \Gamma_{b}(x) \right) e_{i}(x) e_{j}(x) \tan^2(x) dx,
	 \\
	\mathbb{D}_{abij}& := \int_{0}^{\frac{\pi}{2}}  \Gamma_{a}(x)  \Gamma_{b}(x) e_{i}(x) e_{j}(x) \tan^2(x) dx, \\
	\mathbb{E}_{1423ij}& := \int_{0}^{\frac{\pi}{2}} \left( \Gamma_{1}(x)\Gamma_{4}(x) - \Gamma_{2}(x)\Gamma_{3}(x) \right) e_{i}(x) e_{j}(x) \tan^2(x) dx \\
	\mathbb{G}_{jki}& :=  \int_{0}^{\frac{\pi}{2}}  e_{i}(x) e_{j}(x) e_{k}(x) \tan^2(x) dx, \\
\mathbb{F}_{ajki}& := \int_{0}^{\frac{\pi}{2}} \Gamma_{a}(x)  e_{i}(x) e_{j}(x) e_{k}(x) \tan^2(x) dx, \\
	\mathbb{H}_{ijkl}& :=  \int_{0}^{\frac{\pi}{2}}   e_{i}(x) e_{j}(x) e_{k}(x) e_{l}(x) \tan^2(x) dx,
\end{align*}
for the equation for $\Psi$, 
\begin{align*}
\overline{\mathbb{C}}_{abij}& := \int_{0}^{\frac{\pi}{2}} \left( \Gamma_{a}(x) - \Gamma_{b}(x) \right) \frac{e_{i}^{\prime}(x)}{\omega_{i}} \frac{e_{j}^{\prime}(x)}{\omega_{j}} \tan^2(x) dx, \\
\overline{\mathbb{D}}_{abij} &:= \int_{0}^{\frac{\pi}{2}}  \Gamma_{a}(x)  \Gamma_{b}(x) \frac{ e_{i}^{\prime}(x)}{\omega_{i}} \frac{e_{j}^{\prime}(x)}{\omega_{j}} \tan^2(x) dx ,\\
	\overline{\mathbb{E}}_{1423ij} &:= \int_{0}^{\frac{\pi}{2}} \left( \Gamma_{1}(x)\Gamma_{4}(x) - \Gamma_{2}(x)\Gamma_{3}(x) \right)  \frac{e_{i}^{\prime}(x)}{\omega_{i}} \frac{e_{j}^{\prime}(x)}{\omega_{j}} \tan^2(x) dx ,\\
	\overline{\mathbb{G}}_{kji} &:= \int_{0}^{\frac{\pi}{2}}  e_{j}(x) \frac{ e_{i}^{\prime}(x)}{\omega_{i}} \frac{e_{k}^{\prime}(x)}{\omega_{k}} \tan^2(x) dx ,\\
\overline{\mathbb{F}}_{akji} &:= \int_{0}^{\frac{\pi}{2}} \Gamma_{a}(x) e_{j}(x)  \frac{e_{i}^{\prime}(x)}{\omega_{i}} \frac{e_{k}^{\prime}(x)}{\omega_{k}} \tan^2(x) dx ,\\
	\overline{\mathbb{H}}_{kjli} &:= \int_{0}^{\frac{\pi}{2}} e_{l}(x) e_{j}(x) \frac{ e_{i}^{\prime}(x)}{\omega_{i}} \frac{e_{k}^{\prime}(x)}{\omega_{k}} \tan^2(x) dx, 
\end{align*}
for the equation for $\Sigma$, 
\begin{align*}
	\mathbb{J}_{jki} & :=\int_{0}^{\frac{\pi}{2}} \frac{e_{j}^{\prime}(x)}{\omega_{j}} \frac{e_{k}^{\prime}(x)}{\omega_{k}} \frac{e_{i}^{\prime}(x)}{\omega_{i}}  \frac{\sin^3(x)}{\cos(x)} dx ,\\
	\mathbb{I}_{jki} & :=\int_{0}^{\frac{\pi}{2}} e_{j}(x) e_{k}(x) \frac{e_{i}^{\prime}(x)}{\omega_{i}}  \frac{\sin^3(x)}{\cos(x)} dx ,\\
	\mathbb{P}_{ajki} & :=\int_{0}^{\frac{\pi}{2}} \Gamma_{a}(x) \frac{e_{j}^{\prime}(x)}{\omega_{j}} \frac{e_{k}^{\prime}(x)}{\omega_{k}} \frac{e_{i}^{\prime}(x)}{\omega_{i}}  \frac{\sin^3(x)}{\cos(x)} dx ,\\
	\mathbb{Q}_{ajki} & :=\int_{0}^{\frac{\pi}{2}} \Gamma_{a}(x) e_{j}(x) e_{k}(x) \frac{e_{i}^{\prime}(x)}{\omega_{i}}  \frac{\sin^3(x)}{\cos(x)} dx ,\\
		\mathbb{R}_{klji} & :=\int_{0}^{\frac{\pi}{2}} e_{j}(x) \frac{e_{k}^{\prime}(x)}{\omega_{k}}  \frac{ e_{l}^{\prime}(x)}{\omega_{l}} \frac{e_{i}^{\prime}(x)}{\omega_{i}} \frac{\sin^3(x)}{\cos(x)} dx ,\\
			\mathbb{S}_{jkli} & :=\int_{0}^{\frac{\pi}{2}} e_{j}(x) e_{k}(x) e_{l}(x) \frac{e_{i}^{\prime}(x)}{\omega_{i}} \frac{\sin^3(x)}{\cos(x)} dx,
\end{align*}
for the equation for $\Theta$, and finally
\begin{align*}
	\overline{\mathbb{J}}_{jki} & :=\int_{0}^{\frac{\pi}{2}} \frac{e_{j}^{\prime}(x)}{\omega_{j}} \frac{e_{k}^{\prime}(x)}{\omega_{k}} \left( \int_{x}^{\frac{\pi}{2}} e_{i}(y) \sin(y) \cos(y) dy \right) \tan^2(x) dx ,\\
	\overline{\mathbb{I}}_{jki} & :=\int_{0}^{\frac{\pi}{2}} e_{j}(x) e_{k}(x) \left( \int_{x}^{\frac{\pi}{2}} e_{i}(y) \sin(y) \cos(y) dy \right) \tan^2(x) dx ,\\
	 \overline{\mathbb{P}}_{ajki} & :=\int_{0}^{\frac{\pi}{2}} 
	\Gamma_{a} (x) \frac{ e_{j}^{\prime}(x)}{\omega_{j}}  \frac{e_{k}^{\prime}(x)}{\omega_{k}}   \left( \int_{x}^{\frac{\pi}{2}} e_{i}(y) \sin(y) \cos(y) dy \right) \tan^2(x) dx ,\\
	 \overline{\mathbb{Q}}_{ajki} & :=\int_{0}^{\frac{\pi}{2}} 
	\Gamma_{a} (x)  e_{j}(x)   e_{k}(x)  \left( \int_{x}^{\frac{\pi}{2}} e_{i}(y) \sin(y) \cos(y) dy \right) \tan^2(x) dx ,\\
		 \overline{\mathbb{R}}_{klji} & :=\int_{0}^{\frac{\pi}{2}} 
	 e_{j}(x)  \frac{ e_{k}^{\prime} (x)}{\omega_{k}} \frac{e_{l}^{\prime}  (x)}{\omega_{l}}  \left( \int_{x}^{\frac{\pi}{2}} e_{i}(y) \sin(y) \cos(y) dy \right) \tan^2(x) dx ,\\
	  \overline{\mathbb{S}}_{jkli} & :=\int_{0}^{\frac{\pi}{2}} 
	 e_{j}(x)   e_{k} (x) e_{l}  (x)  \left( \int_{x}^{\frac{\pi}{2}} e_{i}(y) \sin(y) \cos(y) dy \right) \tan^2(x) dx,
\end{align*}
for the equation for $B$. In addition, the non-linear system for the error terms boils down to 
\begin{align*}
	\frac{d}{d \tau} \psi_{i}(\tau) - \frac{\omega_{i}}{\omega_{0}+\theta_{0} \epsilon^2 +\eta_{0} \epsilon^4} \sigma_{i}(\tau) & = \frac{1}{\omega_{0}+\theta_{0} \epsilon^2+\eta_{0} \epsilon^4} \frac{ \prescript{\psi}{}{F}_{i}(\tau) }{\omega_{i}}\\
	& + \frac{1}{\omega_{0}+\theta_{0} \epsilon^2++\eta_{0} \epsilon^4}  \frac{\prescript{\psi}{}{\mathfrak{N}}_{i}(\psi(\tau),\sigma(\tau),b(\tau),\xi(\tau))}{\omega_{i}}, \\
	\frac{d}{d \tau} \sigma_{i}(\tau) + \frac{\omega_{i}}{\omega_{0}+\theta_{0} \epsilon^2+\eta_{0} \epsilon^4} \psi_{i}(\tau) & = \frac{1}{\omega_{0}+\theta_{0} \epsilon^2+\eta_{0} \epsilon^4}  \prescript{\sigma}{}{F}_{i}(\tau) \\
	& +  \frac{1}{\omega_{0}+\theta_{0} \epsilon^2+\eta_{0} \epsilon^4}\prescript{\sigma}{}{\mathfrak{N}}_{i}(\psi(\tau),\sigma(\tau),b(\tau),\xi(\tau)), 
\end{align*}
subject to the constraints
\begin{align*}
		b_{i} (\tau)& =  \prescript{b}{}{S}_{i}(\tau) + \prescript{b}{}{\mathfrak{N}}_{i}(\psi(\tau),\sigma(\tau),b(\tau),\xi(\tau)) ,\\
	\xi_{i} (\tau)& = \prescript{\xi}{}{S}_{i}(\tau) + \prescript{\xi}{}{\mathfrak{N}}_{i}(\psi(\tau),\sigma(\tau),b(\tau),\xi(\tau)).
\end{align*}
Here, the source terms are given explicitly in terms of the Fourier constants by 
\begin{align*}	 
	\frac{\prescript{\psi}{}{F}_{i}(\tau)}{\omega_{i}} & = \omega_{i} \Big( \frac{\omega_{0}^2\theta_{0}\delta_{0,i}}{\omega_{i}^2} \sin(\tau) + \omega_{0}  \mathbb{C}_{13 0 i} \sin(\tau) \cos^2(\tau) + \omega_{0} \mathbb{C}_{240 i} \sin^3(\tau)  \Big) \\
	&+ \omega_{i} \epsilon^2 \Big( \frac{\omega_{0}^2\eta_{0}\delta_{0,i}}{\omega_{i}^2} \sin(\tau)+ \omega_{0}  \mathbb{D}_{130 i }\sin(\tau) \cos^{4}(\tau) + \omega_{0}  \mathbb{D}_{24 0 i} \sin^{5} (\tau) + \omega_{0} \mathbb{E}_{1423 0 i} \sin^3(\tau) \cos^2(\tau) \Big),\\
	\prescript{\sigma}{}{F}_{i}(\tau) & = \omega_{0} \omega_{i}\Big( \frac{\theta_{0}\delta_{0,i}}{\omega_{i}} \cos(\tau) +  \overline{\mathbb{C}}_{13 0 i} \cos^3(\tau) + \overline{\mathbb{C}}_{240 i} \cos(\tau) \sin^2(\tau)  \Big) \\
	&+\omega_{0} \omega_{i} \epsilon^2 \Big( \frac{\eta_{0}\delta_{0,i}}{\omega_{i}} \cos(\tau)+  \overline{\mathbb{D}}_{130 i }\cos^{5}(\tau) + \overline{\mathbb{D}}_{24 0 i} \cos^{4}(\tau) \sin^{4} (\tau) + \overline{\mathbb{E}}_{1423 0 i} \cos^3(\tau) \sin^2(\tau) \Big),\\	
\prescript{b}{}{S}_{i}(\tau) & = \omega_{0}^2 \Big( - \cos^4(\tau)  \overline{\mathbb{P}}_{100i} - \sin^2(\tau) \cos^2(\tau) \overline{\mathbb{P}}_{200i} - \sin^2(\tau) \cos^2(\tau) \overline{\mathbb{Q}}_{100i} - \sin^4(\tau) \overline{\mathbb{Q}}_{200i} \Big), \\
	\prescript{\xi}{}{S}_{i}(\tau) & = \frac{\omega_{0}^2}{\omega_{i}} \Big( \cos^4(\tau) \mathbb{P}_{300i} + \cos^2(\tau)  \sin^2(\tau) \mathbb{P}_{400i} + \sin^4(\tau) \mathbb{Q}_{400i} + \cos^2(\tau)  \sin^2(\tau) \mathbb{Q}_{300i} \Big), 
\end{align*}
where $\delta_{0,i}$ stands for the Kronecker's delta, whereas the linear and non-linear terms are given by
\begin{align*}	
	&\prescript{\psi}{}{\mathfrak{N}}_{i}(\psi(\tau),\sigma(\tau),b(\tau),\xi(\tau)) = \epsilon^2 \Bigg( -\cos^2(\tau) \sum _{j=0}^{\infty} \omega_{i}^2 \mathbb{C}_{13ji} \sigma_{j}(\tau) -\sin^2(\tau) \sum _{j=0}^{\infty} \omega_{i}^2 \mathbb{ C}_{24ji} \sigma_{j}(\tau) \\
	& -\omega_{0} \sin(\tau) \sum _{j=0}^{\infty} \omega_{i}^2 \mathbb{G}_{0 ji} ( \xi_{j}(\tau)-b_{j}(\tau))    \Bigg) \\
	& + \epsilon^4 \Bigg( -\cos^{4}(\tau) \sum _{j=0}^{\infty} \omega_{i}^2 \mathbb{D}_{13ji} \sigma_{j}(\tau)
	 - \cos^{2}(\tau) \sin^{2}(\tau) \sum _{j=0}^{\infty} \omega_{i}^2 \mathbb{E}_{1423ji} \sigma_{j}(\tau)+ \sin^{4}(\tau) \sum _{j=0}^{\infty} \omega_{i}^2 \mathbb{D}_{24ji} \sigma_{j}(\tau) \\
	 & + \omega_{0} \sin(\tau) \cos^{2}(\tau) \sum _{j=0}^{\infty} \omega_{i}^2 \mathbb{F}_{10 ji} \xi_{j}(\tau) + \omega_{0} \sin^{3}(\tau) \sum _{j=0}^{\infty} \omega_{i}^2 \mathbb{F}_{20 ji} \xi_{j}(\tau) + \omega_{0} \sin(\tau) \cos^{2}(\tau) \sum _{j=0}^{\infty} \omega_{i}^2 \mathbb{F}_{3 0 ji} b_{j}(\tau) \\
	& + \omega_{0} \sin^{3}(\tau) \sum _{j=0}^{\infty} \omega_{i}^2 \mathbb{F}_{40 ji} b_{j}(\tau) + \sum _{j,k=0}^{\infty} \omega_{i}^2 \mathbb{G}_{jki} \sigma_{j}(\tau)(-b_{k}(\tau)+\xi_{k}(\tau))  \Bigg) \\
	& + \epsilon^6 \Bigg( \omega_{0} \sin(\tau) \sum _{j,k=0}^{\infty} \omega_{i}^2 \mathbb{H}_{0 jki} b_{j}(\tau) \xi_{k}(\tau) - \sin^{2}(\tau) \sum _{j,k=0}^{\infty} \omega_{i}^2 \mathbb{F}_{2jki} \xi_{j}(\tau)\sigma_{k}(\tau)  - \sin^{2}(\tau) \sum _{j,k=0}^{\infty} \omega_{i}^2 \mathbb{F}_{4jki} \sigma_{j}(\tau)b_{k}(\tau) \\
	& - \cos^{2}(\tau) \sum _{j,k=0}^{\infty} \omega_{i}^2 \mathbb{F}_{1jki} \sigma_{j}(\tau)\xi_{k}(\tau)  - \cos^{2}(\tau) \sum _{j,k=0}^{\infty} \omega_{i}^2 \mathbb{F}_{3jki} \sigma_{j}(\tau)b_{k}(\tau) \Bigg) \\
	& + \epsilon^8 \Bigg( - \sum_{j,k,l=0}^{\infty} \omega_{i}^2 \mathbb{H}_{jkli} \xi_{j}(\tau) \sigma_{k}(\tau) b_{l}(\tau) \Bigg),
\end{align*}
\begin{align*}	
	& \prescript{\sigma}{}{\mathfrak{N}}_{i}(\psi(\tau),\sigma(\tau),b(\tau),\xi(\tau)) = \epsilon^2 \Bigg( \cos^2(\tau) \sum _{j=0}^{\infty} \omega_{i}  \overline{\mathbb{C}}_{13ji} \psi_{j}(\tau) +\sin^2(\tau) \sum _{j=0}^{\infty}  \omega_{i} \overline{\mathbb{C}}_{24ji} \psi_{j}(\tau) \\
	&  - \cos(\tau) \sum _{j=0}^{\infty} \omega_{0} \omega_{i} \overline{\mathbb{G}}_{0 ji} (\xi_{j}(\tau)-b_{j}(\tau))     \Bigg) \\
	& + \epsilon^4 \Bigg( \cos^{4}(\tau) \sum _{j=0}^{\infty} \omega_{i} \overline{\mathbb{D}}_{13ji} \psi_{j}(\tau) + \cos^{2}(\tau) \sin^{2}(\tau) \sum _{j=0}^{\infty} \omega_{i}  \overline{\mathbb{E}}_{1423ji} \psi_{j}(\tau) + \sin^{4}(\tau) \sum _{j=0}^{\infty} \omega_{i}  \overline{\mathbb{D}}_{24ji} \psi_{j}(\tau) \\
	& + \sin^{2}(\tau) \cos(\tau) \sum _{j=0}^{\infty} \omega_{0} \omega_{i}\overline{\mathbb{F}}_{20 ji} \xi_{j}(\tau) +  \cos^{3}(\tau) \sum _{j=0}^{\infty} \omega_{0} \omega_{i}\overline{\mathbb{F}}_{10 ji} \xi_{j}(\tau) + \cos^{3}(\tau) \sum _{j=0}^{\infty} \omega_{0} \omega_{i} \overline{\mathbb{F}}_{3 0 ji} b_{j}(\tau) \\
	& + \cos(\tau) \sin^{2}(\tau) \sum _{j=0}^{\infty}\omega_{0} \omega_{i} \overline{\mathbb{F}}_{40 ji} b_{j}(\tau) + \sum _{j,k=0}^{\infty} \omega_{i} \overline{\mathbb{G}}_{jki} \psi_{j}(\tau)(b_{k}(\tau)-\xi_{k}(\tau))  \Bigg) \\
	& + \epsilon^6 \Bigg(  \cos(\tau) \sum _{j,k=0}^{\infty} \omega_{0} \omega_{i} \overline{\mathbb{H}}_{0 jki} b_{j}(\tau) \xi_{k}(\tau) + \cos^{2}(\tau) \sum _{j,k=0}^{\infty} \omega_{i}\overline{\mathbb{F}}_{1jki} \xi_{k}(\tau) \psi_{j}(\tau) + \cos^{2}(\tau) \sum _{j,k=0}^{\infty} \omega_{i} \overline{\mathbb{F}}_{3jki} \psi_{j}(\tau) b_{k}(\tau) \\
	& + \sin^{2}(\tau) \sum _{j,k=0}^{\infty} \omega_{i}\overline{\mathbb{F}}_{2jki} \psi_{j}(\tau) \xi_{k}(\tau) + \sin^{2}(\tau) \sum _{j,k=0}^{\infty}  \omega_{i} \overline{\mathbb{F}}_{4jki} \psi_{j}(\tau) b_{k}(\tau) \Bigg) \\
	& + \epsilon^8 \Bigg(  \sum_{j,k,l=0}^{\infty} \omega_{i} \overline{\mathbb{H}}_{ljki} \xi_{k}(\tau) \psi_{l}(\tau) b_{j}(\tau) \Bigg),
\end{align*}
\begin{align*}	
	& \prescript{b}{}{\mathfrak{N}}_{i}(\psi(\tau),\sigma(\tau),b(\tau),\xi(\tau))  = \Bigg( 2 \cos(\tau) \sum _{j=0}^{\infty} \omega_{0}\overline{\mathbb{J}}_{0ji} \psi_{j}(\tau) - 2 \omega_{0} \sin(\tau) \sum _{j=0}^{\infty} \overline{\mathbb{I}}_{0ji} \sigma_{j}(\tau) \Bigg) \\
	& + \epsilon^2 \Bigg( - 2 \cos^3(\tau) \sum _{j=0}^{\infty} \omega_{0}\overline{\mathbb{P}}_{10ji} \psi_{j}(\tau) - 2 \cos(\tau) \sin^2(\tau) \sum _{j=0}^{\infty} \omega_{0}  \overline{\mathbb{P}}_{20ji} \psi_{j}(\tau) \\
	& + 2  \omega_{0} \cos^2(\tau) \sin(\tau) \sum _{j=0}^{\infty} \overline{\mathbb{Q}}_{10ji} \sigma_{j}(\tau) + 2 \omega_{0} \sin^3(\tau) \sum _{j=0}^{\infty} \overline{\mathbb{Q}}_{20ji} \sigma_{j}(\tau) - \cos^2(\tau) \sum _{j=0}^{\infty} \omega_{0}^2 \overline{\mathbb{R}}_{00ji} b_{j}(\tau) \\
	& -  \sin^2(\tau) \sum _{j=0}^{\infty} \omega_{0}^2 \overline{\mathbb{S}}_{00ji} b_{j}(\tau) +  \sum _{j,k=0}^{\infty}  \overline{\mathbb{J}}_{jki} \psi_{j}(\tau) \psi_{k}(\tau) +  \sum _{j,k=0}^{\infty} \overline{\mathbb{I}}_{jki} \sigma_{j}(\tau)\sigma_{k}(\tau)  \Bigg) \\
	& + \epsilon^4 \Bigg( -\cos^2(\tau)  \sum _{j,k=0}^{\infty}  \overline{\mathbb{P}}_{1jki} \psi_{j}(\tau) \psi_{k}(\tau) -\sin^2(\tau)  \sum _{j,k=0}^{\infty} \overline{\mathbb{P}}_{2jki} \psi_{j}(\tau) \psi_{k}(\tau) \\
	& - 2 \cos (\tau)  \sum _{j,k=0}^{\infty} \omega_{0}  \overline{\mathbb{R}}_{0kji} \psi_{k}(\tau) b_{j}(\tau) -\cos^2(\tau)  \sum _{j,k=0}^{\infty} \overline{\mathbb{Q}}_{1jki} \sigma_{j}(\tau) \sigma_{k}(\tau) \\
	& -\sin^2(\tau)  \sum _{j,k=0}^{\infty} \overline{\mathbb{Q}}_{2jki} \sigma_{j}(\tau) \sigma_{k}(\tau) +2  \sin(\tau)  \sum _{j,k=0}^{\infty}\omega_{0} \overline{\mathbb{S}}_{0jki} \sigma_{k}(\tau)  b_{j}(\tau) \Bigg) \\
	& + \epsilon^6 \Bigg( -   \sum _{j,k,l=0}^{\infty}  \overline{\mathbb{R}}_{klji} \psi_{k}(\tau) \psi_{l}(\tau) b_{j} (\tau) -   \sum _{j,k,l=0}^{\infty} \overline{\mathbb{S}}_{jkli} \sigma_{k}(\tau) \sigma_{l}(\tau) b_{j} (\tau) \Bigg),
\end{align*}
\begin{align*}	
	& \prescript{\xi}{}{\mathfrak{N}}_{i}(\psi(\tau),\sigma(\tau),b(\tau),\xi(\tau))  = \Bigg( 2 \cos(\tau) \sum _{j=0}^{\infty} \omega_{0}  \frac{\mathbb{J}_{0ji}}{\omega_{i}} \psi_{j}(\tau) - 2 \omega_{0} \sin(\tau) \sum _{j=0}^{\infty} \frac{\mathbb{I}_{0ji}}{\omega_{i}} \sigma_{j}(\tau) \Bigg) \\
	& + \epsilon^2 \Bigg(  2 \cos^3(\tau) \sum _{j=0}^{\infty} \omega_{0}  \frac{\mathbb{P}_{30ji}}{\omega_{i}} \psi_{j}(\tau) + 2 \cos(\tau) \sin^2(\tau) \sum _{j=0}^{\infty} \omega_{0}  \frac{\mathbb{P}_{40ji}}{\omega_{i}} \psi_{j}(\tau) -2  \omega_{0} \sin^3(\tau) \sum _{j=0}^{\infty}  \frac{\mathbb{Q}_{40ji}}{\omega_{i}} \sigma_{j}(\tau) \\
	&- 2 \omega_{0} \cos^2(\tau )\sin(\tau) \sum _{j=0}^{\infty} \frac{\mathbb{Q}_{30ji}}{\omega_{i}} \sigma_{j}(\tau) + \cos^2(\tau) \sum _{j=0}^{\infty} \omega_{0}^2 \frac{\mathbb{R}_{00ji}}{\omega_{i}} \xi_{j}(\tau) + \omega_{0}^2 \sin^2(\tau) \sum _{j=0}^{\infty} \omega_{i}\frac{\mathbb{S}_{00ji}}{\omega_{i}^2} \xi_{j}(\tau) \\
	& +  \sum _{j,k=0}^{\infty}  \frac{\mathbb{J}_{jki}}{\omega_{i}} \psi_{j}(\tau) \psi_{k}(\tau) +  \sum _{j,k=0}^{\infty}  \frac{\mathbb{I}_{jki}}{\omega_{i}} \sigma_{j}(\tau)\sigma_{k}(\tau)  \Bigg) \\
	& + \epsilon^4 \Bigg( \cos^2(\tau)  \sum _{j,k=0}^{\infty}  \frac{\mathbb{P}_{3jki}}{\omega_{i}} \psi_{j}(\tau) \psi_{k}(\tau) + \sin^2(\tau)  \sum _{j,k=0}^{\infty}  \frac{\mathbb{P}_{4jki}}{\omega_{i}} \psi_{j}(\tau) \psi_{k}(\tau) + 2 \cos (\tau)  \sum _{j,k=0}^{\infty} \omega_{0}  \frac{\mathbb{R}_{0kji}}{\omega_{i}} \psi_{k}(\tau) \xi_{j}(\tau)\\
	& +\cos^2(\tau)  \sum _{j,k=0}^{\infty} \frac{\mathbb{Q}_{3jki}}{\omega_{i}} \sigma_{j}(\tau) \sigma_{k}(\tau) + \sin^2(\tau)  \sum _{j,k=0}^{\infty} \frac{\mathbb{Q}_{4jki}}{\omega_{i}} \sigma_{j}(\tau) \sigma_{k}(\tau) - 2 \omega_{0} \sin(\tau)  \sum _{j,k=0}^{\infty}  \frac{\mathbb{S}_{0jki}}{\omega_{i}} \sigma_{k}(\tau)  \xi{j}(\tau) \Bigg) \\
	& + \epsilon^6 \Bigg( -   \sum _{j,k,l=0}^{\infty}  \frac{\mathbb{R}_{klji}}{\omega_{i}} \psi_{k}(\tau)\psi_{l}(\tau) \xi_{j} (\tau) +   \sum _{j,k,l=0}^{\infty}  \frac{\mathbb{S}_{jkli}}{\omega_{i}} \sigma_{k}(\tau) \sigma_{l}(\tau) \xi_{j} (\tau) \Bigg).
\end{align*}
\subsection{Approximate periodic solution and small divisors}\label{giaAppendix}
Similarly to the first approach with the infinite sum, the linear and homogeneous part of the ordinary differential equation for $(\psi_{i},\sigma_{i})$ is simply the equation for the harmonic oscillator and hence we can use the variational constants formula to solve it. We find the fixed-point formulation
\begin{align*}
	\psi_{i}(\tau) = \prescript{\psi}{}{S}_{i}(\tau) + \int_{0}^{\tau} \Bigg( &  \frac{\cos\left( \frac{\omega_{i} (\tau-s)}{\omega_{0} +\theta_{0}\epsilon^2+ \eta_{0}\epsilon^4} \right)}{\omega_{0} +\theta_{0}\epsilon^2+ \eta_{0}\epsilon^4} \frac{\prescript{\psi}{}{\mathfrak{N}}_{i}(\psi(s),\sigma(s),b(s),\xi(s))}{\omega_{i}} \\
	& + \frac{\sin\left( \frac{\omega_{i} (\tau-s)}{\omega_{0} +\theta_{0}\epsilon^2+ \eta_{0}\epsilon^4} \right)}{\omega_{0} +\theta_{0}\epsilon^2+ \eta_{0}\epsilon^4} \prescript{\sigma}{}{\mathfrak{N}}_{i}(\psi(s),\sigma(s),b(s),\xi(s)) \Bigg) ds, \\
	\sigma_{i}(\tau) = \prescript{\sigma}{}{S}_{i}(\tau) + \int_{0}^{\tau} \Bigg( &  \frac{-\sin\left( \frac{\omega_{i} (\tau-s)}{\omega_{0} +\theta_{0}\epsilon^2+ \eta_{0}\epsilon^4} \right)}{\omega_{0} +\theta_{0}\epsilon^2+ \eta_{0}\epsilon^4} \frac{\prescript{\psi}{}{\mathfrak{N}}_{i}(\psi(s),\sigma(s),b(s),\xi(s))}{\omega_{i}} \\
	& + \frac{\cos\left( \frac{\omega_{i} (\tau-s)}{\omega_{0} +\theta_{0}\epsilon^2+ \eta_{0}\epsilon^4} \right)}{\omega_{0} +\theta_{0}\epsilon^2+ \eta_{0}\epsilon^4} \prescript{\sigma}{}{\mathfrak{N}}_{i}(\psi(s),\sigma(s),b(s),\xi(s)) \Bigg) ds, 
\end{align*}
subject to the constrain equations
\begin{align*}
	b_{i} (\tau)& =  \prescript{b}{}{S}_{i}(\tau) + \prescript{b}{}{\mathfrak{N}}_{i}(\psi(\tau),\sigma(\tau),b(\tau),\xi(\tau)) ,\\
	\xi_{i} (\tau)& = \prescript{\xi}{}{S}_{i}(\tau) + \prescript{\xi}{}{\mathfrak{N}}_{i}(\psi(\tau),\sigma(\tau),b(\tau),\xi(\tau))
\end{align*}
where
\begin{align*}
	\prescript{\psi}{}{S}_{i}(\tau) & = \cos\left( \frac{\omega_{i} \tau}{\omega_{0} +\theta_{0}\epsilon^2+ \eta_{0}\epsilon^4} \right) \psi_{i}(0) + \sin\left( \frac{\omega_{i} \tau}{\omega_{0} +\theta_{0}\epsilon^2+ \eta_{0}\epsilon^4} \right) \sigma_{i}(0) \\
	& + \int_{0}^{\tau} \Bigg( \frac{\cos\left( \frac{\omega_{i} (\tau-s)}{\omega_{0} +\theta_{0}\epsilon^2+ \eta_{0}\epsilon^4} \right)}{\omega_{0} +\theta_{0}\epsilon^2+ \eta_{0}\epsilon^4} \frac{\prescript{\psi}{}{F}_{i}(s)}{\omega_{i}} + \frac{\sin\left( \frac{\omega_{i} (\tau-s)}{\omega_{0} +\theta_{0}\epsilon^2+ \eta_{0}\epsilon^4} \right)}{\omega_{0} +\theta_{0}\epsilon^2+ \eta_{0}\epsilon^4} \prescript{\sigma}{}{F}_{i}(s) \Bigg) ds, \\
	\prescript{\sigma}{}{S}_{i}(\tau) & = -\sin\left( \frac{\omega_{i} \tau}{\omega_{0} +\theta_{0}\epsilon^2+ \eta_{0}\epsilon^4} \right) \psi_{i}(0) + \cos \left( \frac{\omega_{i} \tau}{\omega_{0} +\theta_{0}\epsilon^2+ \eta_{0}\epsilon^4} \right) \sigma_{i}(0) \\
	& + \int_{0}^{\tau} \Bigg( \frac{-\sin\left( \frac{\omega_{i} (\tau-s)}{\omega_{0} +\theta_{0}\epsilon^2+ \eta_{0}\epsilon^4} \right)}{\omega_{0} +\theta_{0}\epsilon^2+ \eta_{0}\epsilon^4} \frac{\prescript{\psi}{}{F}_{i}(s)}{\omega_{i}} + \frac{\cos\left( \frac{\omega_{i} (\tau-s)}{\omega_{0} +\theta_{0}\epsilon^2+ \eta_{0}\epsilon^4} \right)}{\omega_{0} +\theta_{0}\epsilon^2+ \eta_{0}\epsilon^4} \prescript{\sigma}{}{F}_{i}(s) \Bigg) ds. 
\end{align*}
Furthermore, one can use various trigonometric identities to write 
\begin{align*}
\prescript{\psi}{}{F}_{i}(s) & = K_{i}(\epsilon ^{2}) \sin(s) + \Lambda_{i}(\epsilon^{2}) \sin(3s) + M_{i}(\epsilon^{2}) \sin(5s), \\
	\prescript{\sigma}{}{F}_{i}(s) & = N_{i}(\epsilon^{2}) \cos(s) + \Xi_{i}(\epsilon^{2}) \cos(3s) + T_{i}(\epsilon^{2}) \cos(5s)
\end{align*}
where
\begin{align*}
	K_{i}(\epsilon^{2}) &:= \omega_{0} \omega_{i}^2  \Bigg(\frac{\delta_{i,0} \omega_{0} \theta_{0}}{\omega_{i}^2} + \frac{1}{4} \mathbb{C}_{130i} + \frac{3}{4}\mathbb{C}_{240i} +  \epsilon^2 \left(\frac{\delta_{i,0} \omega_{0} \eta_{0}}{\omega_{i}^2}+ \frac{1}{8} \mathbb{ D}_{130i} + \frac{5}{8}   \mathbb{ D}_{240i} + \frac{1}{8} \mathbb{E}_{14230i} \right) \Bigg), \\
	\Lambda_{i}(\epsilon^{2}) &:= \omega_{0}\omega_{i}^2 \Bigg( \frac{1}{4} \mathbb{C}_{130i} -\frac{1}{4} \mathbb{C}_{240i} + \epsilon^2 \left( \frac{3}{16} \mathbb{D}_{130i} -\frac{5}{16} \mathbb{D}_{240i} +\frac{1}{16} \mathbb{E}_{14230i} \right)   \Bigg), \\
	M_{i}(\epsilon^{2}) &:= \omega_{0}\omega_{i}^2 \epsilon^2 \Bigg( \frac{1}{16} \mathbb{D}_{130i} +\frac{1}{16} \mathbb{D}_{240i} - \frac{1}{16} \mathbb{E}_{14230i}  \Bigg), \\
	N_{i}(\epsilon^{2}) &:= \omega_{0} \omega_{i} \left(\frac{ \delta_{i,0} \theta_{0}}{\omega_{i}} + \frac{3}{4} \overline{\mathbb{C}}_{130i} + \frac{1}{4}  \overline{\mathbb{C}}_{240i} +  \epsilon^2 \left( \frac{\delta_{i,0} \eta_{0}}{\omega_{i}}+ \frac{5}{32} \overline{\mathbb{D}}_{130i} + \frac{63}{64}   \overline{\mathbb{D}}_{240i} + \frac{19}{32}  \overline{\mathbb{E}}_{14230i} \right) \right), \\
	\Xi_{i}(\epsilon^{2}) &:= \omega_{0} \omega_{i} \left( \frac{1}{4} \overline{\mathbb{C}}_{130i} -\frac{1}{4} \overline{\mathbb{C}}_{240i} + \epsilon^2 \left( \frac{5}{64} \overline{\mathbb{D}}_{130i} + \frac{27}{64} \overline{\mathbb{D}}_{240i} +\frac{11}{64} \overline{\mathbb{E}}_{14230i} \right) \right), \\
	T_{i}(\epsilon^{2}) &:=\omega_{0} \omega_{i} \epsilon^2 \Big( \frac{1}{16} \overline{\mathbb{D}}_{130i} +\frac{5}{64} \overline{\mathbb{D}}_{240i} - \frac{1}{64} \overline{\mathbb{E}}_{14230i}  \Big).
\end{align*}
Now, computing the integrals above we obtain
\begin{align*}
\prescript{\psi}{}{S}_{i}(\tau) & = \prescript{\psi}{}{\mathcal{K}}_{i}(\epsilon^{2}) \cos(\tau) + \prescript{\psi}{}{\mathcal{R}}_{i}(\epsilon^{2}) \cos(3 \tau) + \prescript{\psi}{}{\mathcal{M}}_{i}(\epsilon^{2}) \cos(5\tau) \\
&+ \Big( \psi_{i}(0) + \prescript{\psi}{}{\mathcal{N}}_{i}(\epsilon^{2}) \Big) \cos\left( \frac{\omega_{i} \tau}{\omega_{0} +\theta_{0}\epsilon^2+\eta_{0}\epsilon^4} \right) +  \sigma_{i}(0) \sin\left( \frac{\omega_{i} \tau}{\omega_{0} +\theta_{0}\epsilon^2+\eta_{0}\epsilon^4} \right), \\ 
\prescript{\sigma}{}{S}_{i}(\tau) & = \prescript{\sigma}{}{\mathcal{K}}_{i}(\epsilon^{2}) \sin(\tau) + \prescript{\sigma}{}{\mathcal{R}}_{i}(\epsilon^{2}) \sin(3 \tau) + \prescript{\sigma}{}{\mathcal{M}}_{i}(\epsilon^{2}) \sin(5\tau) \\
&+ \sigma_{i}(0) \cos\left( \frac{\omega_{i} \tau}{\omega_{0} +\theta_{0}\epsilon^2+\eta_{0}\epsilon^4} \right) -   \Big( \psi_{i}(0) - \prescript{\sigma}{}{\mathcal{N}}_{i}(\epsilon^{2}) \Big) \sin\left( \frac{\omega_{i} \tau}{\omega_{0} +\theta_{0}\epsilon^2+\eta_{0}\epsilon^4} \right),
\end{align*}
where
\begin{align*}
\prescript{\psi}{}{\mathcal{K}}_{i}(\epsilon^{2}) & = 
\frac{1}{\omega_{i}}
\frac{ (\omega_{0}+\epsilon^2 \theta_{0}+\epsilon^4 \eta_{0}) K_{i}(\epsilon^2) + \omega_{i}^2 N_{i}(\epsilon^2) }{
(\omega_{i}-\omega_{0}-\epsilon^2 \theta_{0} -\epsilon^4 \eta_{0})
(\omega_{i}+\omega_{0}+\epsilon^2 \theta_{0}+\epsilon^4 \eta_{0} )
}, \\
\prescript{\psi}{}{\mathcal{R}}_{i}(\epsilon^{2}) & = 
\frac{1}{\omega_{i}}
\frac{ 3(\omega_{0}+\epsilon^2 \theta_{0}+\epsilon^4 \eta_{0}) \Lambda_{i}(\epsilon^2) + \omega_{i}^2 \Xi_{i}(\epsilon^2) }{
(\omega_{i}-3\omega_{0}-3\epsilon^2 \theta_{0}-3\epsilon^4 \eta_{0} )
(\omega_{i}+3\omega_{0}+3\epsilon^2 \theta_{0} +3\epsilon^4 \eta_{0})
}, \\
\prescript{\psi}{}{\mathcal{M}}_{i}(\epsilon^{2}) & = 
\frac{1}{\omega_{i}}
\frac{ 5(\omega_{0}+\epsilon^2 \theta_{0}+\epsilon^4 \eta_{0}) M_{i}(\epsilon^2) + \omega_{i}^2 T_{i}(\epsilon^2) }{
(\omega_{i}-5\omega_{0}-5\epsilon^2 \theta_{0} -5\epsilon^4 \eta_{0})
(\omega_{i}+5\omega_{0}+5\epsilon^2 \theta_{0} +5\epsilon^4 \eta_{0})
}, \\
\prescript{\psi}{}{\mathcal{N}}_{i}(\epsilon^{2}) & = 
-\frac{1}{\omega_{i}}
\frac{ (\omega_{0}+\epsilon^2 \theta_{0}+\epsilon^4 \eta_{0}) K_{i}(\epsilon^2) + \omega_{i}^2 N_{i}(\epsilon^2) }{
(\omega_{i}-\omega_{0}-\epsilon^2 \theta_{0} -\epsilon^4 \eta_{0})
(\omega_{i}+\omega_{0}+\epsilon^2 \theta_{0} +\epsilon^4 \eta_{0})
} \\
&\quad -\frac{1}{\omega_{i}}
\frac{ 3(\omega_{0}+\epsilon^2 \theta_{0}+\epsilon^4 \eta_{0}) \Lambda_{i}(\epsilon^2) + \omega_{i}^2 \Xi_{i}(\epsilon^2) }{
(\omega_{i}-3\omega_{0}-3\epsilon^2 \theta_{0}-3\epsilon^4 \eta_{0} )
(\omega_{i}+3\omega_{0}+3\epsilon^2 \theta_{0}+3\epsilon^4 \eta_{0} )
} \\
&\quad -\frac{1}{\omega_{i}}
\frac{ 5(\omega_{0}+\epsilon^2 \theta_{0}+\epsilon^4 \eta_{0}) M_{i}(\epsilon^2) + \omega_{i}^2 T_{i}(\epsilon^2) }{
(\omega_{i}-5\omega_{0}-5\epsilon^2 \theta_{0}-5\epsilon^4 \eta_{0} )
(\omega_{i}+5\omega_{0}+5\epsilon^2 \theta_{0}+5\epsilon^4 \eta_{0} )
}
\end{align*}
and
\begin{align*}
\prescript{\sigma}{}{\mathcal{K}}_{i}(\epsilon^{2}) & = 
-
\frac{ (\omega_{0}+\epsilon^2 \theta_{0}+\epsilon^4 \eta_{0}) N_{i}(\epsilon^2) + K_{i}(\epsilon^2) }{
(\omega_{i}-\omega_{0}-\epsilon^2 \theta_{0} -\epsilon^4 \eta_{0})
(\omega_{i}+\omega_{0}+\epsilon^2 \theta_{0} +\epsilon^4 \eta_{0})
}, \\
\prescript{\sigma}{}{\mathcal{R}}_{i}(\epsilon^{2}) & = 
-
\frac{ 3(\omega_{0}+\epsilon^2 \theta_{0}+\epsilon^4 \eta_{0}) \Xi_{i}(\epsilon^2) +  \Lambda_{i}(\epsilon^2) }{
(\omega_{i}-3\omega_{0}-3\epsilon^2 \theta_{0} -3\epsilon^4 \eta_{0})
(\omega_{i}+3\omega_{0}+3\epsilon^2 \theta_{0}+3\epsilon^4 \eta_{0} )
}, \\
\prescript{\sigma}{}{\mathcal{M}}_{i}(\epsilon^{2}) & = 
-
\frac{ 5(\omega_{0}+\epsilon^2 \theta_{0}+\epsilon^4 \eta_{0}) T_{i}(\epsilon^2) +  M_{i}(\epsilon^2) }{
(\omega_{i}-5\omega_{0}-5\epsilon^2 \theta_{0} -5\epsilon^4 \eta_{0})
(\omega_{i}+5\omega_{0}+5\epsilon^2 \theta_{0}+5\epsilon^4 \eta_{0} )
}, \\
\prescript{\sigma}{}{\mathcal{N}}_{i}(\epsilon^{2}) & = 
 \frac{1}{\omega_{i}}
\frac{ (\omega_{0}+\epsilon^2 \theta_{0}+\epsilon^4 \eta_{0}) K_{i}(\epsilon^2) + \omega_{i}^2 N_{i}(\epsilon^2) }{
(\omega_{i}-\omega_{0}-\epsilon^2 \theta_{0} -\epsilon^4 \eta_{0})
(\omega_{i}+\omega_{0}+\epsilon^2 \theta_{0}+\epsilon^4 \eta_{0} )
} \\
& + \frac{1}{\omega_{i}}
\frac{ 3(\omega_{0}+\epsilon^2 \theta_{0}+\epsilon^4 \eta_{0}) \Lambda_{i}(\epsilon^2) + \omega_{i}^2 \Xi_{i}(\epsilon^2) }{
(\omega_{i}-3\omega_{0}-3\epsilon^2 \theta_{0}-3\epsilon^4 \eta_{0} )
(\omega_{i}+3\omega_{0}+3\epsilon^2 \theta_{0} +3\epsilon^4 \eta_{0})
} \\
& +\frac{1}{\omega_{i}}
\frac{ 5(\omega_{0}+\epsilon^2 \theta_{0}+\epsilon^4 \eta_{0}) M_{i}(\epsilon^2) + \omega_{i}^2 T_{i}(\epsilon^2) }{
(\omega_{i}-5\omega_{0}-5\epsilon^2 \theta_{0}-5\epsilon^4 \eta_{0} )
(\omega_{i}+5\omega_{0}+5\epsilon^2 \theta_{0} +5\epsilon^4 \eta_{0})
}.
\end{align*}
Notice that conditions like
\begin{align*}
& \omega_{i}\pm \omega_{0} \pm \epsilon^2 \theta_{0} \pm \epsilon^4 \eta_{0} \neq 0, \quad 
\omega_{i}\pm 3\omega_{0} \pm 3\epsilon^2 \theta_{0} \pm 3\epsilon^4 \eta_{0} \neq 0, \quad 
 \omega_{i}\pm 5\omega_{0} \pm 5\epsilon^2 \theta_{0} \pm 5\epsilon^4 \eta_{0} \neq 0,
\end{align*}
are closely related to small divisors and play an important role in KAM theory \cite{MR2345400, MR3097022, MR4062430, MR3867631, MR3569244, MR3502158}. Observe that the identity
\begin{align*}
	\prescript{\psi}{}{\mathcal{N}}_{i}(\epsilon^{2}) +\prescript{\sigma}{}{\mathcal{N}}_{i}(\epsilon^{2}) = 0
\end{align*}
holds true for all $i=0,1,\dots$ and all $\epsilon >0$. Here, all the constants involved 
\begin{align*}
	\prescript{\psi}{}{\mathcal{K}}_{i}(\epsilon^{2}),\prescript{\psi}{}{\mathcal{R}}_{i}(\epsilon^{2}),\prescript{\psi}{}{\mathcal{M}}_{i}(\epsilon^{2}),\prescript{\psi}{}{\mathcal{N}}_{i}(\epsilon^{2}), \prescript{\sigma}{}{\mathcal{K}}_{i}(\epsilon^{2}),\prescript{\sigma}{}{\mathcal{R}}_{i}(\epsilon^{2}),\prescript{\sigma}{}{\mathcal{M}}_{i}(\epsilon^{2}),\prescript{\sigma}{}{\mathcal{N}}_{i}(\epsilon^{2})
\end{align*}
depend explicitly on the Fourier constants defined above and most importantly depend only on one index. Due to this fact, we can compute them (see Lemma \ref{LemmaAppendixB} in Appendix B). The fact that these constants are given in closed forms has a numerous advantages. First, we get the asymptotic behaviour for large $i$ and fixed $\epsilon>0$,
\begin{align*}
&	\prescript{\psi}{}{\mathcal{K}}_{i}(\epsilon^{2}) \simeq \frac{1}{\omega_{i}^5},\prescript{\psi}{}{\mathcal{R}}_{i}(\epsilon^{2}) \simeq \frac{\epsilon^2}{\omega_{i}^5},\prescript{\psi}{}{\mathcal{M}}_{i}(\epsilon^{2})\simeq \frac{\epsilon^2}{\omega_{i}^5},\prescript{\psi}{}{\mathcal{N}}_{i}(\epsilon^{2})\simeq \frac{1}{\omega_{i}^5}, \text{~for~} i \longrightarrow \infty \\
&	\prescript{\sigma}{}{\mathcal{K}}_{i}(\epsilon^{2}) \simeq \frac{1}{\omega_{i}^6},\prescript{\sigma}{}{\mathcal{R}}_{i}(\epsilon^{2}) \simeq \frac{\epsilon^2}{\omega_{i}^6},\prescript{\sigma}{}{\mathcal{M}}_{i}(\epsilon^{2})\simeq \frac{\epsilon^2}{\omega_{i}^6},\prescript{\sigma}{}{\mathcal{N}}_{i}(\epsilon^{2})\simeq \frac{1}{\omega_{i}^5}, \text{~for~} i \longrightarrow \infty.
\end{align*}
Second, we get their asymptotic behaviour for sufficiently small $\epsilon$ and fixed $i=0,1,\dots$, 
\begin{align*}
\prescript{\psi}{}{\mathcal{K}}_{i}(\epsilon^{2}) 
   &= \left\{\begin{array}{lr}
        \left(-3+\frac{459}{4 \pi \theta_{0}} \right) \epsilon^{-2} + (...)1+(...)\epsilon^2, & \text{for } i = 0 \\ [10pt]
        (...)1+(...)\epsilon^2, & \text{for } i \neq 0
        \end{array}\right., \\
\prescript{\psi}{}{\mathcal{R}}_{i}(\epsilon^{2}) 
   &= \left\{\begin{array}{lr}
      \frac{\omega_{3}}{72 \theta_{0}} \left( \omega_{3} \mathbb{C}_{1303}-\omega_{3} \mathbb{C}_{2403}+3 \omega_{0} \overline{\mathbb{C}}_{1303}-3 \omega_{0}   \overline{\mathbb{C}}_{2403} \right)\epsilon^{-2}+ (...)1+(...)\epsilon^2, & \text{for } i = 3 \\ [10pt]
        (...)1+(...)\epsilon^2, & \text{for } i \neq 3
        \end{array}\right.,\\
\prescript{\psi}{}{\mathcal{M}}_{i}(\epsilon^{2}) 
   &= \left\{\begin{array}{lr}
     (...)1+(...)\epsilon^2, & \text{for } i = 6 \\ [10pt]
        (...)\epsilon^2, & \text{for } i \neq 6
        \end{array}\right., \\ 
\prescript{\psi}{}{\mathcal{N}}_{i}(\epsilon^{2}) 
   &= \left\{\begin{array}{lr}
       \left(-3+\frac{459}{4 \pi \theta_{0}} \right)\epsilon^{-2}+ (...)1+(...)\epsilon^2, & \text{for } i = 0 \\ [10pt]
         \frac{\omega_{3}}{72 \theta_{0}} \left( \omega_{3} \mathbb{C}_{1303}-\omega_{3} \mathbb{C}_{2403}+3 \omega_{0} \overline{\mathbb{C}}_{1303}-3 \omega_{0}   \overline{\mathbb{C}}_{2403} \right)\epsilon^{-2}+(...)1+(...)\epsilon^2, & \text{for } i = 3 \\ [10pt]
         (...)1+(...)\epsilon^2, & \text{for } i = 6 \\ [10pt]
         (...)1+(...)\epsilon^2, & \text{for } i \neq 0,3,6
        \end{array}\right. 
\end{align*} 
\begin{align*}
\prescript{\sigma}{}{\mathcal{K}}_{i}(\epsilon^{2}) 
   &= \left\{\begin{array}{lr}
        \left(-3+\frac{459}{4 \pi \theta_{0}} \right) \epsilon^{-2} + (...)1+(...)\epsilon^2, & \text{for } i = 0 \\ [10pt]
        (...)1+(...)\epsilon^2, & \text{for } i \neq 0
        \end{array}\right., \\
\prescript{\sigma}{}{\mathcal{R}}_{i}(\epsilon^{2}) 
   &= \left\{\begin{array}{lr}
      \frac{\omega_{3}}{72 \theta_{0}} \left( \omega_{3} \mathbb{C}_{1303}-\omega_{3} \mathbb{C}_{2403}+3 \omega_{0} \overline{\mathbb{C}}_{1303}-3 \omega_{0}   \overline{\mathbb{C}}_{2403} \right)\epsilon^{-2}+ (...)1+(...)\epsilon^2, & \text{for } i = 3 \\ [10pt]
        (...)1+(...)\epsilon^2, & \text{for } i \neq 3
        \end{array}\right.,\\
\prescript{\sigma}{}{\mathcal{M}}_{i}(\epsilon^{2}) 
   &= \left\{\begin{array}{lr}
     (...)1+(...)\epsilon^2, & \text{for } i = 6 \\ [10pt]
        (...)\epsilon^2, & \text{for } i \neq 6
        \end{array}\right., \\ 
\prescript{\sigma}{}{\mathcal{N}}_{i}(\epsilon^{2}) 
   &= \left\{\begin{array}{lr}
       \left(-3+\frac{459}{4 \pi \theta_{0}} \right)\epsilon^{-2}+ (...)1+(...)\epsilon^2, & \text{for } i = 0 \\ [10pt]
         \frac{\omega_{3}}{72 \theta_{0}} \left( \omega_{3} \mathbb{C}_{1303}-\omega_{3} \mathbb{C}_{2403}+3 \omega_{0} \overline{\mathbb{C}}_{1303}-3 \omega_{0}   \overline{\mathbb{C}}_{2403} \right)\epsilon^{-2}+(...)1+(...)\epsilon^2, & \text{for } i = 3 \\ [10pt]
         (...)1+(...)\epsilon^2, & \text{for } i = 6 \\ [10pt]
         (...)1+(...)\epsilon^2, & \text{for } i \neq 0,3,6
        \end{array}\right.
\end{align*}
We compute
\begin{align*}
 \mathbb{C}_{1303} = -  \frac{291}{560\sqrt{10}\pi},
 ~\mathbb{C}_{2403} = -  \frac{99}{140\sqrt{10}\pi},
 ~\overline{\mathbb{C}}_{1303} = - \frac{111}{140\sqrt{10}\pi},
 ~\overline{\mathbb{C}}_{2403} = - \frac{339}{560\sqrt{10}\pi}
\end{align*}
and hence
\begin{align*}
	\omega_{3} \mathbb{C}_{1303}-\omega_{3} \mathbb{C}_{2403}+3 \omega_{0} \overline{\mathbb{C}}_{1303}-3 \omega_{0}   \overline{\mathbb{C}}_{2403} = 0.
\end{align*}
Consequently, by the structure of the equations, $\prescript{\psi}{}{\mathcal{R}}_{3},\prescript{\psi}{}{\mathcal{N}}_{3},\prescript{\sigma}{}{\mathcal{R}}_{3}$ and $\prescript{\sigma}{}{\mathcal{N}}_{3}$ cannot blowup as $\epsilon$ does to zero. However, we choose
\begin{align*}
	\theta_{0}:=  \frac{153}{4\pi}
\end{align*}
to ensure that every component of the periodic parts $\prescript{\psi}{}{S}_{i}(\tau)$ and $\prescript{\sigma}{}{S}_{i}(\tau)$ of $\psi_{i}$ and $\sigma_{i}$ respectively are bounded as $\epsilon$ goes to zero. This choice coincides with the choice of $\theta_{2}$ from the first approach as well as with the numerical computations of Rostworowski-Maliborski \cite{PhysRevLett111051102}. 
Similarly, using various trigonometric identities, we get
\begin{align*}
	\prescript{b}{}{S}_{i}(\tau) & = \prescript{b}{}{\mathcal{K}}_{i} + \prescript{b}{}{\mathcal{R}}_{i} \cos(2 \tau)+ \prescript{b}{}{\mathcal{M}}_{i}\cos(4 \tau) , \\
	\prescript{\xi}{}{S}_{i}(\tau) & =  \prescript{\xi}{}{\mathcal{K}}_{i} + \prescript{\xi}{}{\mathcal{R}}_{i} \cos(2 \tau)+ \prescript{\xi}{}{\mathcal{M}}_{i}\cos(4 \tau),
\end{align*}
where
\begin{align*}
\prescript{b}{}{\mathcal{K}}_{i} & =- \frac{\omega_{0}^2}{8} \Big( 3  \overline{\mathbb{P}}_{100i} + \overline{\mathbb{P}}_{200i} +  \overline{\mathbb{Q}}_{100i} + 3 \overline{\mathbb{Q}}_{200i} \Big),\\
\prescript{b}{}{\mathcal{R}}_{i} &= \frac{\omega_{0}^2 }{2}  \Big( -\overline{\mathbb{P}}_{100i} +  \overline{\mathbb{Q}}_{200i} \Big),\\
\prescript{b}{}{\mathcal{M}}_{i} &= \frac{\omega_{0}^2}{8} \Big(-  \overline{\mathbb{P}}_{100i} +  \overline{\mathbb{P}}_{200i} +  \overline{\mathbb{Q}}_{100i} - \overline{\mathbb{Q}}_{200i}\Big),\\
\prescript{\xi}{}{\mathcal{K}}_{i} &= \frac{\omega_{0}^2}{8\omega_{i}^2} \Big( 3 \mathbb{P}_{300i} +  \mathbb{P}_{400i} +  \mathbb{Q}_{300i} + 3 \mathbb{Q}_{400i} \Big),\\
	\prescript{\xi}{}{\mathcal{R}}_{i}& =\frac{\omega_{0}^2}{2 \omega_{i}^2} \Big( \mathbb{P}_{300i} -\mathbb{Q}_{400i} \Big),\\
 \prescript{\xi}{}{\mathcal{M}}_{i} &= \frac{\omega_{0}^2}{8\omega_{i}^2} \Big( \mathbb{P}_{300i} -  \mathbb{P}_{400i} - \mathbb{Q}_{300i} + \mathbb{Q}_{400i} \Big).
\end{align*}
As before, all the constants involved 
\begin{align*}
	\prescript{b}{}{\mathcal{K}}_{i},\prescript{b}{}{\mathcal{R}}_{i},\prescript{b}{}{\mathcal{M}}_{i}, \prescript{\xi}{}{\mathcal{K}}_{i},\prescript{\xi}{}{\mathcal{R}}_{i},\prescript{\xi}{}{\mathcal{M}}_{i}
\end{align*}
depend explicitly on the Fourier constants defined above and most importantly depend only on one index. Due to this fact, we can compute them (see Lemma \ref{LemmaAppendixB} in Appendix B). We immediately get their asymptotic behaviour for large $i$,
\begin{align*}
	& \prescript{b}{}{\mathcal{K}}_{i}\simeq \frac{1}{\omega_{i}^{12}},\prescript{b}{}{\mathcal{R}}_{i}\simeq \frac{1}{\omega_{i}^{12}},\prescript{b}{}{\mathcal{M}}_{i} \simeq \frac{1}{\omega_{i}^{12}}, \text{~for~} i \longrightarrow \infty \\
	&	\prescript{\xi}{}{\mathcal{K}}_{i}  \simeq \frac{1}{\omega_{i}^{6}},\prescript{\xi}{}{\mathcal{R}}_{i}\simeq \frac{1}{\omega_{i}^{6}},\prescript{\xi}{}{\mathcal{M}}_{i} \simeq \frac{1}{\omega_{i}^{6}}, \text{~for~} i \longrightarrow \infty.
\end{align*}

\subsection{Choice of the initial data}
We choose
\begin{align*}
	 \psi_{i}(0):=  - \prescript{\psi}{}{\mathcal{N}}_{i}(\epsilon^{2}) = \prescript{\sigma}{}{\mathcal{N}}_{i}(\epsilon^{2}), \quad \sigma_{i}(0):=0
\end{align*}
so that
\begin{align*}
\prescript{\psi}{}{S}_{i}(\tau) & = \prescript{\psi}{}{\mathcal{K}}_{i}(\epsilon^{2}) \cos(\tau) + \prescript{\psi}{}{\mathcal{R}}_{i}(\epsilon^{2}) \cos(3 \tau) + \prescript{\psi}{}{\mathcal{M}}_{i}(\epsilon^{2}) \cos(5\tau), \\
\prescript{\sigma}{}{S}_{i}(\tau) & = \prescript{\sigma}{}{\mathcal{K}}_{i}(\epsilon^{2}) \sin(\tau) + \prescript{\sigma}{}{\mathcal{R}}_{i}(\epsilon^{2}) \sin(3 \tau) + \prescript{\sigma}{}{\mathcal{M}}_{i}(\epsilon^{2}) \sin(5\tau).
\end{align*}
This choice is motivated by the fact that the source terms $\prescript{\psi}{}{S}_{i}(\tau)$ and $\prescript{\sigma}{}{S}_{i}(\tau)$ of the solutions $\psi_{i}(\tau)$ and $\sigma_{i}(\tau)$ would give rise to a periodic term. Indeed,
\begin{align*}
	\Psi(\tau,x) &= \sum_{i=0}^{\infty} \psi_{i}(\tau) 
	 \frac{e_{i}^{\prime}(x)}{\omega_{i}}
	 = \sum_{i=0}^{\infty} \prescript{\psi}{}{S}_{i}(\tau) 
	 \frac{e_{i}^{\prime}(x)}{\omega_{i}} + \mathcal{O} \left(\epsilon^2 \right) \\
	 & = \Bigg( \sum_{i=0}^{\infty} \prescript{\psi}{}{\mathcal{K}}_{i}(\epsilon^{2})  \frac{e_{i}^{\prime}(x)}{\omega_{i}} \Bigg) \cos(\tau)+ \Bigg( \sum_{i=0}^{\infty}  \prescript{\psi}{}{\mathcal{R}}_{i}(\epsilon^{2})  \frac{e_{i}^{\prime}(x)}{\omega_{i}} \Bigg)\cos(3 \tau) \\
	 & + \Bigg( \sum_{i=0}^{\infty} \prescript{\psi}{}{\mathcal{M}}_{i}(\epsilon^{2})  \frac{e_{i}^{\prime}(x)}{\omega_{i}} \Bigg) \cos(5\tau) + \mathcal{O} \left(\epsilon^2 \right)
\end{align*}
and similarly for $\Sigma, B$ and $\Theta$. 
\subsection{Growth and decay of the Fourier constants}
We are interested in the asymptotic behaviour of all the Fourier constants which appear using this approach. To begin with, we split them into five groups as follows
\begin{align*}
	\mathcal{A}_{1}& := \Bigg \{ \omega_{i}\mathbb{C}_{13ji}, \omega_{i}\mathbb{C}_{24ji}, \omega_{i}\mathbb{D}_{13ji},\omega_{i}\mathbb{D}_{24ji}, \omega_{i}\mathbb{E}_{1423ji}, \omega_{i}\mathbb{\overline{C}}_{13ji}, \omega_{i}\mathbb{\overline{C}}_{24ji}, \omega_{i}\mathbb{\overline{D}}_{13ji}, \omega_{i}\mathbb{\overline{D}}_{24ji}, \\
	& \quad \quad \quad \omega_{i}\mathbb{\overline{E}}_{1423ji}, \omega_{i}\mathbb{F}_{10ji}, \omega_{i}\mathbb{F}_{20ji},\omega_{i}\mathbb{F}_{30ji}, \omega_{i}\mathbb{F}_{40ji},\omega_{i}\mathbb{F}_{1jki},\omega_{i}\mathbb{F}_{2jki},\omega_{i}\mathbb{F}_{3jki}, \omega_{i}\mathbb{F}_{4jki},\omega_{i}\mathbb{G}_{0ji}, \\
&\quad \quad \quad \frac{ \omega_{0} \mathbb{P}_{10ji} }{\omega_{i}}, \frac{\omega_{0} \mathbb{P}_{20ji} }{\omega_{i}}, \frac{\omega_{0} \mathbb{P}_{30ji} }{\omega_{i}},\frac{\omega_{0} \mathbb{P}_{40ji}}{\omega_{i}}, \frac{\omega_{0} \mathbb{J}_{0ji}}{\omega_{i}}, \omega_{0} \mathbb{ \overline{P} }_{10ji}, \omega_{0} \mathbb{\overline{P}}_{20ji}, \omega_{0} \mathbb{\overline{P}}_{30ji},\omega_{0} \mathbb{\overline{P}}_{40ji}, \omega_{0} \mathbb{\overline{J}}_{0ji}	
\Bigg \}, \\
\mathcal{A}_{2}& := \Bigg \{ \omega_{i} \mathbb{F}_{1jki},\omega_{i} \mathbb{F}_{2jki},\omega_{i} \mathbb{F}_{3jki},\omega_{i} \mathbb{F}_{4jki},\frac{\omega_{0} \mathbb{R}_{0kji}}{\omega_{i}},\omega_{i} \mathbb{\overline{F}}_{1jki},\omega_{i} \mathbb{\overline{F}}_{2jki},\omega_{i} \mathbb{\overline{F}}_{3jki},\omega_{i} \mathbb{\overline{F}}_{4jki},\omega_{0} \mathbb{\overline{R}}_{0kji} \Bigg \}, \\
\mathcal{A}_{3}& := \Bigg \{ \frac{\mathbb{ Q}_{10ji}}{\omega_{i}},\frac{\mathbb{ Q}_{20ji}}{\omega_{i}},\frac{\mathbb{ Q}_{30ji}}{\omega_{i}},\frac{\mathbb{ Q}_{40ji}}{\omega_{i}},\frac{\mathbb{ I}_{0ji}}{\omega_{i}}, \frac{\omega_{0}^2 \mathbb{ R}_{00ji}}{\omega_{i}}, \frac{\mathbb{ S}_{00ji}}{\omega_{i}}, \omega_{0} \omega_{i}\mathbb{ \overline{F}}_{10ji},\omega_{0} \omega_{i}\mathbb{\overline{F}}_{20ji},\omega_{0} \omega_{i}\mathbb{\overline{F}}_{30ji},\omega_{0} \omega_{i}\mathbb{\overline{F}}_{40ji}, \\
&\quad \quad \quad \omega_{0} \omega_{i}\mathbb{\overline{G}}_{0ji}, \mathbb{\overline{Q}}_{10ji},\mathbb{\overline{Q}}_{20ji},\mathbb{\overline{Q}}_{30ji},\mathbb{\overline{Q}}_{40ji},\mathbb{\overline{I}}_{0ji},\omega_{0}^2\mathbb{\overline{R}}_{00ji},\mathbb{\overline{S}}_{00ji} \Bigg \}, \\
\mathcal{A}_{4}& := \Bigg \{ 
\frac{\mathbb{I}_{jki}}{\omega_{i}},\frac{\mathbb{J}_{jki}}{\omega_{i}}, \frac{\mathbb{P}_{1jki}}{\omega_{i}},\frac{\mathbb{P}_{2jki}}{\omega_{i}},\frac{\mathbb{P}_{3jki}}{\omega_{i}},\frac{\mathbb{P}_{4jki}}{\omega_{i}},
\frac{\mathbb{Q}_{1jki}}{\omega_{i}},\frac{\mathbb{Q}_{2jki}}{\omega_{i}},\frac{\mathbb{Q}_{3jki}}{\omega_{i}},\frac{\mathbb{Q}_{4jki}}{\omega_{i}},
\frac{\mathbb{S}_{0jki}}{\omega_{i}}
  \\
&\quad \quad \quad 
\overline{\mathbb{I}}_{jki}, \overline{\mathbb{J}}_{jki},\overline{\mathbb{P}}_{1jki},\overline{\mathbb{P}}_{2jki},\overline{\mathbb{P}}_{3jki}, \overline{\mathbb{P}}_{4jki}, 
\overline{\mathbb{Q}}_{1jki},\overline{\mathbb{Q}}_{2jki}, \overline{\mathbb{Q}}_{3jki}, \overline{\mathbb{Q}}_{4jki}, \overline{\mathbb{S}}_{0jki},
\omega_{0} \omega_{i}\overline{\mathbb{H}}_{0jki} \Bigg \}, \\
\mathcal{B}& := \Bigg \{  \omega_{i} \mathbb{H}_{0jki},    \omega_{i}\mathbb{G}_{jki},\frac{ \mathbb{R}_{klji}}{\omega_{i}},\frac{ \mathbb{S}_{jkli}}{\omega_{i} }, \omega_{i}\mathbb{\overline{G}}_{jki},  \omega_{i}\mathbb{H}_{klji}, \omega_{i}\mathbb{\overline{H}}_{ikjl}, \mathbb{\overline{R}}_{klji}, \overline{ \mathbb{S} }_{jkli}   \Bigg \}. 
\end{align*}
As before, we shall use the notation
\begin{align*}
	\sum_{\pm} f(a\pm b \pm c) = f(a + b + c) + f(a + b - c) + f(a - b + c) + f(a - b - c),
\end{align*}
that is summation with respect to all possible combinations of plus and minus and expressions like $\omega_{i} \pm \omega_{j} \pm \omega_{m}$ stand not only for $\omega_{i} + \omega_{j} + \omega_{m}$ and $\omega_{i} - \omega_{j} - \omega_{m}$ but also for $\omega_{i} + \omega_{j} - \omega_{m}$ and $\omega_{i} - \omega_{j} + \omega_{m}$, that is considering all possible combinations of plus and minus. We will use the leading order terms (Remark \ref{lot}) together with the asymptotic behavior of the oscillatory integrals (Lemma \ref{OscillatoryIntegrals}), the orthogonality properties (Lemma \ref{ClosedformulasFore}), the $L^{\infty}-$bounds for quantities related to the eigenfunctions (Lemma \ref{Linftyboundse}) and the $L^{\infty}-$bounds of the weights $\Gamma_{a}$ (estimate \eqref{LinftyGamma}).
\subsubsection{Fourier constants in $\mathcal{A}_{1}$, $\mathcal{A}_{2}$, $\mathcal{A}_{3}$ and $\mathcal{A}_{4}$}
First, we focus on the elements of $\mathcal{A}_{1}$. 
\begin{prop}[Fourier constants in $\mathcal{A}_{1}$]\label{A1}
	The following growth and decay estimates hold.
\begin{center}
\begin{longtable}{ |l|l|l|l|l| }
\hline
\multicolumn{5}{ |c| }{Growth and decay estimates for the Fourier constants in $\mathcal{A}_{1}$  as $i,j \longrightarrow + \infty$} \\
\hline
\quad  \quad Constant & \quad \quad $F$ & 1st derivative $\neq 0$ & $ \omega_{i} - \omega_{j} \longarrownot\longrightarrow \infty $ & \quad $ \omega_{i} - \omega_{j} \longrightarrow \infty $  \\ \hline
\multirow{1}{*}{ \(\displaystyle \omega_{i} \mathbb{C}_{13ji}, \omega_{i}\mathbb{\overline{C}}_{13ji}  \)}
 &  \(\displaystyle \quad \Gamma_{1}-\Gamma_{3} \)  &  \(\displaystyle  \quad F^{(3)} \left( \frac{\pi}{2} \right) \neq 0  \) & \quad  $\mathcal{O}\left(\omega_{i}\right)$  & \(\displaystyle  \sum_{\pm}
  \mathcal{O} \left(
   \frac{\omega_{i} }{ (\omega_{i} \pm \omega_{j})^4} 
   \right)
   \)  \\ \hline
\multirow{1}{*}{ \(\displaystyle \omega_{i}\mathbb{C}_{24ji}, \omega_{i}\mathbb{\overline{C}}_{24ji}\)}
 &  \(\displaystyle \quad \Gamma_{2}-\Gamma_{4} \)  &  \(\displaystyle  \quad F^{(3)} \left( \frac{\pi}{2} \right) \neq 0  \) & \quad  $\mathcal{O}\left(\omega_{i}\right)$  & \(\displaystyle  \sum_{\pm} 
 \mathcal{O} \left( \frac{\omega_{i} }{ (\omega_{i} \pm \omega_{j})^4}\right) \)  \\ \hline
\multirow{1}{*}{ \(\displaystyle \omega_{i}\mathbb{D}_{13ji}, \omega_{i}\mathbb{\overline{D}}_{13ji}\)}
 &  \(\displaystyle \quad \Gamma_{1} \Gamma_{3} \)  &  \(\displaystyle  \quad F^{(3)} \left( \frac{\pi}{2} \right) \neq 0  \) & \quad  $\mathcal{O}\left(\omega_{i}\right)$
  & \(\displaystyle  \sum_{\pm} 
  \mathcal{O} \left(
  \frac{\omega_{i} }{ (\omega_{i} \pm \omega_{j})^4}
  \right)
   \)  \\ \hline
\multirow{1}{*}{ \(\displaystyle \omega_{i}\mathbb{D}_{24ji}, \omega_{i}\mathbb{\overline{D}}_{24ji}\)}
 &  \(\displaystyle \quad \Gamma_{2} \Gamma_{4} \)  &  \(\displaystyle \quad F^{(3)} \left( \frac{\pi}{2} \right) \neq 0  \) & \quad  $\mathcal{O}\left(\omega_{i}\right)$
  & \(\displaystyle  \sum_{\pm}
  \mathcal{O} \left(
   \frac{\omega_{i} }{ (\omega_{i} \pm \omega_{j})^4}\right) \)  \\ \hline
\multirow{1}{*}{ \(\displaystyle \omega_{i}\mathbb{E}_{1423ji}, \omega_{i}\mathbb{\overline{E}}_{1423ji}\)}
 &  \(\displaystyle   \Gamma_{1} \Gamma_{4}+ \Gamma_{2} \Gamma_{3} \)  &  \(\displaystyle  \quad F^{(3)} \left( \frac{\pi}{2} \right) \neq 0  \) & \quad  $\mathcal{O}\left(\omega_{i}\right)$
  & \(\displaystyle  \sum_{\pm} \mathcal{O} \left( \frac{\omega_{i} }{ (\omega_{i} \pm \omega_{j})^4}\right) \)  \\ \hline
\multirow{1}{*}{\quad\quad \(\displaystyle \omega_{i}\mathbb{G}_{0ji}\)}
 &  \(\displaystyle  \quad\quad e_{0} \)  &  \(\displaystyle  \quad F^{(3)} \left( \frac{\pi}{2} \right) \neq 0  \) & \quad  $\mathcal{O}\left(\omega_{i} \right)$
  & \(\displaystyle  \sum_{\pm}\mathcal{O} \left( \frac{\omega_{i} }{ (\omega_{i} \pm \omega_{j})^4}\right) \)  \\ \hline
\multirow{1}{*}{\quad\quad \(\displaystyle \omega_{i}\mathbb{F}_{10ji}\)}
 &  \(\displaystyle \quad\quad  \Gamma_{1} e_{0} \)  &  \(\displaystyle  \quad F^{(9)} \left( \frac{\pi}{2} \right) \neq 0  \) & \quad  $\mathcal{O}\left(\omega_{i}\right)$
  & \(\displaystyle  \sum_{\pm}\mathcal{O} \left( \frac{\omega_{i} }{ (\omega_{i} \pm \omega_{j})^{10}} \right)\)  \\ \hline
\multirow{1}{*}{\quad\quad \(\displaystyle  \omega_{i}\mathbb{F}_{20ji}\)}
 &  \(\displaystyle  \quad\quad \Gamma_{2} e_{0} \)  &  \(\displaystyle  \quad F^{(11)} \left( \frac{\pi}{2} \right) \neq 0  \) & \quad  $\mathcal{O}\left(\omega_{i}\right)$
  & \(\displaystyle  \sum_{\pm}\mathcal{O} \left( \frac{\omega_{i} }{ (\omega_{i} \pm \omega_{j})^{12}}\right) \)  \\ \hline
\multirow{1}{*}{\quad\quad \(\displaystyle  \omega_{i}\mathbb{F}_{30ji}\)}
 &  \(\displaystyle \quad\quad  \Gamma_{3} e_{0} \)  &  \(\displaystyle  \quad F^{(3)} \left( \frac{\pi}{2} \right) \neq 0  \) & \quad  $\mathcal{O}\left(\omega_{i}\right)$
  & \(\displaystyle  \sum_{\pm}\mathcal{O} \left( \frac{\omega_{i} }{ (\omega_{i} \pm \omega_{j})^{4}}\right) \)  \\ \hline
\multirow{1}{*}{\quad\quad \(\displaystyle  \omega_{i}\mathbb{F}_{40ji}\)}
 &  \(\displaystyle \quad\quad  \Gamma_{4} e_{0} \)  &  \(\displaystyle  \quad F^{(3)} \left( \frac{\pi}{2} \right) \neq 0  \) & \quad  $\mathcal{O}\left(\omega_{i}\right)$
  & \(\displaystyle  \sum_{\pm}\mathcal{O} \left( \frac{\omega_{i} }{ (\omega_{i} \pm \omega_{j})^{4}}\right) \)  \\ \hline
\multirow{1}{*}{ \(\displaystyle \omega_{0}\mathbb{\overline{P}}_{10ji}, \frac{\omega_{0}\mathbb{P}_{10ji}}{\omega_{i}}  \)}
 &  \(\displaystyle   \Gamma_{1} e_{0}^{\prime}\sin \cos \)  &  \(\displaystyle  \quad F^{(9)} \left( \frac{\pi}{2} \right) \neq 0  \) &\quad $\mathcal{O}\left(\omega_{i}^{-1} \right)$
  & \(\displaystyle  \frac{1}{\omega_{i}} \sum_{\pm}\mathcal{O} \left( \frac{1 }{ (\omega_{i} \pm \omega_{j})^{10}}\right) \)  \\ \hline
\multirow{1}{*}{ \(\displaystyle \omega_{0}\mathbb{\overline{P}}_{20ji},\frac{\omega_{0}\mathbb{P}_{20ji}}{\omega_{i}} \)}
 &  \(\displaystyle  \Gamma_{2} e_{0}^{\prime}\sin \cos \)  &  \(\displaystyle \quad F^{(11)} \left( \frac{\pi}{2} \right) \neq 0  \) & \quad $\mathcal{O}\left(\omega_{i}^{-1} \right)$
  & \(\displaystyle \frac{1}{\omega_{i}} \sum_{\pm}\mathcal{O} \left( \frac{1 }{ (\omega_{i} \pm \omega_{j})^{12}}\right) \)  \\ \hline
\multirow{1}{*}{ \(\displaystyle  \omega_{0}\mathbb{\overline{P}}_{30ji},\frac{\omega_{0}\mathbb{P}_{30ji}}{\omega_{i}} \)}
 &  \(\displaystyle  \Gamma_{3} e_{0}^{\prime}\sin \cos \)  &  \(\displaystyle   \quad F^{(3)} \left( \frac{\pi}{2} \right) \neq 0  \) &\quad $\mathcal{O}\left(\omega_{i}^{-1} \right)$
  & \(\displaystyle \frac{1}{\omega_{i}} \sum_{\pm}\mathcal{O} \left( \frac{1 }{ (\omega_{i} \pm \omega_{j})^{4}}\right) \)  \\ \hline
\multirow{1}{*}{ \(\displaystyle  \omega_{0}\mathbb{\overline{P}}_{40ji},\frac{\omega_{0}\mathbb{P}_{40ji}}{\omega_{i}} \)}
 &  \(\displaystyle  \Gamma_{4} e_{0}^{\prime}\sin \cos \)  &  \(\displaystyle  \quad F^{(3)} \left( \frac{\pi}{2} \right) \neq 0  \) & \quad $\mathcal{O}\left(\omega_{i}^{-1} \right)$
  & \(\displaystyle \frac{1}{\omega_{i}} \sum_{\pm}\mathcal{O} \left( \frac{1 }{ (\omega_{i} \pm \omega_{j})^{4}}\right) \)  \\ \hline
\multirow{1}{*}{\(\displaystyle \quad \omega_{0}\mathbb{\overline{J}}_{0ji}, \frac{\omega_{0}\mathbb{J}_{0ji}}{\omega_{i}}\)}
 &  \(\displaystyle \quad e_{0}^{\prime}\sin \cos \)  &  \(\displaystyle  \quad F^{(3)} \left( \frac{\pi}{2} \right) \neq 0  \) &\quad $\mathcal{O}\left(\omega_{i}^{-1} \right)$
  & \(\displaystyle \frac{1}{\omega_{i}} \sum_{\pm}\mathcal{O} \left( \frac{1 }{ (\omega_{i} \pm \omega_{j})^{4}} \right)\)  \\ \hline
\end{longtable}
\end{center}	
\end{prop}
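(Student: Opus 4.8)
The plan is to treat every constant in $\mathcal{A}_1$ by a single template, since each is an integral over $\left[0,\frac{\pi}{2}\right]$ of a fixed weight against exactly two genuinely oscillating factors. Indeed, in each listed constant only the indices $i$ and $j$ are large; every factor carrying the fixed index $0$ (such as $e_0$ or $e_0'/\omega_0$), together with all powers of $\sin,\cos,\tan$ and the fixed profiles $\Gamma_a$, can be absorbed into one weight which, up to the $\tan^2$ measure, is precisely the function $F$ recorded in the second column of the table. Thus each constant has the schematic form $\int_0^{\pi/2} F(x)\,\Theta_i(x)\,\Theta_j(x)\,dx$ with $\Theta_i\in\{e_i,\,e_i'/\omega_i,\,\int_x^{\pi/2}e_i\sin\cos\}$ and likewise for $\Theta_j$. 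First I would record, using Lemma \ref{ClosedformulasFore}, Remark \ref{lot}, and the reduction $\int_x^{\pi/2}e_m(y)\sin(y)\cos(y)\,dy\simeq \frac{2}{\sqrt\pi}\frac{1}{\omega_m}\cos^2(x)\cos(\omega_m x)$ already obtained in the proof of Proposition \ref{Result1}, the exact leading trigonometric profile of each $\Theta$.

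For the growth regime, where some $\omega_i-\omega_j$ stays bounded, I would argue exactly as in Proposition \ref{Result1}: apply Hölder's inequality, placing the $L^\infty$ bound of Lemma \ref{Linftyboundse} on the factor built from $e_i$ (which costs one power $\omega_i$, or instead gains $\omega_i^{-1}$ when the factor is $\int_\cdot^{\pi/2}e_i\sin\cos$) and the weighted $L^2$ bounds from Remark \ref{lot} on the remaining oscillating factor, while the fixed weight is controlled by \eqref{LinftyGamma} for the $\Gamma_a$ and by Lemma \ref{Linftyboundse} for the mode-$0$ factors. This directly yields the $\mathcal{O}(\omega_i)$ and $\mathcal{O}(\omega_i^{-1})$ entries, with no cancellation required.

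For the decay regime, where all four combinations $\omega_i\pm\omega_j\to\infty$, the key step is to replace the oscillating factors by their exact closed forms and expand the product $\Theta_i\Theta_j$ into single-frequency terms $\{\sin,\cos\}(\omega_i x)\{\sin,\cos\}(\omega_j x)$; the measure $\tan^2(x)$ together with the $\sin^3/\cos$ or $\sin\cos$ factors collapses the leading contribution to $\int_0^{\pi/2}F(x)\{\sin,\cos\}(\omega_i x)\{\sin,\cos\}(\omega_j x)\,dx$. A product-to-sum identity reduces this to a sum of $\int_0^{\pi/2}F(x)\{\sin,\cos\}((\omega_i\pm\omega_j)x)\,dx$. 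Since $\omega_i=3+2i$, every combination $\omega_i\pm\omega_j=2b$ is even, so Lemma \ref{byparts2} applies and produces an expansion whose first surviving term is the boundary term of order $\frac{1}{(2b)^{2k+2}}$ attached to the first non-vanishing odd derivative $F^{(2k+1)}$ at $x\in\{0,\frac{\pi}{2}\}$. Reading off $k$ from the third column ($F^{(3)}(\frac{\pi}{2})\neq0$ gives $(\cdot)^{-4}$, $F^{(9)}(\frac{\pi}{2})\neq0$ gives $(\cdot)^{-10}$, and so on) yields each final-column entry, with the overall $\omega_i$ or $\omega_i^{-1}$ prefactor inherited from the normalization of the constant.

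The hard part will be twofold. First, the closed form of $e_i$ carries, besides its leading $\frac{\sin(\omega_i x)}{\tan x}$ profile, a subleading $\cos(\omega_i x)$ piece with coefficient of size $\omega_i^{-1}$; when this piece is paired against its partner the weight acquires an extra $\tan$, which is singular at $x=\frac{\pi}{2}$ unless the fixed profile vanishes there. One must therefore work with the exact coefficients $\frac{\omega_i\omega_j}{\sqrt{(\omega_i^2-1)(\omega_j^2-1)}}$ of Lemma \ref{ClosedformulasFore} rather than a bare leading-order substitution, and verify that these cross terms are genuinely of lower order (smaller $\omega$-weight and at least as much oscillatory decay) so that they cannot spoil the stated rate; this is where most of the care is needed. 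Second, and more laboriously, one must certify the first-non-vanishing-boundary-derivative data in the third column for each of the roughly thirty weights by differentiating the explicit trigonometric polynomials $\Gamma_1,\dots,\Gamma_4$, $e_0$ and $e_0'$ at $0$ and $\frac{\pi}{2}$; although each such computation is routine, it is exactly this bookkeeping that separates the exponents $4$, $10$ and $12$ and establishes sharpness.
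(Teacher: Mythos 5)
Your decay-regime argument is exactly the paper's: substitute the leading trigonometric profiles of Remark \ref{lot} (and the asymptotic antiderivative $\int_x^{\pi/2}e_i\sin\cos\,dy\simeq\frac{2}{\sqrt{\pi}}\frac{1}{\omega_i}\cos^2(x)\cos(\omega_i x)$ from the proof of Proposition \ref{Result1}), reduce by product-to-sum to $\int_0^{\pi/2}F(x)\cos((\omega_i\pm\omega_j)x)\,dx$ with $\omega_i\pm\omega_j$ even, and read the rate off the first non-vanishing odd boundary derivative of $F$ via Lemma \ref{byparts2}. Your observation that the subleading $\mathcal{O}(\omega_i^{-1})\cos(\omega_i x)$ pieces of the closed formulas must be checked separately is a legitimate point of care which the paper absorbs into the symbol $\simeq$ without comment.

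The growth regime, however, is where your proposal as written fails. Take $\omega_i\mathbb{C}_{13ji}=\omega_i\int_0^{\pi/2}(\Gamma_1-\Gamma_3)\,e_i e_j\tan^2dx$. Your allocation puts $e_i$ in $L^{\infty}$ (costing $\omega_i$ by Lemma \ref{Linftyboundse}), puts $e_j\tan$ in $L^2$ (norm $1$), and controls the fixed weight by \eqref{LinftyGamma}; but the leftover factor is then $(\Gamma_1-\Gamma_3)\tan$, and since $\Gamma_3$ does not vanish at $\frac{\pi}{2}$ this function grows like $\tan$ there and lies in no $L^p\left[0,\frac{\pi}{2}\right]$ with $p\geq 2$, so the H\"older product does not close. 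Even setting that aside, charging $\|e_i\|_{L^{\infty}}\lesssim\omega_i$ would only give $\mathbb{C}_{13ji}=\mathcal{O}(\omega_i)$, hence $\omega_i\mathbb{C}_{13ji}=\mathcal{O}(\omega_i^2)$ --- one power worse than the table. The allocation the paper uses (and the one you need) is the opposite: put the fixed weight in $L^{\infty}$ and pair \emph{each} oscillating factor with one copy of $\tan$, so that $|\mathbb{C}_{13ji}|\leq\|\Gamma_1-\Gamma_3\|_{L^{\infty}}\|e_i\tan\|_{L^2}\|e_j\tan\|_{L^2}\lesssim 1$ by orthonormality; the $\mathcal{O}(\omega_i)$ entry then comes solely from the explicit prefactor $\omega_i$. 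Your allocation is correct only for the constants whose $i$-factor is $\int_x^{\pi/2}e_i\sin\cos$, where the $L^{\infty}$ bound gains $\omega_i^{-1}$ and the leftover weight $\Gamma_a e_0'\tan$ is genuinely bounded. With that reallocation, and the boundary-derivative bookkeeping you already describe, the rest of the proposal coincides with the paper's proof.
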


\begin{proof}
	All these estimates follow directly from Lemma \ref{byparts2} and in particular from 
\begin{align*}
	\int_{0}^{\frac{\pi}{2}} F(x) \cos( 2 b x )  dx   = 
 \sum_{k=0}^{N} \frac{(-1)^{k}}{a^{2k+2}} \left( (-1)^{b} F^{(2k+1)} \left(  \frac{\pi}{2} \right) -F^{(2k+1)} \left( 0 \right)  \right) + \mathcal{O} \left(  \frac{1}{a^{2N+2}} \right),
\end{align*}
as $b \longrightarrow \infty$. However, we illustrate the proof only for the first constant, namely $\omega_{i}\mathbb{C}_{13ji}$. For large values of $i,j$ and in the case where both $\omega_{i} \pm \omega_{j} \longrightarrow \infty$ (equivalently when $\omega_{i} - \omega_{j} \longrightarrow \infty$), 
	\begin{align*}
		\mathbb{C}_{13ji}  &:= \int_{0}^{\frac{\pi}{2}} \left( \Gamma_{1}(x) - \Gamma_{3}(x) \right) e_{i}(x)e_{j}(x)\tan^2(x) dx
		 \simeq \int_{0}^{\frac{\pi}{2}} \left( \Gamma_{1}(x) - \Gamma_{3}(x) \right) \sin(\omega_{i}x) \sin(\omega_{j}x) dx \\
		& = \frac{1}{2} \int_{0}^{\frac{\pi}{2}} \left( \Gamma_{1}(x) - \Gamma_{3}(x) \right) \cos((\omega_{i}-\omega_{j})x) dx - \frac{1}{2} \int_{0}^{\frac{\pi}{2}} \left( \Gamma_{1}(x) - \Gamma_{3}(x) \right) \cos((\omega_{i}+\omega_{j})x) dx.
	\end{align*}
Observe that both $\omega_{i} + \omega_{j}$ and $\omega_{i} - \omega_{j}$ are even,
\begin{align*}
	\omega_{i} + \omega_{j} = 2(3 +i+j), \quad \omega_{i} - \omega_{j} = 2(i-j).  
\end{align*}
We define $F(x):=\Gamma_{1}(x) - \Gamma_{3}(x)$. If $\omega_{i} - \omega_{j} \longrightarrow \infty$, then Lemma \ref{byparts2} applies and since
\begin{align*}
	F^{\prime} \left( \frac{\pi}{2}\right) = F^{\prime} \left( 0 \right) =0,~~~F^{\prime \prime \prime} \left(\frac{\pi}{2} \right) = -54 \neq 0
\end{align*}	
we infer
\begin{align*}
	\mathbb{C}_{13ji}=\mathcal{O}\left( \frac{1}{(\omega_{i} - \omega_{j})^4} + \frac{1}{(\omega_{i}+ \omega_{j})^4} \right),
\end{align*}
as $i,j \longrightarrow \infty$. On the other hand, if $\omega_{i}-\omega_{j} \longarrownot\longrightarrow \infty$, then we see that
\begin{align*}
	\left | \mathbb{C}_{13ji} \right| &:=\left| \int_{0}^{\frac{\pi}{2}} \left( \Gamma_{1}(x) - \Gamma_{3}(x) \right) e_{i}(x)e_{j}(x)\tan^2(x) dx \right| \\
		& \leq 	
	\left\|
	\Gamma_{1} - \Gamma_{3} 
	\right \|_{L^{\infty}\left[0,\frac{\pi}{2}\right]}
	\left \| 
	e_{i}\tan
	\right \|_{L^{2}\left[0,\frac{\pi}{2}\right]}
	\left \| 
	e_{j}\tan
	\right \|_{L^{2}\left[0,\frac{\pi}{2}\right]} 	\lesssim 1.
\end{align*}
In conclusion,
\begin{align*}
	\omega_{i} \mathbb{C}_{13ji}  =
 \Bigg \{ 
    \begin{array}{lr}
        \sum_{\pm}\mathcal{O}\left( \frac{\omega_{i}}{(\omega_{i} \pm \omega_{j})^4} \right), & \text{if } \omega_{i}-\omega_{j} \longrightarrow \infty\\
        \mathcal{O}\left(\omega_{i}\right), & \text{if }  \omega_{i}-\omega_{j}\longarrownot\longrightarrow \infty,
    \end{array}
\end{align*}
as $i,j \longrightarrow \infty$.
\end{proof}
\noindent
Second, we focus on the elements of $\mathcal{A}_{2}$.
\begin{prop}[Fourier constants in $\mathcal{A}_{2}$]\label{A2}
	Let $N\in \mathbb{N}$. 	The following growth and decay estimates hold. 
\begin{center}
\begin{longtable}{ |l|l|l|l|l| }
\hline
\multicolumn{5}{ |c| }{Growth and decay estimates for the Fourier constants in $\mathcal{A}_{2}$  as $i,j,k \longrightarrow + \infty$} \\
\hline
\quad \quad  Constant & \quad  $F$ & 1st derivative $\neq 0$& $\exists~ \omega_{i} \pm \omega_{j} \pm \omega_{k} \longarrownot\longrightarrow \infty$ & \quad  $\forall ~\omega_{i} \pm \omega_{j} \pm \omega_{k} \longrightarrow \infty$ \\ \hline
\multirow{1}{*}{ \(\displaystyle \omega_{i}\mathbb{F}_{1jki}, \omega_{i}\mathbb{\overline{F}}_{1jki}  \)}
 &  \(\displaystyle  \frac{\Gamma_{1}}{\tan}  \)  &  \(\displaystyle  \quad F^{(7)} \left( \frac{\pi}{2} \right) \neq 0  \) & \quad \quad\quad $\mathcal{O} \left(\omega_{i} \omega_{k} \right)$
  & \(\displaystyle  
  \quad \sum_{\pm}
  \mathcal{O} \left(
  \frac{\omega_{i} }{ (\omega_{i} \pm \omega_{j}\pm \omega_{k})^{8}} \right)\)  \\ \hline
\multirow{1}{*}{ \(\displaystyle \omega_{i}\mathbb{F}_{2jki}, \omega_{i}\mathbb{\overline{F}}_{2jki}  \)}
 &  \(\displaystyle  \frac{\Gamma_{2}}{\tan}  \)  &  \(\displaystyle  \quad F^{(9)} \left( \frac{\pi}{2} \right) \neq 0  \) & \quad \quad\quad $\mathcal{O} \left(\omega_{i} \omega_{k} \right)$
  & \(\displaystyle \quad \sum_{\pm} \mathcal{O} \left( \frac{\omega_{i} }{ (\omega_{i} \pm \omega_{j}\pm \omega_{k})^{10}}\right) \)  \\ \hline
\multirow{1}{*}{ \(\displaystyle \omega_{i}\mathbb{F}_{3jki}, \omega_{i}\mathbb{\overline{F}}_{3jki}  \)}
 &  \(\displaystyle  \frac{\Gamma_{3}}{\tan}  \)  &  \(\displaystyle  \quad F^{\prime} \left( \frac{\pi}{2} \right) \neq 0  \) & \quad \quad\quad $\mathcal{O} \left(\omega_{i} \omega_{k} \right)$
  & \(\displaystyle \quad \sum_{\pm} \mathcal{O} \left( \frac{\omega_{i} }{ (\omega_{i} \pm \omega_{j}\pm \omega_{k})^{2}}\right) \)  \\ \hline
\multirow{1}{*}{ \(\displaystyle \omega_{i}\mathbb{F}_{4jki}, \omega_{i}\mathbb{\overline{F}}_{4jki}  \)}
 &  \(\displaystyle  \frac{\Gamma_{4}}{\tan}  \)  &  \(\displaystyle  \quad F^{\prime} \left( \frac{\pi}{2} \right) \neq 0  \) & \quad \quad\quad $\mathcal{O} \left(\omega_{i} \omega_{k} \right)$
  & \(\displaystyle \quad \sum_{\pm} \mathcal{O} \left( \frac{\omega_{i} }{ (\omega_{i} \pm \omega_{j}\pm \omega_{k})^{2}}\right) \)  \\ \hline
\multirow{1}{*}{ \(\displaystyle \omega_{0}\mathbb{\overline{R}}_{0kji},  \frac{\omega_{0}\mathbb{R}_{0kji}}{\omega_{i}}  \)}
 &  \(\displaystyle  e_{0}^{\prime} \cos^2  \)  &\quad \quad  \quad  \text{None} & \quad \quad\quad $\mathcal{O} \left( \omega_{j}\omega_{i}^{-1} \right)$
  & \(\displaystyle \frac{1}{\omega_{i}} \sum_{\pm} \mathcal{O} \left( \frac{1 }{ (\omega_{i} \pm \omega_{j}\pm \omega_{k})^{N}}\right) \)  \\ \hline
\end{longtable}
\end{center}	
\end{prop}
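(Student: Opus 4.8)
The plan is to handle every row of the table by the same two-regime dichotomy already used in Propositions~\ref{Result1} and~\ref{A1}: I would separate the \emph{decay regime}, in which all combinations $\omega_i \pm \omega_j \pm \omega_k$ tend to infinity, from the \emph{growth regime}, in which at least one such combination stays bounded.

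In the decay regime I would first substitute the leading-order approximations of Remark~\ref{lot}, writing $e_n \simeq \tfrac{2}{\sqrt\pi}\,\sin(\omega_n\cdot)/\tan$ and $e_n'/\omega_n \simeq \tfrac{2}{\sqrt\pi}\,\cos(\omega_n\cdot)/\tan$, the errors being uniformly small in the weighted $L^2$-norm. After absorbing the weight $\tan^2$ (respectively $\sin^3/\cos$) and collecting the powers of $\tan$, each constant in $\mathcal{A}_2$ reduces to an integral $\int_0^{\pi/2} F(x)\,T(x)\,dx$, where $F$ is precisely the function listed in the ``$F$''-column (for instance $F=\Gamma_1/\tan$ for both $\mathbb{F}_{1jki}$ and $\overline{\mathbb{F}}_{1jki}$) and $T$ is a product of three trigonometric factors of frequencies $\omega_i,\omega_j,\omega_k$. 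Expanding $T$ by the product-to-sum identities turns it into a sum over all sign choices of single oscillations $\sin((\omega_i \pm \omega_j \pm \omega_k)x)$; since each $\omega_n$ is odd, every combination $\omega_i \pm \omega_j \pm \omega_k$ is odd, so the $\sin((2b+1)x)$ expansion of Lemma~\ref{byparts2} applies verbatim. The resulting decay exponent is governed by the order of the first nonvanishing boundary derivative of $F$, recorded in the ``1st derivative $\neq 0$''-column: because each $\Gamma_a$ is flat at the origin, the terms $F^{(2k)}(0)$ vanish up to high order and the leading contribution comes from the $F^{(2k+1)}(\pi/2)$-terms, producing exactly the exponents $8,10,2,2$ displayed.

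For the growth regime I would use H\"older's inequality, distributing the weight $\tan^2$ among the factors and invoking the $L^\infty$- and $L^2$-bounds of Lemma~\ref{Linftyboundse} together with the normalisations $\|e_n\tan\|_{L^2}=\|e_n'\tan/\omega_n\|_{L^2}=1$. For example, for $\mathbb{F}_{1jki}$ one estimates $|\mathbb{F}_{1jki}| \le \|\Gamma_1\|_{L^\infty}\,\|e_i\tan\|_{L^2}\,\|e_j\tan\|_{L^2}\,\|e_k\|_{L^\infty} \lesssim \omega_k$, whence $\omega_i\mathbb{F}_{1jki}=\mathcal{O}(\omega_i\omega_k)$; the barred versions $\overline{\mathbb{F}}$ are identical after replacing two $e$'s by $e'/\omega$. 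The last row, $\mathbb{R}_{0kji}$ and $\overline{\mathbb{R}}_{0kji}$, is the only structurally different one: here I would first insert the antiderivative estimate $\int_x^{\pi/2} e_i(y)\sin(y)\cos(y)\,dy \simeq \tfrac{2}{\sqrt\pi}\,\omega_i^{-1}\cos^2(x)\cos(\omega_i x)$ from the proof of Proposition~\ref{Result1}, which produces the prefactor $\omega_i^{-1}$ and turns the weight into $F=e_0'\cos^2$. Since this $F$ is a genuine trigonometric polynomial, the boundary terms in Lemma~\ref{byparts2} telescope and the oscillatory integral decays faster than any power (exactly as in the arbitrary-$N$ remainders of Lemma~\ref{OscillatoryIntegrals}), which accounts for the free exponent $N$ and the empty ``derivative''-entry; in the growth regime the $L^\infty$-bound $\|\int_\cdot^{\pi/2} e_i\sin\cos\|_{L^\infty}\le 2/\omega_i$ combined with $\|e_j\|_{L^\infty}\lesssim\omega_j$ gives $\mathcal{O}(\omega_j/\omega_i)$.

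The routine part is the algebra; the genuine obstacle is the determination, for each weight $F=\Gamma_a/\tan$, of the exact order of its first nonvanishing boundary derivative at $x=\tfrac{\pi}{2}$ (and the verification that the lower-order derivatives at both endpoints indeed vanish). This rests on the explicit trigonometric-polynomial expressions for $\Gamma_1,\dots,\Gamma_4$ and on the cancellations they exhibit near $x=0$, and it is this case-by-case boundary analysis — rather than any single estimate — that fixes the precise decay exponents in the table.
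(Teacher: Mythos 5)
Your proposal is correct and follows essentially the same route as the paper: the same dichotomy between the regime where all $\omega_i\pm\omega_j\pm\omega_k\longrightarrow\infty$ (leading-order substitution from Remark \ref{lot}, product-to-sum expansion into $\sin((2b+1)x)$ oscillations, then Lemma \ref{byparts2} with the decay exponent set by the first nonvanishing boundary derivative of $F$) and the regime where some combination stays bounded (H\"older with the $L^2$-normalisations and the $L^\infty$-bounds of Lemma \ref{Linftyboundse}), including the antiderivative asymptotics $\int_x^{\pi/2}e_i\sin\cos\simeq\omega_i^{-1}\cos^2(x)\cos(\omega_i x)$ and the all-orders cancellation for the trigonometric-polynomial weight $e_0'\cos^2$ in the last row. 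You also correctly identify that the only genuinely case-specific work is the verification of the vanishing of the low-order boundary derivatives of $\Gamma_a/\tan$ at both endpoints, which is exactly the computation the paper carries out (explicitly only for $\Gamma_1/\tan$).
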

\begin{proof}
All these estimates follow directly from Lemma \ref{byparts2} and in particular from 
\begin{align*}
\int_{0}^{\frac{\pi}{2}} F(x) \sin( (2 b+1) x )  dx   = 
\sum_{k=0}^{N} \frac{(-1)^{k}}{a^{2k+1}} F^{(2k)} \left(  0 \right) 
+ (-1)^{b} \sum_{k=0}^{N} \frac{(-1)^{k}}{a^{2k+2}} F^{(2k+1)} \left( \frac{\pi}{2} \right) 
  + \mathcal{O} \left(  \frac{1}{a^{2N+2}} \right),
\end{align*}
as $b \longrightarrow \infty$. However, we illustrate the proof only for the first constant, namely $\omega_{i}\mathbb{F}_{1jki}$. For large values of $i,j,k$ and in the case where all $\omega_{i} \pm \omega_{j} \pm \omega_{k} \longrightarrow \infty$,
	\begin{align*}
		\mathbb{F}_{1jki}  &:= \int_{0}^{\frac{\pi}{2}}  \Gamma_{1}(x)  e_{i}(x)e_{j}(x)e_{k}(x) \tan^2(x) dx 
		 \simeq \int_{0}^{\frac{\pi}{2}}  \frac{\Gamma_{1}(x)}{\tan(x)}  \sin(\omega_{i}x) \sin(\omega_{j}x) \sin(\omega_{k}x) dx \\
		& = \frac{1}{4} \int_{0}^{\frac{\pi}{2}}  \frac{\Gamma_{1}(x)}{\tan(x)}  \sin \left( \left(\omega_{i}-\omega_{j}+\omega_{k} \right) x\right) dx -  \frac{1}{4} \int_{0}^{\frac{\pi}{2}}  \frac{\Gamma_{1}(x)}{\tan(x)}  \sin \left( \left(\omega_{i}-\omega_{j}-\omega_{k} \right) x\right) dx \\
		& -  \frac{1}{4} \int_{0}^{\frac{\pi}{2}}  \frac{\Gamma_{1}(x)}{\tan(x)}  \sin \left( \left(\omega_{i}+\omega_{j}+\omega_{k} \right) x\right) dx
		+  \frac{1}{4} \int_{0}^{\frac{\pi}{2}}  \frac{\Gamma_{1}(x)}{\tan(x)}  \sin \left( \left(\omega_{i}+\omega_{j}-\omega_{k} \right) x\right) dx.
	\end{align*}
Observe that all $\omega_{i} \pm \omega_{j} \pm \omega_{k}$ are odd,
\begin{align*}
	\omega_{i} - \omega_{j} + \omega_{k} & = 2(i-j+k+1)+1, \quad 
	\omega_{i} - \omega_{j} - \omega_{k}   = 2(i-j-k-2)+1, \\
	\omega_{i} + \omega_{j} + \omega_{k} & = 2(i+j+k+4)+1, \quad 
	\omega_{i} + \omega_{j} - \omega_{k}   = 2(i+j-k+1)+1.
\end{align*}
We define $F(x):=\frac{\Gamma_{1}(x) }{\tan(x)}$. Lemma \ref{byparts2} applies and since
\begin{align*}
	F \left(0 \right) = F^{\prime} \left(  \frac{\pi}{2} \right) =0, F^{\prime \prime} \left(0 \right) = F^{\prime \prime \prime} \left(  \frac{\pi}{2} \right) =0, F^{(4)} \left(0 \right) = F^{(5)} \left(  \frac{\pi}{2} \right) =0, ~~~F^{(7)} \left(  \frac{\pi}{2} \right) = \frac{483840}{\pi} \neq 0
\end{align*}	
we infer
\begin{align*}
	\mathbb{F}_{1jki} = \sum_{\pm} \mathcal{O}\left( \frac{1}{(\omega_{i} \pm \omega_{j} \pm \omega_{k})^8} \right),
\end{align*}
as $i,j,k \longrightarrow \infty$. Finally, for large values of $i,j,k$ such that some $\omega_{i} \pm \omega_{j} \pm \omega_{k} \longarrownot\longrightarrow \infty$,  Holder's inequality implies
\begin{align*}
	\left| \mathbb{F}_{1jki} \right|& :=\left| \int_{0}^{\frac{\pi}{2}}  \Gamma_{1}(x)  e_{i}(x)e_{j}(x)e_{k}(x) \tan^2(x) dx \right| \\ & \leq 	
	\left\|
	\Gamma_{1} 
	\right \|_{L^{\infty}\left[0,\frac{\pi}{2}\right]}
	\left \| 
	e_{k}
	\right \|_{L^{\infty}\left[0,\frac{\pi}{2}\right]}
	\left \| 
	e_{i}\tan
	\right \|_{L^{2}\left[0,\frac{\pi}{2}\right]}
	\left \|
	e_{j}\tan
	\right \|_{L^{2}\left[0,\frac{\pi}{2}\right]} 	\lesssim \omega_{k}.
\end{align*}
as $i,j,k \longrightarrow \infty$. In conclusion,
\begin{align*}
	\omega_{i}\mathbb{F}_{1jki} =
 \Bigg \{ 
    \begin{array}{lr}
        \sum_{\pm}\mathcal{O} \left( \frac{\omega_{i}}{(\omega_{i} \pm \omega_{j}\pm \omega_{k})^{8}} \right), & \text{if all }\omega_{i} \pm \omega_{j} \pm \omega_{k} \longrightarrow \infty \\
      \mathcal{O} \left(\omega_{i}\omega_{k} \right), & \text{if some }  \omega_{i} \pm \omega_{j} \pm \omega_{k} \longarrownot\longrightarrow \infty,
    \end{array}
\end{align*}
that completes the proof.
\end{proof}
\noindent
Now, we focus on the elements of $\mathcal{A}_{3}$.
\begin{prop}[Fourier constants in $\mathcal{A}_{3}$]\label{A3}
	Let $N\in \mathbb{N}$. The following growth and decay estimates hold. 
\begin{center}
\begin{longtable}{ |l|l|l|l|l| }
\hline
\multicolumn{5}{ |c| }{Growth and decay estimates for the Fourier constants in $\mathcal{A}_{3}$  as $i,j \longrightarrow + \infty$} \\
\hline
\quad  Constant & \quad \quad $F$ & 1st derivative $\neq 0$& $ \omega_{i} - \omega_{j}  \longarrownot\longrightarrow \infty $ & \quad\quad $ \omega_{i} - \omega_{j}  \longrightarrow \infty $  \\ \hline
\multirow{1}{*}{ \(\displaystyle \quad  \omega_{0}\omega_{i} \mathbb{ \overline{F}}_{10ji}  \)}
 &  \(\displaystyle \quad \Gamma_{1} e_{0}^{\prime}  \)  &  \(\displaystyle  \quad F^{(8)} \left( \frac{\pi}{2} \right) \neq 0  \) & \quad $\mathcal{O}\left(\omega_{i} \right)$
  & \(\displaystyle  \sum_{\pm} 
  \mathcal{O} \left(
  \frac{\omega_{i} }{ (\omega_{i} \pm \omega_{j})^{9}}\right) \)  \\ \hline
\multirow{1}{*}{ \(\displaystyle \quad   \omega_{0}\omega_{i} \mathbb{\overline{F}}_{20ji}  \)}
 &  \(\displaystyle \quad \Gamma_{2} e_{0}^{\prime}  \)  &  \(\displaystyle  \quad F^{(10)} \left( \frac{\pi}{2} \right) \neq 0  \) &\quad $\mathcal{O}\left(\omega_{i} \right)$
  & \(\displaystyle  \sum_{\pm} \mathcal{O} \left( \frac{\omega_{i} }{ (\omega_{i} \pm \omega_{j})^{11}} \right)\)  \\ \hline  
\multirow{1}{*}{ \(\displaystyle \quad  \omega_{0}\omega_{i} \mathbb{\overline{F}}_{30ji}  \)}
 &  \(\displaystyle \quad \Gamma_{3} e_{0}^{\prime}  \)  &  \(\displaystyle  \quad F^{(2)} \left( \frac{\pi}{2} \right) \neq 0  \) & \quad $\mathcal{O}\left(\omega_{i} \right)$
  & \(\displaystyle  \sum_{\pm} \mathcal{O} \left( \frac{\omega_{i} }{ (\omega_{i} \pm \omega_{j})^{3}}\right) \)  \\ \hline
\multirow{1}{*}{ \(\displaystyle \quad  \omega_{0}\omega_{i} \mathbb{\overline{F}}_{40ji}  \)}
 &  \(\displaystyle \quad \Gamma_{4} e_{0}^{\prime}  \)  &  \(\displaystyle  \quad F^{(2)} \left( \frac{\pi}{2} \right) \neq 0  \) & \quad $\mathcal{O}\left(\omega_{i} \right)$
  & \(\displaystyle  \sum_{\pm} \mathcal{O} \left( \frac{\omega_{i} }{ (\omega_{i} \pm \omega_{j})^{3}}\right) \)  \\ \hline
\multirow{1}{*}{ \(\displaystyle \quad   \omega_{0}\omega_{i} \mathbb{\overline{G}}_{0ji}  \)}
 &  \(\displaystyle \quad\quad  e_{0}^{\prime}  \)  &  \(\displaystyle  \quad F^{(2)} \left( \frac{\pi}{2} \right) \neq 0  \) & \quad $\mathcal{O}\left(\omega_{i}\right)$
  & \(\displaystyle  \sum_{\pm} \mathcal{O} \left( \frac{\omega_{i} }{ (\omega_{i} \pm \omega_{j})^{3}} \right)\)  \\ \hline
\multirow{1}{*}{ \(\displaystyle \quad  \mathbb{ \overline{Q}}_{10ji},\frac{\mathbb{Q}_{10ji}}{\omega_{i}}  \)}
 &  \(\displaystyle  \Gamma_{1} e_{0} \sin \cos \)  &  \(\displaystyle  \quad F^{(10)} \left( \frac{\pi}{2} \right) \neq 0  \) & \quad $\mathcal{O}\left( \omega_{i}^{-1} \right)$
  & \(\displaystyle \frac{1}{\omega_{i}}  \sum_{\pm} \mathcal{O} \left( \frac{1 }{ (\omega_{i} \pm \omega_{j})^{11}}\right) \)  \\ \hline
\multirow{1}{*}{ \(\displaystyle\quad   \mathbb{\overline{Q}}_{20ji},\frac{\mathbb{Q}_{20ji}}{\omega_{i}}  \)}
 &  \(\displaystyle  \Gamma_{2} e_{0} \sin \cos  \)  &  \(\displaystyle  \quad F^{(12)} \left( \frac{\pi}{2} \right) \neq 0  \) & \quad $\mathcal{O}\left(\omega_{i}^{-1}  \right)$
  & \(\displaystyle \frac{1}{\omega_{i}}  \sum_{\pm} \mathcal{O} \left( \frac{1}{ (\omega_{i} \pm \omega_{j})^{13}} \right)\)  \\ \hline
\multirow{1}{*}{ \(\displaystyle \quad  \mathbb{\overline{Q}}_{30ji},\frac{\mathbb{Q}_{30ji}}{\omega_{i}}  \)}
 &  \(\displaystyle  \Gamma_{3} e_{0} \sin \cos  \)  &  \(\displaystyle  \quad F^{(4)} \left( \frac{\pi}{2} \right) \neq 0  \) & \quad $\mathcal{O}\left(\omega_{i}^{-1}  \right)$
  & \(\displaystyle \frac{1}{\omega_{i}}  \sum_{\pm}  \mathcal{O} \left(\frac{1 }{ (\omega_{i} \pm \omega_{j})^{5}} \right)\)  \\ \hline
\multirow{1}{*}{ \(\displaystyle \quad  \mathbb{\overline{Q}}_{40ji},\frac{\mathbb{Q}_{40ji}}{\omega_{i}}  \)}
 &  \(\displaystyle  \Gamma_{4} e_{0} \sin \cos  \)  &  \(\displaystyle  \quad F^{(4)} \left( \frac{\pi}{2} \right) \neq 0  \) & \quad $\mathcal{O}\left(\omega_{i}^{-1} \right)$
  & \(\displaystyle \frac{1}{\omega_{i}}  \sum_{\pm}  \mathcal{O} \left(\frac{1 }{ (\omega_{i} \pm \omega_{j})^{5}}\right) \)  \\ \hline
\multirow{1}{*}{ \(\displaystyle\quad   \overline{\mathbb{I}}_{0ji},\frac{\mathbb{I}_{0ji}}{\omega_{i}}  \)}
 &  \(\displaystyle   e_{0} \sin \cos  \)  &  \(\displaystyle  \quad F^{(4)} \left( \frac{\pi}{2} \right) \neq 0  \) & \quad $\mathcal{O}\left(\omega_{i}^{-1}  \right)$
  & \(\displaystyle \frac{1}{\omega_{i}}  \sum_{\pm} \mathcal{O} \left( \frac{1 }{ (\omega_{i} \pm \omega_{j})^{5}}\right) \)  \\ \hline  
\multirow{1}{*}{ \(\displaystyle \omega_{0}^2\overline{\mathbb{R}}_{00ji}, \frac{\omega_{0}^2 \mathbb{R}_{00ji}}{\omega_{i}}  \)}
 &  \(\displaystyle   (e_{0}^{\prime})^2 \sin \cos  \)  &  \quad \quad\quad  \text{None} & \quad $\mathcal{O}\left(\omega_{i}^{-1} \right)$
  & \(\displaystyle \frac{1}{\omega_{i}}  \sum_{\pm}  \mathcal{O} \left(\frac{1 }{ (\omega_{i} \pm \omega_{j})^{N}}\right) \)  \\ \hline 
\multirow{1}{*}{ \(\displaystyle \quad \overline{\mathbb{S}}_{00ji},\frac{\mathbb{S}_{00ji}}{\omega_{i}}  \)}
 &  \(\displaystyle ~~  e_{0}^2 \sin \cos  \)  &  \quad \quad  \quad\text{None} &\quad $\mathcal{O}\left(\omega_{i}^{-1}\right)$
  & \(\displaystyle \frac{1}{\omega_{i}}  \sum_{\pm} \mathcal{O} \left( \frac{1 }{ (\omega_{i} \pm \omega_{j})^{N}}\right) \)  \\ \hline 
\end{longtable}
\end{center}	
\end{prop}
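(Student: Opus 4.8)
The plan is to prove every row of the table by the same two-regime dichotomy already used for Propositions \ref{A1} and \ref{A2}: a fast-decay regime when $\omega_i-\omega_j\longrightarrow\infty$ (equivalently, when both $\omega_i\pm\omega_j\longrightarrow\infty$, since $\omega_i+\omega_j=2(3+i+j)$ and $\omega_i-\omega_j=2(i-j)$ are even) and a crude-growth regime when $\omega_i-\omega_j\longarrownot\longrightarrow\infty$. In the decay regime I would substitute, for the two indices $i,j$ tending to infinity, the leading-order terms from Remark \ref{lot}, namely $e_j\simeq\frac{2}{\sqrt{\pi}}\frac{\sin(\omega_j\cdot)}{\tan}$ and $\frac{e_i'}{\omega_i}\simeq\frac{2}{\sqrt{\pi}}\frac{\cos(\omega_i\cdot)}{\tan}$, while keeping the fixed index $0$ exact through $e_0(x)=4\sqrt{2/\pi}\cos^3(x)$ and $e_0'$ (Remark \ref{e0oscilating}) and the explicit weights $\Gamma_a$. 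For the constants carrying the antiderivative $\int_{x}^{\frac{\pi}{2}}e_i(y)\sin(y)\cos(y)\,dy$ (the families $\overline{\mathbb{Q}},\overline{\mathbb{I}},\overline{\mathbb{R}},\overline{\mathbb{S}}$ and $\mathbb{Q}/\omega_i,\mathbb{I}/\omega_i,\dots$) I would additionally insert the asymptotic $\int_{x}^{\frac{\pi}{2}}e_m(y)\sin(y)\cos(y)\,dy\simeq\frac{2}{\sqrt{\pi}}\frac{1}{\omega_m}\cos^2(x)\cos(\omega_m x)$ derived in the proof of Proposition \ref{Result1}, which is exactly what produces the extra $\frac{1}{\omega_i}$ prefactor in the corresponding rows.

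The key mechanism is then the following: after this substitution all powers of $\tan$ coming from the leading-order terms cancel against the weight $\tan^2(x)$ (and the $\frac{\sin^3}{\cos}$ or $\cos^2$ factors), leaving a single smooth, index-independent weight $F(x)$ — precisely the entry in the ``$F$'' column — multiplying one oscillatory factor $\sin$ or $\cos$. A product-to-sum identity reduces each constant to $\sum_{\pm}\int_{0}^{\frac{\pi}{2}}F(x)\,\{\sin,\cos\}((\omega_i\pm\omega_j)x)\,dx$, with the even frequencies $\omega_i\pm\omega_j$. I would apply Lemma \ref{byparts2} (the $\cos(2bx)$ and $\sin(2bx)$ formulas), whose expansion involves only the even-order boundary derivatives $F^{(2k)}(0),F^{(2k)}(\frac{\pi}{2})$; the lowest order at which such a derivative is nonzero — the ``1st derivative $\neq0$'' column — fixes the polynomial decay rate. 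For the rows marked ``None'' (namely $\overline{\mathbb{R}}_{00ji}$, $\overline{\mathbb{S}}_{00ji}$ and their unbarred partners) the weight $F$ is an odd trigonometric polynomial, a finite sum of $\sin(2mx)$, so every even-order boundary derivative vanishes at both endpoints and Lemma \ref{byparts2} yields $\mathcal{O}(a^{-N})$ for arbitrary $N$; attaching the prefactors $\omega_0\omega_i$, $\omega_0^2$, or $\frac{1}{\omega_i}$ then gives the stated rates. Here I would also record that each barred constant and its unbarred $/\omega_i$ partner collapse to the \emph{same} reduced integral, because $\frac{e_i'}{\omega_i}\frac{\sin^3}{\cos}$ and $\big(\int_{\cdot}^{\frac{\pi}{2}}e_i\sin\cos\big)\tan^2$ share the same leading asymptotic up to the factor $\frac{1}{\omega_i}$.

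In the growth regime I would instead use Hölder's inequality, splitting the $\tan^2$ weight so that each factor lands either in $L^2$ — using the uniform bounds $\|e_i\tan\|,\ \|\frac{e_i'}{\omega_i}\tan\|\lesssim1$ from Remark \ref{lot} and Lemma \ref{ClosedformulasFore} — or in $L^\infty$ — using $\|e_i\|_\infty\lesssim\omega_i$, $\|\tan\frac{e_i'}{\omega_i}\|_\infty\lesssim1$ and $\|\int_{\cdot}^{\frac{\pi}{2}}e_i\sin\cos\|_\infty\lesssim\frac{1}{\omega_i}$ from Lemma \ref{Linftyboundse}, together with $\|\Gamma_a\|_\infty\lesssim1$ from \eqref{LinftyGamma}. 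This produces the crude $\mathcal{O}(\omega_i)$, $\mathcal{O}(\omega_i^{-1})$ or $\mathcal{O}(\omega_j\omega_i^{-1})$ bounds in the growth column. I would carry out the two regimes in detail only for $\omega_0\omega_i\overline{\mathbb{F}}_{10ji}$ (a representative with weight $\Gamma_1 e_0'$ and first nonvanishing derivative $F^{(8)}(\frac{\pi}{2})$, giving decay $(\omega_i\pm\omega_j)^{-9}$) and for $\omega_0^2\overline{\mathbb{R}}_{00ji}$ (the representative of the ``None'' case), leaving the remaining rows to the identical argument.

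The main obstacle I anticipate is purely the bookkeeping inside the decay regime: for each of the twenty-odd constants one must correctly track the $\tan$-cancellations and the insertion of the antiderivative asymptotic to pin down the exact weight $F$, and then compute enough of its boundary Taylor data to certify the order listed in the ``1st derivative $\neq0$'' column — in particular to verify, for the ``None'' rows, that $F$ genuinely is an odd trigonometric polynomial, so that all even-order boundary derivatives vanish and the $\mathcal{O}(a^{-N})$ claim is legitimate. Everything else reduces to the already-proved Lemmas \ref{byparts2}, \ref{Linftyboundse} and Remark \ref{lot}.
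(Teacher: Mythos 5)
Your proposal is correct and follows essentially the same route as the paper: leading-order substitution from Remark \ref{lot} (keeping the index $0$ and the weights $\Gamma_a$ exact, and inserting the antiderivative asymptotic $\int_x^{\pi/2}e_i\sin\cos\,dy\simeq\frac{2}{\sqrt{\pi}}\frac{1}{\omega_i}\cos^2(x)\cos(\omega_i x)$ for the barred families), product-to-sum, Lemma \ref{byparts2} keyed to the first nonvanishing even-order boundary derivative of $F$, and H\"older with Lemma \ref{Linftyboundse} in the non-decaying regime. The paper only writes out $\omega_0\omega_i\overline{\mathbb{F}}_{10ji}$; your additional observation that the ``None'' weights $(e_0')^2\sin\cos$ and $e_0^2\sin\cos$ are finite sums of $\sin(2mx)$, so that every even-order boundary derivative vanishes and the $\mathcal{O}(a^{-N})$ rate follows, is a correct justification of a point the paper leaves implicit.
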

\begin{proof}
All these estimates follow directly from Lemma \ref{byparts2} and in particular from 
\begin{align*}
\int_{0}^{\frac{\pi}{2}} F(x) \sin( 2 b x )  dx   = 
 \sum_{k=0}^{N} \frac{(-1)^{k+1}}{a^{2k+1}} \left( (-1)^{b} F^{(2k)} \left(  \frac{\pi}{2} \right) -F^{(2k)} \left( 0 \right)  \right) + \mathcal{O} \left(  \frac{1}{a^{2N+2}} \right),
\end{align*}
as $b \longrightarrow \infty$. However, we illustrate the proof only for the first constant, namely $\omega_{0}\omega_{i}\mathbb{\overline{F}}_{10ji}$. For large values of $i,j$ and in the case where $\omega_{i} - \omega_{j} \longrightarrow \infty$,
	\begin{align*}
		\omega_{0}\mathbb{\overline{F}}_{10ji}  &:= \int_{0}^{\frac{\pi}{2}}  \Gamma_{1}(x)  e_{j} (x)e_{0}^{\prime}(x) \frac{e_{i}^{\prime}(x)}{\omega_{i}} \tan^2(x) dx 
		 \simeq \int_{0}^{\frac{\pi}{2}}  \Gamma_{1}(x) e_{0}^{\prime}(x)  \cos(\omega_{i}x) \sin(\omega_{j}x)  dx \\
		& = \frac{1}{2} \int_{0}^{\frac{\pi}{2}}  \Gamma_{1}(x) e_{0}^{\prime}(x)  \sin \left( \left(\omega_{i}+\omega_{j} \right) x\right) dx 
		- \frac{1}{2} \int_{0}^{\frac{\pi}{2}}  \Gamma_{1}(x) e_{0}^{\prime}(x)  \sin \left( \left(\omega_{i}-\omega_{j} \right) x\right) dx. 
		\end{align*}
Observe that both $\omega_{i} + \omega_{j}$ and $\omega_{i} - \omega_{j}$ are even,
\begin{align*}
	\omega_{i} + \omega_{j} = 2(3 +i+j), \quad \omega_{i} - \omega_{j} = 2(i-j).  
\end{align*}
We define $F(x):= \Gamma_{1}(x) e_{0}^{\prime}(x) $ and compute
\begin{align*}
	F \left( \frac{\pi}{2} \right) = F \left(  0 \right) =0, \dots, F^{(6)} \left( \frac{\pi}{2} \right) = F^{(6) } \left( 0 \right) =0, ~~~F^{(8)} \left(  \frac{\pi}{2} \right) = \sqrt{\frac{2}{\pi}} \frac{46448640}{\pi^3} \neq 0.
\end{align*}	
Now, Lemma \ref{byparts2} yields
\begin{align*}
	\omega_{0}\mathbb{\overline{F}}_{10ji} = \sum_{\pm}\mathcal{O}\left( \frac{1}{(\omega_{i} \pm \omega_{j})^9} \right),
\end{align*}
as $i,j \longrightarrow \infty$. On the other hand, for large values of $i,j$ such that $\omega_{i} - \omega_{j} \longarrownot\longrightarrow \infty$,  Holder's inequality implies 
\begin{align*}
	\left|\omega_{0}\mathbb{\overline{F}}_{10ji}  \right|& :=\left| \int_{0}^{\frac{\pi}{2}}  \Gamma_{1}(x)  e_{j} (x)e_{0}^{\prime}(x) \frac{e_{i}^{\prime}(x)}{\omega_{i}} \tan^2(x) dx \right| \\ & \leq 	
	\left\|
	\Gamma_{1} e^{\prime}_{0}
	\right \|_{L^{\infty}\left[0,\frac{\pi}{2}\right]}
	\left \| 
	e_{j}\tan
	\right \|_{L^{2}\left[0,\frac{\pi}{2}\right]}
	\left \|
	\frac{e^{\prime}_i}{\omega_{i}}\tan
	\right \|_{L^{2}\left[0,\frac{\pi}{2}\right]} 	\lesssim 1.
\end{align*}
as $i,j \longrightarrow \infty$. In conclusion,
\begin{align*}
	\omega_{0}\omega_{i} \mathbb{\overline{F}}_{10ji} =
 \Bigg \{ 
    \begin{array}{lr}
        \sum_{\pm}\mathcal{O} \left(  \omega_{i} (\omega_{i} \pm \omega_{j})^{-9} \right), & \text{if }\omega_{i} - \omega_{j}  \longrightarrow \infty \\
      \mathcal{O} \left(\omega_{i} \right), & \text{if }  \omega_{i} - \omega_{j} \longarrownot\longrightarrow \infty,
    \end{array}
\end{align*}
that completes the proof.
\end{proof}
\noindent
Finally, we focus on the elements of $\mathcal{A}_{4}$.
\begin{prop}[Fourier constants in $\mathcal{A}_{4}$]\label{A4}
	Let $N\in \mathbb{N}$. The following growth and decay estimates hold. 
\begin{center}
\begin{longtable}{ |l|l|l|l|l| }
\hline
\multicolumn{5}{ |c| }{Growth and decay estimates for the Fourier constants in $\mathcal{A}_{4}$  as $i,j,k \longrightarrow + \infty$} \\
\hline
\quad  Constant & \quad  $F$ & 1st derivative $\neq 0$& $ \exists~\omega_{i} \pm \omega_{j} \pm \omega_{k} \longarrownot\longrightarrow \infty $ & \quad \quad $ \forall~\omega_{i} \pm \omega_{j} \pm \omega_{k} \longrightarrow \infty $  \\ \hline
\multirow{1}{*}{ \(\displaystyle \mathbb{ \overline{I}}_{jki},\frac{\mathbb{I}_{jki}}{\omega_{i}}    \)}
 &  \(\displaystyle ~~ \cos^2  \)  &  \(\displaystyle  \quad F^{(2)} \left( \frac{\pi}{2} \right) \neq 0  \) & \quad \quad \quad $\mathcal{O}\left(\omega_{i}^{-1} \right)$
  & \(\displaystyle \frac{1}{\omega_{i}} \sum_{\pm} 
  \mathcal{O} \left( 
  \frac{1 }{ (\omega_{i} \pm \omega_{j} \pm \omega_{k})^{3}} \right) 
  \)  \\ \hline 
\multirow{1}{*}{ \(\displaystyle \overline{\mathbb{J}}_{jki}, \frac{\mathbb{J}_{jki}}{\omega_{i}}    \)}
 &  \(\displaystyle ~~ \cos^2  \)  &  \(\displaystyle  \quad F^{(2)} \left( \frac{\pi}{2} \right) \neq 0  \) & \quad \quad \quad $\mathcal{O}\left(\omega_{i}^{-1} \right)$
  & \(\displaystyle \frac{1}{\omega_{i}} \sum_{\pm}\mathcal{O} \left( \frac{1 }{ (\omega_{i} \pm \omega_{j} \pm \omega_{k})^{3}}\right) \)  \\ \hline   
\multirow{1}{*}{ \(\displaystyle \overline{\mathbb{P}}_{1jki},\frac{\mathbb{P}_{1jki}}{\omega_{i}} \)}
 &  \(\displaystyle  \Gamma_{1} \cos^2  \)  &  \(\displaystyle  \quad F^{(8)} \left( \frac{\pi}{2} \right) \neq 0  \) & \quad \quad \quad $\mathcal{O}\left(\omega_{i}^{-1} \right)$
  & \(\displaystyle \frac{1}{\omega_{i}} \sum_{\pm}\mathcal{O} \left( \frac{1 }{ (\omega_{i} \pm \omega_{j} \pm \omega_{k})^{9}}\right) \)  \\ \hline   
\multirow{1}{*}{ \(\displaystyle \mathbb{\overline{P}}_{2jki},\frac{\mathbb{P}_{2jki}}{\omega_{i}}  \)}
 &  \(\displaystyle  \Gamma_{2} \cos^2  \)  &  \(\displaystyle  \quad F^{(10)} \left( \frac{\pi}{2} \right) \neq 0  \) & \quad \quad \quad $\mathcal{O}\left(\omega_{i}^{-1}\right)$
  & \(\displaystyle \frac{1}{\omega_{i}} \sum_{\pm}\mathcal{O} \left( \frac{1 }{ (\omega_{i} \pm \omega_{j} \pm \omega_{k})^{11}} \right)\)  \\ \hline 
\multirow{1}{*}{ \(\displaystyle \mathbb{\overline{P}}_{3jki},\frac{\mathbb{P}_{3jki}}{\omega_{i}}\)}
 &  \(\displaystyle  \Gamma_{3} \cos^2  \)  &  \(\displaystyle  \quad F^{(2)} \left( \frac{\pi}{2} \right) \neq 0  \) & \quad \quad \quad $\mathcal{O}\left(\omega_{i}^{-1} \right)$
  & \(\displaystyle \frac{1}{\omega_{i}} \sum_{\pm}\mathcal{O} \left( \frac{1 }{ (\omega_{i} \pm \omega_{j} \pm \omega_{k})^{3}}\right) \)  \\ \hline 
\multirow{1}{*}{ \(\displaystyle \mathbb{\overline{P}}_{4jki},\frac{\mathbb{P}_{4jki}}{\omega_{i}}  \)}
 &  \(\displaystyle  \Gamma_{4} \cos^2  \)  &  \(\displaystyle  \quad F^{(2)} \left( \frac{\pi}{2} \right) \neq 0  \) & \quad \quad \quad $\mathcal{O}\left(\omega_{i}^{-1} \right)$
  & \(\displaystyle \frac{1}{\omega_{i}} \sum_{\pm} \mathcal{O} \left(\frac{1 }{ (\omega_{i} \pm \omega_{j} \pm \omega_{k})^{3}}\right) \)  \\ \hline 
\multirow{1}{*}{ \(\displaystyle \mathbb{\overline{Q}}_{1jki},\frac{\mathbb{Q}_{1jki}}{\omega_{i}}  \)}
 &  \(\displaystyle  \Gamma_{1} \cos^2  \)  &  \(\displaystyle  \quad F^{(8)} \left( \frac{\pi}{2} \right) \neq 0  \) & \quad \quad \quad $\mathcal{O}\left(\omega_{i}^{-1} \right)$
  & \(\displaystyle \frac{1}{\omega_{i}} \sum_{\pm}\mathcal{O} \left( \frac{1 }{ (\omega_{i} \pm \omega_{j} \pm \omega_{k})^{9}}\right) \)  \\ \hline 
\multirow{1}{*}{ \(\displaystyle \mathbb{\overline{Q}}_{2jki},\frac{\mathbb{Q}_{2jki}}{\omega_{i}}  \)}
 &  \(\displaystyle  \Gamma_{2} \cos^2  \)  &  \(\displaystyle  \quad F^{(10)} \left( \frac{\pi}{2} \right) \neq 0  \) & \quad \quad \quad $\mathcal{O}\left(\omega_{i}^{-1} \right)$
  & \(\displaystyle \frac{1}{\omega_{i}} \sum_{\pm}\mathcal{O} \left( \frac{1 }{ (\omega_{i} \pm \omega_{j} \pm \omega_{k})^{11}}\right) \)  \\ \hline     
\multirow{1}{*}{ \(\displaystyle \mathbb{\overline{Q}}_{3jki},\frac{\mathbb{Q}_{3jki}}{\omega_{i}}  \)}
 &  \(\displaystyle  \Gamma_{3} \cos^2  \)  &  \(\displaystyle  \quad F^{(2)} \left( \frac{\pi}{2} \right) \neq 0  \) & \quad \quad \quad $\mathcal{O}\left(\omega_{i}^{-1} \right)$
  & \(\displaystyle \frac{1}{\omega_{i}} \sum_{\pm} \mathcal{O} \left( \frac{1 }{ (\omega_{i} \pm \omega_{j} \pm \omega_{k})^{3}} \right) \)  \\ \hline 
\multirow{1}{*}{ \(\displaystyle \mathbb{\overline{Q}}_{4jki},\frac{\mathbb{Q}_{4jki}}{\omega_{i}}  \)}
 &  \(\displaystyle  \Gamma_{4} \cos^2  \)  &  \(\displaystyle  \quad F^{(2)} \left( \frac{\pi}{2} \right) \neq 0  \) & \quad \quad \quad $\mathcal{O}\left(\omega_{i}^{-1}\right)$
  & \(\displaystyle \frac{1}{\omega_{i}} \sum_{\pm}\mathcal{O} \left( \frac{1 }{ (\omega_{i} \pm \omega_{j} \pm \omega_{k})^{3}}\right) \)  \\ \hline 
\multirow{1}{*}{\(\displaystyle ~ \mathbb{\overline{S}}_{0jki},\frac{\mathbb{S}_{0jki}}{\omega_{i} }   \)}
 &  \(\displaystyle  e_{0} \cos^2  \)  & \quad \quad \quad \text{None} & \quad \quad \quad $\mathcal{O}\left(\omega_{i}^{-1} \right)$
  & \(\displaystyle \frac{1}{\omega_{i}} \sum_{\pm}\mathcal{O} \left( \frac{1 }{ (\omega_{i} \pm \omega_{j} \pm \omega_{k})^{N}} \right) \)  \\ \hline 
\multirow{1}{*}{\quad \(\displaystyle \omega_{0} \omega_{i}\overline{\mathbb{H}}_{0jki}    \)} 
 &  \(\displaystyle ~~ \frac{e_{0}^{\prime}}{\tan}  \)  
 & \quad \quad \quad \text{None} & \quad \quad \quad $\mathcal{O}\left(\omega_{i}\omega_{k} \right)$
  & \quad \(\displaystyle \sum_{\pm}\mathcal{O} \left( \frac{\omega_{i} }{ (\omega_{i} \pm \omega_{j} \pm \omega_{k})^{N}}\right) \)  \\ \hline   
\end{longtable}
\end{center}	
\end{prop}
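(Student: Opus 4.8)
The plan is to follow verbatim the two-regime strategy already deployed in the proofs of Propositions \ref{A1}, \ref{A2} and \ref{A3}: an oscillatory-integral argument based on integration by parts in the regime where all the frequency combinations $\omega_i \pm \omega_j \pm \omega_k$ diverge, and a crude Hölder estimate in the complementary regime where at least one combination stays bounded. Since the constants in $\mathcal{A}_4$ come in overlined/non-overlined pairs that are listed together, I would first verify that each pair collapses to the same oscillatory integral, so that a single computation settles both members. For instance, inserting the leading-order factors of Remark \ref{lot} into $\mathbb{J}_{jki}$ turns the weight $\sin^3(x)/\cos(x)$ against $1/\tan^3(x)$ into the common weight $\cos^2(x)$, which is exactly the weight produced for $\overline{\mathbb{J}}_{jki}$ after the inner antiderivative is replaced.

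For the decay regime I would replace every eigenfunction factor by its leading-order term from Remark \ref{lot}, and for the overlined constants replace the nested antiderivative by the asymptotic profile $\int_x^{\pi/2} e_i(y)\sin(y)\cos(y)\,dy \simeq \tfrac{2}{\sqrt{\pi}}\,\omega_i^{-1}\cos^2(x)\cos(\omega_i x)$ already established in the proof of Proposition \ref{Result1} (this is the source of the uniform $\omega_i^{-1}$ prefactor in the overlined rows). After these substitutions each constant becomes, up to the explicit prefactor displayed in the $F$-column, an integral of the shape $\int_0^{\pi/2} F(x)\,c_i(x)c_j(x)c_k(x)\,dx$ with each $c_\bullet$ a sine or cosine. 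Product-to-sum identities rewrite this as four integrals $\int_0^{\pi/2} F(x)\,\bigl(\text{sine or cosine of } (\omega_i\pm\omega_j\pm\omega_k)x\bigr)\,dx$, and since a sum of three odd integers is odd, every argument $\omega_i\pm\omega_j\pm\omega_k$ is odd, so the relevant tools are the cosine-odd and sine-odd expansions of Lemma \ref{byparts2}. The decay exponent is then read off from the order of the first surviving boundary derivative of $F$: for $F=\cos^2$ one has $F^{(2)}(\tfrac{\pi}{2})\neq 0$, yielding $(\ldots)^{-3}$, while the weighted cases $F=\Gamma_a\cos^2$ postpone the first nonzero even derivative at $\tfrac{\pi}{2}$ to the order listed, producing the stated exponents; for the rows marked ``None'' the weight is (a multiple of) $\cos^3$ or $e_0'/\tan$, which is odd about $x=\tfrac{\pi}{2}$ and even about $x=0$, so all boundary derivatives vanish and Lemma \ref{byparts2} delivers $\mathcal{O}((\ldots)^{-N})$ for every $N$.

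For the complementary regime I would distribute the weight $\tan^2(x)$ (respectively $\sin^3(x)/\cos(x)$) so that two factors enter in $L^2$ and the remaining ones in $L^\infty$, and apply Hölder's inequality. The $L^2$ factors are normalized by Lemma \ref{ClosedformulasFore}, whence $\|\tfrac{e_j'}{\omega_j}\tan\|_{L^2}=\|e_j\tan\|_{L^2}=1$; the eigenfunction sup-norm $\|e_k\|_{L^\infty}\lesssim\omega_k$ and the antiderivative bound $\|\int_\cdot^{\pi/2} e_i\sin\cos\|_{L^\infty}\lesssim\omega_i^{-1}$ come from Lemma \ref{Linftyboundse}, and $\|\Gamma_a\|_{L^\infty}\lesssim 1$ from \eqref{LinftyGamma}. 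Tracking which factor supplies each bound reproduces the growth column exactly (the $\omega_i^{-1}$ for the overlined constants, the extra $\omega_k$ for $\omega_0\omega_i\overline{\mathbb{H}}_{0jki}$, and so on).

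The principal obstacle I anticipate is computational bookkeeping rather than anything conceptual: for each weight $F=\Gamma_a\cos^2$, as well as for $e_0\cos^2$, $(e_0')^2\sin\cos$ and $e_0'/\tan$, one must confirm using the explicit closed forms of $\Gamma_a$ and $e_0$ precisely which boundary derivative is the first to survive, since that single integer pins down the decay exponent. Verifying that the even derivatives at $x=\tfrac{\pi}{2}$ and the odd derivatives at $x=0$ vanish up to the claimed order \emph{and} genuinely fail at that order is the delicate case-by-case part; once these orders are certified, every entry of the table follows mechanically from Lemma \ref{byparts2} together with Hölder's inequality.
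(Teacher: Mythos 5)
Your proposal follows exactly the paper's own two-regime argument for Proposition \ref{A4}: leading-order replacement of the eigenfunctions (and of the nested antiderivative by $\tfrac{1}{\omega_i}\cos^2(x)\cos(\omega_i x)$), product-to-sum identities giving odd frequency combinations, Lemma \ref{byparts2} with the decay exponent read off from the first nonvanishing boundary derivative of $F$, and H\"older with the $L^2$/$L^\infty$ splits from Lemmas \ref{ClosedformulasFore} and \ref{Linftyboundse} for the non-divergent regime. The details you flag as the remaining bookkeeping (certifying the order of the first surviving derivative for each weight, and the all-orders vanishing for the odd-about-$\pi/2$ weights $\cos^5$ and $e_0'/\tan$) are precisely what the paper records in its table, so the proposal is correct and essentially identical to the paper's proof.
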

\begin{proof}
All these estimates follow directly from Lemma \ref{byparts2} and in particular from 
\begin{align*}
\int_{0}^{\frac{\pi}{2}} F(x) \cos( (2 b+1) x )  dx   = 
  \sum_{k=0}^{N} \frac{(-1)^{k+b}}{a^{2k+1}} F^{(2k)} \left(  \frac{\pi}{2} \right) 
+ \sum_{k=0}^{N} \frac{(-1)^{k+1}}{a^{2k+2}} F^{(2k+1)} \left( 0 \right) 
  + \mathcal{O} \left(  \frac{1}{a^{2N+2}} \right), 
\end{align*}
as $b \longrightarrow \infty$. However, we illustrate the proof only for the first constant, namely $\overline{\mathbb{I}}_{jki}$. For large $i$, we have
\begin{align*}
	\int_{x}^{\frac{\pi}{2}} e_{i}(y) \sin(y) \cos(y) d y & \simeq 
	\int_{x}^{\frac{\pi}{2}} \sin(\omega_{i} y)  \cos^2(y) d y  = -\frac{2}{\omega_{i}(\omega_{i}^2-4)} \cos(\omega_{i}x)
	+\frac{\omega_{i}\cos^2(x)}{\omega_{i}^2-4} \cos(\omega_{i}x)\\
	&
	+ \frac{\sin(2x)}{\omega_{i}^2-4} \sin(\omega_{i}x) 
	\simeq \frac{1}{\omega_{i}} \cos^2(x) \cos(\omega_{i}x).
\end{align*}
Now, for large values of $i,j,k$ and in the case where all $\omega_{i} \pm \omega_{j} \pm \omega_{k} \longrightarrow \infty$,
	\begin{align*}
		\overline{\mathbb{I}}_{jki}  &:= \int_{0}^{\frac{\pi}{2}}  e_{j} (x) e_{k}(x) \left( \int_{x}^{\frac{\pi}{2}} e_{i}(y) \sin(y) \cos(y) d y \right) \tan^2(x) dx \\
		& \simeq \frac{1}{\omega_{i}} \int_{0}^{\frac{\pi}{2}}  \cos^2(x) \cos(\omega_{i}x)\sin(\omega_{j}x)\sin(\omega_{k}x) dx \\
		& =  \frac{1}{\omega_{i}} \Bigg( \int_{0}^{\frac{\pi}{2}}  \cos^2(x) \cos \left( \left( \omega_{i} +\omega_{j} - \omega_{k}   \right) x \right) dx -
		\int_{0}^{\frac{\pi}{2}}  \cos^2(x) \cos \left( \left( \omega_{i} + \omega_{j} + \omega_{k}   \right) x \right) dx \\
		& + \int_{0}^{\frac{\pi}{2}}  \cos^2(x) \cos \left( \left( \omega_{i} - \omega_{j} - \omega_{k}   \right) x \right) dx
		- \int_{0}^{\frac{\pi}{2}}  \cos^2(x) \cos \left( \left( \omega_{i} - \omega_{j} + \omega_{k}   \right) x \right) dx
		\Bigg).
		\end{align*}
Observe that all $\omega_{i} \pm \omega_{j} \pm \omega_{k}$,
\begin{align*}
	\omega_{i} - \omega_{j} + \omega_{k} & = 2(i-j+k+1)+1, \quad 
	\omega_{i} - \omega_{j} - \omega_{k}   = 2(i-j-k-2)+1, \\
	\omega_{i} + \omega_{j} + \omega_{k} & = 2(i+j+k+4)+1, \quad
	\omega_{i} + \omega_{j} - \omega_{k}   = 2(i+j-k+1)+1.
\end{align*}
We define $F(x):= \cos^2(x) $ and compute
\begin{align*}
	F \left( \frac{\pi}{2} \right) = F^{\prime} \left(  0 \right) =0, ~~~F^{\prime \prime} \left(  \frac{\pi}{2} \right) = 2 \neq 0.
\end{align*}	
Lemma \ref{byparts2} yields 
\begin{align*}
	\overline{\mathbb{I}}_{jki} = \frac{1}{\omega_{i}} \sum_{\pm} \mathcal{O}\left( \frac{1}{(\omega_{i} \pm \omega_{j} \pm \omega_{k})^3} \right),
\end{align*}
as $i,j,k \longrightarrow \infty$, whereas, for large values of $i,j,k$ such that some $\omega_{i} \pm \omega_{j} \pm \omega_{k} \longarrownot\longrightarrow \infty$,  Holder's inequality implies 
\begin{align*}
	\left| \overline{\mathbb{I}}_{jki} \right|& :=\left| \int_{0}^{\frac{\pi}{2}}  e_{j} (x) e_{k}(x) \left( \int_{x}^{\frac{\pi}{2}} e_{i}(y) \sin(y) \cos(y) d y \right) \tan^2(x) dx \right| \\ & \leq 	
	\left\|
	\int_{\cdot}^{\frac{\pi}{2}} e_{i}(y) \sin(y) \cos(y) d y
	\right \|_{L^{\infty}\left[0,\frac{\pi}{2}\right]}
	\left \| 
	e_{j}\tan
	\right \|_{L^{2}\left[0,\frac{\pi}{2}\right]}
	\left \| 
	e_{k}\tan
	\right \|_{L^{2}\left[0,\frac{\pi}{2}\right]} 	\lesssim \omega_{i}^{-1}.
\end{align*}
In conclusion,
\begin{align*}
	\overline{\mathbb{I}}_{jki} =
 \Bigg \{ 
    \begin{array}{lr}
       \frac{1}{\omega_{i}} \sum_{\pm} \mathcal{O}\left( (\omega_{i} \pm \omega_{j} \pm \omega_{k})^{-3} \right), & \text{if all } \omega_{i} \pm \omega_{j} \pm \omega_{m} \longrightarrow \infty \\
      \mathcal{O} \left(\omega_{i}^{-1}\right), & \text{if some }\omega_{i} \pm \omega_{j} \pm \omega_{k} \longarrownot\longrightarrow \infty,
    \end{array}
\end{align*}
that completes the proof.
\end{proof}
\subsubsection{Fourier constants in $\mathcal{B}$}
We write
\begin{align*}
	\mathcal{B} = \mathcal{B}_{1} \cup \mathcal{B}_{2}
\end{align*}
where
\begin{align*}
	  \mathcal{B}_{1}:=\left \{ \omega_{i}\mathbb{H}_{0jki},\omega_{i}\mathbb{G}_{jki}, \omega_{i}\overline{\mathbb{G}}_{jki}\right \}, \quad 
	 \mathcal{B}_{2}:=\left \{\frac{\mathbb{R}_{klji}}{\omega_{i}},\frac{\mathbb{S}_{jkli}}{\omega_{i}},\omega_{i}\mathbb{H}_{klji},\omega_{i}\overline{\mathbb{H}}_{ijkl}, \overline{\mathbb{R}}_{klji},\overline{\mathbb{S}}_{jkli} \right \}
\end{align*}
\begin{prop}[Fourier constants in $\mathcal{B}_{1}$]\label{B1}
	Let $N\in \mathbb{N}$. The following growth and decay estimates hold. 
\begin{center}
\begin{longtable}{ |l|l|l|}
\hline
\multicolumn{3}{ |c| }{Growth and decay estimates for the Fourier constants in $\mathcal{B}_{1}$  as $i,j,k \longrightarrow + \infty$} \\
\hline
\quad Constant  & $\exists~ \omega_{i} \pm \omega_{j} \pm \omega_{k} \longarrownot\longrightarrow \infty $ & \quad \quad \quad \quad$\forall~ \omega_{i} \pm \omega_{j} \pm \omega_{k} \longrightarrow \infty $  \\ \hline
\multirow{1}{*}{ \(\displaystyle~ \quad \omega_{i}\mathbb{H}_{0jki}  \)}
 & \quad \quad\quad $\mathcal{O}\left(\omega_{i}\omega_{k} \right) $
  &\quad \quad \quad\(\displaystyle  \sum_{\pm} \mathcal{O} \left( \frac{\omega_{i} }{ (\omega_{i} \pm \omega_{j} \pm \omega_{k})^{N}}\right) \)  \\ \hline 
\multirow{1}{*}{ \(\displaystyle \omega_{i}\mathbb{G}_{jki} ,\omega_{i}\overline{\mathbb{G}}_{jki} \)}
 & \quad \quad\quad $\mathcal{O}\left(\omega_{i}\omega_{k} \right) $
  &\quad \quad \quad \(\displaystyle \sum_{\pm}
  \mathcal{O} \left(
   \frac{\omega_{i}}{(\omega_{i} \pm \omega_{j} \pm  \omega_{k} )^2} \right) \)  \\ \hline 
\end{longtable}
\end{center}
\end{prop}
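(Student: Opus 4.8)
The plan is to treat each of the three families in $\mathcal{B}_1$ by the same two-regime scheme already used for $C_{ij}^{(m)}$ and $\overline{C}_{ij}^{(m)}$ in Proposition \ref{Result1}. The point to record first is that $\mathbb{G}_{jki}=\int_0^{\pi/2}e_i e_j e_k\tan^2$ is literally of the same type as $C_{ij}^{(m)}$ (a product of three undifferentiated eigenfunctions against the weight), that $\overline{\mathbb{G}}_{jki}=\int_0^{\pi/2}e_k\,\tfrac{e_i'}{\omega_i}\tfrac{e_j'}{\omega_j}\tan^2$ has the same structure as $\overline{C}_{ij}^{(m)}$ (one undifferentiated factor against two weighted derivatives), and that $\mathbb{H}_{0jki}$ differs only by carrying the extra fixed factor $e_0$, which by Remark \ref{e0oscilating} equals $4\sqrt{2/\pi}\,\cos^3(x)$. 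Hence the whole statement reduces, termwise, to the two estimates below.

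In the decay regime, where all $\omega_i\pm\omega_j\pm\omega_k\longrightarrow\infty$ (which forces $\omega_i$ to dominate $\omega_j+\omega_k$ eventually, since the binding combination is $\omega_i-\omega_j-\omega_k$), I would replace every eigenfunction by its leading-order term from Remark \ref{lot}, namely $e_n\simeq\tfrac{2}{\sqrt\pi}\tfrac{\sin(\omega_n\cdot)}{\tan}$ and $\tfrac{e_n'}{\omega_n}\simeq\tfrac{2}{\sqrt\pi}\tfrac{\cos(\omega_n\cdot)}{\tan}$, keeping $e_0$ exactly. After cancelling the surplus powers of $\tan$ against the weight, $\mathbb{G}_{jki}$, $\overline{\mathbb{G}}_{jki}$ and $\mathbb{H}_{0jki}$ reduce to fixed multiples of $\int_0^{\pi/2}\tfrac{\sin(\omega_i x)\sin(\omega_j x)\sin(\omega_k x)}{\tan x}dx$, $\int_0^{\pi/2}\tfrac{\sin(\omega_k x)\cos(\omega_i x)\cos(\omega_j x)}{\tan x}dx$, and $\int_0^{\pi/2}\cos^3(x)\tfrac{\sin(\omega_i x)\sin(\omega_j x)\sin(\omega_k x)}{\tan x}dx$ respectively. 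Expanding the trigonometric products into four single sinusoids of odd arguments $\omega_i\pm\omega_j\pm\omega_k$, I would invoke the first integral of Lemma \ref{OscillatoryIntegrals} for $\mathbb{G},\overline{\mathbb{G}}$ (error $\mathcal O((\,\cdot\,)^{-2})$) and the third integral / Remark \ref{e0oscilating} for $\mathbb{H}_{0jki}$ (error $\mathcal O((\,\cdot\,)^{-N})$). In each case the four leading contributions are $\pm\tfrac{\pi}{2}$ and sum to zero, so that after multiplication by $\omega_i$ one is left with exactly $\sum_{\pm}\mathcal O\big(\omega_i/(\omega_i\pm\omega_j\pm\omega_k)^2\big)$ for $\mathbb{G},\overline{\mathbb{G}}$ and $\sum_{\pm}\mathcal O\big(\omega_i/(\omega_i\pm\omega_j\pm\omega_k)^N\big)$ for $\mathbb{H}_{0jki}$.

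In the growth regime, where some $\omega_i\pm\omega_j\pm\omega_k\longarrownot\longrightarrow\infty$, I would use H\"older's inequality together with the $L^\infty$ bounds of Lemma \ref{Linftyboundse} and the orthonormality relations of Lemma \ref{ClosedformulasFore}. Placing the factor carrying the $\omega_k$ growth in $L^\infty$ (namely $e_k$, bounded by $\tfrac{2}{\sqrt\pi}\omega_k$) and the remaining two eigenfunction factors in the weighted $L^2$ norm, where $\|e_n\tan\|_{L^2}=1$ and $\|\tfrac{e_n'}{\omega_n}\tan\|_{L^2}=1$ by orthonormality while $\|e_0\|_{L^\infty}\lesssim1$, each of $\mathbb{G}_{jki},\overline{\mathbb{G}}_{jki},\mathbb{H}_{0jki}$ is $\mathcal O(\omega_k)$, whence $\omega_i$ times each is $\mathcal O(\omega_i\omega_k)$, matching the table.

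The delicate point is the sign bookkeeping in the decay regime, and in particular the contrast with $\overline{C}_{ij}^{(m)}$. For $\overline{C}_{ij}^{(m)}$ the index that is always positive in the condition coincided with the \emph{sine} index, so all four arguments were positive and the four $\tfrac{\pi}{2}$'s added up to the nonzero constant $\tfrac{4}{\sqrt\pi}$. For $\overline{\mathbb{G}}_{jki}$ the always-positive index $i$ of the condition is instead one of the two \emph{cosine} indices while the sine sits at $k$; since $\omega_i$ dominates, the arguments $\omega_k-\omega_i+\omega_j$ and $\omega_k-\omega_i-\omega_j$ are negative, flipping the sign of the corresponding $\tfrac{\pi}{2}$, and the constant cancels, which is exactly what turns $\overline{\mathbb{G}}$ from a bounded into a decaying constant. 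The analogous $(+,+,-,-)$ coefficient pattern produces the cancellation for the all-sine integrals defining $\mathbb{G}_{jki}$ and $\mathbb{H}_{0jki}$. The only remaining care is to verify that the subleading $\cos(\omega_n\cdot)$ corrections in the eigenfunction expansions, which carry prefactors $\mathcal O(\omega_n^{-1})$, contribute strictly below the stated order, precisely as in the treatment of $C_{ij}^{(m)}$ and $\overline{C}_{ij}^{(m)}$.
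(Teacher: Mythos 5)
Your proposal is correct and follows essentially the same route as the paper's proof: leading-order replacement of the eigenfunctions via Remark \ref{lot}, product-to-sum expansion into sinusoids with arguments $\omega_{i}\pm\omega_{j}\pm\omega_{k}$, cancellation of the four $\pm\frac{\pi}{2}$ contributions using Lemma \ref{OscillatoryIntegrals} and Remark \ref{e0oscilating}, and H\"older with Lemma \ref{Linftyboundse} in the growth regime. Your explicit sign bookkeeping for why $\overline{\mathbb{G}}_{jki}$ cancels while $\overline{C}_{ij}^{(m)}$ does not is in fact slightly more careful than the paper's own presentation, but it is the same argument.
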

\begin{proof}
First, observe that 
\begin{align*}
	\omega_{i} - \omega_{j} + \omega_{k} & = 2(i-j+k+1)+1, \quad
	\omega_{i} - \omega_{j} - \omega_{k}   = 2(i-j-k-2)+1, \\
	\omega_{i} + \omega_{j} + \omega_{k} & = 2(i+j+k+4)+1,  \quad
	\omega_{i} + \omega_{j} - \omega_{k}   = 2(i+j-k+1)+1,
\end{align*}
are all odd. All results here follow from Lemma \ref{OscillatoryIntegrals} and Remark \ref{e0oscilating}. For large values of $i,j,k$ and in the case where all $\omega_{i} \pm \omega_{j} \pm \omega_{k} \longrightarrow \infty$,
\begin{align*}
	\mathbb{H}_{0jki} &:= \int_{0}^{\frac{\pi}{2}} e_{0}(x) e_{i}(x) e_{j}(x) e_{k}(x) \tan^2(x)  dx 
	\simeq \int_{0}^{\frac{\pi}{2}} e_{0}(x) \frac{\sin(\omega_{i}x) \sin(\omega_{j}x) \sin(\omega_{k}x) }{\tan(x)} dx \\
	& = \frac{1}{4} \int_{0}^{\frac{\pi}{2}} e_{0}(x)\frac{\sin\left( \left( \omega_{i} - \omega_{j} + \omega_{k} \right)x \right) }{\tan(x)} dx - \frac{1}{4} \int_{0}^{\frac{\pi}{2}}e_{0}(x)  \frac{\sin\left( \left( \omega_{i} - \omega_{j} - \omega_{k} \right)x \right) }{\tan(x)} dx \\
	& - \frac{1}{4} \int_{0}^{\frac{\pi}{2}} e_{0}(x)\frac{\sin\left( \left( \omega_{i} + \omega_{j} + \omega_{k} \right)x \right) }{\tan(x)} dx + \frac{1}{4} \int_{0}^{\frac{\pi}{2}} e_{0}(x) \frac{\sin\left( \left( \omega_{i} + \omega_{j} - \omega_{k} \right)x \right) }{\tan(x)} dx.
\end{align*}
By Remark \ref{e0oscilating}, we infer that in this case
\begin{align*}
	\mathbb{H}_{0jki} & = \frac{1}{4}
	\left(  2 \left(2\sqrt{2 \pi}-2\sqrt{2 \pi}\right) + \sum_{\pm } \mathcal{O} \left( \frac{1}{\left( \omega_{i} \pm \omega_{j} \pm \omega_{k} \right)^{N}} \right) \right) = 
	\sum_{\pm } \mathcal{O} \left( \frac{1}{\left( \omega_{i} \pm \omega_{j} \pm \omega_{k} \right)^{N}} \right),
\end{align*}
as $i,j,k \longrightarrow \infty$, whereas, for large values of $i,j,k$ such that some $\omega_{i} \pm \omega_{j} \pm \omega_{k} \longarrownot\longrightarrow \infty$,  Holder's inequality implies
\begin{align*}
	\left| \mathbb{H}_{0jki}  \right|& :=\left| \int_{0}^{\frac{\pi}{2}} e_{0}(x) e_{i}(x) e_{j}(x) e_{k}(x) \tan^2(x)  dx \right| \\ & \leq 	
	\left\|
	e_{0} 
	\right \|_{L^{\infty}\left[0,\frac{\pi}{2}\right]}
	\left \| 
	e_{k}
	\right \|_{L^{\infty}\left[0,\frac{\pi}{2}\right]}
	\left \| 
	e_{i}\tan
	\right \|_{L^{2}\left[0,\frac{\pi}{2}\right]}
	\left \|
	e_{j}\tan
	\right \|_{L^{2}\left[0,\frac{\pi}{2}\right]} 	\lesssim \omega_{k}.
\end{align*}
Second, for large values of $i,j,k$ and in the case where all $\omega_{i} \pm \omega_{j} \pm \omega_{k} \longrightarrow \infty$, we have
\begin{align*}
	\mathbb{G}_{jki} &:= \int_{0}^{\frac{\pi}{2}} e_{i}(x) e_{j}(x) e_{k}(x) \tan^2(x)  dx 
	 \simeq \int_{0}^{\frac{\pi}{2}} \frac{\sin(\omega_{i}x) \sin(\omega_{j}x) \sin(\omega_{k}x) }{\tan(x)} dx \\
	& = \frac{1}{4} \int_{0}^{\frac{\pi}{2}} \frac{\sin\left( \left( \omega_{i} - \omega_{j} + \omega_{k} \right)x \right) }{\tan(x)} dx- \frac{1}{4} \int_{0}^{\frac{\pi}{2}} \frac{\sin\left( \left( \omega_{i} - \omega_{j} - \omega_{k} \right)x \right) }{\tan(x)} dx \\
	& - \frac{1}{4} \int_{0}^{\frac{\pi}{2}} \frac{\sin\left( \left( \omega_{i} + \omega_{j} + \omega_{k} \right)x \right) }{\tan(x)} dx + \frac{1}{4} \int_{0}^{\frac{\pi}{2}} \frac{\sin\left( \left( \omega_{i} + \omega_{j} - \omega_{k} \right)x \right) }{\tan(x)} dx.
\end{align*} 
Hence, by Lemma \ref{OscillatoryIntegrals}, we get that in this case
\begin{align*}
	\mathbb{G}_{jki}  = \frac{1}{4} \left( 2 \left( \frac{\pi}{2} - \frac{\pi}{2} \right)+ \sum_{\pm} \mathcal{O} \left( \frac{1}{(\omega_{i} \pm \omega_{j} \pm  \omega_{k} )^2} \right) \right)
	= \sum_{\pm} \mathcal{O} \left( \frac{1}{(\omega_{i} \pm \omega_{j} \pm  \omega_{k} )^2} \right),
\end{align*}
as $i,j,k \longrightarrow \infty$. On the other hand, for large values of $i,j,k$ such that some $\omega_{i} \pm \omega_{j} \pm \omega_{k} \longarrownot\longrightarrow \infty$,  Holder's inequality implies
\begin{align*}
	\left| \mathbb{G}_{jki} \right|& :=\left| \int_{0}^{\frac{\pi}{2}} e_{i}(x) e_{j}(x) e_{k}(x) \tan^2(x)  dx \right|  \leq 	
	\left \| 
	e_{k}
	\right \|_{L^{\infty}\left[0,\frac{\pi}{2}\right]}
	\left \| 
	e_{i}\tan
	\right \|_{L^{2}\left[0,\frac{\pi}{2}\right]}
	\left \|
	e_{j}\tan
	\right \|_{L^{2}\left[0,\frac{\pi}{2}\right]} 	\lesssim \omega_{k}.
\end{align*}
Similarly, for large values of $i,j,k$ and in the case where all $\omega_{i} \pm \omega_{j} \pm \omega_{k} \longrightarrow \infty$, 
\begin{align*}
	\overline{\mathbb{G}}_{jki} &:= \int_{0}^{\frac{\pi}{2}} e_{k}(x) \frac{e_{j}^{\prime}(x)}{\omega_{j}} \frac{e_{i}^{\prime}(x)}{\omega_{i}} \tan^2(x)  dx 
	 \simeq \int_{0}^{\frac{\pi}{2}}  \frac{\sin(\omega_{k}x) \cos(\omega_{j}x) \cos(\omega_{i}x) }{\tan(x)} dx \\
	& = \frac{1}{4} \int_{0}^{\frac{\pi}{2}} \frac{\sin\left( \left( \omega_{i} - \omega_{j} + \omega_{k} \right)x \right) }{\tan(x)} dx - \frac{1}{4} \int_{0}^{\frac{\pi}{2}}  \frac{\sin\left( \left( \omega_{i} - \omega_{j} - \omega_{k} \right)x \right) }{\tan(x)} dx \\
	& + \frac{1}{4} \int_{0}^{\frac{\pi}{2}} \frac{\sin\left( \left( \omega_{i} + \omega_{j} + \omega_{k} \right)x \right) }{\tan(x)} dx - \frac{1}{4} \int_{0}^{\frac{\pi}{2}}  \frac{\sin\left( \left( \omega_{i} + \omega_{j} - \omega_{k} \right)x \right) }{\tan(x)} dx
\end{align*}
and finally
\begin{align*}
	\overline{\mathbb{G}}_{jki}  = \frac{1}{4} \left( 2 \left( \frac{\pi}{2} - \frac{\pi}{2} \right)+ \sum_{\pm} \mathcal{O} \left( \frac{1}{(\omega_{i} \pm \omega_{j} \pm  \omega_{k} )^2} \right) \right)
	= \sum_{\pm} \mathcal{O} \left( \frac{1}{(\omega_{i} \pm \omega_{j} \pm  \omega_{k} )^2} \right),
\end{align*}
as $i,j,k \longrightarrow \infty$. However, for large values of $i,j,k$ such that some $\omega_{i} \pm \omega_{j} \pm \omega_{k} \longarrownot\longrightarrow \infty$,  Holder's inequality implies
\begin{align*}
	\left| \overline{\mathbb{G}}_{jki}  \right|& :=\left| \int_{0}^{\frac{\pi}{2}} e_{k}(x) \frac{e_{j}^{\prime}(x)}{\omega_{j}} \frac{e_{i}^{\prime}(x)}{\omega_{i}} \tan^2(x)  dx \right|   \leq 	
	\left \| 
	e_{k}
	\right \|_{L^{\infty}\left[0,\frac{\pi}{2}\right]}
	\left \| 
	\frac{e^{\prime}_{i}}{\omega_{i}}\tan
	\right \|_{L^{2}\left[0,\frac{\pi}{2}\right]}
	\left \|
	\frac{e^{\prime}_{j}}{\omega_{j}}\tan
	\right \|_{L^{2}\left[0,\frac{\pi}{2}\right]} 	\lesssim \omega_{k},
\end{align*}
that completes the proof.
\end{proof}
\begin{prop}[Fourier constants in $\mathcal{B}_{2}$]\label{B2}
	Let $N\in \mathbb{N}$. The following growth and decay estimates hold. 
\begin{center}
\begin{longtable}{ |l|l|l|}
\hline
\multicolumn{3}{ |c| }{Growth and decay estimates for the Fourier constants in $\mathcal{B}_{2}$  as $i,j,k,l \longrightarrow + \infty$} \\
\hline
\quad  Constant  & $\exists~ \omega_{i} \pm \omega_{j} \pm \omega_{k} \pm \omega_{l} \longarrownot\longrightarrow \infty $ & \quad \quad $ \forall~\omega_{i} \pm \omega_{j} \pm \omega_{k} \pm \omega_{l} \longrightarrow \infty $  \\ \hline
\multirow{1}{*}{ \(\displaystyle \quad \frac{\mathbb{R}_{klji}}{\omega_{i}}, \frac{\mathbb{S}_{jkli}}{\omega_{i}}  \)} & \quad \quad\quad\quad \(\displaystyle 
\mathcal{O} \left(
\omega_{j}\omega_{i}^{-1}\right) \)  
  &\quad \(\displaystyle \frac{1}{\omega_{i}} \sum_{\pm} \mathcal{O} \left( \frac{1 }{ (\omega_{i} \pm \omega_{j} \pm \omega_{k} \pm \omega_{l})^{N}}\right) \)  \\ \hline 
\multirow{1}{*}{ \(\displaystyle \omega_{i}\mathbb{H}_{klji},\omega_{i}\overline{\mathbb{H}}_{ijkl}  \)}
  & \quad \quad\quad\quad \(\displaystyle \mathcal{O} \left(\omega_{i} \omega_{j} \omega_{k}\right) \) 
  &\quad \(\displaystyle \quad \sum_{\pm} \mathcal{O} \left( \frac{\omega_{i} }{ (\omega_{i} \pm \omega_{j} \pm \omega_{k} \pm \omega_{l})^{3}}\right) \)  \\ \hline 
\multirow{1}{*}{ \(\displaystyle \quad \overline{\mathbb{R}}_{klji},\overline{\mathbb{S}}_{jkli}   \)}
 & \quad \quad \quad\quad \(\displaystyle 
 \mathcal{O} \left(
\omega_{j}\omega_{i}^{-1} \right)  \)
  &\quad \(\displaystyle \frac{1}{\omega_{i}} \sum_{\pm}
   \mathcal{O} \left( \frac{1 }{ (\omega_{i} \pm \omega_{j} \pm \omega_{k} \pm \omega_{l})^{N}} \right)\)  \\ \hline 
\end{longtable}
\end{center}
\end{prop}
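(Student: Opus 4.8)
The plan is to treat these four-index constants exactly as in the proofs of Propositions~\ref{A1}--\ref{B1}, splitting into the two regimes recorded in the table. The starting observation is arithmetic: since each $\omega_n=3+2n$ is odd, every combination $\omega_i\pm\omega_j\pm\omega_k\pm\omega_l$ is \emph{even}, so in the decay regime I will only ever need the even-argument oscillatory integrals, namely the second and fourth integrals of Lemma~\ref{OscillatoryIntegrals}. Throughout I replace each eigenfunction by its leading order term from Remark~\ref{lot}, using $e_n\simeq\frac{2}{\sqrt\pi}\frac{\sin(\omega_n x)}{\tan x}$ and $\frac{e_n'}{\omega_n}\simeq\frac{2}{\sqrt\pi}\frac{\cos(\omega_n x)}{\tan x}$, and for the barred constants I additionally invoke the asymptotic $\int_x^{\frac{\pi}{2}}e_i(y)\sin y\cos y\,dy\simeq\frac{2}{\sqrt\pi}\,\frac{1}{\omega_i}\cos^2 x\cos(\omega_i x)$ already established before Proposition~\ref{Result1}; this is precisely what manufactures the explicit $\omega_i^{-1}$ carried by $\overline{\mathbb R}$ and $\overline{\mathbb S}$.

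For the growth regime (some $\omega_i\pm\omega_j\pm\omega_k\pm\omega_l\not\to\infty$) I argue purely by Hölder's inequality together with the $L^\infty$ bounds of Lemma~\ref{Linftyboundse}, after distributing the weights so that every factor is controlled. For the $\frac{\sin^3}{\cos}$-weighted constants I write $\frac{\sin^3 x}{\cos x}=\sin x\cos x\,\tan^2 x$ and group $e_j$ in $L^\infty$ (contributing $\omega_j$), the factor $\frac{e_i'}{\omega_i}\sin x\cos x=\tan(x)\frac{e_i'}{\omega_i}\cos^2 x$ in $L^\infty$ (bounded by Lemma~\ref{Linftyboundse}), and the two remaining factors $e_k\tan,e_l\tan$ (or $\frac{e_k'}{\omega_k}\tan,\frac{e_l'}{\omega_l}\tan$) in $L^2$, each of norm one by the orthonormality in Lemma~\ref{ClosedformulasFore}; this yields the $\mathcal O(\omega_j\omega_i^{-1})$ bound for $\mathbb R/\omega_i,\mathbb S/\omega_i$. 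For $\overline{\mathbb R},\overline{\mathbb S}$ the single factor $\int_\cdot^{\frac{\pi}{2}}e_i\sin\cos$ is placed in $L^\infty$ with norm $\lesssim\omega_i^{-1}$, again giving $\mathcal O(\omega_j\omega_i^{-1})$. For $\mathbb H$ and $\overline{\mathbb H}$ I place two eigenfunctions in $L^\infty$ (two powers of $\omega$) and two in $L^2$, producing $\mathcal O(\omega_i\omega_j\omega_k)$ after multiplication by $\omega_i$.

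In the decay regime the computation splits according to the residual weight. For $\mathbb R,\mathbb S,\overline{\mathbb R},\overline{\mathbb S}$ the algebra collapses the weight to $\cos^2 x/\tan x$: for instance
\[
\mathbb S_{jkli}\simeq\Big(\tfrac{2}{\sqrt\pi}\Big)^4\int_0^{\frac{\pi}{2}}\cos^2(x)\,\frac{\sin(\omega_j x)\sin(\omega_k x)\sin(\omega_l x)\cos(\omega_i x)}{\tan(x)}\,dx,
\]
and the barred versions reduce to the same integral up to the $\omega_i^{-1}$ factor coming from the inner-integral asymptotic. Expanding the product of three sines and one cosine into the eight frequencies $\omega_i\pm\omega_j\pm\omega_k\pm\omega_l$ and applying the second integral of Lemma~\ref{OscillatoryIntegrals} produces a $\tfrac\pi2$ from each term; the signs generated by the product-to-sum identities arrange these constants into a cancelling pattern, $4(\tfrac\pi2-\tfrac\pi2)=0$, exactly as in the $\overline A_{ijl}^{(m)},\overline B_{ijl}^{(m)}$ computations in the proof of Proposition~\ref{Result1}. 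What survives is $\sum_\pm\mathcal O\big((\omega_i\pm\omega_j\pm\omega_k\pm\omega_l)^{-N}\big)$, hence the stated power-$N$ decay.

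The genuinely delicate case is $\mathbb H$ and $\overline{\mathbb H}$, where the residual weight is $1/\tan^2 x$ and only order-$3$ decay is claimed. Here the leading-order integrand is $\frac{\sin(\omega_i x)\sin(\omega_j x)\sin(\omega_k x)\sin(\omega_l x)}{\tan^2 x}$, whose product-to-sum expansion involves $\frac{\cos((\omega_i\pm\omega_j\pm\omega_k\pm\omega_l)x)}{\tan^2 x}$. The fourth integral of Lemma~\ref{OscillatoryIntegrals} governs exactly this object, but it carries a \emph{linearly growing} main term $c\,a$ and is stated only on $[\frac{\pi}{a},\frac{\pi}{2}]$ because $1/\tan^2 x\sim x^{-2}$ is non-integrable at the origin. \textbf{This is the main obstacle:} the individually decomposed integrals diverge at $x=0$, so the cancellation of the $c\,a$ terms cannot be carried out term by term. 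The resolution I would pursue keeps the cancellation at the level of the full sum, where the product of four sines vanishes to order $x^4$ and restores integrability. Concretely, I would split $\int_0^{\frac{\pi}{2}}=\int_0^{\frac{\pi}{A}}+\int_{\frac{\pi}{A}}^{\frac{\pi}{2}}$ with $A$ tied to the relevant frequency, control the near-origin piece through the product form (using $|\sin(\omega x)|\le\min(1,\omega x)$ on all four factors simultaneously, so that the $x^{-2}$ is beaten by $x^{4}$), and apply the fourth integral of Lemma~\ref{OscillatoryIntegrals} on the remaining range. The linear pieces $c\,a$ then cancel across the eight sign choices, since their signed sum is zero as for $\overline A,\overline B$, and the surviving remainder is $\sum_\pm\mathcal O\big((\omega_i\pm\omega_j\pm\omega_k\pm\omega_l)^{-3}\big)$, which after multiplication by $\omega_i$ is the table entry. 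The case $\overline{\mathbb H}$ is identical after replacing two of the sines by cosines, so that the governing integral is again the fourth one of Lemma~\ref{OscillatoryIntegrals}.
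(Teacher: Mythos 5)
Your proposal is correct and follows essentially the same route as the paper: leading-order substitution via Remark \ref{lot}, product-to-sum expansion into the frequencies $\omega_{i}\pm\omega_{j}\pm\omega_{k}\pm\omega_{l}$, the second and fourth integrals of Lemma \ref{OscillatoryIntegrals} with cancellation of the $\frac{\pi}{2}$ (respectively $c\,a$) main terms across the eight sign choices, and H\"older with the $L^{\infty}$ bounds of Lemma \ref{Linftyboundse} in the non-resonant regime. The only difference is that you make explicit the near-origin treatment of the $1/\tan^{2}$ weight for $\mathbb{H}$ and $\overline{\mathbb{H}}$, which the paper handles implicitly by truncating each of the eight integrals at $\tfrac{2\pi}{\omega_{i}\pm\omega_{j}\pm\omega_{k}\pm\omega_{l}}$ under a ``$\simeq$''; your splitting argument with $|\sin(\omega x)|\leq\min(1,\omega x)$ is a legitimate way to justify that step.
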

\begin{proof}
First, observe that 
\begin{align*}
	\omega_{i} - \omega_{j} + \omega_{k} - \omega_{l} & = 2 (i - j + k - l),\quad 
	\omega_{i} + \omega_{j} - \omega_{k} - \omega_{l}   = 2 (i + j - k - l), \\
	\omega_{i} - \omega_{j} - \omega_{k} - \omega_{l}& = 2 (-3 + i - j - k - l), \quad
	\omega_{i} + \omega_{j} + \omega_{k} - \omega_{l}   = 2 (3 + i + j + k - l),\\
	\omega_{i} - \omega_{j} - \omega_{k} + \omega_{l} & = 2 (i - j - k + l),\quad 
	\omega_{i} + \omega_{j} + \omega_{k} + \omega_{l}   = 2 (6 + i + j + k + l), \\
	\omega_{i} - \omega_{j} + \omega_{k} + \omega_{l} & = 2 (3 + i - j + k + l), \quad
	\omega_{i} + \omega_{j} - \omega_{k} + \omega_{l}   = 2 (3 + i + j - k + l), 
\end{align*}
are all even. All results here follow from Lemma \ref{OscillatoryIntegrals}. For large values of $i,j,k,l$ and in the case where all $\omega_{i} \pm \omega_{j} \pm \omega_{k}\pm \omega_{l} \longrightarrow \infty$, 
	\begin{align*}
	 \mathbb{R}_{klji}  &:=  \int_{0}^{\frac{\pi}{2}} e_{j}(x) \frac{e_{k}^{\prime}(x)}{\omega_{k}} \frac{e_{l}^{\prime}(x)}{\omega_{l}} \frac{e_{i}^{\prime}(x)}{\omega_{i}} \frac{\sin^3(x)}{\cos(x)}  dx 
	  \simeq  \int_{0}^{\frac{\pi}{2}}  \cos^2(x) \frac{ \cos(\omega_{i}x)  \sin(\omega_{j}x) \cos(\omega_{k}x) \cos(\omega_{l}x) }{\tan(x)} dx \\
	& =   
	\int_{0}^{\frac{\pi}{2}}  \cos^2(x) \frac{ \sin \left(  \left( \omega_{i} + \omega_{j} + \omega_{k} + \omega_{l}   \right) x \right) }{\tan(x)} dx 
	 -  \int_{0}^{\frac{\pi}{2}}  \cos^2(x) \frac{ \sin \left(  \left( \omega_{i} - \omega_{j} + \omega_{k} + \omega_{l}   \right) x \right) }{\tan(x)} dx \\
	& + 	\int_{0}^{\frac{\pi}{2}}  \cos^2(x) \frac{  \sin \left(  \left( \omega_{i} + \omega_{j} + \omega_{k} - \omega_{l}   \right) x \right) }{\tan(x)} dx 
	-  \int_{0}^{\frac{\pi}{2}}  \cos^2(x) \frac{  \sin \left(  \left( \omega_{i} - \omega_{j} + \omega_{k} - \omega_{l}   \right) x \right) }{\tan(x)} dx \\
	& + 	\int_{0}^{\frac{\pi}{2}}  \cos^2(x) \frac{  \sin \left(  \left( \omega_{i} + \omega_{j} - \omega_{k} + \omega_{l}   \right) x \right) }{\tan(x)} dx 
	-  \int_{0}^{\frac{\pi}{2}}  \cos^2(x) \frac{  \sin \left(  \left( \omega_{i} - \omega_{j} - \omega_{k} + \omega_{l}   \right) x \right )}{\tan(x)} dx \\
	& + 	\int_{0}^{\frac{\pi}{2}}  \cos^2(x) \frac{  \sin \left(  \left( \omega_{i} + \omega_{j} - \omega_{k} - \omega_{l}   \right) x \right)) }{\tan(x)} dx 
	-  \int_{0}^{\frac{\pi}{2}}  \cos^2(x) \frac{  \sin \left(  \left( \omega_{i} - \omega_{j} - \omega_{k} - \omega_{l}   \right) x \right) }{\tan(x)} dx \\
&=  \left( 4 \left(  \frac{\pi}{2} - \frac{\pi}{2} \right) + \sum_{\pm} \mathcal{O} \left( \frac{1}{\left( \omega_{i} - \omega_{j} - \omega_{k} + \omega_{l}   \right)^{N}}
	 \right)  \right) 
	  = \sum_{\pm} \mathcal{O} \left( \frac{1}{\left( \omega_{i} - \omega_{j} - \omega_{k} + \omega_{l}   \right)^{N}} \right).		  
\end{align*}
On the other hand, in the case where some $\omega_{i} \pm \omega_{j} \pm \omega_{m} \longarrownot\longrightarrow \infty$,  Holder's inequality implies
\begin{align*}
	\left| \frac{\mathbb{R}_{klji}}{\omega_{i}} \right| & =
	\frac{1}{\omega_{i}} \left| 
	 \int_{0}^{\frac{\pi}{2}} e_{j}(x) \cos(x) \sin(x)\frac{e_{k}^{\prime}(x)}{\omega_{k}} \frac{e_{l}^{\prime}(x)}{\omega_{l}} \frac{e_{i}^{\prime}(x)}{\omega_{i}} \tan^2(x)  dx
	\right| \\
	& \leq \frac{1}{\omega_{i}}
	\left \| 
	e_{j}
	\right\|_{L^{\infty}\left[0,\frac{\pi}{2}\right]}
	 \left \| 
	\cos \sin \frac{e_{k}^{\prime}}{\omega_{k}}
	\right \|_{L^{\infty}\left[0,\frac{\pi}{2}\right]}	
	\left \| 
	\frac{e_{l}^{\prime}}{\omega_{l}}\tan
	\right \|_{L^{2}\left[0,\frac{\pi}{2}\right]}
	\left \| 
	\frac{e_{i}^{\prime}}{\omega_{i}}\tan
	\right \|_{L^{2}\left[0,\frac{\pi}{2}\right]} 	\leq \frac{\omega_{j}}{\omega_{i}},
\end{align*}
where we used the $L^{\infty}$ bounds as well as the orthogonality from Lemma \ref{Linftyboundse}. Furthermore, for large values of $i,j,k,l$ and in the case where all $\omega_{i} \pm \omega_{j} \pm \omega_{k}\pm \omega_{l} \longrightarrow \infty$,   
	\begin{align*}
	\mathbb{S}_{jkli} &:=  \int_{0}^{\frac{\pi}{2}} e_{j}(x) e_{k}(x) e_{l} (x) \frac{e_{i}^{\prime}(x)}{\omega_{i}} \frac{\sin^3(x)}{\cos(x)}  dx 
	 \simeq  \int_{0}^{\frac{\pi}{2}}  \cos^2(x) \frac{ \cos(\omega_{i}x)  \sin(\omega_{j}x) \sin(\omega_{k}x) \sin(\omega_{l}x) }{\tan(x)} dx \\
	& =   -
	\int_{0}^{\frac{\pi}{2}}  \cos^2(x) \frac{ \sin \left(  \left( \omega_{i} + \omega_{j} + \omega_{k} + \omega_{l}   \right) x \right) }{\tan(x)} dx 
	 + \int_{0}^{\frac{\pi}{2}}  \cos^2(x) \frac{ \sin \left(  \left( \omega_{i} - \omega_{j} + \omega_{k} + \omega_{l}   \right) x \right) }{\tan(x)} dx \\
	& +	\int_{0}^{\frac{\pi}{2}}  \cos^2(x) \frac{  \sin \left(  \left( \omega_{i} + \omega_{j} + \omega_{k} - \omega_{l}   \right) x \right) }{\tan(x)} dx 
	-  \int_{0}^{\frac{\pi}{2}}  \cos^2(x) \frac{  \sin \left(  \left( \omega_{i} - \omega_{j} + \omega_{k} - \omega_{l}   \right) x \right) }{\tan(x)} dx \\
	& + 	\int_{0}^{\frac{\pi}{2}}  \cos^2(x) \frac{  \sin \left(  \left( \omega_{i} + \omega_{j} - \omega_{k} + \omega_{l}   \right) x \right) }{\tan(x)} dx 
	-  \int_{0}^{\frac{\pi}{2}}  \cos^2(x) \frac{  \sin \left(  \left( \omega_{i} - \omega_{j} - \omega_{k} + \omega_{l}   \right) x \right) }{\tan(x)} dx \\
	& -	\int_{0}^{\frac{\pi}{2}}  \cos^2(x) \frac{  \sin \left(  \left( \omega_{i} + \omega_{j} - \omega_{k} - \omega_{l}   \right) x \right) }{\tan(x)} dx 
	+ \int_{0}^{\frac{\pi}{2}}  \cos^2(x) \frac{  \sin \left(  \left( \omega_{i} - \omega_{j} - \omega_{k} - \omega_{l}   \right) x \right) }{\tan(x)} dx \\
 &=   4 \left(  \frac{\pi}{2} - \frac{\pi}{2} \right) + \sum_{\pm} \mathcal{O} \left( \frac{1}{\left( \omega_{i} - \omega_{j} - \omega_{k} + \omega_{l}   \right)^{N}}
	 \right) 
	 = \sum_{\pm} \mathcal{O} \left( \frac{1}{\left( \omega_{i} - \omega_{j} - \omega_{k} + \omega_{l}   \right)^{N}} \right).
\end{align*}
On the other hand, in the case where some $\omega_{i} \pm \omega_{j} \pm \omega_{k}\pm \omega_{l} \longarrownot\longrightarrow \infty$, Holder's inequality implies
\begin{align*}
	\left| \frac{\mathbb{S}_{jkli}}{\omega_{i}} \right| & =
	\frac{1}{\omega_{i}} \left| 
	 \int_{0}^{\frac{\pi}{2}} e_{j}(x) \cos(x) \sin(x)\frac{e_{i}^{\prime}(x)}{\omega_{i}} e_{k}(x) e_{l}(x) \tan^2(x)  dx
	\right| \\
	& \leq \frac{1}{\omega_{i}}
	\left \| 
	e_{j}
	\right\|_{L^{\infty}\left[0,\frac{\pi}{2}\right]}
	 \left \| 
	\cos \sin \frac{e_{i}^{\prime}}{\omega_{i}}
	\right \|_{L^{\infty}\left[0,\frac{\pi}{2}\right]}	
	\left \| 
	e_{k}\tan
	\right \|_{L^{2}\left[0,\frac{\pi}{2}\right]}
	\left \| 
	e_{l}\tan
	\right \|_{L^{2}\left[0,\frac{\pi}{2}\right]}
		\leq \frac{\omega_{j}}{\omega_{i}},
\end{align*}
where we used the $L^{\infty}$ bounds and the orthogonality from Lemma \ref{Linftyboundse}. Similarly, for large values of $i,j,k,l$ and in the case where all $\omega_{i} \pm \omega_{j} \pm \omega_{k}\pm \omega_{l} \longrightarrow \infty$,   
\begin{align*}
 \mathbb{H}_{klji} & := \int_{0}^{\frac{\pi}{2}} e_{i}(x) e_{j}(x) e_{k}(x) e_{l}(x) \tan^{2}(x) dx 
  \simeq   \int_{0}^{\frac{\pi}{2}} \frac{\sin(\omega_{i}x) \sin(\omega_{j}x) \sin(\omega_{k}x) \sin(\omega_{l}x) }{\tan^2(x)} dx \\
& \simeq \frac{1}{8} \int_{ \frac{2 \pi}{ \omega_{i} -\omega_{j} + \omega_{k} - \omega_{l} }  }^{\frac{\pi}{2}} \frac{\cos\left( 
\left(\omega_{i} -\omega_{j} + \omega_{k} - \omega_{l} \right)x \right)}{\tan^2(x)} dx 
+\frac{1}{8} \int_{ \frac{2 \pi}{ \omega_{i} + \omega_{j} - \omega_{k} - \omega_{l} }}^{\frac{\pi}{2} } \frac{\cos\left( 
\left(\omega_{i} + \omega_{j} - \omega_{k} - \omega_{l} \right)x \right)}{\tan^2(x)} dx \\
& - \frac{1}{8} \int_{ \frac{2 \pi}{ \omega_{i} -\omega_{j} - \omega_{k} - \omega_{l}}}^{\frac{\pi}{2}} \frac{\cos\left( 
\left(\omega_{i} -\omega_{j} - \omega_{k} - \omega_{l} \right)x \right)}{\tan^2(x)} dx 
-\frac{1}{8} \int_{\frac{2 \pi}{ \omega_{i} + \omega_{j} + \omega_{k} - \omega_{l}}}^{\frac{\pi}{2}} \frac{\cos\left( 
\left(\omega_{i} + \omega_{j} + \omega_{k} - \omega_{l} \right)x \right)}{\tan^2(x)} dx \\
& +\frac{1}{8} \int_{\frac{2 \pi}{ \omega_{i} -\omega_{j} - \omega_{k} + \omega_{l}}}^{\frac{\pi}{2}} \frac{\cos\left( 
\left(\omega_{i} -\omega_{j} - \omega_{k} + \omega_{l} \right)x \right)}{\tan^2(x)} dx 
+\frac{1}{8} \int_{\frac{2 \pi}{ \omega_{i} + \omega_{j} + \omega_{k} + \omega_{l}}}^{\frac{\pi}{2}} \frac{\cos\left( 
\left(\omega_{i} + \omega_{j} + \omega_{k}  + \omega_{l} \right)x \right)}{\tan^2(x)} dx \\
& -\frac{1}{8} \int_{\frac{2 \pi}{ \omega_{i} -\omega_{j} + \omega_{k} + \omega_{l}}}^{\frac{\pi}{2}} \frac{\cos\left( 
\left(\omega_{i} -\omega_{j} + \omega_{k} +  \omega_{l} \right)x \right)}{\tan^2(x)} dx 
-\frac{1}{8} \int_{\frac{2 \pi}{ \omega_{i} + \omega_{j} - \omega_{k} + \omega_{l}}}^{\frac{\pi}{2}} \frac{\cos\left( 
\left(\omega_{i} + \omega_{j} - \omega_{k} + \omega_{l} \right)x \right)}{\tan^2(x)} dx \\
& = \frac{c}{16 \pi} \Big(
+(\omega_{i} -\omega_{j} + \omega_{k} - \omega_{l})
+( \omega_{i} + \omega_{j} - \omega_{k} - \omega_{l} ) 
-( \omega_{i} -\omega_{j} - \omega_{k} - \omega_{l} ) \\
& -( \omega_{i} +\omega_{j} + \omega_{k} - \omega_{l} ) 
+( \omega_{i} -\omega_{j} - \omega_{k} + \omega_{l} )
+( \omega_{i} +\omega_{j} + \omega_{k} + \omega_{l} ) \\
& -( \omega_{i} -\omega_{j} + \omega_{k} + \omega_{l} )
-( \omega_{i} + \omega_{j} - \omega_{k} + \omega_{l} ) \Big)
+ \sum_{\pm} \mathcal{O} \left( \frac{1}{ ( \omega_{i} \pm \omega_{j} \pm  \omega_{k} \pm \omega_{l} )^{3}  } \right) \\
&= \sum_{\pm} \mathcal{O} \left( \frac{1}{ ( \omega_{i} \pm \omega_{j} \pm  \omega_{k} \pm \omega_{l} )^{3}  } \right).
\end{align*}
On the other hand, in the case where some $\omega_{i} \pm \omega_{j} \pm \omega_{k}\pm \omega_{l} \longarrownot\longrightarrow \infty$, Holder's inequality implies
\begin{align*}
	\left| \mathbb{H}_{klji} \right| & =
	\left| 
	 \int_{0}^{\frac{\pi}{2}} e_{i}(x) e_{j}(x) e_{k}(x) e_{l}(x) \tan^2(x)  dx
	\right| \\
	& \leq 
	\left \| 
	e_{j}
	\right\|_{L^{\infty}\left[0,\frac{\pi}{2}\right]}
	 \left \| 
	e_{k}
	\right \|_{L^{\infty}\left[0,\frac{\pi}{2}\right]}	
	\left \| 
	e_{i}\tan
	\right \|_{L^{2}\left[0,\frac{\pi}{2}\right]}
	\left \| 
	e_{l}\tan
	\right \|_{L^{2}\left[0,\frac{\pi}{2}\right]}
		\leq \omega_{j}\omega_{k}.
\end{align*}
Furthermore, for large values of $i,j,k,l$ and in the case where all $\omega_{i} \pm \omega_{j} \pm \omega_{k}\pm \omega_{l} \longrightarrow \infty$,    
\begin{align*}
	\overline{\mathbb{H}}_{ijkl}&:=\int_{0}^{\frac{\pi}{2}} e_{j}(x) e_{k}(x) \frac{e_{i}^{\prime}(x)}{\omega_{i}} \frac{e_{l}^{\prime}(x)}{\omega_{l}}\tan^{2}(x) dx 
	 \simeq   \int_{0}^{\frac{\pi}{2}} \frac{\sin(\omega_{j}x) \sin(\omega_{k}x) \cos(\omega_{i}x) \cos(\omega_{l}x) }{\tan^2(x)} dx \\
 & \simeq \frac{1}{8} \int_{ \frac{2 \pi}{ \omega_{i} +\omega_{j} - \omega_{k} - \omega_{l} }  }^{\frac{\pi}{2}} \frac{\cos\left( 
\left(\omega_{i} +\omega_{j} - \omega_{k} - \omega_{l} \right)x \right)}{\tan^2(x)} dx 
-\frac{1}{8} \int_{ \frac{2 \pi}{ \omega_{i} - \omega_{j} - \omega_{k} - \omega_{l} }}^{\frac{\pi}{2} } \frac{\cos\left( 
\left(\omega_{i} - \omega_{j} - \omega_{k} - \omega_{l} \right)x \right)}{\tan^2(x)} dx \\
& + \frac{1}{8} \int_{ \frac{2 \pi}{ \omega_{i} +\omega_{j} - \omega_{k} +\omega_{l}}}^{\frac{\pi}{2}} \frac{\cos\left( 
\left(\omega_{i} +\omega_{j} - \omega_{k} + \omega_{l} \right)x \right)}{\tan^2(x)} dx 
-\frac{1}{8} \int_{\frac{2 \pi}{ \omega_{i} - \omega_{j} - \omega_{k} + \omega_{l}}}^{\frac{\pi}{2}} \frac{\cos\left( 
\left(\omega_{i} - \omega_{j} - \omega_{k} + \omega_{l} \right)x \right)}{\tan^2(x)} dx \\
& -\frac{1}{8} \int_{\frac{2 \pi}{ \omega_{i} +\omega_{j} + \omega_{k} - \omega_{l}}}^{\frac{\pi}{2}} \frac{\cos\left( 
\left(\omega_{i} +\omega_{j} + \omega_{k} - \omega_{l} \right)x \right)}{\tan^2(x)} dx 
+\frac{1}{8} \int_{\frac{2 \pi}{ \omega_{i} - \omega_{j} + \omega_{k} - \omega_{l}}}^{\frac{\pi}{2}} \frac{\cos\left( 
\left(\omega_{i} - \omega_{j} + \omega_{k}  - \omega_{l} \right)x \right)}{\tan^2(x)} dx \\
& -\frac{1}{8} \int_{\frac{2 \pi}{ \omega_{i} +\omega_{j} + \omega_{k} + \omega_{l}}}^{\frac{\pi}{2}} \frac{\cos\left( 
\left(\omega_{i} +\omega_{j} + \omega_{k} + \omega_{l} \right)x \right)}{\tan^2(x)} dx 
+\frac{1}{8} \int_{\frac{2 \pi}{ \omega_{i} - \omega_{j} + \omega_{k} + \omega_{l}}}^{\frac{\pi}{2}} \frac{\cos\left( 
\left(\omega_{i} - \omega_{j} + \omega_{k}  + \omega_{l} \right)x \right)}{\tan^2(x)} dx
\end{align*}
and the rest of the proof coincides with the one above. On the other hand, if some $\omega_{i} \pm \omega_{j} \pm \omega_{k}\pm \omega_{l} \longarrownot\longrightarrow \infty$, Holder's inequality implies
\begin{align*}
	\left| \overline{\mathbb{H}}_{ijkl} \right| & =
	 \left| 
	 \int_{0}^{\frac{\pi}{2}} e_{j}(x) e_{k}(x) 
	 \frac{e_{i}^{\prime}(x)}{\omega_{i}}
	 \frac{e_{l}^{\prime}(x)}{\omega_{l}}
	  \tan^2(x)  dx
	\right| \\
	& \leq 
	\left \| 
	e_{j}
	\right\|_{L^{\infty}\left[0,\frac{\pi}{2}\right]}
	 \left \| 
	e_{k}
	\right \|_{L^{\infty}\left[0,\frac{\pi}{2}\right]}	
	\left \| 
	\frac{e_{i}^{\prime}}{\omega_{i}}\tan
	\right \|_{L^{2}\left[0,\frac{\pi}{2}\right]}
	\left \| 
	\frac{e_{l}^{\prime}}{\omega_{l}}\tan
	\right \|_{L^{2}\left[0,\frac{\pi}{2}\right]}
		\leq \omega_{j}\omega_{k}.
\end{align*} 
In addition, for large values of $i,j,k,l$ and in the case where all $\omega_{i} \pm \omega_{j} \pm \omega_{k}\pm \omega_{l} \longrightarrow \infty$, 
\begin{align*}
	\overline{\mathbb{R}}_{klji} & :=
\int_{0}^{\frac{\pi}{2}} e_{j}(x) \frac{e_{k}^{\prime}(x)}{\omega_{k}}  \frac{e_{l}^{\prime}(x)}{\omega_{l}} \left( \int_{x}^{\frac{\pi}{2}} e_{i}(y) \sin(y) \cos(y) d y \right) \tan^2(x) dx \\
& \simeq \frac{1}{\omega_{i}}  \int_{0}^{\frac{\pi}{2}} \cos^2(x) \frac{ \cos(\omega_{i}x)\sin(\omega_{j}x)\cos(\omega_{k}x)\cos(\omega_{l}x)  }{\tan(x)} dx \\
& \simeq \frac{1}{8 \omega_{i}} \Bigg( 
\int_{0}^{\frac{\pi}{2}} \cos^2(x) \frac{ \sin( (\omega_{i}+\omega_{j}+\omega_{k}+\omega_{l} )x)  }{\tan(x)} dx 
-\int_{0}^{\frac{\pi}{2}} \cos^2(x) \frac{ \sin( (\omega_{i}-\omega_{j}+\omega_{k}+\omega_{l} )x)  }{\tan(x)} dx \\
& \int_{0}^{\frac{\pi}{2}} \cos^2(x) \frac{ \sin( (\omega_{i}+\omega_{j}+\omega_{k}-\omega_{l} )x)  }{\tan(x)} dx
- \int_{0}^{\frac{\pi}{2}} \cos^2(x) \frac{ \sin( (\omega_{i}-\omega_{j}+\omega_{k}-\omega_{l} )x)  }{\tan(x)} dx \\
& \int_{0}^{\frac{\pi}{2}} \cos^2(x) \frac{ \sin( (\omega_{i}+\omega_{j}-\omega_{k}+\omega_{l} )x)  }{\tan(x)} dx
- \int_{0}^{\frac{\pi}{2}} \cos^2(x) \frac{ \sin( (\omega_{i}-\omega_{j}-\omega_{k}+\omega_{l} )x)  }{\tan(x)} dx \\
& \int_{0}^{\frac{\pi}{2}} \cos^2(x) \frac{ \sin( (\omega_{i}+\omega_{j}-\omega_{k}-\omega_{l} )x)  }{\tan(x)} dx
-\int_{0}^{\frac{\pi}{2}} \cos^2(x) \frac{ \sin( (\omega_{i}-\omega_{j}-\omega_{k}-\omega_{l} )x)  }{\tan(x)} dx
\Bigg) \\
& = \frac{1}{8 \omega_{i}} \left( 4 \left( \frac{\pi}{2} -\frac{\pi}{2}  \right) + \sum_{\pm} \mathcal{O}\left( \frac{1}{(\omega_{i} \pm \omega_{j} \pm \omega_{k} \pm \omega_{l})^{N}} \right) \right) \\
		 &
 = \frac{1}{ \omega_{i}}\sum_{\pm} \mathcal{O}\left( \frac{1}{(\omega_{i} \pm \omega_{j} \pm \omega_{k} \pm \omega_{l})^{N}} \right). 
\end{align*}
On the other hand, in the case where some $\omega_{i} \pm \omega_{j} \pm \omega_{k} \pm \omega_{l} \longarrownot\longrightarrow \infty$, Holder's inequality implies
\begin{align*}
	\left| \overline{\mathbb{R}}_{klji} \right| & =
	 \left| 
	 \int_{0}^{\frac{\pi}{2}} e_{j}(x) \frac{e_{k}^{\prime}(x)}{\omega_{k}}  \frac{e_{l}^{\prime}(x)}{\omega_{l}} \left( \int_{x}^{\frac{\pi}{2}} e_{i}(y) \sin(y) \cos(y) d y \right) \tan^2(x) dx 
	\right| \\
	& \leq 
	\left \| 
	e_{j}
	\right\|_{L^{\infty}\left[0,\frac{\pi}{2}\right]}
	 \left \| 
	\int_{\cdot}^{\frac{\pi}{2}} e_{i}(y) \sin(y) \cos(y) d y 
	\right \|_{L^{\infty}\left[0,\frac{\pi}{2}\right]}	
	\left \| 
	\frac{e_{k}^{\prime}}{\omega_{k}}\tan
	\right \|_{L^{2}\left[0,\frac{\pi}{2}\right]}
	\left \| 
	\frac{e_{l}^{\prime}}{\omega_{l}}\tan
	\right \|_{L^{2}\left[0,\frac{\pi}{2}\right]}
		\leq \frac{\omega_{j}}{\omega_{i}}.
\end{align*}
Similarly, for large values of $i,j,k,l$ and in the case where all $\omega_{i} \pm \omega_{j} \pm \omega_{k}\pm \omega_{l}  \longrightarrow \infty$,  
\begin{align*}
	\overline{\mathbb{S}}_{jkli}& :=
\int_{0}^{\frac{\pi}{2}} e_{j}(x) e_{k}(x) e_{l}(x) \left( \int_{x}^{\frac{\pi}{2}} e_{i}(y) \sin(y) \cos(y) d y \right) \tan^2(x) dx \\
& \simeq \frac{1}{\omega_{i}}  \int_{0}^{\frac{\pi}{2}} \cos^2(x) \frac{ \cos(\omega_{i}x)\sin(\omega_{j}x)\sin(\omega_{k}x)\sin(\omega_{l}x)  }{\tan(x)} dx \\
& \simeq \frac{1}{8 \omega_{i}} \Bigg( 
\int_{0}^{\frac{\pi}{2}} \cos^2(x) \frac{ \sin( (\omega_{i}+\omega_{j}-\omega_{k}+\omega_{l} )x)  }{\tan(x)} dx 
-\int_{0}^{\frac{\pi}{2}} \cos^2(x) \frac{ \sin( (\omega_{i}-\omega_{j}-\omega_{k}-\omega_{l} )x)  }{\tan(x)} dx \\
&- \int_{0}^{\frac{\pi}{2}} \cos^2(x) \frac{ \sin( (\omega_{i}+\omega_{j}+\omega_{k}+\omega_{l} )x)  }{\tan(x)} dx
+ \int_{0}^{\frac{\pi}{2}} \cos^2(x) \frac{ \sin( (\omega_{i}+\omega_{j}+\omega_{k}-\omega_{l} )x)  }{\tan(x)} dx \\
&- \int_{0}^{\frac{\pi}{2}} \cos^2(x) \frac{ \sin( (\omega_{i}-\omega_{j}-\omega_{k}+\omega_{l} )x)  }{\tan(x)} dx
+ \int_{0}^{\frac{\pi}{2}} \cos^2(x) \frac{ \sin( (\omega_{i}-\omega_{j}-\omega_{k}-\omega_{l} )x)  }{\tan(x)} dx \\
&+ \int_{0}^{\frac{\pi}{2}} \cos^2(x) \frac{ \sin( (\omega_{i}-\omega_{j}+\omega_{k}+\omega_{l} )x)  }{\tan(x)} dx
-\int_{0}^{\frac{\pi}{2}} \cos^2(x) \frac{ \sin( (\omega_{i}-\omega_{j}+\omega_{k}-\omega_{l} )x)  }{\tan(x)} dx
\Bigg)
\end{align*}
and the rest of the proof coincides with the one above. On the other hand, if some $\omega_{i} \pm \omega_{j} \pm \omega_{k}\pm \omega_{l} \longarrownot\longrightarrow \infty$, Holder's inequality implies
\begin{align*}
	\left|\overline{\mathbb{S}}_{jkli}  \right| & =
	 \left| 
	 \int_{0}^{\frac{\pi}{2}} e_{j}(x) e_{k}(x)  e_{l}(x) \left( \int_{x}^{\frac{\pi}{2}} e_{i}(y) \sin(y) \cos(y) d y \right) \tan^2(x) dx 
	\right| \\
	& \leq 
	\left \| 
	e_{j}
	\right\|_{L^{\infty}\left[0,\frac{\pi}{2}\right]}
	 \left \| 
	\int_{\cdot}^{\frac{\pi}{2}} e_{i}(y) \sin(y) \cos(y) d y 
	\right \|_{L^{\infty}\left[0,\frac{\pi}{2}\right]}	
	\left \| 
	e_{k}\tan
	\right \|_{L^{2} \left[0,\frac{\pi}{2}\right]}
	\left \| 
	e_{l}\tan
	\right \|_{L^{2}\left[0,\frac{\pi}{2}\right]}
		\leq \frac{\omega_{j}}{\omega_{i}},
\end{align*}
that completes the proof.
\end{proof}	
\appendix 
\section{Auxiliary integral estimates}
In this section, we prove the estimate \eqref{AuxiliaryIntegralsBounds}. To do so, we will use the notation
\begin{align*}
    \mathbbm{1}\left( \text{condition} \right)=
    \begin{cases}
    	1,\quad \text{if the condition if satisfied} \\
    	0, \quad \text{otherwise}
    \end{cases} 
\end{align*}
\begin{lemma}\label{lemmaAppendix}
For all $i=0,1,2,\dots$, we have
	\begin{align*}
      \int_{0}^{\frac{\pi}{2}} \cos^2 (\omega_{i}x) \tan^2(x)dx = \int_{0}^{\frac{\pi}{2}} \frac{\sin^2 (\omega_{i}x)}{\tan^2(x)} dx  = \frac{\pi}{2} \left(\omega_{i}-\frac{1}{2} \right).
\end{align*}
\end{lemma}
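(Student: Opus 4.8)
The plan is to reduce both integrals to the evaluation of $\int_0^{\pi/2}\frac{\sin^2(\omega_i x)}{\sin^2 x}\,dx$ and its cosine analogue, each of which the Dirichlet-kernel identities of Lemma~\ref{Dirichlet} turn into a finite trigonometric sum. Since the eigenfrequencies are odd integers, $\omega_i = 3+2i = 2(i+1)+1$, I would first rewrite
\[
\int_0^{\pi/2}\frac{\sin^2(\omega_i x)}{\tan^2 x}\,dx
= \int_0^{\pi/2}\frac{\sin^2(\omega_i x)}{\sin^2 x}\cos^2 x\,dx
= \int_0^{\pi/2}\frac{\sin^2(\omega_i x)}{\sin^2 x}\,dx - \int_0^{\pi/2}\sin^2(\omega_i x)\,dx,
\]
using $\cos^2 x = 1-\sin^2 x$, and symmetrically
\[
\int_0^{\pi/2}\cos^2(\omega_i x)\tan^2 x\,dx
= \int_0^{\pi/2}\frac{\cos^2(\omega_i x)}{\cos^2 x}\,dx - \int_0^{\pi/2}\cos^2(\omega_i x)\,dx,
\]
via $\sin^2 x = 1-\cos^2 x$. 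The two trailing integrals both equal $\pi/4$, since $\omega_i\in\mathbb{N}$ forces $\int_0^{\pi/2}\cos(2\omega_i x)\,dx = 0$.

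Next I would apply Lemma~\ref{Dirichlet} with $n=i+1$: its first identity gives $\frac{\sin(\omega_i x)}{\sin x} = 1 + 2\sum_{\mu=1}^{i+1}\cos(2\mu x)$ and its second gives $\frac{\cos(\omega_i x)}{\cos x} = (-1)^{i+1}\big(1 + 2\sum_{\mu=1}^{i+1}(-1)^\mu\cos(2\mu x)\big)$. Squaring and integrating term by term, I would invoke the half-period orthogonality relations
\[
\int_0^{\pi/2}\cos(2\mu x)\,dx = 0, \qquad
\int_0^{\pi/2}\cos(2\mu x)\cos(2\nu x)\,dx = \frac{\pi}{4}\,\delta_{\mu\nu} \quad (\mu,\nu\ge 1),
\]
which hold because every frequency appearing is an even multiple of $x$ and $\int_0^{\pi/2}\cos(2kx)\,dx = \sin(k\pi)/(2k)=0$ for every nonzero integer $k$. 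All the cross terms then vanish, the signs $(-1)^\mu$ square to one, and only the $i+1$ diagonal squares survive, each contributing $\pi/4$; hence both $\int_0^{\pi/2}\frac{\sin^2(\omega_i x)}{\sin^2 x}\,dx$ and $\int_0^{\pi/2}\frac{\cos^2(\omega_i x)}{\cos^2 x}\,dx$ equal $\frac{\pi}{2} + (i+1)\pi = \frac{\omega_i\pi}{2}$.

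Substituting back yields that both target integrals equal $\frac{\omega_i\pi}{2}-\frac{\pi}{4} = \frac{\pi}{2}\big(\omega_i-\frac12\big)$, which is the claim. The one point deserving care — and essentially the only place the argument could go wrong — is the legitimacy of the term-by-term splitting together with the half-interval orthogonality: because $\omega_i$ is odd, the kernels $\sin(\omega_i x)/\sin x$ and $\cos(\omega_i x)/\cos x$ extend to bounded continuous functions on all of $\left[0,\frac{\pi}{2}\right]$ (the apparent singularities at $x=0$ and $x=\frac{\pi}{2}$ are removable), so each rewritten integrand is a genuine proper integral and the finite-sum manipulations are unconditionally valid. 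It is precisely the evenness of the frequencies $2\mu$ that makes the orthogonality persist on the half-period $\left[0,\frac{\pi}{2}\right]$ rather than only on $[0,\pi]$; I would flag this as the substantive observation, the remaining computations being routine.
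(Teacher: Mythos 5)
Your proof is correct. You and the paper both rest on the same key ingredient, the Dirichlet--kernel identities of Lemma~\ref{Dirichlet} applied with $n=i+1$ to $\omega_i=2(i+1)+1$, followed by term-by-term integration; the difference is in the decomposition. The paper multiplies the squared kernel directly by $\sin^2(x)$ (resp.\ $\cos^2(x)$) and must then evaluate $\int_0^{\pi/2}\cos(2\mu x)\cos(2\nu x)\sin^2(x)\,dx$, which produces, besides the diagonal $\mu=\nu$ contribution, off-diagonal contributions supported on $|\mu-\nu|=1$ that have to be counted (this is where the paper's indicator-function bookkeeping comes from). You instead write $\tan^2(x)=\frac{1}{\cos^2(x)}-1$ and $\frac{1}{\tan^2(x)}=\frac{1}{\sin^2(x)}-1$, which splits each target integral into $\int_0^{\pi/2}\bigl(\tfrac{\cos(\omega_i x)}{\cos x}\bigr)^2dx$ (resp.\ the sine analogue) minus an elementary $\pi/4$; the first piece is then handled by the clean half-period orthogonality $\int_0^{\pi/2}\cos(2\mu x)\cos(2\nu x)\,dx=\frac{\pi}{4}\delta_{\mu\nu}$, so only diagonal terms survive. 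The two computations of course agree ($\frac{\omega_i\pi}{2}-\frac{\pi}{4}=\frac{\pi}{2}(\omega_i-\frac12)$), but your variant trades the cross-term accounting for a single extra elementary integral, which makes the calculation shorter and less error-prone; your closing remark about the removability of the singularities of $\sin(\omega_i x)/\sin x$ and $\cos(\omega_i x)/\cos x$ at the endpoints, and about why orthogonality persists on the half-period, correctly identifies the only points needing justification.
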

\begin{proof}
	Both results follow from Lemma \ref{Dirichlet}. For the first result, we get
	\begin{align*}
		& \int_{0}^{\frac{\pi}{2}} \cos^2 (\omega_{i}x) \tan^2(x)dx   = \int_{0}^{\frac{\pi}{2}}\left( \frac{\cos (\omega_{i}x)}{\cos(x)} \right)^2 \sin^2(x)dx  = \int_{0}^{\frac{\pi}{2}}\left(1+ 2 \sum_{\mu=1}^{i+1} (-1)^{\mu} \cos(2 \mu x) \right)^2 \sin^2(x)dx  \\
		& = \int_{0}^{\frac{\pi}{2}}\left(1+ 4 \sum_{\mu=1}^{i+1} (-1)^{\mu} \cos(2 \mu x) + 4 \sum_{\mu,\nu=1}^{i+1} (-1)^{\mu+\nu} \cos(2 \mu x)\cos(2 \nu x) \right) \sin^2(x)dx  \\
		& = \int_{0}^{\frac{\pi}{2}}\sin^2(x) dx +
		4 \sum_{\mu=1}^{i+1} (-1)^{\mu} \int_{0}^{\frac{\pi}{2}}\cos(2 \mu x) \sin^2(x) dx 
		+ 4 \sum_{\mu,\nu=1}^{i+1} (-1)^{\mu+\nu}\int_{0}^{\frac{\pi}{2}} \cos(2 \mu x)\cos(2 \nu x) \sin^2(x)dx.
	\end{align*}
Now, we use various trigonometric identities
to compute
\begin{align*}
	& \int_{0}^{\frac{\pi}{2}} \sin^2(x) dx = \frac{\pi}{4}, \\
	& \int_{0}^{\frac{\pi}{2}}\cos(2 \mu x) \sin^2(x) dx  = \frac{1}{2}  \int_{0}^{\frac{\pi}{2}}\cos(2 \mu x) dx -\frac{1}{4}  \int_{0}^{\frac{\pi}{2}}\cos(2 (\mu-1) x) dx - \frac{1}{4}  \int_{0}^{\frac{\pi}{2}}\cos(2 (\mu+1) x) dx  \\
	 &\quad \quad \quad \quad \quad \quad \quad \quad \quad \quad = -  \frac{\pi}{8} \mathbbm{1}\left( \mu=1 \right), \\
	& \int_{0}^{\frac{\pi}{2}}\cos(2 \mu x)\cos(2 \nu x) \sin^2(x) dx = -\frac{1}{8} \int_{0}^{\frac{\pi}{2}} \cos \left( 2(1-\mu -\nu)x\right)dx  + \frac{1}{4} \int_{0}^{\frac{\pi}{2}} \cos \left( 2(\mu -\nu)x\right)dx\\
	&\quad \quad \quad \quad \quad \quad \quad \quad \quad \quad \quad \quad \quad \quad  -\frac{1}{8} \int_{0}^{\frac{\pi}{2}} \cos \left( 2(1+\mu -\nu)x\right) dx   -\frac{1}{8} \int_{0}^{\frac{\pi}{2}} \cos \left( 2(1-\mu +\nu)x\right)dx \\
	&\quad \quad \quad \quad \quad \quad \quad \quad \quad \quad \quad \quad \quad \quad  + \frac{1}{4} \int_{0}^{\frac{\pi}{2}} \cos \left( 2(\mu +\nu)x\right) dx - \frac{1}{8} \int_{0}^{\frac{\pi}{2}} \cos \left( 2(1+\mu +\nu)x\right) dx  \\
	& = - \frac{\pi}{16} \mathbbm{1}\left( 1 -\mu-\nu =0 \right) 
	+ \frac{\pi}{8} \mathbbm{1}\left( \mu-\nu =0 \right)  -\frac{\pi}{16} \mathbbm{1}\left( 1+ \mu -\nu=0 \right)
	-\frac{\pi}{16} \mathbbm{1}\left( 1- \mu +\nu=0 \right). 
\end{align*}
Hence,
\begin{align*}
	\int_{0}^{\frac{\pi}{2}} \cos^2 (\omega_{i}x) \tan^2(x)dx  & = \frac{\pi}{4} + \frac{\pi}{2} 	
	 +\frac{\pi}{4} \sum_{\substack{\mu,\nu=1 \\ 1-\mu-\nu=0 }}^{i+1} 1  + \frac{\pi}{2} \sum_{\substack{\mu,\nu=1 \\ \mu-\nu=0 }}^{i+1} 1  + \frac{\pi}{4} \sum_{\substack{\mu,\nu=1 \\ 1+\mu-\nu=0 }}^{i+1} 1  + \frac{\pi}{4} \sum_{\substack{\mu,\nu=1 \\ 1-\mu+\nu=0 }}^{i+1} 1 \\
	&=  \frac{\pi}{4} + \frac{\pi}{2} 	
	 +\frac{\pi}{4} \cdot 0   + \frac{\pi}{2} \cdot (1+i)  + \frac{\pi}{4} \cdot i + \frac{\pi}{4} \cdot i 	= \frac{\pi}{2} \left(\omega_{i}-\frac{1}{2} \right).
	 \end{align*}
Similarly, for the second result, we get
	\begin{align*}
		& \int_{0}^{\frac{\pi}{2}} \frac{\sin^2 (\omega_{i}x)}{\tan^2(x)} dx =
	 \int_{0}^{\frac{\pi}{2}} \left( \frac{\sin (\omega_{i}x)}{\sin(x)}  \right)^2 \cos^2(x) dx = \int_{0}^{\frac{\pi}{2}}\left(1+ 2 \sum_{\mu=1}^{i+1} \cos(2 \mu x) \right)^2 \cos^2(x)dx  \\
		&\quad \quad\quad\quad\quad\quad\quad  = \int_{0}^{\frac{\pi}{2}}\left(1+ 4 \sum_{\mu=1}^{i+1} \cos(2 \mu x) + 4 \sum_{\mu,\nu=1}^{i+1}  \cos(2 \mu x)\cos(2 \nu x) \right) \cos^2(x)dx  \\
		& = \int_{0}^{\frac{\pi}{2}}\cos^2(x) dx +
		4 \sum_{\mu=1}^{i+1} \int_{0}^{\frac{\pi}{2}}\cos(2 \mu x) \cos^2(x) dx 
		+ 4 \sum_{\mu,\nu=1}^{i+1} \int_{0}^{\frac{\pi}{2}} \cos(2 \mu x)\cos(2 \nu x) \cos^2(x)dx.
	\end{align*}
Now, we use various trigonometric identities
to compute
\begin{align*}
    & \int_{0}^{\frac{\pi}{2}} \cos^2(x) dx = \frac{\pi}{4}, \\
	& \int_{0}^{\frac{\pi}{2}}\cos(2 \mu x) \cos^2(x) dx  = \frac{1}{2}  \int_{0}^{\frac{\pi}{2}}\cos(2 \mu x) dx +\frac{1}{4}  \int_{0}^{\frac{\pi}{2}}\cos(2 (\mu-1) x) dx + \frac{1}{4}  \int_{0}^{\frac{\pi}{2}}\cos(2 (\mu+1) x) dx  \\
	 &\quad \quad \quad \quad \quad \quad \quad \quad \quad \quad =   \frac{\pi}{8} \mathbbm{1}\left( \mu=1 \right), \\
	& \int_{0}^{\frac{\pi}{2}}\cos(2 \mu x)\cos(2 \nu x) \cos^2(x) dx = \frac{1}{8} \int_{0}^{\frac{\pi}{2}} \cos \left( 2(1-\mu -\nu)x\right)dx  + \frac{1}{4} \int_{0}^{\frac{\pi}{2}} \cos \left( 2(\mu -\nu)x\right)dx\\
	&\quad \quad \quad \quad \quad \quad \quad \quad \quad \quad \quad \quad \quad \quad  +\frac{1}{8} \int_{0}^{\frac{\pi}{2}} \cos \left( 2(1+\mu -\nu)x\right) dx   +\frac{1}{8} \int_{0}^{\frac{\pi}{2}} \cos \left( 2(1-\mu +\nu)x\right)dx \\
	&\quad \quad \quad \quad \quad \quad \quad \quad \quad \quad \quad \quad \quad \quad  + \frac{1}{4} \int_{0}^{\frac{\pi}{2}} \cos \left( 2(\mu +\nu)x\right) dx + \frac{1}{8} \int_{0}^{\frac{\pi}{2}} \cos \left( 2(1+\mu +\nu)x\right) dx  \\
	& =  \frac{\pi}{16} \mathbbm{1}\left( 1 -\mu-\nu =0 \right) 
	+ \frac{\pi}{8} \mathbbm{1}\left( \mu-\nu =0 \right)  +\frac{\pi}{16} \mathbbm{1}\left( 1+ \mu -\nu=0 \right)
	+\frac{\pi}{16} \mathbbm{1}\left( 1- \mu +\nu=0 \right). 
\end{align*}
Hence,
\begin{align*}
	\int_{0}^{\frac{\pi}{2}} \frac{\sin^2 (\omega_{i}x)}{\tan^2(x)} dx  & = \frac{\pi}{4} + \frac{\pi}{2} 	
	 +\frac{\pi}{4} \sum_{\substack{\mu,\nu=1 \\ 1-\mu-\nu=0 }}^{i+1} 1  + \frac{\pi}{2} \sum_{\substack{\mu,\nu=1 \\ \mu-\nu=0 }}^{i+1} 1  + \frac{\pi}{4} \sum_{\substack{\mu,\nu=1 \\ 1+\mu-\nu=0 }}^{i+1} 1  + \frac{\pi}{4} \sum_{\substack{\mu,\nu=1 \\ 1-\mu+\nu=0 }}^{i+1} 1 \\
	&=  \frac{\pi}{4} + \frac{\pi}{2} 	
	 +\frac{\pi}{4} \cdot 0   + \frac{\pi}{2} \cdot (1+i)  + \frac{\pi}{4} \cdot i + \frac{\pi}{4} \cdot i\\
	 &	= \frac{\pi}{2} \left(\omega_{i}-\frac{1}{2} \right),
	 \end{align*}	 
	 that completes the proof.
\end{proof} 

\section{Closed formulas for the Fourier constants with one index}
In this section, we list closed formulas for the Fourier constants with one index which are used in subsection \ref{giaAppendix}. 
	\begin{lemma}\label{LemmaAppendixB}	
	For all $j=0,1,2,\dots$, we have
	\begin{align*}
		\mathbb{C}_{130j} & = 
		\begin{cases}
			-\frac{3}{\pi},\quad j=0 \\[10pt]
			\frac{513}{320\pi}\sqrt{3},\quad j=1 \\[10pt]
			-\frac{39}{80\pi}\sqrt{\frac{3}{2}},\quad j=2 \\[10pt]
			-\frac{291}{560\pi}\frac{1}{\sqrt{10}},\quad j=3 \\[10pt]
			\frac{69}{1120\pi}\sqrt{\frac{3}{5}},\quad j=4 \\[10pt]
			\frac{162}{\pi \sqrt{2}} \frac{(-1)^{j}}{\sqrt{(j+1)(j+2)}} \frac{1}{3+2j} \frac{9+12j+4j^2}{(j-1)j(j+1)(j+2)(j+3)(j+4)},\quad j \geq 5, 
		\end{cases}\\[5pt]
		\mathbb{C}_{240j}  &= 
		\begin{cases}
			-\frac{279}{16\pi},\quad j=0 \\[10pt]
			\frac{1683}{320\pi}\sqrt{3},\quad j=1 \\[10pt]
			\frac{33}{40\pi}\sqrt{\frac{3}{2}},\quad j=2 \\[10pt]
			-\frac{99}{140\pi}\frac{1}{\sqrt{10}},\quad j=3 \\[10pt]
			-\frac{9}{280\pi}\sqrt{\frac{3}{5}},\quad j=4 \\[10pt]
			\frac{162}{\pi \sqrt{2}} \frac{(-1)^{j}}{\sqrt{(j+1)(j+2)}} \frac{1}{3+2j} \frac{9+12j+4j^2}{(j-1)j(j+1)(j+2)(j+3)(j+4)},\quad j \geq 5,	
		\end{cases}\\[5pt]
		\mathbb{D}_{130j} & = 
		\begin{cases}
			\frac{505521}{28672\pi^2},\quad j=0 \\[10pt]
			-\frac{981867}{143360\pi^2}\sqrt{3},\quad j=1 \\[10pt]
			-\frac{42183}{10240\pi^2}\sqrt{\frac{3}{2}},\quad j=2 \\[10pt]
			\frac{13783689}{788480\pi^2}\frac{1}{\sqrt{10}},\quad j=3 \\[10pt]
			\frac{1523079}{1576960\pi^2}\sqrt{\frac{3}{5}},\quad j=4 \\[10pt]
			-\frac{35626779}{20500480\pi^2}\sqrt{\frac{3}{7}},\quad j=5 \\[10pt]
			-\frac{5325129}{20500480\pi^2}\frac{1}{\sqrt{7}},\quad j=6 \\[10pt]
			\frac{46545}{585728\pi^2},\quad j=7 \\[10pt]
			\frac{201477}{3727360\pi^2}\frac{1}{\sqrt{5}},\quad j=8 \\[10pt]
			\frac{486\sqrt{2}}{\pi^2 } \frac{(-1)^{j}(3+2j)}{\sqrt{(j+1)(j+2)}} \frac{-17325-148200j+8272j^2 + 33264 j^3 +1386 j^4 -1512 j^5 -84 j^6 +24 j^7 +2 j^8}{(j-5)(j-4)(j-3)(j-2)(j-1)j(j+1)(j+2)(j+3)(j+4)(j+5)(j+6)(j+7)(j+8)},\quad j \geq 9,	
		\end{cases}\\[5pt]
		\mathbb{D}_{240j} & = 
		\begin{cases}
			\frac{1028457}{4096\pi^2},\quad j=0 \\[10pt]
			-\frac{236643}{20480\pi^2}\sqrt{3},\quad j=1 \\[10pt]
			-\frac{883833}{10240\pi^2}\sqrt{\frac{3}{2}},\quad j=2 \\[10pt]
			-\frac{53130249}{788480\pi^2}\frac{1}{\sqrt{10}},\quad j=3 \\[10pt]
			\frac{2723691}{143360\pi^2}\sqrt{\frac{3}{5}},\quad j=4 \\[10pt]
			\frac{266148291}{20500480\pi^2}\sqrt{\frac{3}{7}},\quad j=5 \\[10pt]
			\frac{11204163}{1863680\pi^2}\frac{1}{\sqrt{7}},\quad j=6 \\[10pt]
			\frac{5955813}{20500480\pi^2},\quad j=7 \\[10pt]
			\frac{2926809}{41000960\pi^2}\frac{1}{\sqrt{5}},\quad j=8 \\[10pt]
			\frac{1458\sqrt{2}}{\pi^2 } \frac{(-1)^{j}(3+2j)}{\sqrt{(j+1)(j+2)}} \frac{39375-261600j -29528 j^2 + 33264 j^3 +1386 j^4 -1512 j^5 -84 j^5 - 84j^6 +24 j^7 +2j^8}{(j-5)(j-4)(j-3)(j-2)(j-1)j(j+1)(j+2)(j+3)(j+4)(j+5)(j+6)(j+7)(j+8)},\quad j \geq 9,	
		\end{cases}\\[5pt]
		\mathbb{E}_{14230j} & = 
		\begin{cases}
			\frac{225639}{1792\pi^2},\quad j=0 \\[10pt]
			-\frac{137889}{4480\pi^2}\sqrt{3},\quad j=1 \\[10pt]
			-\frac{13233}{320\pi^2}\sqrt{\frac{3}{2}},\quad j=2 \\[10pt]
			\frac{25083}{448\pi^2}\frac{1}{\sqrt{10}},\quad j=3 \\[10pt]
			\frac{160737}{9856\pi^2}\sqrt{\frac{3}{5}},\quad j=4 \\[10pt]
			-\frac{285177}{197120\pi^2}\sqrt{\frac{3}{7}},\quad j=5 \\[10pt]
			-\frac{9313947}{2562560\pi^2\sqrt{7}},\quad j=6 \\[10pt]
			-\frac{1123959}{2562560\pi^2},\quad j=7 \\[10pt]
			-\frac{129009}{2562560\pi^2\sqrt{5}},\quad j=8 \\[10pt]
			\frac{3888\sqrt{2}}{\pi^2 } \frac{(-1)^{j}(3+2j)}{\sqrt{(j+1)(j+2)}} \frac{-315 + 2892 j + 460 j^2 - 309 j^3 - 29 j^4 + 9 j^5 + j^6}{(j-4)(j-3)(j-2)(j-1)j(j+1)(j+2)(j+3)(j+4)(j+5)(j+6)(j+7)},\quad j \geq 9,	
		\end{cases}\\[5pt]	
		\overline{\mathbb{C}}_{130j}  &= 
		\begin{cases}
			-\frac{105}{16\pi},\quad j=0 \\[10pt]
			\frac{627}{320\pi}\sqrt{3},\quad j=1 \\[10pt]
			-\frac{9}{40\pi}\sqrt{\frac{3}{2}},\quad j=2 \\[10pt]
			-\frac{111}{140\pi}\frac{1}{\sqrt{10}},\quad j=3 \\[10pt]
			\frac{3}{40\pi}\sqrt{\frac{3}{5}},\quad j=4 \\[10pt]
			\frac{3}{8\pi \sqrt{2}} \frac{(-1)^{j}}{\sqrt{(j+1)(j+2)}} \frac{18(56+48j+16j^2)}{(j-1)j(j+1)(j+2)(j+3)(j+4)},\quad j \geq 5,	
		\end{cases}\\[5pt]	
		\overline{\mathbb{C}}_{240j} & = 
		\begin{cases}
			-\frac{27}{\pi},\quad j=0 \\[10pt]
			\frac{1377}{320\pi}\sqrt{3},\quad j=1 \\[10pt]
			\frac{87}{80\pi}\sqrt{\frac{3}{2}},\quad j=2 \\[10pt]
			-\frac{339}{560\pi}\frac{1}{\sqrt{10}},\quad j=3 \\[10pt]
			-\frac{3}{160\pi}\sqrt{\frac{3}{5}},\quad j=4 \\[10pt]
			\frac{3}{8\pi \sqrt{2}} \frac{(-1)^{j}}{\sqrt{(j+1)(j+2)}} \frac{18(56+48j+16j^2)}{(j-1)j(j+1)(j+2)(j+3)(j+4)},\quad j \geq 5,	
		\end{cases}\\[5pt]
		\overline{\mathbb{D}}_{130j} & = 
		\begin{cases}
			\frac{100521}{4096\pi^2},\quad j=0 \\[10pt]
			-\frac{8763}{143360\pi^2}\sqrt{3},\quad j=1 \\[10pt]
			-\frac{677823}{71680\pi^2}\sqrt{\frac{3}{2}},\quad j=2 \\[10pt]
			\frac{7256121}{788480\pi^2}\frac{1}{\sqrt{10}},\quad j=3 \\[10pt]
			\frac{714927}{225280\pi^2}\sqrt{\frac{3}{5}},\quad j=4 \\[10pt]
			-\frac{2521353}{1863680\pi^2}\sqrt{\frac{3}{7}},\quad j=5 \\[10pt]
			-\frac{10458663}{20500480\pi^2}\frac{1}{\sqrt{7}},\quad j=6 \\[10pt]
			\frac{81573}{2928640\pi^2},\quad j=7 \\[10pt]
			\frac{3074073}{41000960\pi^2}\frac{1}{\sqrt{5}},\quad j=8 \\[10pt]
			\frac{162\sqrt{2}}{\pi^2 } \frac{(-1)^{j}}{\sqrt{(j+1)(j+2)}}\cdot \\  \quad\quad \cdot \frac{407925 - 574350 j - 1017146 j^2 - 240720 j^3 + 188030 j^4 + 
 64260 j^5 - 6888 j^6 - 3360 j^7 - 10 j^8 + 60 j^9 + 4 j^{10}}{(j-5)(j-4)(j-3)(j-2)(j-1)j(j+1)(j+2)(j+3)(j+4)(j+5)(j+6)(j+7)(j+8)},\quad j \geq 9,	
		\end{cases}\\[5pt]
		\overline{\mathbb{D}}_{240j} & = 
		\begin{cases}
			\frac{5907249}{28672\pi^2},\quad j=0 \\[10pt]
			\frac{8116251}{143360\pi^2}\sqrt{3},\quad j=1 \\[10pt]
			-\frac{3585921}{71680\pi^2}\sqrt{\frac{3}{2}},\quad j=2 \\[10pt]
			-\frac{89919801}{788480\pi^2}\frac{1}{\sqrt{10}},\quad j=3 \\[10pt]
			\frac{6828951}{1576960\pi^2}\sqrt{\frac{3}{5}},\quad j=4 \\[10pt]
			\frac{154847739}{20500480\pi^2}\sqrt{\frac{3}{7}},\quad j=5 \\[10pt]
			\frac{9032301}{1863680\pi^2}\frac{1}{\sqrt{7}},\quad j=6 \\[10pt]
			\frac{856593}{4100096\pi^2},\quad j=7 \\[10pt]
			\frac{5437431}{41000960\pi^2}\frac{1}{\sqrt{5}},\quad j=8 \\[10pt]
			\frac{486\sqrt{2}}{\pi^2 } \frac{(-1)^{j}}{\sqrt{(j+1)(j+2)}} \cdot \\  \quad\quad \cdot \frac{1107225 - 347550 j - 1281746 j^2 - 467520 j^3 + 150230 j^4 + 
 64260 j^5 - 6888 j^6 - 3360 j^7 - 10 j^8 + 60 j^9 + 4 j^{10}}{(j-5)(j-4)(j-3)(j-2)(j-1)j(j+1)(j+2)(j+3)(j+4)(j+5)(j+6)(j+7)(j+8)}, \quad j \geq 9,	
		\end{cases}\\[5pt]
		\overline{\mathbb{E}}_{14230j} & = 
		\begin{cases}
			\frac{253539}{1792\pi^2},\quad j=0 \\[10pt]
			\frac{68139}{4480\pi^2}\sqrt{3},\quad j=1 \\[10pt]
			-\frac{113397}{2240\pi^2}\sqrt{\frac{3}{2}},\quad j=2 \\[10pt]
			-\frac{4317}{448\pi^2}\frac{1}{\sqrt{10}},\quad j=3 \\[10pt]
			\frac{162663}{9856\pi^2}\sqrt{\frac{3}{5}},\quad j=4 \\[10pt]
			\frac{231249}{197120\pi^2}\sqrt{\frac{3}{7}},\quad j=5 \\[10pt]
			-\frac{4113129}{2562560\pi^2}\frac{1}{\sqrt{7}},\quad j=6 \\[10pt]
			-\frac{1104963}{2562560\pi^2},\quad j=7 \\[10pt]
			\frac{81519}{2562560\pi^2}\frac{1}{\sqrt{5}},\quad j=8 \\[10pt]
			\frac{1296\sqrt{2}}{\pi^2 } \frac{(-1)^{j}}{\sqrt{(j+1)(j+2)}} \frac{ -11655 + 4179 j + 15217 j^2 + 6381 j^3 - 1137 j^4 - 729 j^5 + 
 3 j^6 + 24 j^7 + 2 j^8}{(j-4)(j-3)(j-2)(j-1)j(j+1)(j+2)(j+3)(j+4)(j+5)(j+6)(j+7)},\quad j \geq 9,
		\end{cases}\\[10pt]		
\overline{\mathbb{Q}}_{100j} &:=
\frac{214035333120}{\pi^{5/2}} (-1)^j \sqrt{(1 + j) (2 + j)}
\frac{-3277699425  - 269297823960 j + 293711943816 j^2}{(1 - 2 j)^2 (3 - 2 j)^2 (5 - 2 j)^2 (7 - 2 j)^2  } \cdots\\
& \cdots\frac{ + 77509866720 j^3 - 
 99784020400 j^4 - 14916314880 j^5 + 12003789568 j^6 + 
 1948032000 j^7}{(-13 + 2 j) (-11 + 
   2 j) (-9 + 2 j) (1 + 2 j)^2 (3 + 2 j) (5 + 2 j)^2}\cdots \\
&\cdots\frac{ - 570197760 j^8 - 120207360 j^9 + 6178816 j^10 + 
 2580480 j^{11} + 143360 j^{12}}{(7 + 2 j)^2 (9 + 
   2 j)^2 (11 + 2 j)^2 (13 + 2 j)^2 (15 + 2 j) (17 + 2 j) (19 + 
   2 j) },\quad j \geq 0,	\\[10pt]
\overline{\mathbb{Q}}_{200j} &:=
\frac{-113010655887360}{\pi^{5/2}} (-1)^j \sqrt{(1 + j) (2 + j)} 
\frac{33108075 + 380845380 j - 393913652 j^2  }{(1 - 2 j)^2 (3 - 2 j)^2 (5 - 2 j)^2 (7 - 2 j)^2  }\cdots \\
&\cdots\frac{- 136760640 j^3 + 
 119740640 j^4 + 28080000 j^5 - 11212416 j^6 - 2933760 j^7}{(-13 + 2 j) (-11 + 
   2 j) (-9 + 2 j) (1 + 2 j)^2 (3 + 2 j) (5 + 2 j)^2}\cdots \\
&\cdots\frac{+ 
 239360 j^8 + 107520 j^{9} + 7168 j^{10}}{(7 + 2 j)^2 (9 + 
   2 j)^2 (11 + 2 j)^2 (13 + 2 j)^2 (15 + 2 j) (17 + 2 j) (19 + 2 j)},\quad j \geq 0, \\[10pt]
\overline{\mathbb{P}}_{100j} &:=
\frac{-11890851840}{\pi^{5/2}}(-1)^j \sqrt{(1 + j) (2 + j)}
\frac{35629018425 + 1577669058180 j - 2316779684100 j^2 }{(1 - 2 j)^2 (3 - 2 j)^2 (5 - 2 j)^2 (7 - 2 j)^2  }\cdots  \\
&\cdots \frac{- 6803783904 j^3 + 
 981811322512 j^4 - 66644159040 j^5 - 178082423360 j^6 + 
 948476928 j^7 }{(-13 + 2 j) (-11 + 
   2 j) (-9 + 2 j) (1 + 2 j)^2 (3 + 2 j) (5 + 2 j)^2}\cdots \\
&\cdots \frac{+ 15882539776 j^8 + 1123875840 j^9 - 626191360 j^{10} - 
 87220224 j^{11} + 6336512 j^{12} }{(7 + 2 j)^2 (9 + 
   2 j)^2 (11 + 2 j)^2 (13 + 2 j)^2 (15 + 2 j) }\cdots\\
  &\cdots\frac{+ 1720320 j^{13} + 81920 j^{14}}{(17 + 2 j) (19 + 2 j)},\quad j \geq 0, \\[10pt]
\overline{\mathbb{P}}_{200j} &:=
\frac{642105999360}{\pi^{5/2}}(-1)^j\sqrt{(1 + j) (2 + j)}
\frac{-2350673325 - 17348688120 j + 22705082600 j^2}{(1 - 
      2 j)^2 (3 - 2 j)^2 (5 - 2 j)^2 (7 - 2 j)^2 }\cdots  \\
&\cdots \frac{+ 2565707616 j^3 - 
 8972306928 j^4 - 257391360 j^5 + 1438027520 j^6 + 121433088 j^7 }{(-13 + 2 j) (-11 + 
      2 j) (-9 + 2 j) (1 + 2 j)^2 (3 + 2 j) (5 + 2 j)^2}\cdots \\
&\cdots \frac{97458944 j^8 - 15390720 j^9 + 1812480 j^{10} + 516096 j^{11} + 28672 j^{12}}{(7 + 
      2 j)^2 (9 + 2 j)^2 (11 + 2 j)^2 (13 + 2 j)^2 (15 + 2 j) (17 + 
      2 j) (19 + 2 j)},\quad j \geq 0, \\[10pt] 
\mathbb{Q}_{300j} &:=
\frac{1105920}{\pi^{5/2}}(-1)^j\sqrt{(1 + j) (2 + j)}
\frac{273378105 - 157311408 j  }{(-13 + 2 j) (-11 + 
    2 j) (-9 + 2 j) (-7 + 2 j)}\cdots  \\
&\cdots \frac{- 29892784 j^2 + 13889088 j^3 + 1385184 j^4}{(-5 + 2 j) (-3 + 2 j) (-1 + 2 j) (1 + 
    2 j) (5 + 2 j) (7 + 2 j)}\cdots \\
&\cdots \frac{- 
 352512 j^5 - 28416 j^6 + 3072 j^7 + 256 j^8}{(9 + 2 j) (11 + 2 j) (13 + 2 j) (15 + 
    2 j) (17 + 2 j) (19 + 2 j)},\quad j \geq 0, \\[10pt] 
\mathbb{Q}_{400j} &:=
\frac{3317760}{\pi^{5/2}}(-1)^j\sqrt{(1 + j) (2 + j)}
\frac{446350905 - 189244848 j  }{(-13 + 2 j) (-11 + 
    2 j) (-9 + 2 j) (-7 + 2 j)}\cdots  \\
&\cdots \frac{- 40537264 j^2 + 13889088 j^3 + 1385184 j^4 }{(-5 + 2 j) (-3 + 2 j) (-1 + 2 j) (1 + 
    2 j) (5 + 2 j) (7 + 2 j)}\cdots \\
&\cdots \frac{- 
 352512 j^5 - 28416 j^6 + 3072 j^7 + 256 j^8}{(9 + 2 j) (11 + 2 j) (13 + 2 j) (15 + 
    2 j) (17 + 2 j) (19 + 2 j)},\quad j \geq 0, \\[10pt]
\mathbb{P}_{300j} &:=
\frac{-36864}{\pi^{5/2}}(-1)^j \sqrt{(1 + j) (2 + j)}
\frac{-9861476625 + 10067010780 j + 608214484 j^2}{(-13 + 
     2 j) (-11 + 2 j) (-9 + 2 j) (-7 + 2 j) }\cdots  \\
&\cdots \frac{- 1532592960 j^3 - 
 20432800 j^4 + 80991360 j^5 + 2020992 j^6 - 1827840 j^7}{(-5 + 2 j) (-3 + 
     2 j) (-1 + 2 j) (1 + 2 j) (5 + 2 j) (7 + 2 j)}\cdots \\
&\cdots \frac{-  83200 j^8 + 15360 j^9 + 1024 j^{10}}{(9 + 2 j) (11 + 
     2 j) (13 + 2 j) (15 + 2 j) (17 + 2 j) (19 + 2 j) },\quad j \geq 0, \\[10pt] 
\mathbb{P}_{400j} &:=
\frac{-110592}{\pi^{5/2}}(-1)^j\sqrt{(1 + j) (2 + j)}
\frac{-12888500625 + 11300802780 j + 932387284 j^2}{(-13 + 
     2 j) (-11 + 2 j) (-9 + 2 j) (-7 + 2 j) }\cdots  \\
&\cdots \frac{- 1590653760 j^3 - 
 30109600 j^4 + 80991360 j^5 + 2020992 j^6 - 1827840 j^7}{(-5 + 2 j) (-3 + 
     2 j) (-1 + 2 j) (1 + 2 j) (5 + 2 j) (7 + 2 j) }\cdots \\
&\cdots \frac{ - 83200 j^8 + 15360 j^9 + 1024 j^{10}}{(9 + 2 j) (11 + 
     2 j) (13 + 2 j) (15 + 2 j) (17 + 2 j) (19 + 2 j) },\quad j \geq 0.
\end{align*}
\end{lemma}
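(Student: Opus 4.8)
The plan is to evaluate every one-index Fourier constant explicitly by freezing the dominant index at $0$ and reducing its defining integral to a finite sum of elementary trigonometric integrals. First I would gather the explicit building blocks already available: the closed formulas for $e_j$ and $e_j'$ from Lemma~\ref{ClosedformulasFore}, the identity $e_0(x)=4\sqrt{2/\pi}\,\cos^3(x)$ from Remark~\ref{e0oscilating}, and the explicit trigonometric-polynomial expressions for $\Gamma_1,\dots,\Gamma_4$ recorded above. The key structural point is that in each of these constants exactly one index is free, so the integrand factors as $e_j(x)$ (or $e_j'(x)/\omega_j$) times a \emph{fixed} weight $W(x)$ assembled from $e_0$, $e_0'$, the $\Gamma_a$'s and the measure $\tan^2(x)$. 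Using $\omega_j^2-1=4(j+1)(j+2)$ I would substitute $e_j(x)=\frac{1}{\sqrt{\pi(j+1)(j+2)}}\bigl(\omega_j\tfrac{\sin(\omega_j x)}{\tan(x)}-\cos(\omega_j x)\bigr)$ and reduce $W(x)e_j(x)\tan^2(x)$ to a finite linear combination of terms $\cos(mx)\cos(\omega_j x)$, $\cos(mx)\sin(\omega_j x)$ and $\cos(mx)\sin(\omega_j x)/\tan(x)$, after clearing the $\cos/\tan$ factors hidden in $\Gamma_1,\Gamma_2$ and the secular $12x$ terms they carry.

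The first two families are integrated on $\left[0,\tfrac{\pi}{2}\right]$ directly by product-to-sum identities and the elementary formulas already used in Lemma~\ref{OscillatoryIntegrals} and Lemma~\ref{lemmaAppendix}, while the ratio $\sin(\omega_j x)/\tan(x)$ is handled by the Dirichlet-kernel identity of Lemma~\ref{Dirichlet}, which rewrites it as $1+\cos(\omega_j x)+2\sum_{\mu}\cos(2\mu x)$ and hence again reduces everything to pure cosine integrals. Since $\omega_j=3+2j$ is odd, each such elementary integral produces boundary contributions carrying a factor $\cos(\omega_j\tfrac{\pi}{2})=0$ or $\sin(\omega_j\tfrac{\pi}{2})=(-1)^{j+1}$, divided by a frequency combination $\omega_j\pm m$. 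Assembling these contributions reproduces the announced shape of the answer: the normalization $\frac{1}{\sqrt{\pi(j+1)(j+2)}}$ gives the prefactor $(-1)^{j}/\sqrt{(j+1)(j+2)}$, and the denominators $\omega_j^2-m^2=(2j+3-m)(2j+3+m)$, linear in $j$ and multiplied together over the odd frequencies $m$ occurring in $W$ (and compounded for the quadratic weights $\Gamma_a\Gamma_b$ entering $\mathbb{D}$ and $\mathbb{E}$), yield exactly the rational functions of $j$ displayed in the statement.

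For the constants $\mathbb{P}_{\bullet},\mathbb{Q}_{\bullet},\overline{\mathbb{P}}_{\bullet},\overline{\mathbb{Q}}_{\bullet}$ I would first compute the inner antiderivative $\int_x^{\pi/2}e_0(y)\sin(y)\cos(y)\,dy$ in closed form, which is immediate since $e_0\propto\cos^3$, and which is itself a trigonometric polynomial; feeding it into the same machinery then reduces these constants to the same type of elementary integrals. The larger number of frequency factors it introduces is precisely what accounts for the higher-degree denominators running from $(1\pm2j)$ up to $(19+2j)$ in the corresponding formulas.

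The main obstacle is the resonance phenomenon together with the sheer volume of bookkeeping, rather than any conceptual difficulty. The generic rational formula is derived under the assumption that $\omega_j$ exceeds every frequency $m$ present in the reduced integrand; for the finitely many small values of $j$ at which $\omega_j$ coincides with one of those frequencies, an elementary integral of the resonant type $\int_0^{\pi/2}\cos((\omega_j-m)x)\,dx=\tfrac{\pi}{2}$ replaces the generic oscillatory boundary term, so that the true value differs from the analytic continuation of the generic formula. These exceptional $j$ must therefore be evaluated separately and by hand, which is exactly the origin of the tabulated special cases (for instance $j=0,1,2,3,4$ for $\mathbb{C}_{130j}$, reflecting that the reduced integrand carries odd frequencies up to $\omega_4=11$). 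The remaining effort is to organize each family ($\mathbb{C}$, $\mathbb{D}$, $\mathbb{E}$, their barred analogues, and $\mathbb{P}/\mathbb{Q}$) through a single uniform reduction, to simplify the resulting rational functions carefully, and to patch in the resonant indices, cross-checking a few entries numerically against the original integral.
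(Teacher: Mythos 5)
Your proposal is correct and follows essentially the same route as the paper: substitute the closed formulas of Lemma \ref{ClosedformulasFore} for $e_j$ and $e_j'/\omega_j$, reduce each integrand to a finite combination of elementary trigonometric (and secular $x\cos(mx)$-type) integrals against $\sin(\omega_j x)$ or $\cos(\omega_j x)$, obtain the generic rational formula for large $j$ from the resulting $\omega_j\pm m$ denominators, and treat the finitely many resonant small $j$ separately (the paper delegates those to Mathematica). Your explicit identification of the resonances $\omega_j=m$ as the origin of the tabulated exceptional cases is a useful clarification that the paper leaves implicit, but it does not change the argument.
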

\begin{proof}
	All these closed formulas are based on Lemma \ref{ClosedformulasFore} and follow similarly. Therefore, we illustrate the proof only for the first constant, namely for $\mathbb{C}_{130j}$. For all $j=0,1,2,\dots$, we have 
	\begin{align}
		\mathbb{C}_{130j} &:= \int_{0}^{\frac{\pi}{2}} 
		\left( \Gamma_{1}(x)-\Gamma_{3}(x) \right) e_{0}(x)e_{j}(x) \tan^{2}(x)dx\nonumber \\ 
		&=\frac{2}{\sqrt{\pi}} \frac{1}{\sqrt{\omega_{j}^2-1}} \int_{0}^{\frac{\pi}{2}} 
		\left( \Gamma_{1}(x)-\Gamma_{3}(x) \right) e_{0}(x) 		 
		\left(
		\omega_{j} \frac{\sin(\omega_{j}x)}{\tan(x)} -
		\cos(\omega_{j}x)
		\right)
		 \tan^{2}(x) dx\nonumber \\ 
		 &= \frac{\omega_{j}}{\sqrt{\omega_{j}^2-1}} \int_{0}^{\frac{\pi}{2}} w_{1}(x) \sin(\omega_{j}x) dx -
		\frac{1}{\sqrt{\omega_{j}^2-1}} \int_{0}^{\frac{\pi}{2}} w_{2}(x) \cos(\omega_{j}x) dx,\label{akuro}
	\end{align}
where
	\begin{align*}
		w_{1}(x)&:=\frac{2}{\sqrt{\pi}} \left( \Gamma_{1}(x)-\Gamma_{3}(x) \right) e_{0}(x) \tan(x) = \frac{3}{4\sqrt{2}\pi^2} \cos^2(x) q(x), \\	
		w_{2}(x) &:=\frac{2}{\sqrt{\pi}} \left( \Gamma_{1}(x)-\Gamma_{3}(x) \right) e_{0}(x) \tan^2(x)= \frac{3}{8\sqrt{2}\pi^2} \sin(2x) q(x)
	\end{align*}
and
\begin{align*}
	q(x):=
		384 ~x \cos^3(x) - 254 \sin(x)-8 \sin(3x)-16 \sin(5x) - 5 \sin(7x)+ \sin(9x).
\end{align*}
We use trigonometric identities to write
\begin{align*}
	w_{1}(x) &=x \left( \frac{90\sqrt{2}}{\pi^2} \cos(x)
	+\frac{45\sqrt{2}}{\pi^2} \cos(3x)
	+\frac{9\sqrt{2}}{\pi^2} \cos(5x) \right)\\
	&-\frac{393}{8\pi^2 \sqrt{2}} \sin(x)
	-\frac{429}{8\pi^2 \sqrt{2}} \sin(3x)
	-\frac{135}{16\pi^2 \sqrt{2}} \sin(5x)
	-\frac{75}{16\pi^2 \sqrt{2}} \sin(7x)\\
	&-\frac{9}{16\pi^2 \sqrt{2}} \sin(9x)
	+\frac{3}{16\pi^2 \sqrt{2}} \sin(11x).
\end{align*}
For $j\geq 5$, we compute each integral separately and find a closed formula for the first integral in \eqref{akuro}. Similarly, for the second. All the other values $\mathbb{C}_{130j}$, $j\in\{0,1,2,3,4\}$ are computed with Mathematica. 
\end{proof}

\bibliographystyle{plain}
\bibliography{FourierConstants_EKG}

\end{document}